\documentclass[11pt]{amsart}
\usepackage[mathscr]{euscript}
\DeclareFontFamily{OT1}{rsfs}{}
\DeclareFontShape{OT1}{rsfs}{n}{it}{<-> rsfs10}{}
\DeclareMathAlphabet{\curly}{OT1}{rsfs}{n}{it}

\makeatletter
\newcommand{\eqnum}{\refstepcounter{equation}\textup{\tagform@{\theequation}}}
\makeatother

\newcommand\beq[1]{\begin{equation}\label{#1}}
\newcommand\eeq{\end{equation}}
\newcommand\beqa{\begin{eqnarray*}}
\newcommand\eeqa{\end{eqnarray*}}

\title[Birational geometry for d-critical loci]{Birational geometry for
d-critical loci and wall-crossing in Calabi-Yau 3-folds}
\date{}
\author{Yukinobu Toda}

\usepackage{amscd}
\usepackage{amsmath}
\usepackage{amssymb}
\usepackage{amsthm}
\usepackage{float}
\usepackage[dvips]{graphicx}
\usepackage{xypic}

\usepackage{fancybox}

\usepackage{array}
\usepackage{amscd}
\usepackage[all]{xy}

\DeclareFontFamily{U}{rsfs}{%
\skewchar\font127}
\DeclareFontShape{U}{rsfs}{m}{n}{%
<-6>rsfs5<6-8.5>rsfs7<8.5->rsfs10}{}
\DeclareSymbolFont{rsfs}{U}{rsfs}{m}{n}
\DeclareSymbolFontAlphabet
{\mathrsfs}{rsfs}
\DeclareRobustCommand*\rsfs{%
\@fontswitch\relax\mathrsfs}

\theoremstyle{plain}
\newtheorem{thm}{Theorem}[section]
\newtheorem{prop}[thm]{Proposition}
\newtheorem{lem}[thm]{Lemma}
\newtheorem{sublem}[thm]{Sublemma}

\newtheorem{cor}[thm]{Corollary}

\newtheorem{prop-defi}[thm]{Proposition-Definition}
\newtheorem{defi-prop}[thm]{Definition-Proposition}
\newtheorem{thm-defi}[thm]{Theorem-Definition}
\newtheorem{lem-defi}[thm]{Lemma-Definition}

\newtheorem{question}[thm]{Question}
\newtheorem{assum}[thm]{Assumption}

\theoremstyle{definition}
\newtheorem{defi}[thm]{Definition}
\newtheorem{exam}[thm]{Example}

\theoremstyle{remark}
\newtheorem{rmk}[thm]{Remark}

\newcommand{\sslash}{/\!\!/}

\newcommand{\aA}{\mathcal{A}}

\newcommand{\dD}{\mathcal{D}}
\newcommand{\eE}{\mathcal{E}}
\newcommand{\fF}{\mathcal{F}}
\newcommand{\gG}{\mathcal{G}}
\newcommand{\hH}{\mathcal{H}}

\newcommand{\lL}{\mathcal{L}}
\newcommand{\mM}{\mathcal{M}}

\newcommand{\oO}{\mathcal{O}}

\newcommand{\qQ}{\mathcal{Q}}

\newcommand{\sS}{\mathcal{S}}

\newcommand{\vV}{\mathcal{V}}
\newcommand{\wW}{\mathcal{W}}
\newcommand{\xX}{\mathcal{X}}
\newcommand{\yY}{\mathcal{Y}}

\newcommand{\Supp}{\mathop{\rm Supp}\nolimits}
\newcommand{\Hom}{\mathop{\rm Hom}\nolimits}

\newcommand{\dR}{\mathbf{R}}

\newcommand{\Pic}{\mathop{\rm Pic}\nolimits}

\newcommand{\id}{\textrm{id}}

\newcommand{\ch}{\mathop{\rm ch}\nolimits}

\newcommand{\td}{\mathop{\rm td}\nolimits}
\newcommand{\Ext}{\mathop{\rm Ext}\nolimits}
\newcommand{\Spec}{\mathop{\rm Spec}\nolimits}
\newcommand{\rank}{\mathop{\rm rank}\nolimits}
\newcommand{\Coh}{\mathop{\rm Coh}\nolimits}
\newcommand{\QCoh}{\mathop{\rm QCoh}\nolimits}

\newcommand{\cneq}{\mathrel{\raise.095ex\hbox{:}\mkern-4.2mu=}}
\newcommand{\eqcn}{\mathrel{=\mkern-4.5mu\raise.095ex\hbox{:}}}

\newcommand{\ext}{\mathop{\rm ext}\nolimits}

\newcommand{\Aut}{\mathop{\rm Aut}\nolimits}

\newcommand{\codim}{\mathop{\rm codim}\nolimits}
\newcommand{\Ex}{\mathop{\rm Ex}\nolimits}

\newcommand{\Stab}{\mathop{\rm Stab}\nolimits}

\newcommand{\modu}{\mathop{\rm mod}\nolimits}

\newcommand{\Imm}{\mathop{\rm Im}\nolimits}

\newcommand{\GL}{\mathop{\rm GL}\nolimits}

\newcommand{\tr}{\mathop{\rm tr}\nolimits}

\newcommand{\cl}{\mathop{\rm cl}\nolimits}

\newcommand{\lkakko}{[\![}
\newcommand{\rkakko}{]\!]}

\makeatletter
 \renewcommand{\theequation}{%
   \thesection.\arabic{equation}}
  \@addtoreset{equation}{section}
\makeatother

\begin{document}

\begin{abstract}
The notion of d-critical loci was introduced by Joyce
in order to give classical shadows of $(-1)$-shifted 
symplectic derived schemes. 
In this paper, we discuss 
birational geometry for 
d-critical loci, 
by introducing notions such as  
`d-critical flips', `d-critical flops', etc. 
They are not birational maps of the underlying 
spaces, but rather
should be understood as 
virtual birational maps.  

We show that several wall-crossing phenomena of 
moduli spaces of stable objects on Calabi-Yau 3-folds
are described in terms of d-critical 
birational geometry. 
Among them, we show that
wall-crossing diagrams of 
Pandharipande-Thomas (PT)
stable pair moduli spaces,  
which 
are relevant in showing the rationality of PT 
generating series, 
 form a
d-critical minimal model program. 
\end{abstract}

\maketitle

\setcounter{tocdepth}{1}
\tableofcontents

\section{Introduction}
\subsection{Background and motivation}
Let $X$ be a smooth projective variety over $\mathbb{C}$. 
For a given numerical class $v \in H^{2\ast}(X, \mathbb{Q})$
and a stability condition $\sigma$
on the derived category of coherent sheaves on $X$
(e.g. Bridgeland stability condition~\cite{Brs1}, 
weak stability condition~\cite{Tcurve1}), 
we denote by 
$M_{\sigma}(v)$ the coarse moduli space\footnote{The
existence of $M_{\sigma}(v)$ is not obvious in general, and we discuss 
assuming that it exists. It is announced in~\cite{AHLH} that 
$M_{\sigma}(v)$ exists if $\sigma$-semistable objects with 
Chern character $v$ are bounded.} of $S$-equivalence classes of 
$\sigma$-semistable objects on $X$ with Chern character $v$. 
The moduli space $M_{\sigma}(v)$ depends on a choice of a
stability condition
$\sigma$.
In general, 
we have \textit{wall-crossing phenomena}, i.e. 
there is a wall-chamber structure on the space of 
stability conditions such that $M_{\sigma}(v)$
is constant if $\sigma$ lies on a chamber but may change 
if $\sigma$ crosses a wall. 
It is an interesting question how the moduli 
spaces $M_{\sigma}(v)$ vary under wall-crossing of $\sigma$.
More precisely, 
suppose that $\sigma$ lies on a wall
and 
$\sigma^{\pm}$ lie on its adjacent chambers.
Then we 
have the following diagram
(called \textit{wall-crossing diagram})
\begin{align}\label{intro:wall}
\xymatrix{
M_{\sigma^{+}}(v) \ar[rd] & & \ar[ld] M_{\sigma^{-}}(v) \\
&  M_{\sigma}(v).  &
}
\end{align}
If $M_{\sigma^{\pm}}(v)$ are smooth (or singular with mild singularities)
and 
birational, then it makes sense to ask 
whether the diagram (\ref{intro:wall}) is a flip or flop
in birational geometry~\cite{KM}. 
Note that if this happens, 
we have the inequality of canonical 
line bundles of $M_{\sigma^{\pm}}(v)$ (see Definition~\ref{defi:ineqK}):
\begin{align}\label{intro:ineq}
M_{\sigma^{+}}(v) \ge_K M_{\sigma^{-}}(v).
\end{align}
Indeed this is true in some cases, 
and birational geometry of the diagram (\ref{intro:wall}) has 
been especially studied when 
$X$ is an algebraic surface~\cite{MR1355920, MR1362648, MW, MR2729275, 
MR3279532, MR3279532, MR3010070, Todbir}. 
In the above articles, birational geometry 
of the diagram (\ref{intro:wall})
has been 
important in understanding birational geometry
of classical moduli spaces (e.g. Hilbert schemes of points),
or applications to enumerative geometry (e.g. Donaldson invariants). 

However there is a limitation of this research direction. 
In general, the moduli spaces $M_{\sigma^{\pm}}(v)$ can have 
worse singularities than
those which appear in birational geometry~\cite{KM}, 
e.g. terminal singularities, canonical singularities, etc.
In fact by Vakil's Murphy's law~\cite{MR2227692}, 
any singularity can appear on such moduli
spaces, so 
they may not be irreducible, may not be reduced, may not be equidimensional, or so on. 
In such bad cases, it is not even clear what are birational 
maps between them, nor what are their canonical line bundles. 
Moreover even if $M_{\sigma^{\pm}}(v)$ are smooth, they are 
not birational in general. 
So it does not make sense to ask 
whether the 
wall-crossing diagram (\ref{intro:wall}) 
is a flip nor a flop, nor satisfy the inequality (\ref{intro:ineq}). 

If we focus on the case that $X$ is a
Calabi-Yau (CY for short) 3-fold, 
still we have the same issue
as above. 
However in this case, we have additional structures on the 
moduli spaces $M_{\sigma^{\pm}}(v)$ (more precisely on their stable parts) 
called \textit{d-critical structures}. 
This notion was introduced by Joyce~\cite{JoyceD}
in order to give 
classical shadows of $(-1)$-shifted symplectic structures
on derived moduli spaces of stable objects on CY 3-folds~\cite{PTVV}. 
In particular, we have the notion of \textit{virtual canonical line bundles}
on such moduli spaces.

The purpose of this paper is to introduce the notions of 
\textit{d-critical flips, d-critical flops}, etc, 
for morphisms from
d-critical loci 
to schemes or analytic spaces.
They are not birational maps 
of the underlying spaces, but 
rather should be understood as `virtual'
birational maps. 
We then show that, despite of the possible 
bad singularities of $M_{\sigma^{\pm}}(v)$,  
several wall-crossing diagrams (\ref{intro:wall}) 
for a CY 3-fold $X$ fit into these notions of d-critical 
birational transformations. In particular they satisfy 
an analogue of the inequality (\ref{intro:ineq}) for virtual 
canonical line bundles. 

\subsection{D-critical birational geometry}
By definition, a \textit{d-critical locus} introduced by Joyce~\cite{JoyceD}
 consists of data
\begin{align*}
(M, s), \ s \in \Gamma(M, \sS_{M}^0)
\end{align*} where 
$M$ is a $\mathbb{C}$-scheme or an analytic space and 
$\sS_M^0$ is a certain sheaf of $\mathbb{C}$-vector spaces
on $M$
(see Definition~\ref{defi:dcrit}). 
The section $s$ is called a \textit{d-critical structure} of $M$. 
Roughly speaking
if $M$ admits a d-critical structure $s$, 
this means that $M$ is locally written as a critical 
locus of some function on a smooth space, and the section 
$s$ remembers how $M$ is locally written as a critical locus. 

Let $(M^{\pm}, s^{\pm})$
be two d-critical loci and consider a diagram
of morphisms of $\mathbb{C}$-schemes or analytic spaces
\begin{align}\label{intro:dcrit}
\xymatrix{
M^{+} \ar[rd] & & M^{-} \ar[ld] \\
& A. &
}
\end{align}
We introduce the notion 
of a \textit{d-critical flip (resp.~d-critical flop)} to be
 a diagram (\ref{intro:dcrit}) satisfying the following: 
for any
$p \in A$, there is a commutative diagram 
\begin{align*}
\xymatrix{
Y^{+} \ar[rd] 
\ar@/_5pt/[rdd]_-{w^{+}}  \ar@{.>}[rr]^-{\phi}& & Y^{-} \ar[ld] 
\ar@/^5pt/[ldd]^-{w^-}\\
& Z \ar[d]_-{g} &  \\
& \mathbb{C} &
}
\end{align*}
where $ \phi \colon Y^{+} \dashrightarrow Y^{-}$ is a flip (resp.~flop)
of smooth varieties (or complex manifolds), 
such that locally near $p\in A$
there exist isomorphisms 
between 
$M^{\pm}$ and $\{dw^{\pm}=0\}$ as d-critical loci
(see~Definition~\ref{defi:dflip} for details).
Other notions such as a d-critical divisorial contraction, a d-critical 
Mori fiber space (MFS for short), and their generalized version will be also 
defined in a similar way. 
We also introduce the inequality of virtual canonical 
line bundles on d-critical loci, as an analogue of (\ref{intro:ineq})
(see Definition~\ref{defi:dcrit:K}): 
\begin{align}\label{intro:ineq:K}
(M^{+},s^{+}) \ge_K (M^{-}, s^{-}).
\end{align}
For example d-critical flips, d-critical flops 
satisfy (\ref{intro:ineq:K}). 

We remark that a diagram (\ref{intro:dcrit})
being a d-critical flip or a d-critical flop 
does not imply anything on birational geometry of 
$M^{\pm}$ themselves, even when $M^{\pm}$ are smooth. 
Indeed there is an example of a d-critical flip
where $M^{\pm}$ are smooth but their dimensions 
are different (see Example~\ref{exam:toric}), so in particular they are 
not birational. 
Therefore we should interpret 
these notions as `virtual' birational 
maps rather than birational maps 
of the underlying spaces $M^{\pm}$.

\subsection{Wall-crossing in Calabi-Yau 3-folds}
Suppose that $X$ is a smooth projective CY 3-fold, 
and consider a wall-crossing diagram (\ref{intro:wall}). 
If $v$ is primitive, 
then 
$M_{\sigma^{\pm}}(v)$ 
consist of $\sigma^{\pm}$-stable 
objects, and
 $M_{\sigma^{\pm}}(v)$ 
admit canonical d-critical structures 
by~\cite{MR3352237}.
Indeed 
$M_{\sigma^{\pm}}(v)$ are classical truncations of derived schemes 
with $(-1)$-shifted symplectic structures~\cite{PTVV}, and 
the derived Darboux theorem~\cite{MR3352237} for 
$(-1)$-shifted symplectic derived schemes 
yield d-critical structures on them. 

Therefore we can ask whether the diagram (\ref{intro:wall}) is a 
d-critical flip, a d-critical flop, or so on. 
We will answer the above question via `analytic neighborhood theorem'
given in Theorem~\ref{thm:Equiver}. 
This theorem describes the diagram (\ref{intro:wall}) 
analytic locally on $M_{\sigma}(v)$ in 
terms of a wall-crossing diagram of moduli spaces of representations 
of a certain quiver with a (formal but convergent) super-potential. 
A similar result was already proved in~\cite{Todstack}
for moduli spaces of semistable sheaves, and 
we will see that 
the argument can be generalized to our setting, 
assuming the 
existence of good moduli spaces of Bridgeland semistable objects.
The latter existence problem is recently announced to be settled 
by Alper-Halpern-Leistner-Heinloth~\cite{AHLH}. 
The analytic neighborhood theorem
 reduces the above question on the diagram (\ref{intro:wall})
to study birational maps of 
moduli spaces of representations of some quivers without 
relations. 

Using the analytic neighborhood theorem, 
we study d-critical birational geometry of  
wall-crossing diagrams (\ref{intro:wall}) which
appeared in the context of enumerative geometry on
CY 3-folds, e.g. Donaldson-Thomas (DT) invariants~\cite{Thom}, 
Pandharipande-Thomas (PT) invariants~\cite{PT} and also
Gopakumar-Vafa (GV) invariants~\cite{MT}. 
The results are summarized below: 
 
\begin{thm}\emph{(Theorem~\ref{thm:dflop:one}, Theorem~\ref{thm:starflip})}\label{thm:intro:main}
\begin{enumerate}
\item 
In the case of wall-crossing of one dimensional stable sheaves, 
the diagram (\ref{intro:wall}) is a d-critical generalized flop. 

\item In the case of wall-crossing of PT stable 
pair moduli spaces, the 
diagram (\ref{intro:wall}) 
is a d-critical generalized flip 
at any point in $\Imm \pi^-$, 
a d-critical MFS
at any point in $M_{\sigma}(v) \setminus \Imm \pi^-$. 
\end{enumerate}
\end{thm}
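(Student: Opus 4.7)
The plan is to deduce both statements from the analytic neighborhood theorem (Theorem~\ref{thm:Equiver}), which for each point $p \in M_\sigma(v)$ produces a quiver $Q_p$ with a convergent super-potential $g_p$ and characters $\zeta^\pm$ such that, locally near the fiber over $p$, the wall-crossing diagram (\ref{intro:wall}) is analytically isomorphic as a diagram of d-critical loci to $\{dw^+ = 0\} \to Z \leftarrow \{dw^- = 0\}$, where $Y^\pm$ is the smooth GIT quotient of $\zeta^\pm$-stable $Q_p$-representations, $w^\pm = g_p|_{Y^\pm}$, and $Z$ is the affine GIT quotient. Consequently, verifying that (\ref{intro:wall}) is a d-critical generalized flop/flip/MFS reduces to a standard VGIT analysis of the smooth quiver moduli $Y^\pm$ paired with the super-potential, to which the definitions of d-critical birational geometry apply directly.

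For case (i), the Jordan--H\"older factors of a strictly $\sigma$-semistable one-dimensional sheaf are themselves $\sigma$-stable one-dimensional sheaves, so Serre duality on the CY 3-fold $X$ forces the local Ext-quiver $Q_p$ to be symmetric, i.e.\ $\dim \Ext^1(F_i, F_j) = \dim \Ext^1(F_j, F_i)$. The VGIT wall associated with $\zeta$ is therefore symmetric, and the induced birational map $Y^+ \dashrightarrow Y^-$ is a generalized flop of smooth quiver GIT quotients; in particular $K_{Y^+}$ and $K_{Y^-}$ pull back to the same class on a common resolution. Passing to $\{dw^\pm=0\}$ via Theorem~\ref{thm:Equiver} yields the d-critical generalized flop statement. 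For case (ii) the relevant quiver $Q_p$ has an additional framing vertex $\infty$ of dimension one, corresponding to the subsheaf $\oO_X$ in the PT pair, alongside vertices for the one-dimensional $\sigma$-stable constituents. Ext-groups between the pure factors remain CY-3 symmetric, but those involving $\infty$ are fundamentally asymmetric, so the characters $\zeta^\pm$ are genuinely distinct on the framed side. If $p \in \Imm \pi^-$, both $\zeta^\pm$-stable loci in the local model are non-empty, the VGIT $Y^+ \dashrightarrow Y^-$ is a generalized flip with $K_{Y^+}-K_{Y^-}$ effective and exceptional, and this d-critically upgrades to a generalized flip. If $p \notin \Imm \pi^-$, the $\zeta^-$-stable locus in the local model is empty, while the $\zeta^+$-semistable locus fibers over $Z$ in a Grassmannian- or projective-bundle-type contraction, giving a relative Mori fiber space $Y^+ \to Z$ which upgrades to the d-critical MFS statement.

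The hard part is the detailed stability and positivity analysis on the quiver side. In particular for (ii) one must determine, for each local dimension vector arising from a strictly $\sigma$-semistable PT pair, precisely when $\zeta^-$-stable representations exist: this is a combinatorial question about the asymmetry of Ext-groups involving the framing vertex, and it is what separates the flip region $\Imm \pi^-$ from the MFS region $M_\sigma(v) \setminus \Imm \pi^-$. Controlling $K_{Y^+}-K_{Y^-}$ with the correct sign in the flip case, and confirming that the Mori contraction is $K$-negative on the fibers, require explicit character computations on $Y^\pm$ and form the main technical content beyond the formal reduction provided by Theorem~\ref{thm:Equiver}.
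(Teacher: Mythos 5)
Your proposal follows essentially the same route as the paper: reduce via the analytic neighborhood theorem (Theorem~\ref{thm:Equiver} and its framed variant Theorem~\ref{thm:Equiver2}) to a VGIT problem for the Ext-quiver of the polystable object equipped with its convergent super-potential, then exploit the symmetry of the Ext-quiver of one-dimensional stable sheaves for (i) and the asymmetry of the framed (extended) quiver for (ii). The quiver-side analysis you defer as ``the hard part'' is precisely the content of Section~\ref{sec:repsym}: for (i) the triviality of $K_{Y^{\pm}}$ plus the smallness of $q_Q^{\xi^{\pm}}$ and the GIT character argument (Proposition~\ref{prop:sflop}), and for (ii) the key Riemann--Roch inequality $a_i>b_i$ forced by $\chi(\oO_X,F_i)=\chi(F_i)<0$ at a wall $t>0$, the global (anti-)ampleness of $K_{Y^{\pm}}$ via global generation of the universal bundles (Lemma~\ref{lem:ample}), and the emptiness criterion for the $\xi^{-}$-stable locus via Theorem~\ref{thm:criterion} (Proposition~\ref{prop:Mflip}).
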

The wall-crossing diagrams
in the above cases have applications to enumerative geometry. 
The wall-crossing diagrams of one dimensional stable sheaves (i)
are used in~\cite{TodGV} to show 
that GV invariants defined in~\cite{MT} 
are independent of 
Bridgeland stability conditions,
and also invariant under flops. 
The wall-crossing diagrams of stable pair moduli 
spaces (ii) are used in~\cite{BrH, Tolim, Tolim2, Tsurvey} 
(also in~\cite{MR2888981} for local curve case) to show the 
rationality conjecture of the generating series of 
PT invariants. 
In this case, we have wall-crossing 
diagrams 
which relate PT invariants 
and L invariants 
in \textit{loc.cit.}, 
and Theorem~\ref{thm:intro:main} (ii) shows that they
form a d-critical MMP (see Corollary~\ref{cor:zigzag}). 

As a summary, Theorem~\ref{thm:intro:main}
gives an interpretation of wall-crossing
diagrams in CY 3-folds
relevant in enumerative geometry 
in terms of d-critical birational geometry. 
In Appendix~\ref{sec:other}, we also 
discuss some other examples of 
wall-crossing diagrams in CY 3-folds in terms
of d-critical birational geometry, 
DT/PT correspondence, local K3 surfaces
(see~Theorem~\ref{thm:DT/PT}, Theorem~\ref{thm:locK3}). 
They also have applications to 
enumerative geometry~\cite{BrH, Tcurve1, TodK3}. 
\subsection{Speculation toward d-critical D/K equivalence conjecture}
Let $Y^{+} \dashrightarrow Y^{-}$ be a birational map 
between smooth projective varieties 
satisfying the relation $Y^{+} \ge_K Y^{-}$. 
Then by Bondal-Orlov~\cite{B-O2} and Kawamata~\cite{Ka1}, 
it is conjectured that there exists a fully faithful functor
of derived categories of coherent sheaves
\begin{align*}
D^b(Y^{-}) \hookrightarrow D^b(Y^{+})
\end{align*}
which is an equivalence if $Y^{+}=_K Y^{-}$. 
We call the above conjecture as \textit{D/K equivalence conjecture}. 

We expect a similar conjecture may hold 
for d-critical 
loci, or $(-1)$-shifted symplectic 
derived schemes. 
 Namely 
for a d-critical locus $(M, s)$ 
(probably induced by a $(-1)$-shifted symplectic
derived scheme
equipped 
with additional data), 
there may exist a certain 
triangulated category $\dD(M, s)$ such that, 
if the relation (\ref{intro:ineq:K}) holds, we have 
a fully faithful functor
\begin{align}\label{intro:shifted:emb}
\dD(M^{-}, s^{-}) \hookrightarrow 
\dD(M^{+}, s^{+})
\end{align}
which is an equivalence if (\ref{intro:ineq:K}) is an equality. 
The category $\dD(M^{-}, s^{-})$ may be 
constructed as a gluing of 
$\mathbb{Z}/2\mathbb{Z}$-periodic triangulated categories of 
matrix factorizations
defined locally on each d-critical chart, 
though its construction seems to be a hard 
problem at this moment 
(see~\cite[(J)]{Jslide}, \cite[Section~6.1]{MR3728637}). 
If it exists, the category $\dD(M, s)$
may be interpreted as a kind of `Fukaya category'
of d-critical loci, 
or $(-1)$-shifted symplectic derived schemes
(see~\cite[Conjecture~1.2]{JoySaf}). 
Moreover we expect that the 
numerical realization of 
semi-orthogonal 
complement of the embedding (\ref{intro:shifted:emb})
recovers wall-crossing formula 
of Donaldson-Thomas (DT)
invariants on CY 3-folds established in~\cite{JS, K-S}. 
Thus our d-critical birational geometry 
gives a link of two research subjects 
developed independently, wall-crossing formula of 
DT invariants and D/K equivalence conjecture. 

If $M^{\pm}$ are smooth, 
so in particular $s^{\pm}=0$, we can use 
usual derived categories of coherent sheaves $D^b(M^{\pm})$
to ask 
an analogue of the above question. 
In~\cite{TodDK}, we address this question in the case of 
simple wall-crossing diagrams 
of stable pair moduli spaces.

\begin{figure}\label{fig:intro}
\begin{align*}
\xymatrix{
&  \ovalbox{\mbox{Wall-crossing in DT theory}}  
\ar[dd]^-{\mbox{categorification}} \\
\ovalbox{\mbox{D-critical birational geometry}} 
\ar[ru]^-{\begin{array}{ll}\mbox{geometric} \\
\mbox{interpretation}
\end{array}} \ar[rd]_-{\mbox{analogy}} &  \\
& \ovalbox{\mbox{D/K equivalence conjecture}}
}
\end{align*}
\end{figure}

\subsection{Outline of the paper}
The outline of this paper is as follows. 
In Section~\ref{sec:birational}, we
review basic terminology of 
birational geometry. 
In Section~\ref{sec:dbir}, we recall 
Joyce's d-critical loci
and introduce notions of d-critical 
birational transformations. 
In Section~\ref{sec:moduliCY3}, 
we introduce moduli spaces of 
semistable objects on CY 3-folds 
and formulate the 
question on their wall-crossing diagrams. 
In Section~\ref{sec:quiver}, we set 
notation of moduli spaces of representations 
of quivers with convergent super-potentials. 
In Section~\ref{sec:anatheorem}, we state
analytic neighborhood theorem 
for wall-crossing diagrams in CY 3-folds, and 
give an outline of the proof. 
In Section~\ref{sec:repsym}, we investigate 
wall-crossing phenomena in symmetric 
and extended quivers. 
In Section~\ref{sec:onedim}, we describe 
wall-crossing diagrams of one dimensional stable 
sheaves on CY 3-folds in terms of d-critical flops. 
In Section~\ref{sec:wc:mmp}, we describe 
wall-crossing diagrams of stable pair 
moduli spaces in terms of d-critical flips. 
In Appendix~\ref{sec:Bridgeland}, we review basics on 
Bridgeland stability conditions. 
In Appendix~\ref{sec:other}, we give 
some more examples of wall-crossing in CY 3-folds, 
and describe them in terms of
d-critical birational geometry. 
In Appendix~\ref{sec:append}, we recall how wall-crossing 
diagrams in this paper have been relevant in 
the study of Donaldson-Thomas invariants.

\subsection{Acknowledgements}
The author is grateful to Daniel 
Halpern-Leistner
for explaining the announced work~\cite{AHLH}
on the existence of good moduli spaces for 
Bridgeland semistable objects. 
The author is also grateful to
Chen Jiang and Dominic Joyce 
for valuable discussions. 
The author is supported by World Premier International Research Center
Initiative (WPI initiative), MEXT, Japan, and Grant-in Aid for Scientific
Research grant (No. 26287002) from MEXT, Japan.

\subsection{Notation and convention}
In this paper, all the varieties and schemes are 
defined over $\mathbb{C}$. 
For a smooth variety or a complex manifold $M$, 
we denote by $K_M$ its canonical divisor 
and $\omega_M=\oO_M(K_M)$ its canonical line bundle. 
For 
a smooth projective variety $X$ and 
$\beta, \beta' \in H_2(X, \mathbb{Z})$, we 
write $\beta> \beta'$ if 
$\beta-\beta'$ is a class of an effective one cycle 
on $X$. 
For a projective morphism $f \colon Y \to Z$ of varieties, we denote by 
$\rho(Y/Z)$ its relative Picard number. 
When $f$ is birational, its exceptional locus is 
denoted by $\mathrm{Ex}(f)$. 
For a scheme $M$, we denote by 
$D^b(M) \cneq D^b(\Coh(M))$ the bounded derived category 
of coherent sheaves on $M$. 

\section{Review of birational geometry}\label{sec:birational}
In this section, we review some basics on 
birational geometry and recall several terminologies. 
A standard reference is~\cite{KM}. 
\subsection{Terminology from birational geometry}
Let $Y$ be a projective variety with at worst terminal singularities
(e.g. $Y$ is smooth).
A \textit{minimal model program (MMP for short)}
 of $Y$ is a sequence of birational maps
\begin{align}\label{MMP}
Y=Y_1 \dashrightarrow Y_2 \dashrightarrow \cdots \dashrightarrow
Y_{N-1} \dashrightarrow
Y_N
\end{align}
satisfying the following: 
\begin{enumerate}
\item each $Y_i$ is a projective variety with 
at worst terminal singularities. 
\item each birational map 
$Y_i \dashrightarrow Y_{i+1}$ is either a divisorial 
contraction or a flip. 
\item $Y_N$ is either a minimal model, i.e. 
$K_{Y_N}$ is nef, 
or has a Mori fiber space structure $Y_N \to Z$. 
The former occurs if and only if the Kodaira dimension of $Y$ is non-negative. 
\end{enumerate}

Here a line bundle $L$ on a variety $Y$ is called \textit{nef}
if for any projective curve $C \subset Y$, we have 
$\deg(L|_{C}) \ge 0$. A Cartier divisor 
$D$ on $Y$ is also called \textit{nef} if the associated line bundle 
$\oO_Y(D)$ is nef. 
The minimal model $Y_N$ is not necessary unique, 
but two birational minimal models 
are known to be connected by a sequence of flops~\cite{Kawaflo}. 

The above notions in birational geometry 
are summarized in the 
following definitions. In this paper we only treat the case of smooth 
varieties, but the definitions are the same 
for varieties with terminal singularities.

\begin{defi}\label{defi:contraction}
Let $Y$ be a smooth variety (resp.~complex manifold)
and 
$f \colon Y \to Z$ a projective morphism
of varieties (resp.~analytic spaces)
with $\rho(Y/Z)=1$
and $f_{\ast}\oO_Y=\oO_Z$.  
Then $f$ is called 
\begin{enumerate}
\item \textit{divisorial contraction} 
if $\dim Y=\dim Z$
(i.e. $f$ is birational or bimeromorphic), 
$-K_Y$ is $f$-ample and 
$\Ex(f)$ is a divisor. 
\item \textit{(anti) flipping contraction} if
$\dim Y=\dim Z$, 
$-K_Y$ (resp.~$K_Y$) is $f$-ample and 
$f$ is isomorphic in codimension one. 
\item \textit{flopping contraction} 
if $\dim Y=\dim Z$, crepant (i.e. 
$K_Y \cdot C=0$ for any curve $C \subset Y$ 
such that $f(C)$ is a point)
and $f$ is isomorphic in codimension one. 
\item \textit{Mori fiber space (MFS for short)} if $\dim Z<\dim Y$
and $-K_Y$ is $f$-ample. 
\end{enumerate}
\end{defi}
We can formulate the relevant 
birational transformations using the diagram (\ref{dia:bir})
below in a unified way: 
\begin{defi}\label{defi:dia}
Let $Y^{+}$, $Y^{-}$ be smooth varieties (or complex manifolds), 
and 
consider a diagram
\begin{align}\label{dia:bir}
\xymatrix{
Y^{+} \ar[rd]_-{f^{+}} &  &  Y^{-} \ar[ld]^-{f^{-}} \\
& Z &
}
\end{align}
where $f^+, f^{-}$ are projective morphisms
of varieties (resp.~analytic spaces)
satisfying $f^{\pm}_{\ast}\oO_{Y^{\pm}}=\oO_Z$
if $Y^{\pm} \neq \emptyset$. 
Then the diagram (\ref{dia:bir}) is called 
\begin{enumerate}
\item \textit{divisorial contraction}
 if $f^{+}$ is a divisorial contraction and 
$f^{-}$ is an isomorphism. 
\item \textit{flip} if $f^{+}$ is a flipping contraction, 
and $f^{-}$ is an anti flipping contraction. 
\item \textit{flop} if $f^{+}$, $f^{-}$ are flopping contraction
and the birational map $Y^{+} \dashrightarrow Y^{-}$ is not an isomorphism. 
\item \textit{MFS} if $f^{+}$ is a MFS and 
$Y^{-}=\emptyset$. 
\end{enumerate}
\end{defi}

\subsection{Generalized flips, flops and MFS}
We also use the following generalized terminology, 
without assuming the condition of relative Picard numbers etc: 
\begin{defi}\label{defi:generalized}
In the situation of Definition~\ref{defi:dia}, 
we call a
 diagram (\ref{dia:bir}) 
\begin{enumerate}
\item \textit{generalized flip} if $f^{\pm}$ are  
birational (or bimeromorphic) morphisms, 
$-K_{Y^{+}}$ is $f^{+}$-ample and 
$K_{Y^{-}}$ is $f^{-}$-ample. 
\item \textit{generalized flop} if $f^{\pm}$ are crepant birational (or bimeromorphic)
morphisms, isomorphisms in codimension one, 
and 
there exists a 
$f^+$-ample divisor on $Y^+$
whose strict transform to $Y^-$ is 
$f^-$-anti-ample.  
\item \textit{generalized MFS}
 if $-K_{Y^{+}}$ is $f^{+}$-ample and $Y^{-}=\emptyset$. 
\end{enumerate}
\end{defi}

\begin{rmk}\label{rmk:codim}
In the definition of generalized flip, we 
don't assume that $f^{+}$ is isomorphic in codimension one. 
For example, a divisorial contraction is a generalized flip. 
On the other hand, the morphism $f^{-}$ for a generalized flip
is always isomorphic in codimension 
one by~\cite[Lemma~3.38]{KM}\footnote{The reference of this fact was
pointed out to the author by Chen Jiang.}. 
\end{rmk}

\begin{rmk}\label{rmk:isom}
The conditions (i) and (ii) 
in Definition~\ref{defi:generalized} are not complementary conditions. 
Indeed a diagram (\ref{dia:bir}) is both of 
generalized flip and generalized flop if and only if 
$f^{+}$ and $f^-$ are isomorphisms. 
\end{rmk}

\begin{rmk}\label{rmk:genMFS}
In the definition of generalized MFS, 
we don't assume that 
$\dim Z<\dim Y^+$, 
so $f^+$ can be birational. 
Indeed such a case may happen in wall-crossing 
of stable pair moduli spaces in 
Section~\ref{sec:wc:mmp} (see Remark~\ref{rmk:dMFS}). 
\end{rmk}

\begin{rmk}\label{rmk:MMP}
If the diagram (\ref{dia:bir}) is a generalized flip, 
by applying MMP relative to $Z$ proved in~\cite{BCHM}, 
the birational map $Y^{+} \dashrightarrow Y^-$ decomposes into 
divisorial contractions and flips over $Z$
(though the intermediate varieties may have terminal 
singularities).  
Similarly a generalized flop decomposes into 
flops over $Z$ by~\cite{Kawaflo}. 
\end{rmk}

\subsection{Inequalities of canonical divisors}
Following Kawamata (for example see~\cite{KawBir}), we introduce the 
inequalities of canonical divisors (with a slight modification 
allowing empty sets): 
\begin{defi}\label{defi:ineqK}
In the situation of Definition~\ref{defi:dia}, 
we write 
\begin{enumerate}
\item 
$Y^{+}>_{K} Y^{-}$ if either 
$Y^{-}=\emptyset$ (while $Y^+ \neq \emptyset)$, 
or there is a commutative diagram
\begin{align}\label{com:resol}
\xymatrix{
& W \ar[ld]_-{g^{+}} \ar[rd]^-{g^{-}}  & \\
Y^{+}\ar[rd]_-{f^{+}} & & Y^{-} \ar[ld]^-{f^{-}} \\
& Z & 
}
\end{align}
 such that $g^{\pm}$ are birational and 
$(g^{+})^{\ast}K_{Y^{+}}-(g^{-})^{\ast}K_{Y^{-}}$ is 
linearly equivalent to an 
effective divisor
on $W$. 
\item $Y^{+}=_K Y^{-}$ if either 
$Y^{\pm}=\emptyset$ or there is a commutative 
diagram (\ref{com:resol}) 
for birational maps $g^{\pm}$ such that 
$(g^{+})^{\ast}K_{Y^{+}}$ and $(g^{-})^{\ast}K_{Y^{-}}$
are linearly equivalent. 
\end{enumerate} 
\end{defi}
The canonical divisors decrease
by divisorial contractions and flips,  
while flops keep them 
(see~\cite[Lemma~3.38]{KM}).
Therefore MMP is interpreted as a process 
decreasing 
the canonical divisors, i.e. 
for a MMP (\ref{MMP}), we have the inequalities of 
canonical divisors
\begin{align*}
Y=Y_1 >_K Y_2 >_K \cdots >_K Y_{N-1} >_K Y_N.
\end{align*}
Moreover we have $Y_{N}=_K Y_{N'}'$ if 
$Y_{N'}'$ is another birational minimal model of $Y$.  
By Remark~\ref{rmk:MMP} (or by using~\cite[Lemma~3.38]{KM}), 
if the diagram (\ref{dia:bir}) is a generalized flip 
(resp.~generalized flop), we have 
\begin{align*}
Y^{+}>_K Y^-, \ (\mbox{resp.}~Y^{+}=_K Y^-). 
\end{align*}

\section{Birational transformations for d-critical loci}\label{sec:dbir}
The notion of d-critical loci was introduced by Joyce~\cite{JoyceD}, 
as a classical shadow of $(-1)$-shifted symplectic derived schemes~\cite{PTVV}. 
In this section, we recall its definition and introduce 
analogue of birational transformations in the previous section 
for d-critical loci. 

\subsection{D-critical locus}
Let $M$ be a complex scheme (resp.~complex analytic space). 
In~\cite{JoyceD}, it 
is proved 
that 
there exists a canonical sheaf of 
$\mathbb{C}$-vector spaces 
$\sS_{M}$ on $M$
satisfying the following
property:
for any Zariski (resp.~analytic) 
open subset $U \subset M$
and a closed embedding 
$i \colon U \hookrightarrow Y$
into a smooth scheme (resp.~complex manifold) $Y$, 
there is an exact sequence
\begin{align}\label{S:property}
0 \longrightarrow \sS_{M}|_{U}
 \longrightarrow \oO_Y/I^2 \stackrel{d_{\rm{DR}}}{\longrightarrow}
\Omega_Y/I \cdot \Omega_Y. 
\end{align}
Here $I \subset \oO_Y$ is the ideal sheaf
which defines $U$
and $d_{\rm{DR}}$ is the de-Rham differential. 
Moreover there is a natural decomposition 
\begin{align*}
\sS_M=\sS_M^0 \oplus \mathbb{C}_M
\end{align*}
where $\mathbb{C}_M$ is the constant sheaf
on $M$. 
The sheaf $\sS_M^{0}$ restricted to $U$ is 
the kernel of the 
composition 
\begin{align*}
\sS_M|_{U} \hookrightarrow \oO_Y/I^2 \twoheadrightarrow \oO_{U^{\rm{red}}}. 
\end{align*}
For example, 
suppose that  
$w \colon Y \to \mathbb{C}$ is
an algebraic (resp.~holomorphic) function such that 
\begin{align}\label{R=df}
U=\{dw=0\}, \ 
w|_{U^{\rm{red}}}=0.
\end{align}
Then $I=(dw)$ and 
$w+(dw)^2$ is an element of
$\Gamma(U, \sS_{M}^0|_{U})$. 

\begin{defi}(\cite{JoyceD})\label{defi:dcrit}
A pair $(M, s)$
for a complex scheme (resp.~analytic space) $M$
and $s \in \Gamma(M, \sS_M^0)$
is called \textit{algebraic (resp.~analytic) d-critical 
locus} 
if for any 
$x \in M$, there exist a Zariski (resp.~analytic) open 
neighborhood $x \in U \subset M$,  
a closed embedding $i \colon U \hookrightarrow Y$
into a smooth scheme (resp.~complex manifold) $Y$, 
an element $w \in \Gamma(\oO_Y)$ 
satisfying (\ref{R=df})
such that 
$s|_{U}=w+(dw)^2$
holds. 
In this case, 
the data
\begin{align}\label{crit:chart}
(U, Y, w, i)
\end{align}
is called a \textit{d-critical chart}.
The section $s$ is called a \textit{d-critical 
structure} of $M$.  
\end{defi} 

\begin{rmk}\label{rmk:smooth}
If $M$ is smooth, then $\sS_M^0=0$ so there is a 
unique (trivial) choice of its d-critical structure, $s=0$. 
\end{rmk}

Given a d-critical locus $(M, s)$, there exists a line bundle
$\omega_{M, s}$ on $M^{\rm{red}}$
called \textit{virtual canonical line bundle}\footnote{In~\cite[Section~2.4]{JoyceD}, it was just called canonical line bundle. We put `virtual' in order to 
distinguish with the usual canonical line bundle.}
(see~\cite[Section~2.4]{JoyceD}). 
It satisfies that, 
for any
d-critical chart (\ref{crit:chart})
there is a natural isomorphism 
\begin{align}\label{nat:K0}
\omega_{M, s}|_{U^{\rm{red}}} \stackrel{\cong}{\to} 
\omega_Y^{\otimes 2}|_{U^{\rm{red}}}. 
\end{align}

\begin{rmk}\label{rmk:shifted}
For a derived scheme $M^{\rm{der}}$ with a $(-1)$-shifted 
symplectic structure~\cite{PTVV}, its classical 
truncation $M$ carries a canonical d-critical structure by~\cite{MR3352237}. 
In this case, the virtual canonical line bundle of $M$
is the determinant of  the cotangent complex of $M^{\rm{der}}$ restricted to $M$. 
\end{rmk}

We introduce the following relative version of 
d-critical chart: 
\begin{defi}\label{defi:relchart}
Let $(M, s)$ be an algebraic (resp.~analytic)
d-critical locus and 
$\pi \colon 
M \to A$ a morphism of schemes (resp.~analytic spaces). 
For a Zariski (resp.~analytic) open subset $U \subset A$, suppose that 
there is a following commutative diagram
\begin{align}\label{relchart}
\xymatrix{
\pi^{-1}(U) \ar[d]^-{\pi}
 \ar@<-0.3ex>@{^{(}->}[r]^-{i} & Y \ar[d]^-{f} \ar[rd]^-{w} \\
U \ar@<-0.3ex>@{^{(}->}[r]_-{j} & Z \ar[r]_-{g} & \mathbb{C}
}
\end{align}
where $f \colon Y \to Z$ is a morphism of schemes (resp. analytic spaces), 
$Y$ is smooth, $i$ and $j$ are closed immersions, $g \in \Gamma(\oO_Z)$
 such that the data
 \begin{align*}
 (\pi^{-1}(U), Y, w, i)
 \end{align*}
  is a d-critical chart. 
In this case, we call the diagram (\ref{relchart}) as a
\textit{$\pi$-relative d-critical chart}.  
\end{defi}

\subsection{D-critical birational transformations}
We formulate the terminology of 
birational contractions for d-critical loci, using 
relative d-critical charts in Definition~\ref{defi:relchart}: 
\begin{defi}\label{defi:dbir}
Let $(M, s)$ be an algebraic (resp.~analytic)
d-critical locus and 
\begin{align}\label{mor:MA}\pi \colon 
M \to A
\end{align} 
a morphism of schemes (resp.~analytic spaces). 
We
 call the morphism (\ref{mor:MA}) an \textit{algebraic (resp.~analytic)
d-critical divisorial contraction, d-critical 
(anti) flipping contraction, d-critical 
flopping contraction
at a point $p \in A$} if there exist a Zariski (resp.~analytic) open neighborhood 
$p \in U \subset A$, a $\pi$-relative d-critical chart (\ref{relchart})
such that $f \colon Y \to Z$ is a divisorial contraction, 
(anti) flipping contraction, flopping contraction, MFS,
as in Definition~\ref{defi:contraction}
respectively. 

We call the morphism (\ref{mor:MA}) an 
\textit{algebraic (resp.~analytic)
d-critical 
divisorial contraction, d-critical 
(anti) flipping contraction, d-critical 
flopping contraction, d-critical MFS},
if 
the above corresponding 
condition holds for any $p \in A$.
\end{defi}

A d-critical birational contraction need not to be birational between 
underlying spaces. Indeed, we have the following example: 
\begin{exam}\label{exam:dcont}
Let $U^{\pm}$ be the following affine schemes with 
d-critical structures $s^{\pm}$
\begin{align*}
U^{\pm} \cneq \Spec \mathbb{C}[x, y^{\pm}]/(xy^{\pm}, y^{\pm 2}),  \
s^{\pm}=xy^{\pm 2}+(d(xy^{\pm 2}))^2. 
\end{align*}
By gluing 
$U^+$
and $U^-$ at the smooth open subset
 $\Spec \mathbb{C}[x, x^{-1}]$, 
 we obtain an algebraic 
 d-critical locus 
\begin{align*}
(M, s), \ M=U^+ \cup U^-, \ 
s|_{U^{\pm}}=s^{\pm}.
\end{align*}
 Note that $M^{\rm{red}}=\mathbb{P}^1$, and $M$ is non-reduced at 
 the points $\{0\}$ and $\{\infty\}$. 
The structure morphism 
\begin{align*}
\pi \colon M \to \Spec \mathbb{C}
\end{align*}
 is an algebraic 
d-critical divisorial contraction, though they are not birational in the usual sense. 
Indeed we have the following $\pi$-relative d-critical chart
\begin{align}\notag
\xymatrix{
M \ar[d]^-{\pi}
 \ar@<-0.3ex>@{^{(}->}[r]^-{i} & \widehat{\mathbb{C}}^2 \ar[d]^-{f} \ar[rd]^-{w} \\
\Spec \mathbb{C} \ar@<-0.3ex>@{^{(}->}[r]_-{j} & \mathbb{C}^2 \ar[r]_-{g} & \mathbb{C}
}
\end{align}
where $f$ is the blow-up at $0 \in \mathbb{C}^2$
and $g$ is the function $g(u, v)=uv$. 
\end{exam}

We also formulate a d-critical version of 
birational transformations below: 

\begin{defi}\label{defi:dflip}
Let $(M^{+}, s^{+})$, $(M^{-}, s^{-})$ be 
algebraic (resp.~analytic) d-critical loci and consider 
a diagram 
\begin{align}\label{dia:dbir}
\xymatrix{
M^{+} \ar[rd]_-{\pi^{+}} &  &  M^{-} \ar[ld]^-{\pi^{-}} \\
& A &
}
\end{align}
where $\pi^{\pm}$ are morphisms of schemes (resp.~analytic spaces). 
Then we call the diagram (\ref{dia:dbir}) 
an \textit{algebraic (resp.~analytic) 
d-critical divisorial contraction, 
d-critical (generalized) flip, 
d-critical (generalized) flop, d-critical (generalized) MFS,
at $p \in A$} if there exist a Zariski (resp.~analytic) open neighborhood $p\in U \subset A$
and 
$\pi^{\pm}$-relative d-critical charts 
\begin{align}\label{relchart2}
\xymatrix{
(\pi^{\pm})^{-1}(U) \ar[d]^-{\pi^{\pm}}
 \ar@<-0.3ex>@{^{(}->}[r]^-{i^{\pm}} 
& Y^{\pm} \ar[d]^-{f^{\pm}} \ar[rd]^-{w^{\pm}} \\
U \ar@<-0.3ex>@{^{(}->}[r]_-{j} & Z \ar[r]_-{g} & \mathbb{C}
}
\end{align}
where 
$g \in \Gamma(\oO_Z)$ 
and $j$ are
independent of $\pm$, 
such that the diagram 
\begin{align}\label{dia:YZ}
Y^{+} \stackrel{f^{+}}{\to} Z \stackrel{f^{-}}{\leftarrow} Y^{-}
\end{align}
 is 
a divisorial contraction, (generalized) flip, (generalized) flop,
(generalized) MFS, as in Definition~\ref{defi:dia}, 
Definition~\ref{defi:generalized}
respectively. 

We call the diagram (\ref{dia:dbir}) 
an \textit{algebraic (resp.~analytic)
d-critical divisorial contraction, d-critical (generalized) flip, 
d-critical (generalized) flop, d-critical MFS}, 
respectively, if the above corresponding condition holds for any 
$p \in A$. 
\end{defi}

Here we give some examples of 
d-critical flips, d-critical flops: 
\begin{exam}\label{exam:toric}
Let $V^{+}$, $V^-$ be finite dimensional $\mathbb{C}$-vector spaces 
with dimension $a$, $b$ on which 
$\mathbb{C}^{\ast}$ acts by weight $1$, $-1$ respectively. 
We denote by 
\begin{align*}
\vec{x}=(x_1, \ldots, x_a), \ 
\vec{y}=(y_1, \ldots, y_{b})
\end{align*}
coordinates of $V^+$, $V^-$ respectively. 
For $c \in \mathbb{Z}_{\ge 0}$, let 
$U=\mathbb{C}^c$ (resp.~an analytic open neighborhood 
$0 \in U \subset \mathbb{C}^c)$
with a trivial $\mathbb{C}^{\ast}$-action. 
By taking GIT quotients of 
$V^+ \times V^{-} \times U$ by the 
$\mathbb{C}^{\ast}$-action with respect to the 
character $\pm \id \colon \mathbb{C}^{\ast} \to \mathbb{C}^{\ast}$, 
we obtain
\begin{align*}
&Y^+ \cneq \mathrm{Tot}_{\mathbb{P}(V^+)}(\oO_{\mathbb{P}(V^+)}(-1) \otimes V^-) \times U \\
&Y^- \cneq \mathrm{Tot}_{\mathbb{P}(V^-)}(\oO_{\mathbb{P}(V^-)}(-1) \otimes V^+) \times U.
\end{align*}
Then by setting
\begin{align*}
Z \cneq \Spec \mathbb{C}[x_i y_j : 1\le i \le a, 1\le j\le b] \times U
\end{align*}
we obtain the diagram
\begin{align}\label{dia:toricflip}
Y^{+} \stackrel{f^+}{\to} Z \stackrel{f^-}{\leftarrow} Y^-
\end{align}
which is a standard toric flip if 
$a>b\ge 2$, 
standard toric flop if $a=b \ge 2$
(see~\cite{Rei92}). 
Let us consider $w \in \Gamma(\oO_Z)$ of the form
\begin{align*}
g=\sum_{i=1}^a \sum_{j=1}^b w_{ij}(\vec{u}) x_i y_j, \ 
w_{ij}(\vec{u}) \in \Gamma(\oO_U). 
\end{align*}
We set $w^{\pm}$ by the commutative diagram
\begin{align*}
\xymatrix{
Y^{+} \ar[r]^-{f^{+}} \ar[rd]_-{w^+} 
& Z \ar[d]^-{g} &
\ar[l]_-{f^-} Y^- \ar[ld]^-{w^-}\\
& \mathbb{C}. &
}
\end{align*}
If we have $c \gg 0$ compared to $a, b$, and 
$w_{ij}(\vec{u})$ are sufficiently general, 
then the critical loci
\begin{align}\label{M:pm}
M^{\pm} \cneq \{dw^{\pm}=0\} \subset Y^{\pm}
\end{align}
are smooth of dimension $\pm (a-b)+c-1$. 
Moreover, $f^{\pm}(M^{\pm})$ are contained in 
$\{0\} \times U$. Therefore the diagram
\begin{align}\label{dia:ab2}
M^+ \stackrel{\pi^+}{\to} U \stackrel{\pi^-}{\leftarrow} M^-
\end{align}
is an algebraic (resp.~analytic) d-critical flip if $a>b\ge 2$, 
an algebraic (resp.~analytic) d-critical flop if $a=b \ge 2$. 
Here as $M^{\pm}$ are smooth, the d-critical 
structures $s^{\pm}$ on $M^{\pm}$ must be zero. 
Note that in the former case, the dimensions 
of $M^{\pm}$ are different. 
The fibers of $\pi^{\pm}$ at $u \in U$ are linear subspaces in 
$\mathbb{P}(V^{\pm})$, whose dimensions depend on $u$. 
\end{exam}

Here is an example of analytic d-critical flips, flops 
for a diagram of smooth projective varieties. 
\begin{exam}\label{exam:symC}
Let $C$ be a smooth projective curve with genus $g$, 
and let $S^k(C)$ be the $k$-th symmetric product of $C$:
\begin{align*}
S^k(C) \cneq (\overbrace{C \times \cdots \times C)}^k/\mathfrak{S}_k.
\end{align*}
Note that $S^k(C)$ is a smooth projective variety with dimension $k$. 
Let $\Pic^k(C)$ be the moduli space of line bundles on $C$
with degree $k$, which is a $g$-dimensional complex torus. 
For each $n> 0$, we consider the 
classical diagram of Abel-Jacobi maps
\begin{align}\label{dia:Pflip3}
\xymatrix{
S^{n+g-1}(C)  \ar[rd]_-{\pi^{+}}
 & & 
S^{-n+g-1}(C)
\ar[ld]^-{\pi^{-}} \\
& \Pic^{n+g-1}(C). &
}
\end{align}
Here the morphisms $\pi^{\pm}$ are given by
\begin{align*}
\pi^{+}(Z \subset C)=\oO_C(Z), \ 
\pi^{-}(Z' \subset C)=\omega_C(-Z').
\end{align*}

The diagram (\ref{dia:Pflip3}) appears as a 
special case of wall-crossing of stable pair
moduli spaces discussed in Theorem~\ref{thm:irreducible}
(see Example~\ref{exam:curve}). 
By \textit{loc.~cit.~}, 
at a point $[L] \in \Pic^{n+g-1}(C)$
we see that 
the diagram (\ref{dia:Pflip3}) is 
an analytic d-critical flip if $h^1(L)>1$, an analytic 
d-critical divisorial contraction if $h^1(L)=1$ and 
an analytic d-critical MFS
if $h^1(L)=0$. 
Moreover relative d-critical charts are analytic locally on $\Pic^{n+g-1}(C)$
given as in Example~\ref{exam:toric}. 
Note that $S^{\pm n +g-1}(C)$ are smooth projective 
varieties, which are not birational
for $n>0$ (as the dimensions are different). 
\end{exam}

Here is an example of a d-critical flop
between non-reduced d-critical loci: 
\begin{exam}\label{exam:dflop:atiyah}
Let us consider the case $a=b=2$ and $c=0$ in 
Example~\ref{exam:toric}. 
In this case, the diagram (\ref{dia:toricflip}) is the simplest
example of a flop, called 
\textit{Atiyah flop}. 
Let us take $g \in \Gamma(\oO_Z)$ to be
$g=x_1 x_2 y_1^2$, 
and define $M^{\pm}$ as in (\ref{M:pm}). 
We have the d-critical structures on $M^{\pm}$
by $s^{\pm}=w^{\pm}+(dw^{\pm})^2$, and 
an algebraic d-critical flop
\begin{align*}
M^+ \stackrel{\pi^+}{\to} Z \stackrel{\pi^-}{\leftarrow} M^-
\end{align*}
The schemes $M^{\pm}$
are described as follows. 
Let $\mathbb{P}^1=C^{\pm} \subset Y^{\pm}$ be the 
exceptional loci of $f^{\pm}$. 
Then the reduced part of 
$M^+$ is a smooth divisor on $Y^+$ which contains 
$C^+$. 
However $M^+$ is non-reduced 
along two disjoint curves on $M^+$. 
On the other hand, the reduced part of 
$M^-$ is a union of 
$C^-$ and a smooth divisor on $Y^-$
which intersects $C^-$ at a point $y$. 
The scheme $M^-$ is  
non-reduced along two curves on the above divisor 
which intersect
at $y$. 
The singularities of schemes 
$M^{\pm}$ are 
not treated in birational geometry. 
\end{exam}

\begin{rmk}\label{rmk:dcrit}
As in Remark~\ref{rmk:isom}, 
d-critical generalized flips, flops at $p \in A$
in Definition~\ref{defi:dflip}
include the case that both of $f^{+}$, $f^{-}$
in the diagram (\ref{dia:dbir}) are isomorphisms. 
In this case, the left vertical 
arrows in (\ref{relchart2}) are closed immersions. 
This case also includes the case that 
$(\pi^{\pm})^{-1}(U)=\emptyset$. 
Indeed one can take a closed embedding 
$U \subset Z$ for a smooth $Z$
which admits a smooth morphism $g \colon Z \to \mathbb{C}$. 
Then by taking $Y^{\pm}=Z$, $f^{\pm}=\id$ and $w^{\pm}=g$, 
we have $\{d w^{\pm}\}=\emptyset$. 
\end{rmk}

\begin{rmk}\label{rmk:fiber}
In the notation of Definition~\ref{defi:dflip}, 
suppose that the function $g$ satisfies 
$g \in m_0^2$, where $m_0 \subset \oO_Z$ is the ideal 
sheaf 
of $0 \cneq j(p) \in Z$. 
Then as $g_{\ast} \colon T_{Z, 0} \to T_{\mathbb{C}, g(0)}$ 
is a zero map, we see that (set theoretically)
\begin{align*}
(f^{\pm})^{-1}(0) \subset \{ dw^{\pm}=0\}. 
\end{align*}
In particular if $f^{\pm}$ contracts a curve in $Y^{\pm}$ 
to a point $0 \in Z$, then it lies on $M^{\pm}$ and 
$\pi^{\pm}$ also contracts it to $p$. 
\end{rmk}

\begin{rmk}\label{rmk:contract}
If the condition of Remark~\ref{rmk:fiber} is not satisfied, 
it is possible that $\pi^{\pm} \colon M^{\pm} \to A$ 
do not contract any curve while 
$f^{\pm}$ do. 
For example, let us consider the diagram
\begin{align*}
\xymatrix{
\widehat{\mathbb{C}}^2 \ar[r]^-{f^{+}} \ar[rd]_-{w^+} 
& \mathbb{C}^2 \ar[d]^-{g} &
\ar[l]_-{\id} \mathbb{C}^2 \ar[ld]^-{w^-}\\
& \mathbb{C} &
}
\end{align*}
where $f^+$ is the blow-up at the origin
and $g$ is the projection onto one the factors of $\mathbb{C}^2$. 
By taking the critical locus of $w^{\pm}$, we obtain 
the diagram
\begin{align}\label{ex:empty}
\{dw^+=0\}=\Spec \mathbb{C} \stackrel{\id}{\to} (0 \in \mathbb{C}^2) 
\leftarrow \{dw^-=0\}=\emptyset.
\end{align}
Although $f^{+}$ contracts a $\mathbb{P}^1$ to a point, the above 
diagram does not contract curves. 
\end{rmk}

In order to avoid situations as in Remark~\ref{rmk:dcrit} and Remark~\ref{rmk:contract}, we introduce the 
following strict notion of birational transformations: 
\begin{defi}\label{defi:strict}
In the situation of Definition~\ref{defi:dflip},  
we call
a diagram (\ref{dia:dbir}) 
\textit{strict at $p \in A$}
if $\dim (\pi^{+})^{-1}(p) \ge 1$, i.e. 
$\pi^+$ is not a finite morphism at $p$.
We call a diagram (\ref{dia:dbir}) is
\textit{strict} if 
it is strict at some $p \in A$, i.e. 
$\pi^+$ is not a finite morphism. 
\end{defi}

\begin{rmk}
The diagram (\ref{dia:ab2}) is strict if $a\ge 2$ by Remark~\ref{rmk:fiber}. 
On the other hand, the diagram (\ref{ex:empty}) is not strict. 
\end{rmk}

\subsection{D-critical MMP}
We define the following d-critical version of MMP as follows:
\begin{defi}\label{def:d-mmp}
Let $(M, s)$ be an algebraic (resp.~analytic) d-critical 
locus. A \textit{d-critical MMP} of $(M, s)$ is a sequence
\begin{align}\label{d-MMP}
\xymatrix{
M_1 \ar[rd]_-{\pi_1^+}  &  &  M_2 \ar[ld]^-{\pi_1^-} \ar[rd]_-{\pi_2^+} &  & \cdots \ar[ld] \ar[rd] &
  &  M_{N} \ar[ld]^-{\pi_{N-1}^-}
\\
& A_1  & & A_2 & & A_{N-1}  &
}
\end{align} 
where each $(M_i, s_i)$ is an algebraic (resp.~analytic) d-critical locus, 
$(M_1, s_1)=(M, s)$ as d-critical loci, and for each $i$
the diagram
\begin{align}\label{MAi}
M_i \stackrel{\pi_i^+}{\to} A_i 
\stackrel{\pi_i^-}{\leftarrow} M_{i+1}
\end{align}
 is an algebraic
 (resp.~analytic) d-critical generalized
flip
at any point in $\Imm \pi_i^-$, and
d-critical generalized MFS 
at any point in $A_i \setminus \Imm \pi_i^-$. 
A d-critical MMP is called \textit{strict} if 
each diagram (\ref{MAi})
is strict 
in the sense of Definition~\ref{defi:strict}. 
\end{defi}

We give an example of a d-critical MMP 
from a usual MMP: 
\begin{exam}\label{ex:mmp}
Let $\xX$ be a complex manifold with a projective 
morphism $f \colon \xX \to \Delta$
where $0 \in \Delta \subset \mathbb{C}$ is a small disc. 
Suppose that $f^{-1}(t)$ is a smooth 
minimal model for any $t \in \Delta \setminus \{0\}$. 
Also suppose that we have a $f$-relative 
MMP of $\xX$ over $\Delta$
\begin{align*}
\xX=\xX_1 \dashrightarrow \xX_2 \dashrightarrow \cdots \dashrightarrow 
\xX_{N-1} \dashrightarrow \xX_N
\end{align*}
where $\xX_N \to \Delta$ is a minimal model over $\Delta$
such that each $\xX_i$ is smooth. 
(For example such a MMP always exists when $\dim \xX=2$.)
Then each birational map 
$\xX_i \dashrightarrow \xX_{i+1}$ fits into the diagram
\begin{align*}
\xymatrix{
\xX_i \ar[r]^-{\pi_i^+}  \ar[dr]_-{f_i} & \yY_i \ar[d]_-{g_i} & \ar[l]_{\pi_{i}^-} \ar[ld]^-{f_{i+1}} \xX_{i+1} \\
& \Delta &
}
\end{align*}
where the top diagram is either a divisorial contraction or a flip. 
Let $h \colon \Delta \to \mathbb{C}$ be
defined by $t \mapsto t^2$ and 
set 
\begin{align*}
w_i \cneq h \circ f_i \colon \xX_i \to \mathbb{C}, \ 
M_i \cneq \{dw_i=0\}, \ 
A_i \cneq \yY_i \times_{\Delta} \Spec \mathbb{C}[t]/t^m
\end{align*}
for $m\gg 0$. Note that $M_i$, $A_i$ are projective schemes
with $f_i(M_i) \subset A_i$ for $m\gg 0$, 
and $M_i$ admits a d-critical 
structure 
$s_i=w_i+(dw_i)^2$. 
Then we obtain 
a d-critical MMP (\ref{d-MMP}),
 which is strict by Remark~\ref{rmk:fiber}. 
\end{exam}

\begin{rmk}\label{rmk:schemestr}
In Example~\ref{ex:mmp}, note that $(f_i)^{-1}(0)=M_i$ as a set, but 
their scheme structures may be different. 
For example if $(f_i)^{-1}(0)$ is a curve 
with a nodal singularity at $x \in (f_i)^{-1}(0)$, 
then the scheme structure of $M_i$ at $x$
is given by $\widehat{\oO}_{M_i, x}=\mathbb{C}[[x, y]]/(x^2 y, xy^2)$,
which is a critical locus of 
the function $x^2 y^2$, and not isomorphic to the nodal singularity. 
\end{rmk}

As an analogy of minimal model in birational 
geometry, we introduce the following notion of 
minimal d-critical loci: 
\begin{defi}\label{defi:minimal}
A d-critical locus $(M,s)$
is called \textit{minimal } if 
the virtual canonical line bundle 
$\omega_{M, s}$ is nef. 
\end{defi}
\begin{exam}\label{exam:minimal}
(i)
For a d-critical locus $(M, s)$, if 
$M$ is smooth then 
$\omega_{M, s}=\omega_M^{\otimes 2}$. Therefore 
$(M, s)$ is minimal if and only if $K_M$ is nef, i.e. 
$M$ is minimal in the usual sense. 

(ii) In the situation of Example~\ref{ex:mmp}, 
the d-critical locus $(M_N, s_N)$ is minimal, 
as $\omega_{M_N, s_N}=\omega_{\xX_N}^{\otimes 2}|_{M_N}$ is nef. 
Note that $M_N$ is a projective singular scheme if 
$(f_N)^{-1}(0)$ is singular.

(iii) There is also an example of a singular 
projective minimal d-critical locus $(M, s)$, 
such that $M^{\rm{red}}$ is a smooth non-minimal model. 
Let $Z$ be the $A_1$ surface singularity 
\begin{align*}
Z=\{xy+z^2=0\} \subset \mathbb{C}^3
\end{align*}
and take the blow-up 
$f \colon Y \to Z$ at the origin, which is a 
crepant resolution of the singularity $0 \in Z$. 
We consider the commutative diagram
\begin{align*}
\xymatrix{
Y \ar[r]^-{f} \ar[dr]_-{w} & Z \ar[d]^{(x, y, z) \mapsto x^2+y^2+z^2} \\
&  \mathbb{C}.
}
\end{align*}
We have the following d-critical locus: 
\begin{align*}
(M, s), \ 
M=\{dw=0\}, \ s=w+(dw)^2.
\end{align*}
Then $M^{\rm{red}}=\mathbb{P}^1$ and 
there are 4-points in $\mathbb{P}^1$ at which 
$M$ is non-reduced. At these points, the 
scheme structure of $M$ is given by the critical 
locus of the function $(x, y) \mapsto xy^2$
on $\mathbb{C}^2$. Note that 
$\omega_{M, s}=\omega_Y^{\otimes 2}|_{M^{\rm{red}}} \cong \oO_{\mathbb{P}^1}$
as $\omega_Y$ is trivial. Therefore the d-critical locus
$(M, s)$ is minimal, while $M^{\rm{red}}$ is not minimal. 
\end{exam}
Similarly to the usual minimal model in 
birational geometry, we have the following lemma: 
\begin{lem}\label{lem:minimal}
Let $(M^+, s^+)$ be a minimal 
d-critical locus. 
Then there is no strict diagram
\begin{align}\label{dia:minimal}
M^+ \stackrel{\pi^+}{\to} A  \stackrel{\pi^-}{\leftarrow} M^-
\end{align}
for a d-critical locus $(M^-, s^-)$, 
which is a d-critical generalized flip 
at any point in $\Imm \pi^-$, and
a d-critical generalized MFS at any 
point in $A\setminus \Imm \pi^-$.
\end{lem}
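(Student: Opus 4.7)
The plan is to argue by contradiction: assume a strict diagram (\ref{dia:minimal}) exists and produce a projective curve $C$ in $(M^+)^{\rm{red}}$ on which $\omega_{M^+,s^+}$ has negative degree, contradicting minimality.

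First I would use the strictness hypothesis to fix a point $p \in A$ with $\dim (\pi^+)^{-1}(p) \ge 1$, and take a $\pi^{\pm}$-relative d-critical chart (\ref{relchart2}) of the form $Y^+ \stackrel{f^+}{\to} Z \stackrel{f^-}{\leftarrow} Y^-$ on a neighborhood $U \ni p$. By the definition of d-critical generalized flip (when $p \in \Imm \pi^-$) or d-critical generalized MFS (when $p \notin \Imm \pi^-$), in either case $-K_{Y^+}$ is $f^+$-ample. Since $f^+$ is projective and $i^+$ is a closed immersion, $\pi^+$ is proper, so the fiber $(\pi^+)^{-1}(p)$ is projective of positive dimension; pick an irreducible projective curve $C$ inside it (taking a general linear section if needed), and identify $C$ with its image in $Y^+$ under $i^+$.

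Next I would observe that $f^+(C) = \{j(p)\}$ because $j \circ \pi^+ = f^+ \circ i^+$ and $\pi^+(C) = \{p\}$. Hence $C$ is contracted by $f^+$, and $f^+$-ampleness of $-K_{Y^+}$ gives
\begin{align*}
\deg\bigl(\omega_{Y^+}|_{C^{\rm{red}}}\bigr) = -\deg\bigl(-K_{Y^+}|_{C^{\rm{red}}}\bigr) < 0.
\end{align*}
Invoking the natural isomorphism (\ref{nat:K0}) for the d-critical chart $((\pi^+)^{-1}(U), Y^+, w^+, i^+)$, we obtain
\begin{align*}
\omega_{M^+,s^+}|_{C^{\rm{red}}} \cong \omega_{Y^+}^{\otimes 2}|_{C^{\rm{red}}},
\end{align*}
so $\deg(\omega_{M^+,s^+}|_{C^{\rm{red}}}) = 2\deg(\omega_{Y^+}|_{C^{\rm{red}}}) < 0$. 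Since $C^{\rm{red}}$ is a projective curve in $(M^+)^{\rm{red}}$, this contradicts the nefness of $\omega_{M^+,s^+}$, finishing the proof.

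The only delicate step is extracting a projective curve $C^{\rm{red}} \subset (M^+)^{\rm{red}}$ on which the comparison (\ref{nat:K0}) applies; once this is in hand, the rest is a direct transfer of the birational inequality on $Y^+$ to a numerical inequality for $\omega_{M^+,s^+}$. The properness of $\pi^+$ (inherited from the projectivity of $f^+$ built into Definition~\ref{defi:generalized}) and the strictness hypothesis together guarantee such a curve exists, so no further technical input is needed.
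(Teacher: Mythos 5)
Your proposal is correct and follows essentially the same route as the paper's proof: use strictness to produce a projective curve contracted by $\pi^+$, pass to a $\pi^{\pm}$-relative d-critical chart where $-K_{Y^+}$ is $f^+$-ample in both the flip and MFS cases, and transfer the negative degree to $\omega_{M^+,s^+}$ via the isomorphism $\omega_{M^+,s^+}|_{U^{\rm{red}}} \cong \omega_{Y^+}^{\otimes 2}|_{U^{\rm{red}}}$ to contradict nefness. Your write-up is just a more detailed version of the paper's two-line argument, spelling out the properness of $\pi^+$ and the choice of curve.
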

\begin{proof}
Suppose that a diagram (\ref{dia:minimal}) exists. 
As $\pi^+$ is not a finite morphism, there is a projective curve 
$C \subset M^+$
such that $\pi(C)=p$ for some point $p \in A$. 
Let us take $\pi^{\pm}$-relative
d-critical charts (\ref{relchart})
for an open neighborhood $p \in U \subset A$.  
Then as $-K_{Y^+}$ is $f$-ample, 
we have 
\begin{align*}
\deg(\omega_{M^+, s^+}|_{C})=2K_{Y^+} \cdot i^+(C)<0
\end{align*}
which contradicts that $(M^+, s^+)$ is minimal. 
\end{proof}

\begin{rmk}\label{rmk:mmp}
By Lemma~\ref{lem:minimal},
a strict $d$-critical MMP (\ref{d-MMP}) 
terminates at $M_N$ if 
$M_N$ is either minimal or an empty set. 
In the latter case, 
the morphism 
$M_{N-1} \to A_{N-1}$ is a $d$-critical generalized MFS
at any point in $A_{N-1}$. 
\end{rmk}

We
generalize the inequality of 
canonical divisors in 
Definition~\ref{defi:ineqK} to 
virtual canonical line bundles on 
d-critical loci: 
\begin{defi}\label{defi:dcrit:K}
In the situation of Definition~\ref{defi:dflip}, we write
\begin{align}\label{dcrit:DK}
(M^{+}, s^{+}) \ge_{K} (M^{-}, s^{-})
\end{align}
if for any $p \in A$ there is a 
$\pi^{\pm}$-relative d-critical chart 
(\ref{relchart2})
such that 
$Y^{+} \ge_K Y^{-}$ as in Definition~\ref{defi:ineqK}. 
The inequality (\ref{dcrit:DK}) is strict if 
$Y^+ >_K Y^-$ for some $p \in A$. 
\end{defi}
\begin{rmk}\label{rmk:ineq:d}
By (\ref{nat:K0}), the inequality (\ref{dcrit:DK}) is
regarded as an inequality for virtual
canonical bundles of d-critical loci. 
\end{rmk}

If the diagram (\ref{dia:dbir})
is a d-critical divisorial contraction, (generalized) flip, 
(generalized) MFS, we have the inequality 
(\ref{dcrit:DK}). 
In particular for a d-critical MMP (\ref{d-MMP}),
we have the inequalities
\begin{align*}
(M_1 , s_1) \ge_K (M_2, s_2) \ge_K \cdots 
\ge_K (M_N, s_N).
\end{align*} 
Each inequalities are strict if (\ref{d-MMP}) is a strict d-critical MMP. 
Moreover we have the equality 
of (\ref{dcrit:DK}) 
if the diagram (\ref{dia:dbir}) is a d-critical (generalized) flop.

\section{Moduli spaces of semistable objects on CY 3-folds}
\label{sec:moduliCY3}
In this section, we discuss moduli spaces of 
Bridgeland semistable objects on CY 3-folds, and 
introduce their wall-crossing diagrams as in 
the introduction. We address 
a general question 
whether wall-crossing diagrams 
in CY 3-folds
are described in terms of d-critical 
birational transformations introduced in the previous 
section, which is a main topic in this paper. 

\subsection{Moduli spaces of objects on CY 3-folds}
\label{subsec:moduli:general}
Let $X$ be a smooth projective CY 3-fold, i.e.
$\dim X=3$, $K_X=0$
and $H^1(\oO_X)=0$. 
Below we fix a trivialization
\begin{align}\label{trivialization}
\oO_X \stackrel{\cong}{\to} \omega_X.
\end{align}
We denote by $\mM$ the 2-functor
\begin{align*}
\mM \colon \sS ch/\mathbb{C} \to 
\gG roupoid
\end{align*}
sending a 
$\mathbb{C}$-scheme $S$ to the groupoid
of relatively perfect object 
\begin{align}\label{obj:S}
\eE \in D^b(X \times S)
\end{align}
such that 
for each $s \in S$, 
its derived 
restriction $\eE_s$
to $X \times \{s\}$
satisfies 
$\Ext^{<0}(\eE_s, \eE_s)=0$. 
By the result of Lieblich~\cite{LIE}, 
the 2-functor $\mM$ 
is an Artin stack locally of finite type. 
We have the open substack
\begin{align*}
\mM^{\rm{si}} \subset \mM
\end{align*}
consisting of 
simple objects, i.e. 
substacks of objects (\ref{obj:S})
which furthermore satisfies $\Hom(\eE_s, \eE_s)=\mathbb{C}$
for any $s \in S$. 
Then by~\cite[Corollary~4.3.3]{LIE}
(also see~\cite{Inaba})
there is an algebraic space 
$M^{\rm{si}}$ locally of finite type with a morphism
\begin{align}\label{M:simple}
\mM^{\rm{si}} \to M^{\rm{si}}
\end{align}
which is a \'etale locally trivial 
$B\mathbb{C}^{\ast}$-bundle
(i.e. $\mathbb{C}^{\ast}$-gerbe). 
By the result of~\cite{MR3352237},
we have the following: 
\begin{thm}\emph{(\cite{MR3352237})}\label{thm:CYdcrit}
There is a canonical d-critical structure 
on the stack $\mM$ whose virtual
canonical line bundle is given by
\begin{align}\label{vir:K}
\omega_{\mM}^{\rm{vir}} \cneq 
\det \dR \hH om_{pr_{\mM}}(\eE, \eE). 
\end{align} 
Here $\eE$ is a universal sheaf on $X \times \mM$ and 
$pr_{\mM} \colon X \times \mM
 \to \mM$ is the projection. 
The restriction of the above d-critical structure 
to $\mM^{\rm{si}} \subset \mM$ 
descends to a d-critical 
structure on $M^{\rm{si}}$. 
\end{thm}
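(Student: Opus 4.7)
The plan is to obtain the d-critical structure by passing to the derived enhancement of $\mM$ and invoking the PTVV shifted symplectic structure together with the Brav--Bussi--Joyce derived Darboux theorem. Concretely, one would first construct a derived enhancement $\mM^{\rm{der}}$ whose classical truncation is $\mM$: this exists by Toen--Vaquie, parametrizing families of relatively perfect complexes $\eE$ on $X\times S$ with $\Ext^{<0}(\eE_s,\eE_s)=0$. One would then recall from PTVV~\cite{PTVV} that the CY structure on $X$, via the trivialization (\ref{trivialization}), equips $\mM^{\rm{der}}$ with a canonical $(-1)$-shifted symplectic form arising from the Serre duality pairing on $\mathbf{R}\HOM_{pr_{\mM}}(\eE,\eE)$. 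The cotangent complex is given, as usual, by a shift of $\mathbf{R}\HOM_{pr_{\mM}}(\eE,\eE)$, so that on the simple locus it has amplitude in $[-1,1]$ as required for the $(-1)$-shifted symplectic formalism.

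Next I would cite \cite{MR3352237} (the BBJ/BBDJS package) to transfer the $(-1)$-shifted symplectic form to a d-critical structure on the classical truncation: their main theorem produces, for each point of $\mM^{\rm{der}}$, a local model in which the underlying scheme is the critical locus $\{dw=0\}$ for some regular function $w$ on a smooth ambient space, with the symplectic form matching the canonical one on the derived critical locus. By definition, the compatible data $w+(dw)^2$ glue to a global section $s\in\Gamma(\mM,\sS_{\mM}^0)$, giving the d-critical structure. The same reference identifies the virtual canonical line bundle $\omega_{\mM,s}$ with $\det(\mathbb{L}_{\mM^{\rm{der}}})|_{\mM^{\rm{red}}}$, which in this setting is precisely $\det\dR\hH om_{pr_{\mM}}(\eE,\eE)|_{\mM^{\rm{red}}}$, yielding formula (\ref{vir:K}).

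Finally, to see that the d-critical structure descends from $\mM^{\rm{si}}$ to $M^{\rm{si}}$ along the $\mathbb{C}^{\ast}$-gerbe (\ref{M:simple}), I would argue that the sheaf $\sS^0$ is insensitive to inertia: because the $\mathbb{C}^{\ast}$-automorphisms of a simple object act trivially on $\Ext^{\geq 1}(\eE_s,\eE_s)$ (up to weights that cancel under the Serre duality pairing), the local Darboux data can be chosen $\mathbb{C}^{\ast}$-equivariantly on each chart, and hence descend. This descent is actually part of the statements proved in \cite{MR3352237}, so the argument reduces to invoking it; the trivialization (\ref{trivialization}) is used to ensure compatibility of the symplectic form with the gerbe structure.

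The main obstacle is really the first passage: producing the $(-1)$-shifted symplectic structure on $\mM^{\rm{der}}$ in the stacky (rather than scheme-theoretic) setting, and ensuring that BBJ-style local models extend across the $B\mathbb{C}^{\ast}$-stabilizers. Everything else---the identification of $\omega_{\mM,s}^{\rm{vir}}$ with $\det\dR\hH om$ and the descent to $M^{\rm{si}}$---is then a formal consequence once the correct derived/stacky framework is in place.
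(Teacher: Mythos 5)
Your proposal follows exactly the route the paper intends: Theorem~\ref{thm:CYdcrit} is quoted from \cite{MR3352237}, and the argument is precisely the chain you describe — derived enhancement of $\mM$ (Toen--Vaqui\'e), the $(-1)$-shifted symplectic structure from \cite{PTVV} via the CY trivialization, the Darboux theorem of \cite{MR3352237} in its stacky form to produce the d-critical structure and identify $\omega_{\mM,s}$ with $\det(\mathbb{L}_{\mM^{\rm{der}}})|_{\mM^{\rm{red}}}=\det\dR\hH om_{pr_{\mM}}(\eE,\eE)$, and descent along the $\mathbb{C}^{\ast}$-gerbe on the simple locus (cf.\ Remark~\ref{rmk:shifted}). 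This matches the paper's treatment, so no further comparison is needed.
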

\begin{rmk}\label{rmk:canonical}
More precisely, the d-critical structure on $\mM$
in Theorem~\ref{thm:CYdcrit}
is canonically 
determined once we choose a 
trivialization (\ref{trivialization}). 
\end{rmk}

\subsection{Moduli spaces of Bridgeland semistable objects}\label{subsec:msstable}
We define
$\Gamma_X \subset H^{2\ast}(X, \mathbb{Q})$ to be the image 
of the Chern character map
\begin{align*}
\ch \colon 
K(X) \to H^{2\ast}(X, \mathbb{Q}).
\end{align*}
Let
$\Stab(X)$ the 
space of Bridgeland stability conditions on 
the derived category
$D^b(X)$ with respect to the 
Chern character map $\ch \colon K(X) \to \Gamma_X$
(see Appendix~\ref{subsec:space}). 
By its definition, a point $\sigma \in \Stab(X)$
is written as
\begin{align}\label{sigma:stab}
\sigma=(Z, \aA) \in \Stab(X), \ 
\aA \subset D^b(X), \ Z \colon \Gamma_X \to \mathbb{C}
\end{align}
where $\aA$ is the heart of a bounded t-structure and
$Z$ is a group homomorphism, satisfying
some conditions. 
\begin{rmk}
So far it is not known whether $\Stab(X) \neq \emptyset$ for 
a projective CY 3-fold in general. 
At the present time, this is only known 
when 
$X$ is an \'etale quotient of an 
abelian 3-fold~\cite{MR3370123, MR3573975}. 
On the other hand, we can generalize the arguments below 
in a modified situation where 
it is easier to construct stability conditions, e.g. 
$\Stab(X)$ for a non-compact CY 3-fold, 
$\Stab(\dD)$ for a triangulated subcategory 
$\dD \subset D^b(X)$, etc. 
\end{rmk}
For a stability condition $\sigma$ as in (\ref{sigma:stab})
and an element $v \in \Gamma_X$, 
we have the substacks
\begin{align}\label{substack:M}
\mM_{\sigma}^{s}(v) \subset \mM_{\sigma}(v) 
\subset \mM
\end{align}
where $\mM_{\sigma}(v)$ consists of 
$\sigma$-semistable objects $E \in \aA$
with $\ch(E)=v$
and $\mM_{\sigma}^s(v)$ 
is the $\sigma$-stable part of $\mM_{\sigma}(v)$. 
Below we discuss under 
the following assumption:
\begin{assum}\label{assum:stack}
The substacks (\ref{substack:M}) are open substacks of 
$\mM$, and they are of finite type. 
\end{assum}
\begin{rmk}
The above assumption holds in the cases 
where $\Stab(X)$ is known to be non-empty
(see~\cite{PiYT}). 
As proven in \textit{loc.~cit.~}, 
the Bogomolov-Gieseker type inequality conjecture
proposed in~\cite{BMT, BMS}
implies both of constructions of stability conditions and 
Assumption~\ref{assum:stack}. 
\end{rmk}
Under Assumption~\ref{assum:stack},  
we can discuss good moduli spaces for 
Artin stacks (\ref{substack:M})
in the sense of~\cite{MR3237451}. 
A general definition is as follows: 
\begin{defi}(\cite{MR3237451})\label{def:goodmoduli}
A morphism $p \colon \mM \to M$, where $\mM$ is an 
Artin stack and $M$ an algebraic space, is called 
a \textit{good moduli 
space} for $\mM$ if the following conditions hold: 
\begin{enumerate}
\item $p$ is quasi-compact and $p_{\ast} \colon \QCoh(\mM) \to \QCoh(M)$
is exact. 
\item The natural map 
$\oO_M \to p_{\ast}\oO_{\mM}$ is an isomorphism.
\end{enumerate}
\end{defi}
A good moduli space $p \colon \mM \to M$ 
is universal for morphisms to algebraic spaces 
(see~\cite[Theorem~6.6]{MR3237451}). 
Namely for a morphism $p' \colon \mM \to M'$ 
for another algebraic space $M'$, there is a unique 
factorization
\begin{align*}
p' \colon \mM \stackrel{p}{\to} M \to M'. 
\end{align*}

Let us return to moduli stacks of (semi)stable 
objects (\ref{substack:M}). 
We will use the following result
which is announced in~\cite{AHLH}
(also see~\cite[Theorem~4.3]{HLK3}, \cite[Theorem~5.11]{Savvas}):
\begin{thm}\emph{(\cite{AHLH})}\label{thm:goodmoduli}
The stack $\mM_{\sigma}(v)$ admits a good moduli space
\begin{align*}
p_M \colon 
\mM_{\sigma}(v) \to M_{\sigma}(v)
\end{align*}
for a separated algebraic space $M_{\sigma}(v)$ of finite type. 
\end{thm}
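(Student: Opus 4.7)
The plan is to invoke the general existence theorem for good moduli spaces proved by Alper, Halpern-Leistner and Heinloth in \cite{AHLH}, which characterizes those finite-type Artin stacks over $\mathbb{C}$ admitting a separated good moduli space by the intrinsic valuative criteria of $\Theta$-reductivity and S-completeness, together with the hypotheses of affine diagonal and reductive stabilizers at closed points. I would therefore reduce the proof to verifying these conditions for $\mM_{\sigma}(v)$ and then quote \cite{AHLH} to conclude.

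First I would dispose of the easy preliminaries. Finite type of $\mM_{\sigma}(v)$ is part of Assumption~\ref{assum:stack}. The ambient stack $\mM$ has affine diagonal by Lieblich's construction in \cite{LIE}: for two relatively perfect objects $E,F$ with $\Ext^{<0}(E,F)=0$, the scheme $\Isom(E,F)$ is an open subscheme of the affine $\Hom(E,F)$. The closed points of $\mM_{\sigma}(v)$ correspond to $\sigma$-polystable objects, whose automorphism groups are products of $\GL_n$'s indexed by the simple Jordan--H\"older constituents and are hence reductive.

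Next I would verify $\Theta$-reductivity: given a DVR $R$ and a map $\Spec R \to \mM_{\sigma}(v)$ equipped with a $\mathbb{G}_m$-compatible weighted filtration over the generic point $\Spec K$, one must extend the filtration across the closed point. In the Bridgeland setting this amounts to showing that a filtration of the generic fibre by $\sigma$-semistable subobjects of the same slope in $\aA$ extends uniquely to a filtration in $\aA_R$ over $\Spec R$. The inputs are openness of the heart in families, the existence of Harder--Narasimhan filtrations, and the boundedness consequences of the support property of $\sigma$ that are implicit in Assumption~\ref{assum:stack}. For S-completeness I would show that any two $R$-flat families of $\sigma$-semistable objects that become S-equivalent over $\Spec K$ are related by an elementary modification over $\Spec R$, so that their closed fibres have isomorphic Jordan--H\"older graded pieces.

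The main obstacle is that, unlike the classical setting of Gieseker stability of coherent sheaves where the above criteria are folklore, the heart $\aA$ of a Bridgeland stability condition is typically an exotic tilt of $\Coh(X)$ rather than $\Coh(X)$ itself. Consequently, openness of $\sigma$-semistability, boundedness of Harder--Narasimhan factors, and the correct behaviour under specialization to the closed point of $\Spec R$ cannot be invoked as standard facts; they must be extracted from the axioms of a Bridgeland stability condition and the support property. This categorical verification of $\Theta$-reductivity and S-completeness for semistable objects in an arbitrary heart is precisely the technical content of \cite{AHLH}, and with it in hand the stated existence of $p_M \colon \mM_{\sigma}(v) \to M_{\sigma}(v)$ as a separated algebraic space of finite type is immediate.
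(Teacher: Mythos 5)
The paper gives no proof of this statement: it is imported verbatim as an announced result of Alper--Halpern-Leistner--Heinloth, so your proposal --- reducing to the existence criteria of \cite{AHLH} ($\Theta$-reductivity, S-completeness, affine diagonal, reductive stabilizers at closed points) and deferring their verification for Bridgeland semistable objects to that work --- is essentially the same approach, just with a correct expository sketch of what the cited theorem actually checks.
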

The good moduli space $M_{\sigma}(v)$ is 
the coarse moduli space of $S$-equivalence classes of 
$\sigma$-semistable objects with Chern character $v$. 
It follows that there is a one to one correspondence 
between closed points of $M_{\sigma}(v)$ and 
$\sigma$-polystable objects,
i.e. $\sigma$-semistable objects 
in $\aA$ with Chern character $v$, 
isomorphic to direct sums of $\sigma$-stable 
objects. 
\begin{rmk}\label{rmk:AHHL}
The existence of the good moduli space 
is well-known for moduli stacks given as 
GIT quotient stacks, say moduli stacks of 
Gieseker semistable sheaves (see~\cite{Hu}). 
In this case, 
we can proceed the arguments without relying on~\cite{AHLH}. 
In general it is not known whether $\mM_{\sigma}(v)$ can be 
constructed as a GIT quotient stack, so 
we rely on~\cite{AHLH} for the existence of 
$M_{\sigma}(v)$. 
\end{rmk}
Let 
$M_{\sigma}^s(v) \subset M_{\sigma}(v)$
be the open subspace consisting of 
$\sigma$-stable objects. 
Then we have the morphism
\begin{align}\label{M:stable}
p_M \colon 
\mM_{\sigma}^s(v) \to M_{\sigma}^s(v)
\end{align}
giving a good moduli space for $\mM_{\sigma}^s(v)$. 
Note that $M_{\sigma}^s(v)$ is also an 
open subspace of the moduli space of 
simple objects $M^{\rm{si}}$
in the previous subsection, and the morphism 
(\ref{M:stable}) is a pull-back of the morphism (\ref{M:simple})
by the open immersion $M_{\sigma}^s(v) \subset M^{\rm{si}}$. 

\subsection{Wall-crossing diagram in CY 3-folds}\label{subsec:wallCY3}
By a general theory of Bridgeland 
stability conditions, there 
is a collection of locally finite 
codimension one submanifolds (called \textit{walls})
\begin{align}\label{wall}
\{\wW_{\lambda}\}_{\lambda \in \Lambda}, \ 
\wW_{\lambda} \subset \Stab(X)
\end{align}
such that 
the moduli stack $\mM_{\sigma}(v)$ is constant if 
$\sigma$ lies
in a connected component of 
the complements of walls (called \textit{chamber}), but may change 
if $\sigma$ crosses a wall. 
Each wall $\wW_{\lambda}$ is defined by the condition
\begin{align}\notag
Z(v_1) \in \mathbb{R}_{>0} Z(v_2), \ v_1+v_2=v
 \in \Gamma_X
\end{align}
where $(v_1, v_2)$ are not proportional
in $(\Gamma_X)_{\mathbb{Q}}$. 

Below, we take $v \in \Gamma_X$ to be primitive, 
i.e. $v$ is not written as a multiple of some element 
in $\Gamma_X$.
Let us take 
\begin{align*}
\sigma=(Z, \aA) \in \Stab(X), \ 
\sigma^{\pm}=(Z^{\pm}, \aA^{\pm}) \in \Stab(X)
\end{align*}
where $\sigma$ lies on a wall and $\sigma^{\pm}$
lie on its adjacent chambers. 
By applying $\mathbb{C}$-action on $\Stab(X)$ if necessary
(see~Remark~\ref{rmk:Caction}),  
we may assume that $\Im Z(v)>0$. 
Then any $\sigma^{\pm}$-semistable object
$E \in \aA^{\pm}$ with $\ch(E)=v$ is
a $\sigma$-semistable object in $\aA$. 
So we have open 
immersions $\mM_{\sigma^{\pm}}(v) \subset \mM_{\sigma}(v)$, 
and the commutative diagram
\begin{align}\label{wall:diagram}
\xymatrix{
\mM_{\sigma^+}(v) \ar@<-0.3ex>@{^{(}->}[r] \ar[d]_{p_M^+} & \mM_{\sigma}(v) 
\ar[d]^-{p_M} & 
\ar@<0.3ex>@{_{(}->}[l] \mM_{\sigma^-}(v) \ar[d]^{p_M^-} \\
M_{\sigma^+}(v) \ar[r]_-{q_M^+} & M_{\sigma}(v) & M_{\sigma^-}(v) 
\ar[l]^-{q_M^-}.
}
\end{align}
Here the top arrows are open immersions, the 
vertical arrows are morphisms to the good moduli 
spaces, and the bottom arrows are induced
by the universality of good moduli spaces. 
Moreover we have
$M_{\sigma^{\pm}}^s(v)=M_{\sigma^{\pm}}(v)$
as $v$ is primitive and $\sigma^{\pm}$ lie on chambers. 
In particular $M_{\sigma^{\pm}}(v)$ 
admit d-critical structures by Theorem~\ref{thm:CYdcrit}, 
and the following question makes sense: 
\begin{question}
For a primitive $v \in \Gamma_X$, is 
the bottom diagram 
in 
(\ref{wall:diagram})
a (generalized) d-critical flip or flop?
\end{question}

In what follows, we study the above question 
via moduli spaces of representations of Ext-quivers.

\section{Representations of quivers with super-potentials}\label{sec:quiver}
In this section, we construct d-critical birational 
transformations introduced in the previous section
via representations of quivers with convergent super-potentials. 
The descriptions in this section will give 
analytic local models of d-critical birational 
transformations for wall-crossing 
diagrams in CY 3-folds 
considered in the previous section.  
\subsection{Representations of quivers}
Recall that a \textit{quiver} $Q$ consists of data
\begin{align*}
Q=(V(Q), E(Q), s, t)
\end{align*}
where $V(Q), E(Q)$ are finite sets 
and $s, t$ are maps
\begin{align*}
s, t \colon E(Q) \to V(Q).
\end{align*}
The set $V(Q)$ is the set of vertices and
$E(Q)$ is the set of edges. 
For $e \in E(Q)$, 
$s(e)$ is the source of $e$
and $t(e)$ is the target of $e$. 
For $i, j \in V(Q)$, we use the following notation
\begin{align}\label{Eab}
E_{i, j} \cneq \{e \in E(Q) : 
s(e)=i, t(e)=j\}, \
\mathbb{E}_{i, j} &\cneq 
\bigoplus_{e \in E_{i, j}} \mathbb{C} \cdot e
\end{align}
i.e. $E_{i, j}$ is the set of edges 
from $i$ to $j$, 
and $\mathbb{E}_{i, j}$ is the 
$\mathbb{C}$-vector space spanned by $E_{i, j}$. 
The \textit{dual quiver} $Q^{\vee}$ of $Q$ is defined by 
\begin{align*}
Q^{\vee} \cneq (V(Q), E(Q), s^{\vee}, t^{\vee}), \ 
s^{\vee} \cneq t, \ t^{\vee} \cneq s. 
\end{align*}

Recall that a \textit{path}
of a quiver $Q$ 
is a composition of edges in $Q$
\begin{align*}
e_1 e_2 \ldots e_n, \ e_i \in E(Q), \ t(e_i)=s(e_{i+1}). 
\end{align*}
The number $n$ above is called the \textit{length} of the path. 
The \textit{path algebra} of 
a quiver $Q$ is
a $\mathbb{C}$-vector space spanned by 
paths in $Q$:
\begin{align*}
\mathbb{C}[Q] \cneq 
\bigoplus_{n\ge 0}
\bigoplus_{e_1, \ldots, e_n \in E(Q), t(e_i)=s(e_{i+1})} \mathbb{C} \cdot e_1 e_2 \ldots e_n.
\end{align*}
Here a path of length zero is a trivial path 
at each vertex of $Q$, and 
the product on $\mathbb{C}[Q]$ is defined by the 
composition of paths. 
 
A \textit{$Q$-representation} consists of
data
\begin{align}\label{rep:Q}
\mathbb{V}=\{
(V_i, u_e) : \ i \in V(Q),  \ e \in E(Q), \ 
u_e \colon V_{s(e)} \to V_{t(e)}\}
\end{align}
where $V_i$ is a finite dimensional 
$\mathbb{C}$-vector space 
and $u_e$ is a linear map. 
It is well-known that a $Q$-representation is nothing but 
a left $\mathbb{C}[Q]$-module
structure on $\bigoplus_{i \in V(Q)}V_i$. 
Also note that the dual of (\ref{rep:Q})
\begin{align}\label{V:dual}
\mathbb{V}^{\vee} \cneq \{
(V_i^{\vee}, u_e^{\vee}) : \ i \in V(Q),  \ e \in E(Q), \ 
u_e^{\vee} \colon V_{t(e)}^{\vee} \to V_{s(e)}^{\vee}\}
\end{align}
is  
a $Q^{\vee}$-representation.

For a $Q$-representation $\mathbb{V}$ as in (\ref{rep:Q}), the vector
\begin{align}\label{m:vect}
\vec{m}=(m_i)_{i \in V(Q)}, \ 
m_i=\dim V_i
\end{align}
is called the \textit{dimension vector} of $\mathbb{V}$. 
For each $i \in V(Q)$, let 
$S_i$ be the one dimensional
 $Q$-representation corresponding to 
the vertex $i$, whose dimension vector is denoted by 
$\vec{i}$. 
We set
\begin{align*}
\Gamma_Q \cneq \bigoplus_{i \in V(Q)} \mathbb{Z} \cdot \vec{i}. 
\end{align*}
Note that the dimension vector (\ref{m:vect}) 
for a non-zero $Q$-representation (\ref{rep:Q}) 
takes its value in 
the positive cone $\Gamma_{Q, >0} \subset \Gamma_Q$:
\begin{align}\label{Q:positive}
\Gamma_{Q, >0} \cneq 
\left\{ \vec{m}=(m_i)_{i \in V(Q)} \in \Gamma_Q : 
m_i \ge 0\right\} \setminus \{0\}.
\end{align}
 
For a given element 
$\vec{m}=(m_i)_{i\in V(Q)} \in \Gamma_{Q, >0}$,  
let $V_i$ be $\mathbb{C}$-vector spaces with 
dimension $m_i$. 
Let us set 
\begin{align}\label{def:G}
G \cneq \prod_{i \in V(Q)} \GL(V_i), \ 
\mathrm{Rep}_Q(\vec{m}) \cneq \prod_{e \in E(V)} \Hom(V_{s(e)}, V_{t(e)}).
\end{align}
The algebraic group $G$ acts on $\mathrm{Rep}_Q(\vec{m})$ by 
\begin{align}\label{G:act}
g \cdot u=\{g_{t(e)}^{-1} \circ u_e \circ g_{s(e)}\}_{e\in E(Q)}
\end{align}
for $g=(g_i)_{i \in V(Q)} \in G$ 
and $u=(u_e)_{e\in E(Q)}$. 
A $Q$-representation with dimension vector $\vec{m}$ is 
determined by a point in $\mathrm{Rep}_Q(\vec{m})$
up to $G$-action. 
The moduli stack of $Q$-representations with 
dimension vector $\vec{m}$ is given by the 
quotient stack 
\begin{align}\label{stack:repQ}
\mM_{Q}(\vec{m}) \cneq \left[ \mathrm{Rep}_Q(\vec{m})/G \right]. 
\end{align}
We have the natural morphism to the GIT quotient
\begin{align}\label{mor:coarse}
p_Q \colon \mM_Q(\vec{m}) \to 
M_{Q}(\vec{m}) \cneq \mathrm{Rep}_Q(\vec{m}) \sslash G. 
\end{align}
Here in general, if a reductive algebraic group $G$ acts on 
an affine scheme $Y=\Spec R$, its GIT quotient is given by
\begin{align*}
Y\sslash G \cneq \Spec (R^G).
\end{align*}
A closed point of $M_{Q}(\vec{m})$ corresponds to a
semi-simple $Q$-representation, i.e. a
direct sum of simple $Q$-representations, 
and $p_Q$ sends a $Q$-representation to its 
semi-simplification. 
We have the commutative diagram
\begin{align}\label{dia:quiver}
\xymatrix{
\mathrm{Rep}_Q(\vec{m}) \ar[r] \ar[rd]_{\pi_Q} & \mM_{Q}(\vec{m}) \ar[d]^{p_Q} \\
& M_{Q}(\vec{m}). 
}
\end{align}

The point $0 \in \mathrm{Rep}_Q(\vec{m})$ and its 
image $0 \in M_Q(\vec{m})$ by the map (\ref{mor:coarse})
correspond to the semi-simple $Q$-representation 
\begin{align*}
\bigoplus_{i\in V(Q)}V_i \otimes S_i.
\end{align*}
A $Q$-representation (\ref{rep:Q})
is called \textit{nilpotent} if any sufficiently large number of 
compositions of the linear maps $u_e$ becomes zero. 
It is easy to see that 
a $Q$-representation is nilpotent if and only if it is 
an iterated extensions of simple objects 
$\{S_i\}_{i \in V(Q)}$. 
In particular, 
the fiber 
\begin{align*}
p_Q^{-1}(0) \subset \mM_Q(\vec{m})
\end{align*}
 for the morphism (\ref{mor:coarse})
consists of nilpotent $Q$-representations
with dimension vector $\vec{m}$. 
The morphism (\ref{mor:coarse})
is a good moduli space
of the stack $\mM_Q(\vec{m})$
(see~Definition~\ref{def:goodmoduli}).

\subsection{Semistable quiver representations}
For a quiver $Q$, let $K(Q)$ be the Grothendieck group of the 
abelian category of 
finite dimensional 
$Q$-representations. 
Let 
$\hH \subset \mathbb{C}$ be the upper half plane, and take 
\begin{align}\label{xi}
\xi=(\xi_i)_{i \in V(Q)} \in \hH^{\sharp V(Q)}, \ \xi_i \in \hH.  
\end{align}
Then we have the group homomorphism 
\begin{align}\label{K:dim}
Z_{\xi} \colon K(Q) \stackrel{\mathbf{dim}}{\to} \Gamma_Q \to
 \mathbb{C}, \ 
[S_i] \mapsto \xi_i. 
\end{align}
Here $\mathbf{dim}$ is the map taking the dimension vectors
of $Q$-representations. 
Then $Z_{\xi}$ defines a Bridgeland stability 
condition on the abelian 
category of finite dimensional $Q$-representations
(see~Appendix~\ref{sec:Bridgeland}). 
For simplicity, we call
$Z_{\xi}$-(semi)stable objects as 
$\xi$-(semi)stable objects. 

For a choice of $\xi$ as in (\ref{xi}), let 
\begin{align}\label{im:rep}
\mathrm{Rep}_Q^{\xi}(\vec{m}) \subset \mathrm{Rep}_Q(\vec{m})
\end{align}
be the Zariski 
open locus consisting of $\xi$-semistable 
$Q$-representations. 
We 
take the associated GIT quotients: 
\begin{align*}
\mM_{Q}^{\xi}(\vec{m}) \cneq [\mathrm{Rep}_Q^{\xi}(\vec{m})/G], \ 
M_{Q}^{\xi}(\vec{m}) \cneq \mathrm{Rep}_Q^{\xi}(\vec{m})\sslash G.
\end{align*}
We have the commutative diagram
\begin{align}\label{com:MQ}
\xymatrix{
\mM_{Q}^{\xi}(\vec{m}) \ar@<-0.3ex>@{^{(}->}[r]^{j_Q^{\xi}} \ar[d]_-{p_Q^{\xi}}
\ar[rd]^-{r_Q^{\xi}} & 
\mM_{Q}(\vec{m}) \ar[d]^-{p_Q} \\
M_Q^{\xi}(\vec{m}) \ar[r]_-{q^{\xi}_Q} & M_Q(\vec{m}).
}
\end{align}
Here $j_Q^{\xi}$ is an open immersion, 
$p_Q$, $p_Q^{\xi}$ are natural morphisms to the 
good moduli spaces, and $q_Q^{\xi}$ is the morphism 
induced by the open immersion (\ref{im:rep}). 
By a general theory of GIT, the morphism $q_Q^{\xi}$ is a projective 
morphism of irreducible varieties
satisfying $q_{Q \ast}^{\xi}\oO_{M_Q^{\xi}(\vec{m})}=\oO_{M_Q(\vec{m})}$. 

Let 
\begin{align*}
M_Q^{s}(\vec{m}) \subset M_Q(\vec{m}), \ 
M_{Q}^{\xi, s}(\vec{m}) \subset M_{Q}^{\xi}(\vec{m})
\end{align*}
be the open subsets of 
simple part, $\xi$-stable part 
respectively. 
It is well-known (for example see~\cite{MR2484736})
that 
both of $M_Q^{s}(\vec{m})$, $M_Q^{\xi, s}(\vec{m})$
are smooth varieties. 
As any preimage $(q_Q^{\xi})^{-1}(x)$
for $x \in M_Q^s(\vec{m})$ is a one point, 
the morphism $q_Q^{\xi}$ is a projective birational 
morphism if $M_Q^s(\vec{m}) \neq \emptyset$.

\subsection{Quivers with convergent super-potentials}\label{subsec:conv}
For a quiver $Q$, by 
taking the completion of 
the path algebra $\mathbb{C}[Q]$ with respect to the 
length of the path, 
we obtain the formal 
path algebra:
\begin{align*}
\mathbb{C}\lkakko Q \rkakko \cneq 
\prod_{n\ge 0}
\bigoplus_{e_1, \ldots, e_n \in E(Q), t(e_i)=s(e_{i+1})} \mathbb{C} \cdot e_1 e_2 \ldots e_n.
\end{align*}
Note that an element $f \in \mathbb{C}\lkakko Q \rkakko$
is written as 
\begin{align}\label{f:element}
f=\sum_{n\ge 0, \{1, \ldots, n+1\} \stackrel{\psi}{\to} V(Q)}
\sum_{e_i \in E_{\psi(i), \psi(i+1)}}
a_{\psi, e_{\bullet}} \cdot e_1 e_2\ldots e_{n}. 
\end{align}
Here 
$a_{\psi, e_{\bullet}} \in \mathbb{C}$, 
$e_{\bullet}=(e_1, \ldots, e_n)$ and 
$E_{\psi(i), \psi(i+1)}$ is defined as in (\ref{Eab}). 
The above element $f$ lies in $\mathbb{C}[Q]$ if and only if
$a_{\psi, e_{\bullet}}=0$ for $n\gg 0$. 
\begin{defi}\label{def:CQ}
The subalgebra
\begin{align*}
\mathbb{C}\{ Q\} \subset \mathbb{C}\lkakko Q \rkakko
\end{align*}
is defined 
to be elements (\ref{f:element}) 
such that $\lvert a_{\psi, e_{\bullet}} \rvert <C^n$ for 
some constant $C>0$ which is independent of $n$. 
\end{defi}
Note that $\mathbb{C}\{Q\}$ contains $\mathbb{C}[Q]$ as 
a subalgebra. 
A \textit{convergent super-potential} of a quiver $Q$ is an element 
\begin{align*}
W \in \mathbb{C}\{ Q \}/[\mathbb{C}\{ Q \}, \mathbb{C}\{ Q \}]. 
\end{align*}
A convergent super-potential $W$ of $Q$ is represented by 
a formal sum
\begin{align}\label{form:W}
W=\sum_{n\ge 1}
\sum_{\begin{subarray}{c}
\{1, \ldots, n+1\} \stackrel{\psi}{\to} V(Q), \\
\psi(n+1)=\psi(1)
\end{subarray}}
\sum_{e_i \in E_{\psi(i), \psi(i+1)}}
a_{\psi, e_{\bullet}} \cdot e_1 e_2\ldots e_{n}
\end{align}
with $\lvert a_{\psi, e_{\bullet}} \rvert <C^n$
for a constant $C>0$. 
The above $W$ is called \textit{minimal}
if $a_{\psi, e_{\bullet}}=0$ for 
$e_{\bullet}=(e_1, \ldots, e_n)$ with $n\le 2$. 

For a dimension vector $\vec{m}$
of $Q$, let $\tr W$ be the formal function 
of $u=(u_e)_{e\in E(Q)} \in \mathrm{Rep}_Q(\vec{m})$ defined by
\begin{align*}
\tr W(u) \cneq \sum_{n\ge 1}
\sum_{\begin{subarray}{c}
\{1, \ldots, n+1\} \stackrel{\psi}{\to} V(Q) \\
\psi(n+1)=\psi(1)
\end{subarray}}
\sum_{e_i \in E_{\psi(i), \psi(i+1)}}
a_{\psi, e_{\bullet}} \cdot \tr(u_n \circ u_{n-1} 
\circ \cdots \circ u_1).
\end{align*}
The above formal function on 
$\mathrm{Rep}_Q(\vec{m})$ is $G$-invariant. 
By the argument of~\cite[Lemma~2.10]{Todstack}
and~\cite[Lemma~4.9]{TodGV}, 
there is an 
analytic open neighborhood $V$ 
and an analytic function $\overline{\tr}(W)$
\begin{align}\label{V:open}
0 \in V \subset M_Q(\vec{m}), \ 
\overline{\tr}(W) \colon V \to \mathbb{C}
\end{align}
such that the formal function 
$\tr W$
absolutely converges on 
$\pi_Q^{-1}(V)$
(here $\pi_Q$ is given in the diagram (\ref{dia:quiver})) to give a 
$G$-invariant analytic function, which factors 
through $\pi_Q^{-1}(V) \to V$: 
\begin{align}\label{tr:W}
\tr W \colon \pi_Q^{-1}(V) \stackrel{\pi_Q}{\to} V
\stackrel{\overline{\tr}(W)}{\to}
\mathbb{C}. 
\end{align}
Then we set
\begin{align}\label{def:MW}
\mathrm{Rep}_{(Q, \partial W)}(\vec{m})|_{V} &\cneq 
\{ d(\tr W)=0\} \subset \pi_Q^{-1}(V), \\ 
\notag
\mM_{(Q, \partial W)}(\vec{m})|_{V}&
 \cneq \left[\{ d(\tr W)=0\}/G \right] \subset [\pi_Q^{-1}(V)/G], \\
\notag
M_{(Q, \partial W)}(\vec{m})|_{V}& \cneq \{ d(\tr W)=0\} \sslash G
\subset V. 
\end{align}
Here $(-)\sslash G$ above is an analytic Hilbert quotient 
(see~\cite{MR1631577, MR3394374, Todstack}).

\begin{rmk}\label{rmk:pW}
For a $Q$-representation corresponding to 
a point in $\pi_Q^{-1}(V)$, it satisfies the 
equation $\{d(\tr W)=0\}$ if and only if it satisfies the relation 
$\partial W$ of the quiver $Q$ given by derivations
of $W$ (see~\cite[Subsection~2.6]{Todstack}). 
\end{rmk}

Let $\xi$ be data as in (\ref{xi}) which defines the 
$\xi$-stability on the category of $Q$-representations, and 
\begin{align}\label{rep:xi}
\mathrm{Rep}_{(Q, \partial W)}^{\xi}(\vec{m})|_{V}
\subset \mathrm{Rep}_{(Q, \partial W)}(\vec{m})|_{V}
\end{align}
be the open locus consisting of $\xi$-semistable 
$Q$-representations. Similarly to (\ref{def:MW}), we define
\begin{align*}
&\mM_{(Q, \partial W)}^{\xi}(\vec{m})|_{V}
\cneq [\mathrm{Rep}_{(Q, \partial W)}^{\xi}(\vec{m})|_{V}/G], \\ 
&M_{(Q, \partial W)}^{\xi}(\vec{m})|_{V}
\cneq \mathrm{Rep}_{(Q, \partial W)}^{\xi}(\vec{m})|_{V}\sslash G. 
\end{align*}
Then we have the commutative diagram
\begin{align}\label{dia:arrow}
\xymatrix{
M_{(Q, \partial W)}^{\xi}(\vec{m})|_{V} \ar[d]_-{q_{(Q, \partial W)}^{\xi}} 
\ar@<-0.3ex>@{^{(}->}[r] & (q_{Q}^{\xi})^{-1}(V) \ar[d]_-{q_{Q}^{\xi}}
 \ar[rd]^-{\tr^{\xi} W}
&  \\
M_{(Q, \partial W)}(\vec{m})|_{V}  
\ar@<-0.3ex>@{^{(}->}[r] & V \ar[r]_{\overline{\tr} W} & \mathbb{C}.
}
\end{align}
Here $\hookrightarrow$ are closed embeddings (see~\cite[Lemma~2.9]{Todstack}), 
$q_{Q}^{\xi}$ is given in the diagram (\ref{com:MQ}), 
$q_{(Q, \partial W)}^{\xi}$ is induced by the 
open immersion 
(\ref{rep:xi}), and
$\tr^{\xi}W$ is defined by the commutativity 
of (\ref{dia:arrow}). 

\begin{lem}\label{MW:dcrit}
Suppose that $M_{Q}^{\xi, s}(\vec{m})=M_Q^{\xi}(\vec{m})$
holds. 
Then there is a d-critical structure on 
$M_{(Q, \partial W)}^{\xi}(\vec{m})|_{V}$
given by 
\begin{align*}
s=\tr^{\xi}W+(d \tr^{\xi}W)^2 
\in \Gamma(\sS_{M_{(Q, \partial W)}^{\xi}(\vec{m})|_{V}}^0)
\end{align*}
such that the diagram (\ref{dia:arrow})
is its $q_{(Q, \partial W)}^{\xi}$-relative 
d-critical chart.
\end{lem}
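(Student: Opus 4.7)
The plan is to verify directly that the diagram (\ref{dia:arrow}) satisfies the conditions of Definition~\ref{defi:relchart} with smooth space $Y=(q_Q^{\xi})^{-1}(V)$, immersion $i\colon M_{(Q,\partial W)}^{\xi}(\vec{m})|_V\hookrightarrow Y$, function $w=\tr^{\xi}W$, base $Z=V$, closed immersion $j\colon M_{(Q,\partial W)}(\vec{m})|_V\hookrightarrow V$, and descended function $g=\overline{\tr}(W)$. The section $s=\tr^{\xi}W+(d\tr^{\xi}W)^2$ is then the canonical element of $\sS^0$ produced by the resulting d-critical chart, and commutativity of the chart data with the projection to $\mathbb{C}$ is automatic from $\tr^{\xi}W=g\circ f$.

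First I would use the stability hypothesis to secure smoothness of $Y$. Since $M_Q^{\xi,s}(\vec{m})=M_Q^{\xi}(\vec{m})$ by assumption and $M_Q^{\xi,s}(\vec{m})$ is smooth (as recalled just after the diagram (\ref{com:MQ})), the GIT quotient $M_Q^{\xi}(\vec{m})$ is a complex manifold, and so is its analytic open $Y$. The $G$-invariant analytic function $\tr W$ on $\pi_Q^{-1}(V)$ descends by definition to an analytic function $\tr^{\xi}W=(q_Q^{\xi})^{\ast}\overline{\tr}(W)$ on $Y$.

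The main step is to identify $M_{(Q,\partial W)}^{\xi}(\vec{m})|_V$ with the critical locus $\{d(\tr^{\xi}W)=0\}\subset Y$ as analytic subspaces, not merely as sets. Since all $\xi$-semistable representations are $\xi$-stable, every closed $G$-orbit in the $\xi$-semistable locus has stabilizer equal to the central $\mathbb{C}^{\ast}\subset G$. The Luna slice theorem in the analytic category therefore produces, near any closed orbit $G\cdot x$, a smooth analytic slice $S\ni x$ transverse to $G$-orbits such that $G\cdot S\cong (G\times S)/\mathbb{C}^{\ast}$ analytically and $S$ maps locally biholomorphically to $Y$. Because $\tr W$ is $G$-invariant, $d(\tr W)$ vanishes along the orbit directions, so along $G\cdot S$ it is the pullback of $d(\tr W|_S)$; hence $\{d(\tr W)=0\}\cap G\cdot S=G\cdot \{d(\tr W|_S)=0\}$ as closed analytic subspaces. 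Passing to the analytic Hilbert quotient identifies $M_{(Q,\partial W)}^{\xi}(\vec{m})|_V$ locally with $\{d(\tr^{\xi}W)=0\}$, and gluing these local identifications yields the global scheme-theoretic equality.

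With the critical locus identification in hand, the remaining condition for a d-critical chart (Definition~\ref{defi:dcrit}) is $\tr^{\xi}W|_{(M_{(Q,\partial W)}^{\xi}(\vec{m})|_V)^{\mathrm{red}}}=0$. This follows because a holomorphic function is locally constant on the reduced locus of its critical scheme, each cyclic summand of the convergent super-potential $\tr W$ has length $\ge 1$ and therefore vanishes at the zero representation $0\in\pi_Q^{-1}(V)$, and after shrinking $V$ if necessary, every connected component of the reduced critical locus can be assumed to contain a preimage of $0\in V$ so that the locally constant value is zero there. Consequently $s=\tr^{\xi}W+(d\tr^{\xi}W)^2$ defines a genuine section of $\sS^0_{M_{(Q,\partial W)}^{\xi}(\vec{m})|_V}$, and (\ref{dia:arrow}) becomes a $q_{(Q,\partial W)}^{\xi}$-relative d-critical chart realizing it. I expect the main obstacle to be the scheme-theoretic part of the middle step: the set-theoretic identification is obvious, but promoting it to an equality of closed analytic subspaces requires the analytic Luna slice theorem together with the orbit/transverse decomposition of $d(\tr W)$, which is exactly what the hypothesis $M_Q^{\xi,s}(\vec{m})=M_Q^{\xi}(\vec{m})$ (equivalently, central stabilizers at every closed orbit) is there to guarantee.
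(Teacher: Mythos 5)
Your proof is correct and follows essentially the same route as the paper: the hypothesis $M_Q^{\xi,s}(\vec m)=M_Q^{\xi}(\vec m)$ makes the relevant stacks $\mathbb{C}^{\ast}$-gerbes over their good moduli spaces (equivalently, all stabilizers on the semistable locus are the central $\mathbb{C}^{\ast}$), so the critical locus of the $G$-invariant function $\tr W$ descends scheme-theoretically to the critical locus of $\tr^{\xi}W$ on the smooth space $(q_Q^{\xi})^{-1}(V)$. Your Luna-slice unpacking of that descent, and your explicit verification of the normalization $\tr^{\xi}W|_{\mathrm{red}}=0$, merely supply details that the paper's four-line proof leaves implicit.
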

\begin{proof}
The
assumption implies that 
$\mM_{(Q, \partial W)}^{\xi}(\vec{m})|_{V}$
and
$(r_{Q}^{\xi})^{-1}(V)$
(see (\ref{com:MQ}) for the definition of the 
map $r_Q^{\xi}$)
are $\mathbb{C}^{\ast}$-gerbes
over $M_{(Q, \partial W)}^{\xi}(\vec{m})|_{V}$
and
$(q_{Q}^{\xi})^{-1}(V)$
respectively. 
Since $\mM_{(Q, \partial W)}^{\xi}(\vec{m})|_{V}$
is a critical locus of the function 
$\tr W|_{(r_{Q}^{\xi})^{-1}(V)}$, 
it follows that 
$M_{(Q, \partial W)}^{\xi}(\vec{m})|_{V}$
is a critical locus of the function 
$\tr^{\xi}W$
on the smooth space 
$(q_Q^{\xi})^{-1}(V)$. 
Therefore the lemma holds. 
\end{proof}

\subsection{Wall-crossing of representations of quivers}
Suppose that $\vec{m} \in \Gamma_{Q, >0}$ is primitive, 
i.e. it is not divided by 
a positive integer greater than one. 
For each decomposition $\vec{m}=\vec{m}_1+\vec{m}_2$
for $\vec{m}_i \in \Gamma_{Q, >0}$, 
we set
\begin{align}\label{wall:quiver}
\wW_{(\vec{m}_1, \vec{m}_2)}
\cneq \left\{ \xi \in \hH^{\sharp V(Q)} : 
Z_{\xi}(\vec{m}_1) \in \mathbb{R}_{>0} 
Z_{\xi}(\vec{m}_2)\right\}. 
\end{align}
As $\vec{m}$ is primitive, 
the vectors $\vec{m}_1$, $\vec{m}_2$ are not proportional 
in $(\Gamma_{Q})_{\mathbb{Q}}$. 
Therefore $\wW_{(\vec{m}_1, \vec{m}_2)}$ is a real 
codimension one submanifold of $\hH^{\sharp V(Q)}$. 
Since the possible decomposition 
$\vec{m}=\vec{m}_1+\vec{m}_2$ is finite, 
the above real codimension one submanifolds are 
finite. 

The submanifolds (\ref{wall:quiver}) are the 
set of walls of $\xi$-stability conditions on 
the category of $Q$-representations, 
and chambers are connected components 
of the complement of walls in $\hH^{\sharp V(Q)}$. 
As is well-known, we have the wall-crossing phenomena: 
the moduli spaces $M_Q^{\xi}(\vec{m})$ are constant 
if $\xi$ lies on a chamber, but may change if 
$\xi$ crosses a wall. 
Note that as $\vec{m}$ is primitive, we 
have $M_Q^{\xi, s}(\vec{m})=M_Q^{\xi}(\vec{m})$
if $\xi$ lies on a chamber, hence it is a 
smooth variety. 

Let $W$ be a convergent super-potential of $Q$, and 
take $\xi^{\pm} \in \hH^{\sharp V(Q)}$ which 
lie on chambers. 
Then there is an analytic open 
subset $0 \in V \subset M_Q(\vec{m})$
and the diagram (see the diagram (\ref{dia:arrow}))
\begin{align}\label{dia:qflip}
\xymatrix{
M_{(Q, \partial W)}^{\xi^+}(\vec{m})|_{V} \ar[rd]_-{q_{(Q, \partial W)}^{\xi^+}} & & 
M_{(Q, \partial W)}^{\xi^-}(\vec{m})|_{V} 
\ar[ld]^-{q_{(Q, \partial W)}^{\xi^-}} 
\\
& M_{(Q, \partial W)}(\vec{m})|_{V}. &
}
\end{align}
By Lemma~\ref{MW:dcrit}, the following corollary 
immediately follows: 
\begin{cor}\label{cor:dia:qflip}
The diagram (\ref{dia:qflip}) 
is an analytic (generalized) d-critical flip, 
flop, MFS if the diagram
\begin{align}\label{dia:qflip2}
\xymatrix{
M_Q^{\xi^+}(\vec{m}) \ar[rd]_-{q_{Q}^{\xi^+}} & & 
\ar[ld]^-{q_{Q}^{\xi^-}}
M_Q^{\xi^-}(\vec{m}) \\
& M_Q(\vec{m}) &
}
\end{align}
is a (generalized) flip, flop, MFS, respectively. 
\end{cor}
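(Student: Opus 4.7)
The plan is to verify the conditions of Definition~\ref{defi:dflip} point by point for
the diagram (\ref{dia:qflip}) by producing the required
$\pi^{\pm}$-relative d-critical charts directly from the diagram (\ref{dia:arrow}),
and then transporting the birational-geometric
hypothesis on the underlying algebraic diagram (\ref{dia:qflip2}) to the analytic local
picture near an arbitrary point.

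First I would note that since $\vec{m}$ is primitive and $\xi^{\pm}$ lie on chambers, no strictly
semistable $Q$-representations with dimension vector $\vec{m}$ exist, so
$M_Q^{\xi^{\pm},s}(\vec{m})=M_Q^{\xi^{\pm}}(\vec{m})$.
Hence Lemma~\ref{MW:dcrit} applies to both
$\xi^{+}$ and $\xi^{-}$, endowing $M_{(Q,\partial W)}^{\xi^{\pm}}(\vec{m})|_{V}$ with d-critical structures
$s^{\pm}=\tr^{\xi^{\pm}} W+(d\tr^{\xi^{\pm}} W)^{2}$, and exhibiting the two rows of
the diagram (\ref{dia:arrow}) as $q_{(Q,\partial W)}^{\xi^{\pm}}$-relative d-critical charts.

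Next, fix an arbitrary point $p \in M_{(Q,\partial W)}(\vec{m})|_V$. I would shrink $V$ to an
analytic open neighborhood of $p$ (still called $V$) and set
\begin{align*}
Y^{\pm}\cneq (q_Q^{\xi^{\pm}})^{-1}(V), \quad Z \cneq V, \quad
f^{\pm}\cneq q_Q^{\xi^{\pm}}|_{Y^{\pm}}, \quad
w^{\pm}\cneq \tr^{\xi^{\pm}} W, \quad g\cneq \overline{\tr}(W),
\end{align*}
with $j\colon U \hookrightarrow Z$ the inclusion of a small open $U \subset V$ around $p$, and
$i^{\pm}\colon (\pi^{\pm})^{-1}(U)\hookrightarrow Y^{\pm}$ the natural closed immersions coming
from the left-hand horizontal arrows of (\ref{dia:arrow}). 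Since
$M_Q^{\xi^{\pm},s}(\vec{m})=M_Q^{\xi^{\pm}}(\vec{m})$ is smooth and $Y^{\pm}$ is an analytic open
subset of it, $Y^{\pm}$ is smooth (or empty); the maps $f^{\pm}$ inherit
$f^{\pm}_{\ast}\oO_{Y^{\pm}}=\oO_{Z}$ from the analogous property of the GIT morphisms
$q_Q^{\xi^{\pm}}$. This produces the common datum $(Z,g,j)$ required in (\ref{relchart2}).

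It then remains to verify that the induced diagram
$Y^{+} \stackrel{f^{+}}{\to} Z \stackrel{f^{-}}{\leftarrow} Y^{-}$ is a (generalized) flip, flop, or
MFS in the sense of Definition~\ref{defi:dia} and Definition~\ref{defi:generalized}, according to the
hypothesis on (\ref{dia:qflip2}). This is the step requiring the most care: the assumed
birational-geometric properties of (\ref{qflip2})---namely birationality/bimeromorphicity,
crepancy and relative (anti)-ampleness of $\pm K$, isomorphism in codimension one in the flop case,
or emptiness of $M_Q^{\xi^{-}}(\vec{m})$ in the MFS case---all pass to the preimages of an open
subset $V\subset M_Q(\vec{m})$. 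Relative ampleness is preserved by pull-back along the open
embedding $V \hookrightarrow M_Q(\vec{m})$; emptiness pulls back tautologically (giving the MFS
case); birational/codimension-one-isomorphism statements are local on the target. The main
potential obstacle is checking that the ampleness of $\pm K_{Y^{\pm}}$ relative to $f^{\pm}$ truly
survives analytic localization, but this follows because ampleness relative to a proper morphism
is a property that can be tested after base change to an analytic open neighborhood of each
image point, and $q_Q^{\xi^{\pm}}$ are projective. Once this is in place, the constructed data
satisfy verbatim the requirements of Definition~\ref{defi:dflip} at $p$, and since $p$ was
arbitrary, we conclude that (\ref{dia:qflip}) is an analytic (generalized) d-critical flip, flop,
or MFS, respectively.
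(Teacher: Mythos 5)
Your proposal is correct and follows essentially the same route as the paper, which simply observes that the corollary is immediate from Lemma~\ref{MW:dcrit}: the two rows of (\ref{dia:arrow}) already furnish the required $q_{(Q,\partial W)}^{\xi^{\pm}}$-relative d-critical charts with common $(Z,g,j)=(V,\overline{\tr}W, \text{inclusion})$, and the middle diagram is the restriction of (\ref{dia:qflip2}) over $V$. Your extra verification that the (generalized) flip/flop/MFS conditions survive restriction over an analytic open subset of the base is a sensible elaboration the paper leaves implicit; just note the small slip that $U$ in the relative chart should be an open subset of $A=M_{(Q,\partial W)}(\vec{m})|_{V}$ sitting as a closed analytic subspace of $Z=V$, not an open subset of $V$ itself.
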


\begin{rmk}\label{rmk:MFS:quiver}
If 
$M_Q^s(\vec{m}) \neq \emptyset$, 
which follows if the condition in Theorem~\ref{thm:criterion}
is satisfied, then the diagram (\ref{dia:qflip2})
is a birational map of smooth varieties
$M_Q^{\xi^+}(\vec{m}) \dashrightarrow 
M_Q^{\xi^-}(\vec{m})$. 
Therefore the diagram (\ref{dia:qflip2}) is 
a (generalized) MFS only if 
$M_Q^{s}(\vec{m}) = \emptyset$ holds. 
\end{rmk}

\section{Analytic neighborhood theorem of wall-crossing in CY 3-folds}
\label{sec:anatheorem}
In this section, 
we give an analytic neighborhood theorem
for wall-crossing diagrams in CY 3-folds. 
This theorem describes 
above diagrams 
in term of wall-crossing diagrams for 
quivers with convergent super-potentials. 
A similar result was already proved in~\cite{Todstack} 
in the case of moduli spaces of semistable sheaves.
We will see that 
the same argument applies 
to the case of Bridgeland semistable objects, 
if we assume the existence of their
good moduli spaces (see Theorem~\ref{thm:goodmoduli}). 
In this section, we always assume that 
$X$ is a smooth projective CY 3-fold.

\subsection{Ext-quivers}\label{subsec:Ext}
For a smooth projective CY 3-fold $X$, 
let us take a collection of objects in the 
derived category
\begin{align}\notag
E_{\bullet}=(E_1, E_2, \ldots, E_k), \ 
E_i \in D^b(X). 
\end{align}
Here we recall the 
notion of Ext-quiver associated with the 
collection $E_{\bullet}$
and the 
construction of their
convergent super-potentials. 
We will apply the construction here to 
the collection of objects $E_{\bullet}$
associated with polystable objects. 

For each $1\le i, j \le k$, 
we fix a finite subset
\begin{align}\notag
E_{i, j} \subset \Ext^1(E_i, E_j)^{\vee}
\end{align}
giving a basis of $\Ext^1(E_i, E_j)^{\vee}$. 
Let the quiver $Q_{E_{\bullet}}$ 
be defined as follows. 
The set of vertices and edges are given by 
\begin{align*}
V(Q_{E_{\bullet}}) \cneq \{1, 2, \ldots, k\}, \ 
E(Q_{E_{\bullet}}) \cneq \coprod_{1\le i, j \le k}
E_{i, j}. 
\end{align*}
The maps $s, t \colon 
E(Q_{E_{\bullet}}) \to V(Q_{E_{\bullet}})$
are given by 
\begin{align*}
s|_{E_{i, j}} \cneq i, \ t|_{E_{i, j}} \cneq j. 
\end{align*}
The quiver $Q_{E_{\bullet}}$ is called the 
\textit{Ext quiver} associated with the collection $E_{\bullet}$. 
Note that we have $\mathbb{E}_{i, j}=\Ext^1(E_i, E_j)^{\vee}$,  
where $\mathbb{E}_{i, j}$ is defined as in (\ref{Eab}). 

For a map of sets
\begin{align*}
\psi \colon \{1, \ldots, n+1\} \to \{1, \ldots, k\}
\end{align*}
let
$m_n$ be the graded linear maps
\begin{align}\notag
m_n \colon 
\Ext^{\ast}(E_{\psi(1)}, E_{\psi(2)}) \otimes 
&\Ext^{\ast}(E_{\psi(2)}, E_{\psi(3)}) \otimes
\cdots \\
\cdots \otimes 
&\Ext^{\ast}(E_{\psi(n)}, E_{\psi(n+1)})  
\label{m_n}
\to \Ext^{\ast+2-n}(E_{\psi(1)}, E_{\psi(n+1)})
\end{align}
which give a minimal $A_{\infty}$-category structure 
on the dg-category generated by $(E_1, \ldots, E_k)$. 
Since $X$ is a CY 3-fold, we can take 
the above 
$A_{\infty}$-structure (\ref{m_n})
to be cyclic (see~\cite{MR1876072}), i.e. 
for a map 
$\psi$ as above with 
$\psi(1)=\psi(n+1)$, and elements
\begin{align*}
a_i \in \Ext^{\ast}(E_{\psi(i)}, E_{\psi(i+1)}), \ 1\le i\le  n
\end{align*}
we have the relation
\begin{align}\notag
(m_{n-1}(a_1, \ldots, a_{n-1}), a_{n})=(m_{n-1}(a_2, \ldots, a_{n}), a_1). 
\end{align}
Here
$(-, -)$ is the Serre duality pairing
\begin{align}\notag
(-, -) \colon 
\Ext^{\ast}(E_a, E_b) \times \Ext^{3-\ast}(E_b, E_a)
\to \Ext^3(E_a, E_a) \stackrel{\int_X \mathrm{tr}}{\to} \mathbb{C}.
\end{align} 
Let $W_{E_{\bullet}} \in \mathbb{C}\lkakko Q_{E_{\bullet}} \rkakko$
be defined by
\begin{align}\label{def:WE}
W_{E_{\bullet}} \cneq \sum_{n\ge 3}
\sum_{\begin{subarray}{c} 
\{1, \ldots, n+1 \} \stackrel{\psi}{\to} \{1, \ldots, k\} \\
\psi(1)=\psi(n+1)
\end{subarray}}
\sum_{e_i \in E_{\psi(i), \psi(i+1)}}
a_{\psi, e_{\bullet}} \cdot e_1 e_2 \ldots e_n. 
\end{align}
Here the coefficient $a_{\psi, e_{\bullet}}$ is given by 
\begin{align}\notag
a_{\psi, e_{\bullet}} \cneq 
\frac{1}{n}(m_{n-1}(e_1^{\vee}, e_2^{\vee}, \ldots, e_{n-1}^{\vee}), 
e_n^{\vee}). 
\end{align}
In the RHS, 
for $e \in E_{i, j}$
the element $e^{\vee} \in \Ext^1(E_i, E_j)$
is determined by 
$e^{\vee}(e)=1$ and 
$e^{\vee}(e')=0$ for any $e'  \in E_{i, j}$
with $e \neq e'$. 
By taking the Dolbeaut model in defining the $A_{\infty}$-products (\ref{m_n}), the result of~\cite[Lemma~4.1]{Todstack} (based on earlier 
works~\cite{MR1950958, JuTu})
shows that
\begin{align*}
W_{E_{\bullet}} \in \mathbb{C}\{ Q_{E_{\bullet}} \} \subset \mathbb{C}\lkakko Q_{E_{\bullet}} \rkakko .
\end{align*}
Here
$\mathbb{C}\{ Q_{E_{\bullet}} \}$
is given in Definition~\ref{def:CQ}. 
Therefore $W_{E_{\bullet}}$ determines 
a convergent super-potential of $Q_{E_{\bullet}}$.

A collection $E_{\bullet}$ is called
a \textit{simple collection}
if we have 
\begin{align}\label{Ext:simple}
\Ext^{\le 0}(E_i, E_j)=\mathbb{C} \cdot \delta_{ij}.
\end{align}
Here the RHS is 
concentrated on degree zero. 
In this case, the 
algebra $\mathbb{C} \lkakko Q_{E_{\bullet}} \rkakko/ 
(\partial W_{E_{\bullet}})$
gives a pro-representable hull of 
NC deformation functor associated with $E_{\bullet}$, 
developed in~\cite{Lau, Erik, Kawnc, BoBo}. 
By taking the tensor product with 
the universal object, we have an equivalence of 
categories (see~\cite[Corollary~6.7]{Todstack})
\begin{align}\label{equiv:NC}
\Phi_{E_{\bullet}} \colon 
\modu_{\rm{nil}}\mathbb{C} \lkakko Q_{E_{\bullet}} \rkakko/ 
(\partial W_{E_{\bullet}}) \stackrel{\sim}{\to}
\langle E_1, \ldots, E_k\rangle_{\rm{ex}}.
\end{align}
Here 
the LHS is the category of 
nilpotent $\mathbb{C} \lkakko Q_{E_{\bullet}} \rkakko/ 
(\partial W_{E_{\bullet}})$-modules, 
and $\langle -\rangle_{\rm{ex}}$ in the RHS 
is the extension closure. 
The above equivalence sends 
simple objects $S_i$ for $1\le i\le k$
corresponding to the vertex $i \in V(Q_{E_{\bullet}})$
to the object $E_i$. 

\subsection{Analytic neighborhood theorem}
We return to the situation in 
Section~\ref{sec:moduliCY3}. 
Namely we take
stability conditions
\begin{align}\label{take:stab}
\sigma=(Z, \aA) \in \Stab(X), \ 
\sigma^{\pm}=(Z^{\pm}, \aA^{\pm}) \in \Stab(X)
\end{align}
where 
$\sigma^{\pm}$ lie on adjacent chambers
whose closures contain $\sigma$.  
We also take 
a primitive element $v \in \Gamma_X$
with $\Im Z(v)>0$, 
the good moduli spaces of
semistable objects
$M_{\sigma}(v)$, $M_{\sigma^{\pm}}(v)$
and  
the wall-crossing diagram 
(\ref{wall:diagram}). 
As we mentioned in Subsection~\ref{subsec:msstable}, a 
closed point $p \in M_{\sigma}(v)$ corresponds to a 
$\sigma$-polystable object
$E \in \aA$. 
 The object $E$ is of the form
\begin{align}\label{polystable:E}
E=
\bigoplus_{i=1}^k V_i \otimes E_i
\end{align}
where $V_i$ is a finite dimensional vector space, each 
$E_i \in \aA$ is a $\sigma$-stable object
with $\arg Z(E_i)=\arg Z(E_j)$
and
$E_i \not\cong E_j$ for $i\neq j$, 
satisfying 
\begin{align}\label{sum:dimVi}
\sum_{i=1}^k \dim V_i \cdot \ch(E_i)=v.
\end{align}
Let $E_{\bullet}$ be the collection of 
$\sigma$-stable objects in (\ref{polystable:E})
\begin{align}\label{collect:poly}
E_{\bullet}=(E_1, E_2, \ldots, E_k).
\end{align}
Then $E_{\bullet}$ is a simple collection, i.e. 
it satisfies the condition (\ref{Ext:simple}). 
Let $Q_{E_{\bullet}}$ be the Ext-quiver
associated with the collection $E_{\bullet}$. 
We take data $\xi^{\pm} \in \hH^{k}$ as in 
(\ref{xi}) for the 
Ext-quiver $Q_{E_{\bullet}}$ by 
\begin{align}\label{xi+}
\xi^{\pm} \cneq (\xi^{\pm}_i)_{1\le i\le k}, \ 
\xi^{\pm}_i = Z^{\pm}(E_i), \ 1\le i\le k. 
\end{align}
Then we have the associated 
$\xi^{\pm}$-stability condition on
the abelian category of 
$Q_{E_{\bullet}}$-representations. 
Let $\vec{m}=(m_i)_{1\le i\le k}$ be the dimension vector of $Q_{E_{\bullet}}$-representations given by
\begin{align}\label{dimvec:m}
m_i=\dim V_i, \ 1\le i\le k
\end{align}
where $V_i$ is given in (\ref{polystable}). 
Note that 
as $v \in \Gamma_X$ is primitive, 
the vector $\vec{m} \in \Gamma_{Q_{E_{\bullet}}}$ is 
also primitive by the identity (\ref{sum:dimVi}). 

The following \textit{analytic neighborhood theorem}
describes the 
wall-crossing diagram 
in terms of representations of quivers with 
convergent super-potentials, analytic locally 
on the good moduli spaces: 
\begin{thm}\label{thm:Equiver}
For a primitive element 
$v \in \Gamma_X$ and 
stability conditions $\sigma$, $\sigma^{\pm}$
as in (\ref{take:stab}), 
let 
\begin{align}\label{wall:diagram2}
\xymatrix{
M_{\sigma^{+}}(v) \ar[rd]_-{q_M^+} & & \ar[ld]^-{q_M^-} M_{\sigma^{-}}(v) \\
&  M_{\sigma}(v).  &
}
\end{align}
be the wall-crossing diagram as in (\ref{wall:diagram}). 
For a closed point 
$p\in M_{\sigma}(v)$ corresponding to a $\sigma$-polystable 
object (\ref{polystable:E}),
let $Q=Q_{E_{\bullet}}$ be the associated 
Ext-quiver
and $W=W_{E_{\bullet}}$ the 
convergent super-potential as in (\ref{def:WE}). 
Then there exist
 analytic 
open neighborhoods 
\begin{align*}
p \in T \subset M_{\sigma}(v), \ 
0 \in V \subset M_{Q}(\vec{m})
\end{align*}
where $\vec{m}$ is the dimension vector (\ref{dimvec:m}), 
such that we have the commutative diagram of isomorphisms 
\begin{align}\label{dia:local}
\xymatrix{
M^{\xi^{\pm}}_{(Q, \partial W)}(\vec{m})|_{V} \ar[r]^-{\cong} 
\ar[d]_-{q_{(Q, \partial W)}^{\xi^{\pm}}} &
(q_M^{\pm})^{-1}(T) \ar[d]^-{q_M^{\pm}} \\
M_{(Q, \partial W)}(\vec{m})|_{V} \ar[r]^-{\cong} & T. 
}
\end{align}
Here the left vertical arrow is given in (\ref{dia:arrow}), 
the right vertical arrow is given 
in (\ref{wall:diagram2}) pulled back to
$T$. Moreover the top isomorphism 
preserves d-critical structures, where 
the d-critical structure on the LHS is given in Lemma~\ref{MW:dcrit}
and that on the RHS is given in Theorem~\ref{thm:CYdcrit}.
\end{thm}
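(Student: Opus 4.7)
The plan is to adapt the proof from \cite{Todstack}, which established the analogous statement for Gieseker semistable sheaves presented as GIT quotients, to the Bridgeland setting where the GIT picture is replaced by the general existence of good moduli spaces from Theorem~\ref{thm:goodmoduli}. The overall structure is to construct an analytic $G$-equivariant slice to the $G$-orbit of the polystable object $E = \bigoplus_i V_i \otimes E_i$ inside $\mM_\sigma(v)$, identify this slice with an analytic neighborhood of $0 \in \mathrm{Rep}_{(Q,\partial W)}(\vec m)$, and verify that the resulting identification transports $\sigma^\pm$-(semi)stability, d-critical structures, and good moduli quotients to their quiver counterparts.

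First I would invoke the analytic local structure theorem for good moduli spaces from \cite{AHLH} at the closed point of $\mM_\sigma(v)$ corresponding to $E$, whose stabilizer is precisely $G = \prod_i \GL(V_i)$. This produces an analytic $G$-equivariant slice $\Sigma$ through a $G$-fixed point $0$, with $[\Sigma/G] \to \mM_\sigma(v)$ étale near $E$, and inducing an analytic open neighborhood $\Sigma \sslash G \cong T$ of $p$ in $M_\sigma(v)$. Second, I would identify $\Sigma$ analytic locally with $\mathrm{Rep}_{(Q, \partial W)}(\vec m)$ by matching tangent spaces:
\begin{align*}
T_0 \Sigma \;=\; \bigoplus_{i, j} \Ext^1(E_i, E_j) \otimes \Hom(V_i, V_j) \;=\; T_0\, \mathrm{Rep}_Q(\vec m)
\end{align*}
by the definition of the Ext-quiver $Q = Q_{E_\bullet}$. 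The NC deformation equivalence (\ref{equiv:NC}), promoted from formal to analytic families via the convergence $W_{E_\bullet} \in \mathbb{C}\{Q_{E_\bullet}\}$ guaranteed by the Dolbeault-model estimate of \cite[Lemma 4.1]{Todstack}, then yields a $G$-equivariant analytic family of objects in $D^b(X)$ over a neighborhood of $0$ in $\mathrm{Rep}_Q(\vec m)$, classifying objects in the extension closure $\langle E_1,\ldots,E_k\rangle_{\rm ex}$; this family lies in $\mM$ precisely on the critical locus $\{ d(\tr W) = 0\}$ (by Remark~\ref{rmk:pW}) and provides the sought identification after shrinking to open neighborhoods $V \subset M_Q(\vec m)$ and $T \subset M_\sigma(v)$.

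Third, I would compare stability conditions. For points close to $0$ the parametrized object lies in $\langle E_1,\ldots,E_k\rangle_{\rm ex}$, and under $\Phi_{E_\bullet}$ the choice (\ref{xi+}) of $\xi^\pm_i = Z^\pm(E_i)$ makes $\xi^\pm$-(semi)stability on $Q$-representations agree with $\sigma^\pm$-(semi)stability on these objects. This identifies the open locus $(q_M^\pm)^{-1}(T)$ with $M^{\xi^\pm}_{(Q, \partial W)}(\vec m)|_V$, yielding the commutative diagram (\ref{dia:local}) after passing to analytic Hilbert quotients. For the d-critical structures, Theorem~\ref{thm:CYdcrit} identifies the d-critical structure on $M^s_\sigma(v)$ with the canonical one coming from the $(-1)$-shifted symplectic structure of \cite{PTVV}, which by the derived Darboux presentation of \cite{MR3352237} is analytic locally the critical locus of the holomorphic Chern--Simons functional built from the cyclic $A_\infty$-structure; this functional matches $\tr W$ under our identification by construction of the coefficients $a_{\psi, e_\bullet}$ in (\ref{def:WE}), so the upper isomorphism of (\ref{dia:local}) preserves d-critical structures by Lemma~\ref{MW:dcrit}.

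The main obstacle I anticipate is the first step: producing an analytic Luna-type slice theorem for the non-GIT good moduli space $p_M$. In \cite{Todstack} one could invoke classical Luna on the GIT presentation; here this input is replaced by the analytic local structure results accompanying Theorem~\ref{thm:goodmoduli} from \cite{AHLH}, and one must ensure that the slice they produce is compatible enough with the $A_\infty$ deformation theory to glue with the NC deformation family of the second step. Once these ingredients are aligned, the comparison of stability and the d-critical structure match are formal consequences of the NC equivalence (\ref{equiv:NC}) and the convergence of $W_{E_\bullet}$, and no new input beyond what is already present in \cite{Todstack} is required.
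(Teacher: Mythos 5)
Your proposal follows essentially the same route as the paper's proof: the convergent cyclic $A_\infty$/NC-deformation family over $\mathrm{Rep}_Q(\vec m)$ provides the identification (the paper packages this as the map $I_\ast$ from the Maurer--Cartan locus to $\mM$, a smooth morphism of relative dimension zero, rather than first producing a slice and then matching it), the existence of good moduli spaces from \cite{AHLH} together with the \'etale slice theorem of \cite{AHR} supplies the descent to $T$, the equivalence $\Phi_{E_\bullet}$ handles the stability comparison, and the cyclicity of the $A_\infty$-structure gives the match of d-critical structures. The only cosmetic difference is the order of operations (slice first versus map first), so this is the paper's argument in substance.
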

The above result is proved in~\cite[Theorem~7.7]{Todstack}
in the case of one dimensional (twisted) semistable sheaves, 
and the same argument also applies
if we assume the existence of good moduli spaces for
Bridgeland semistable objects 
(see Theorem~\ref{thm:goodmoduli}). 
In Subsection~\ref{subsec:outline}, we will 
 just give an outline of the proof. 
By Corollary~\ref{cor:dia:qflip}
 and Theorem~\ref{thm:Equiver}, we have the following: 
\begin{cor}\label{cor:analytic}
In the situation of Theorem~\ref{thm:Equiver}, the diagram 
(\ref{wall:diagram2})
is a d-critical (generalized) flip, flop, MFS 
at $p\in M_{\sigma}(v)$ 
(which corresponds to a polystable object (\ref{polystable:E}))
if the diagram
\begin{align}\label{dia:qflip3}
\xymatrix{
M_Q^{\xi^+}(\vec{m}) \ar[rd]_-{q_{Q}^{\xi^+}} & & 
\ar[ld]^-{q_{Q}^{\xi^-}}
M_Q^{\xi^-}(\vec{m}) \\
& M_Q(\vec{m}) &
}
\end{align}
for the Ext-quiver $Q=Q_{E_{\bullet}}$, 
dimension vector $\vec{m}$ in (\ref{dimvec:m})
and $\xi^{\pm}$ as in (\ref{xi+}) is 
a (generalized) flip, flop, MFS, respectively. 
\end{cor}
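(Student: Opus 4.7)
The plan is to combine Theorem~\ref{thm:Equiver} with Corollary~\ref{cor:dia:qflip} and observe that being a d-critical (generalized) flip, flop, or MFS at a point is a condition which transports under isomorphisms of d-critical loci over isomorphic bases.

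First, I would apply Theorem~\ref{thm:Equiver} at the closed point $p \in M_{\sigma}(v)$ corresponding to the polystable object (\ref{polystable:E}). This produces analytic open neighborhoods $p \in T \subset M_{\sigma}(v)$ and $0 \in V \subset M_Q(\vec{m})$, together with the commutative diagram (\ref{dia:local}) of isomorphisms. Crucially, the top horizontal isomorphism identifies the d-critical structure on $(q_M^{\pm})^{-1}(T)$ coming from Theorem~\ref{thm:CYdcrit} with the d-critical structure on $M^{\xi^{\pm}}_{(Q,\partial W)}(\vec{m})|_V$ coming from Lemma~\ref{MW:dcrit}. In particular, the wall-crossing diagram (\ref{wall:diagram2}) restricted over $T$ is isomorphic, as a diagram of d-critical loci over a base, to the quiver wall-crossing diagram (\ref{dia:qflip}) restricted over $V$.

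Next, I would apply Corollary~\ref{cor:dia:qflip} under the hypothesis that (\ref{dia:qflip3}) is a (generalized) flip, flop, or MFS. That corollary furnishes, for every point of $M_{(Q,\partial W)}(\vec{m})|_V$, and in particular for the point $0$, $q^{\xi^{\pm}}_{(Q,\partial W)}$-relative d-critical charts of the form required by Definition~\ref{defi:dflip} whose underlying diagram $Y^+ \to Z \leftarrow Y^-$ is a (generalized) flip, flop, or MFS. Concretely, one may take the charts coming from the diagram (\ref{dia:arrow}), where the ambient smooth spaces $(q_Q^{\xi^{\pm}})^{-1}(V)$ together with the trace functions $\tr^{\xi^{\pm}}W$ witness the d-critical structure.

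Finally, I would transport these relative d-critical charts along the isomorphisms of diagram (\ref{dia:local}). Since both the d-critical structures and the morphisms to the base are preserved by that diagram, the resulting data constitute $q_M^{\pm}$-relative d-critical charts at $p \in M_{\sigma}(v)$ whose underlying smooth diagrams are exactly $Y^+ \to Z \leftarrow Y^-$ from the quiver side. By Definition~\ref{defi:dflip}, this says precisely that (\ref{wall:diagram2}) is a d-critical (generalized) flip, flop, or MFS at $p$. There is no genuine obstacle here: all the substantive work is packaged in Theorem~\ref{thm:Equiver} and in the identification of d-critical structures in Lemma~\ref{MW:dcrit}, so the proof reduces to checking that the definitions match up through the local isomorphism, which is essentially formal.
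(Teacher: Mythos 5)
Your proposal is correct and follows exactly the route the paper takes: the paper states this corollary with no written proof beyond ``By Corollary~\ref{cor:dia:qflip} and Theorem~\ref{thm:Equiver}, we have the following,'' and your argument is precisely the fleshed-out combination of those two results, transporting the relative d-critical charts of Lemma~\ref{MW:dcrit} through the d-critical-structure-preserving isomorphisms of diagram (\ref{dia:local}). No gaps.
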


Using Corollary~\ref{cor:analytic}, 
we give the simplest case of d-critical flips
and
flops in the following example: 
\begin{exam}\label{subsec:dflop}
In the situation of Corollary~\ref{cor:analytic}, 
suppose that $p \in M_{\sigma}(v)$ 
corresponds to a
$\sigma$-polystable object $E$
of the form
\begin{align*}
E=E_1 \oplus E_2, \ E_1 \not\cong E_2
\end{align*}
 where 
$E_1$, $E_2$ are $\sigma$-stable 
with $\arg Z(E_1)=\arg Z(E_2)$. 
Note that we have
\begin{align*}
\chi(E_1, E_2) &\cneq \sum_{ i \in \mathbb{Z}}
(-1)^i \dim \Ext^i(E_1, E_2) \\
&=\dim \Ext^1(E_2, E_1)-\dim \Ext^1(E_1, E_2)
\end{align*}
and the LHS is computed by the Riemann-Roch theorem. 
The Ext-quiver $Q$ 
associated with $E_{\bullet}=(E_1, E_2)$
has 
two vertices, $V(Q)=\{1, 2\}$. 
Let us set
\begin{align*}
V^+ &=\Ext^1(E_1, E_2), \ 
V^-=\Ext^1(E_2, E_1), \\ 
U &=\Ext^1(E_1, E_1) \oplus \Ext^1(E_2, E_2).
\end{align*}
The stack of $Q$-representations of dimension vector 
$\vec{m}=(1, 1)$ is given by
\begin{align*}
\mM_Q(\vec{m})=
[(V^+ \times V^-)/(\mathbb{C}^{\ast})^2] \times U. 
\end{align*}
Here 
the action of 
$(t_1, t_2) \in (\mathbb{C}^{\ast})^2$ on 
$(\vec{x}, \vec{y}) \in V^+ \times V^-$ is given by 
\begin{align*}
(t_1, t_2)
\cdot (\vec{x}, \vec{y})=
(t_1 t_2^{-1} \cdot \vec{x}, t_1^{-1} t_2 \cdot \vec{y}).
\end{align*}
We take $\sigma^{\pm}$ in (\ref{take:stab})
so that 
\begin{align*}
\arg Z^{+}(E_1)>\arg Z^{+}(E_2), \ 
\arg Z^{-}(E_1)<\arg Z^{-}(E_2)
\end{align*}
hold. 
It is easy to see that 
for a point 
$(\vec{x}, \vec{y}, \vec{u}) \in  
\mM_Q(\vec{m})$, 
it is 
$\xi^+$-(semi)stable if and only if $\vec{x} \neq 0$, 
$\xi^-$-(semi)stable if and only if $\vec{y} \neq 0$. 
Therefore the moduli spaces of $\xi^{\pm}$-stable 
$Q$-representations are given by
\begin{align*}
&M_Q^{\xi^{+}}(\vec{m})=
\mathrm{Tot}_{\mathbb{P}(V^+)}(\oO_{\mathbb{P}(V^+)}(-1) \otimes
V^-) \times U, \\ 
&M_Q^{\xi^{-}}(\vec{m})=
\mathrm{Tot}_{\mathbb{P}(V^-)}
(\oO_{\mathbb{P}(V^-)}(-1) \otimes
V^+) \times U.
\end{align*}
It follows that the diagram (\ref{dia:qflip3}) is a 
standard toric flip (resp.~flop) 
given in~Example~\ref{exam:toric}
if 
$\chi(E_1, E_2)<0$ (resp.~$\chi(E_1, E_2)=0$).
Therefore by Corollary~\ref{cor:analytic}, the  
the diagram (\ref{wall:diagram2}) is an analytic d-critical 
flip if $\chi(E_1, E_2)<0$ (resp.~flop if $\chi(E_1, E_2)=0$) 
at the point $p=[E_1 \oplus E_2] \in M_{\sigma}(v)$. 
\end{exam}

\subsection{Outline of the proof of Theorem~\ref{thm:Equiver}}
\label{subsec:outline}
\begin{proof}
Let us take a point $p \in M_{\sigma}(v)$ 
which corresponds to 
a $\sigma$-polystable object $E$ as in 
(\ref{polystable:E}). 
Note that for the Ext-quiver
$Q=Q_{E_{\bullet}}$
associated with the collection (\ref{collect:poly}), we have 
\begin{align*}
\mathrm{Rep}_Q(\vec{m})=\Ext^1(E, E), \ 
G \cneq \prod_{i=1}^k \GL(V_i)=\Aut(E)
\end{align*}
where $\vec{m}$ is the dimension vector (\ref{dimvec:m}). 
Under the above identifications, the 
$G$-action on $\mathrm{Rep}_Q(\vec{m})$ is compatible 
with the conjugate $\Aut(E)$-action on $\Ext^1(E, E)$. 

Let $\eE^{\bullet}_i \to E_i$ be a resolution of $E_i$ by vector bundles, 
and set 
$\eE^{\bullet}=\oplus_{i=1}^k V_i \otimes \eE_i^{\bullet}$. 
We have linear maps of degree $1-n$
\begin{align}\label{linear:I}
I_n \colon \Ext^{\ast}(E, E)^{\otimes n} \to 
\mathfrak{g}_{\eE_{\bullet}}^{\ast} \cneq
A^{0, \ast}(\hH om^{\ast}(\eE^{\bullet}, \eE^{\bullet}))
\end{align}
giving a $A_{\infty}$ quasi-isomorphism 
between the minimal 
$A_{\infty}$-algebra 
$\Ext^{\ast}(E, E)$ and
the Dolbeaut dg Lie algebra $\mathfrak{g}_{\eE_{\bullet}}^{\ast}$. 
 For $x \in \Ext^1(E, E)$, the formal fum
 \begin{align}\label{I:sum}
 I_{\ast}(x) \cneq \sum_{n\ge 1}I_n(x, \ldots, x)
 \end{align}
 has a convergent radius
 (see~\cite[Lemma~4.1]{Todstack}).
Let $\pi_Q \colon \mathrm{Rep}_Q(\vec{m}) \to M_Q(\vec{m})$ be the 
natural map to the GIT quotient,
and 
take a sufficiently small analytic open subset 
$0 \in V \subset M_Q(\vec{m})$. 
The infinite sum (\ref{I:sum})
determines an $\Aut(E)$-equivariant 
analytic map
\begin{align}\label{Iast0}
I_{\ast} \colon \pi_Q^{-1}(V) \to \widehat{\mathfrak{g}}_{\eE_{\bullet}}^1.
\end{align}
Here $\widehat{\mathfrak{g}}_{\eE_{\bullet}}^1$
is a certain Sobolev completion of 
$\mathfrak{g}_{\eE_{\bullet}}^1$
(see~\cite[Lemma~5.1]{Todstack}). 
On the open subset 
$\pi_Q^{-1}(V) \subset \Ext^1(E, E)$, the Mauer-Cartan 
locus of the minimal $A_{\infty}$-algebra
$\Ext^{\ast}(E, E)$
is given by the critical 
locus of 
the analytic function
\begin{align*}
\tr W \colon \pi_Q^{-1}(V) \to \mathbb{C}
\end{align*}
given
in (\ref{tr:W})
for 
the convergent super-potential 
$W=W_{E_{\bullet}}$ defined as in (\ref{def:WE}). 
Then the restriction of the map (\ref{Iast0}) 
to the Mauer-Cartan locus 
determines a smooth 
 morphism 
 of analytic stacks 
 of relative dimension zero (see~\cite[Proposition~4.3]{Todstack})
 \begin{align}\label{Iast}
 I_{\ast} \colon \mM_{(Q, \partial W)}(\vec{m})|_{V}. 
 \to \mM
 \end{align}
Here $\mM$ is the moduli stack of complexes given in 
Subsection~\ref{subsec:moduli:general}. 

 By shrinking $V$ if necessary, the 
image of the above morphism lies in 
the open substack $\mM_{\sigma}(v) \subset \mM$.
Let $p_M \colon \mM_{\sigma}(v) \to M_{\sigma}(v)$ be the 
morphism to the good moduli space. 
Then the argument of~\cite[Proposition~5.4]{Todstack}
shows that the map (\ref{Iast})
 induces the commutative diagram of isomorphisms 
\begin{align}\label{Iast:isom}
\xymatrix{
\mM_{(Q, \partial W)}(\vec{m})|_{V} \ar[r]_-{\cong}^-{I_{\ast}}
\ar[d]_-{p_{Q}} &
p_M^{-1}(T) \ar[d]^-{p_M} \\
M_{(Q, \partial W)}(\vec{m})|_{V} \ar[r]_-{\cong} & T
}
\end{align}
 for some analytic open neighborhood 
 $p \in T \subset M_{\sigma}(\beta, n)$. 
Here in \textit{loc.~cit.~}, 
the 
claim is stated for moduli stacks of semistable sheaves
and their good moduli spaces. 
But the argument can be generalized to the 
case of Bridgeland semistable objects, as 
the explicit construction of good moduli spaces
are not needed in the proof of \textit{loc.~cit.~}
The properties we used for the good moduli spaces are 
their existence and the \'etale slice theorem. 
For the Bridgeland semistable objects, 
the former is given in~\cite{AHLH} and the latter 
for the map $p_M \colon \mM_{\sigma}(v) \to M_{\sigma}(v)$
is given 
in~\cite{AHR}. 

 The diagram (\ref{Iast:isom})
in particular implies the isomorphism
\begin{align}\label{I:fiber}
I_{\ast} \colon p_{Q}^{-1}(0) \stackrel{\cong}{\to}
p_M^{-1}(p).
\end{align}
We show that 
the above isomorphism restricts to the isomorphisms
\begin{align}\label{I:restrict}
I_{\ast} \colon p_{Q}^{-1}(0) \cap 
\mM^{\xi^{\pm}}_{(Q, \partial W)}(\vec{m})|_{V} 
\stackrel{\cong}{\to} p_M^{-1}(p) \cap 
\mM_{\sigma^{\pm}}(v). 
\end{align}
Indeed if the above isomorphism holds, then the 
argument of~\cite[Theorem~6.8]{Todstack} shows 
that, after shrinking $V$, $T$ if necessary, we have the isomorphism
\begin{align*}
I_{\ast} \colon 
\mM^{\xi^{\pm}}_{(Q, \partial W)}(\vec{m})|_{V} 
\stackrel{\cong}{\to}
p_M^{-1}(T) \cap 
\mM_{\sigma^{\pm}}(v). 
\end{align*}
By taking the associated isomorphism on good moduli spaces, we obtain 
the desired diagram (\ref{isom:weak}). 
The comparison of d-critical structures follow from 
the argument of~\cite[Appendix~A]{TodGV}. 

We show the isomorphism (\ref{I:restrict}).
Note that $\mathbb{C}$-valued points of 
$p_{Q}^{-1}(0)$ consist
of nilpotent $Q$-representations with 
dimension vector $\vec{m}$, 
and those of $p_M^{-1}(p)$ consist of 
objects in the extension closure $\langle E_1, \ldots, E_k \rangle_{\rm{ex}}$
in $\aA$
with Chern character $v$. 
Then the isomorphism (\ref{I:restrict}) follows from 
Lemma~\ref{lem:category}
below. 
\end{proof}

\begin{lem}\label{lem:category}
The isomorphism 
(\ref{I:fiber}) is induced by 
the equivalence of categories 
given in (\ref{equiv:NC})
\begin{align}\label{Istar:eq}
\Phi_{E_{\bullet}} \colon 
\modu_{\rm{nil}}\mathbb{C}[[Q]]/(\partial W) \stackrel{\sim}{\to}
\langle E_1, \ldots, E_k \rangle_{\rm{ex}}.
\end{align}
Under the above equivalence, 
a nilpotent 
$Q$-representation
$\mathbb{V}$ is $\xi^{\pm}$-semistable 
if and only if $\Phi_{E_{\bullet}}(\mathbb{V})$ is 
$\sigma^{\pm}$-semistable in $\aA$. 
\end{lem}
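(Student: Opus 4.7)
The plan is to identify the map $I_\ast$ from (\ref{I:fiber}) with the NC-deformation equivalence $\Phi_{E_{\bullet}}$ of (\ref{Istar:eq}), and then to deduce the equivalence of $\xi^\pm$- and $\sigma^\pm$-semistabilities from the fact that the two central charges match on simples.

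First, I would observe that $\mathbb{C}$-points of $p_Q^{-1}(0)$ correspond bijectively to nilpotent $\mathbb{C}\lkakko Q \rkakko/(\partial W)$-modules of dimension vector $\vec{m}$, by the description of the fiber recalled in Section~\ref{sec:quiver} together with Remark~\ref{rmk:pW}. On such a module, encoded by a Maurer--Cartan element $x \in \Ext^1(E,E)$ of the minimal cyclic $A_\infty$-algebra $\Ext^\ast(E,E)$, the map $I_\ast$ of (\ref{Iast0}) outputs the twisted complex $(\eE^\bullet, \overline{\partial}+I_\ast(x))$ in $D^b(X)$, where the $I_n$ are the components of the $A_\infty$-quasi-isomorphism to the Dolbeault model $\mathfrak{g}_{\eE_\bullet}^\ast$ in (\ref{linear:I}). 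On the other hand the equivalence $\Phi_{E_{\bullet}}$ is defined by tensoring a module over the pro-representable hull $\mathbb{C}\lkakko Q \rkakko/(\partial W)$ with the universal NC family of $E_\bullet$, which by~\cite[Section~6]{Todstack} is built from exactly the same $I_n$'s. Comparing the two constructions term by term in the twisted-complex realization gives the first assertion of the lemma.

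Next I would verify the semistability correspondence. By the choice (\ref{xi+}) the two central charges agree on simple objects: $Z_{\xi^\pm}(S_i) = \xi_i^\pm = Z^\pm(E_i) = Z^\pm(\Phi_{E_{\bullet}}(S_i))$. Since $\Phi_{E_{\bullet}}$ is an exact equivalence of abelian categories sending $S_i$ to $E_i$, this identity propagates by additivity to every nilpotent $\mathbb{V}$ and every sub-representation thereof. Consequently a proper sub-representation $\mathbb{V}' \subset \mathbb{V}$ destabilizes $\mathbb{V}$ for $\xi^\pm$ if and only if $\Phi_{E_{\bullet}}(\mathbb{V}') \subset \Phi_{E_{\bullet}}(\mathbb{V})$ destabilizes for $\sigma^\pm$ inside $\langle E_1,\ldots,E_k \rangle_{\rm{ex}}$. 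One direction of the lemma is now immediate. For the other direction I would need that every $\sigma^\pm$-destabilizing subobject $F \subset \Phi_{E_{\bullet}}(\mathbb{V})$ in the ambient heart $\aA^\pm$ already lies in $\langle E_1,\ldots,E_k \rangle_{\rm{ex}}$, so that it is of the form $\Phi_{E_{\bullet}}(\mathbb{V}')$ for some sub-representation $\mathbb{V}' \subset \mathbb{V}$.

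The main obstacle is precisely this last step: ruling out ``exotic'' destabilizing subobjects in $\aA^\pm$ having no quiver-level counterpart. The strategy is to shrink the analytic open $V \subset M_Q(\vec{m})$ so that any $\sigma$-semistable constituent of any object parametrized by $\pi_Q^{-1}(V)$ has phase arbitrarily close to $\arg Z(E)$; at the wall $\sigma$ the phase must then actually equal $\arg Z(E)$, and the Jordan--H\"older factors are forced to lie in $\{E_1,\ldots,E_k\}$ by the description of the polystable object corresponding to $p$. This wall-locality argument is carried out in~\cite[Theorem~6.8]{Todstack} in the sheaf setting; in our Bridgeland setting it goes through verbatim, relying only on the existence of good moduli spaces~\cite{AHLH} and on the \'etale slice theorem for $p_M$~\cite{AHR}, both of which were already invoked to establish (\ref{Iast:isom}). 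Combined with the identification of $I_\ast$ with $\Phi_{E_\bullet}$ from the first step, this yields the lemma.
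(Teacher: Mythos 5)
Your overall route is the same as the paper's: the paper disposes of both halves of the lemma by citing \cite[Theorem~6.8]{Todstack} (compatibility of $I_{\ast}$ with $\Phi_{E_{\bullet}}$ via the components $I_n$ of the $A_\infty$-quasi-isomorphism) and \cite[Lemma~7.8]{Todstack} (preservation of stability), and your first two steps reconstruct exactly those arguments. The identification of $\mathbb{C}$-points of $p_Q^{-1}(0)$ with nilpotent modules, the twisted-complex description of $I_{\ast}$, and the matching of central charges on simples are all correct and are what the cited references do.

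The one place where your justification misfires is the final step, ruling out ``exotic'' destabilizers. Shrinking $V$ cannot do this job: the lemma concerns only the fiber $p_Q^{-1}(0)$, whose points already correspond exactly to objects of $\langle E_1, \ldots, E_k \rangle_{\rm{ex}}$ (all of phase exactly $\arg Z(E)$, no limiting argument needed), whereas a potential destabilizing subobject $A \subset F = \Phi_{E_{\bullet}}(\mathbb{V})$ lives in the heart $\aA^{\pm}$ of $D^b(X)$ and is in no way parametrized by $\pi_Q^{-1}(V)$, so no choice of $V$ constrains it. The correct lever is the hypothesis that $\sigma^{\pm}$ lie on chambers \emph{adjacent} to $\sigma$: since $F$ is $\sigma$-semistable of phase $\phi = \arg Z(E)$, the maximal $\sigma^{\pm}$-destabilizing subobject of $F$ (equivalently, each $\sigma^{\pm}$-HN factor) is itself $\sigma$-semistable of phase $\phi$; being a subobject of $F$ in the finite-length abelian category of $\sigma$-semistable objects of phase $\phi$, its Jordan--H{\"o}lder factors form a sub-multiset of those of $F$, hence lie in $\{E_1, \ldots, E_k\}$, so $A \in \langle E_1, \ldots, E_k \rangle_{\rm{ex}}$ and corresponds to a subrepresentation $\mathbb{V}' \subset \mathbb{V}$. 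With that substitution (which is the actual content of \cite[Lemma~7.8]{Todstack}), your argument closes.
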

\begin{proof}
The compatibility of $I_{\ast}$ with 
$\Phi_{E_{\bullet}}$ is due 
to~\cite[Theorem~6.8]{Todstack}, and the preservation 
of the stability follows from 
the argument of~\cite[Lemma~7.8]{Todstack}
without any modification.
\end{proof}

\section{Representations of symmetric and extended quivers}
\label{sec:repsym}
By Corollary~\ref{cor:analytic}, 
in order to see whether a given 
wall-crossing diagram in a CY 3-fold is a d-critical flip or flop, 
it is enough to 
see whether a wall-crossing diagram 
in the Ext-quiver
is a flip or flop. 
In this section, we 
study the latter problem in detail 
in the case of 
symmetric quivers and 
their extended version.
The results in this section will be applied to 
geometric situations in later sections. 

\subsection{Some general facts on representations of quivers}
Here 
we give some general facts on 
moduli spaces of representations of quivers. 
Below, 
we use the notation in Section~\ref{sec:quiver}. 
Let $Q$ be a quiver, 
$\vec{m} \in \Gamma_{Q, >0}$ a dimension vector
of $Q$, and $V_i$ for each $i \in V(Q)$
is a vector space with 
dimension $m_i$.  
As in (\ref{mor:coarse}), we have the 
moduli stack 
of 
$Q$-representations
with dimension vector $\vec{m}$
\begin{align}\label{stack:MQ}
\mM_Q(\vec{m})=\left[ \prod_{e \in E(Q)} \Hom(V_{s(e)}, V_{t(e)})
/G  \right], \ 
G=\prod_{i \in V(Q)} \GL(V_i)
\end{align}
and its 
good moduli space 
$\mM_Q(\vec{m}) \to M_Q(\vec{m})$. 
Let $\xi \in \hH^{\sharp V(Q)}$ be a choice 
of a stability condition 
as in (\ref{xi}), 
and $M_Q^{\xi}(\vec{m})$ the 
good moduli space of $\xi$-semistable representations.
As in (\ref{com:MQ}),
we have the natural morphism
\begin{align}\label{nat:q_Q}
q_Q^{\xi} \colon M_Q^{\xi}(\vec{m}) \to M_Q(\vec{m}).
\end{align} 
Let $M_Q^s(\vec{m}) \subset M_Q(\vec{m})$ be
the simple part. 
If $M_Q^s(\vec{m}) \neq \emptyset$, then 
the morphism (\ref{nat:q_Q}) is always birational. 

A criterion of the condition 
$M_Q^s(\vec{m}) \neq \emptyset$
is given in~\cite{MR958897}. 
In order to state this, we prepare some terminology. 
 A full subquiver $Q' \subset Q$ is 
said to be \textit{strongly connected}
if each
 couple from its vertex set belongs to an oriented cycle. 
For $\vec{m} \in \Gamma_Q$, let 
$\mathrm{supp}(\vec{m})$ be
the set of $i \in V(Q)$ with $m_i \neq 0$. 
Note that $\mathrm{supp}(\vec{m})$ is regarded as a full subquiver 
of $Q$. 
We also use the pairing 
on $\Gamma_Q$
defined by
\begin{align}\label{pairing}
\langle \vec{m}, \vec{m}'\rangle 
\cneq \sum_{i\in V(Q)} m_i \cdot m_i' -\sum_{e \in E(Q)}
m_{s(e)} \cdot m_{t(e)}'. 
\end{align}
We denote by $\widetilde{A}_n$ the 
extended Dynkin $A_n$-quiver, i.e. 
$Q(\widetilde{A}_n)=\{1, 2, \ldots, n\}$ 
with one arrow from 
$i$ to $i+1$ for each $1\le i\le n-1$ and 
$n$ to $1$. 
The result of~\cite{MR958897}
is stated as follows: 
\begin{thm}\emph{(\cite[Theorem~4]{MR958897})}\label{thm:criterion}
For $\vec{m} \in \Gamma_Q$, we have 
$M_Q^{s}(\vec{m}) \neq \emptyset$
if and only if 
either one of the following conditions hold: 
\begin{enumerate}
\item 
We have $m_i=1$ for all $i \in \mathrm{supp}(\vec{m})$ 
and 
$\mathrm{supp}(\vec{m})$ is a quiver of type 
$A_1$ or 
$\widetilde{A}_n$ for $n\ge 1$. 
\item
The quiver 
$\mathrm{supp}(\vec{m})$ is not of the above type, 
is strongly connected  
and 
\begin{align}\label{cond:simple}
\langle \vec{m}, \vec{i} \rangle 
\le 0, \ 
\langle \vec{i}, \vec{m} \rangle 
\le 0, \ 
\mbox{ for all }
i \in V(Q).
\end{align}
\end{enumerate}
\end{thm}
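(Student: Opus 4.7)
The plan is to follow the proof of Le Bruyn--Procesi in \cite{MR958897}. By restricting to the full subquiver $\mathrm{supp}(\vec{m})$, I may assume without loss of generality that $\mathrm{supp}(\vec{m})=V(Q)$. The key observation used throughout is that the pairing (\ref{pairing}) is exactly the Euler form of the hereditary path algebra $\mathbb{C}[Q]$: for any $Q$-representations $M, N$ with dimension vectors $\vec{m}, \vec{n}$,
\begin{align*}
\langle \vec{m}, \vec{n} \rangle = \dim \Hom(M,N) - \dim \Ext^1(M,N).
\end{align*}

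For the necessity direction, pick a simple representation $\mathbb{V}$ with dimension $\vec{m}$. If $Q$ fails to be strongly connected, the poset of strongly connected components has a minimal element, and its vertex set supports a nonzero proper subrepresentation of $\mathbb{V}$, contradicting simplicity; in particular, non-extended $A_n$ is excluded. When $\vec{m}\neq \vec{i}$, simplicity forces $\Hom(\mathbb{V}, S_i)=\Hom(S_i, \mathbb{V})=0$, and the Euler-form identity immediately yields $\langle \vec{m}, \vec{i}\rangle\le 0$ and $\langle \vec{i}, \vec{m}\rangle\le 0$. A direct case analysis then separates case (i), where $Q$ is forced to be $A_1$ or $\widetilde{A}_n$ with $m_i=1$, from case (ii).

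For sufficiency in case (i), one writes down explicit simple representations: a one-dimensional vector space for $A_1$; for $\widetilde{A}_n$, take $V_i=\mathbb{C}$ for all $i$ with all arrows acting by generic nonzero scalars, giving a representation whose only subrepresentation preserved by the cyclic action is zero or the whole space. In case (ii), proceed by induction on $\lvert\vec{m}\rvert=\sum m_i$. Using strong connectedness together with the inequalities, locate a vertex $i$ such that $\vec{m}-\vec{i}$ still satisfies the hypotheses (with a small number of boundary cases landing in case (i)); then construct $\mathbb{V}$ as a generic extension $0\to \mathbb{V}'\to \mathbb{V}\to S_i\to 0$ with $\mathbb{V}'$ simple of dimension $\vec{m}-\vec{i}$, obtained by induction. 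Simplicity of $\mathbb{V}$ for generic extension class follows by enumerating all possible sub-dimension vectors $\vec{n}\le \vec{m}$ and verifying, via the Euler-form bounds together with strong connectedness, that each imposes a positive-codimension condition inside $\Ext^1(S_i, \mathbb{V}')$.

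The main obstacle is the inductive step in case (ii): identifying a suitable vertex to peel off and verifying that a generic extension is actually simple, not merely indecomposable. The combinatorics is delicate, since $\vec{m}-\vec{i}$ may fail strong connectedness of its support, and several small boundary configurations (for example $\vec{m}-\vec{i}=\vec{j}$ or $\langle \vec{m},\vec{i}\rangle=0$) must be handled separately. A more uniform approach, which is the one adopted in \cite{MR958897}, is to work directly on the GIT side: show that $\dim M_Q^s(\vec{m})=1-\langle \vec{m},\vec{m}\rangle\ge 0$ under the hypotheses, and then establish that a generic $G$-orbit in $\mathrm{Rep}_Q(\vec{m})$ is closed with trivial stabilizer via a semicontinuity argument on stabilizer dimensions, producing a point of $M_Q^s(\vec{m})$ without the explicit inductive construction.
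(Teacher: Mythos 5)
First, note that the paper does not prove this statement at all: it is quoted verbatim from Le Bruyn--Procesi \cite[Theorem~4]{MR958897} and used as a black box (e.g.\ in Lemma~\ref{lem:slop} and Proposition~\ref{prop:Mflip}). So there is no in-paper argument to compare against; your proposal has to be judged as a reconstruction of the cited proof.

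Your necessity direction is essentially complete and correct: the identification of the pairing (\ref{pairing}) with the Euler form of the hereditary algebra $\mathbb{C}[Q]$ gives $\langle \vec{m},\vec{i}\rangle=-\dim\Ext^1(\mathbb{V},S_i)\le 0$ once $\Hom(\mathbb{V},S_i)=0$, and a sink strongly connected component of $\mathrm{supp}(\vec{m})$ does carry a proper subrepresentation. One point you gloss over: for $\mathrm{supp}(\vec{m})$ of type $\widetilde{A}_n$ the inequalities (\ref{cond:simple}) are automatically satisfied for \emph{any} constant dimension vector, so the necessity of $m_i=1$ in case (i) is not a consequence of the Euler-form inequalities; it needs the separate argument (as in the paper's own proof of Lemma~\ref{lem:slop} for $\widetilde{A}_2$) that simplicity forces the arrow maps to be isomorphisms and an eigenvector of the composite around the cycle then spans a subrepresentation. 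The genuine gap, however, is in sufficiency for case (ii). You correctly identify the inductive step (peeling off a vertex and taking a generic extension by $S_i$) as the crux, but you do not carry it out: you neither specify how to choose the vertex $i$ so that $\vec{m}-\vec{i}$ again satisfies the hypotheses (or falls into a controlled boundary case), nor prove that a generic class in $\Ext^1(S_i,\mathbb{V}')$ yields a \emph{simple} rather than merely indecomposable extension. The alternative ``GIT'' route you sketch does not close the gap either: the inequality $1-\langle\vec{m},\vec{m}\rangle\ge 0$ is only the expected dimension of $M_Q^{s}(\vec{m})$ and says nothing about nonemptiness, and a semicontinuity argument on stabilizer dimensions cannot get started without first exhibiting one point of $\mathrm{Rep}_Q(\vec{m})$ with stabilizer reduced to the scalars --- which is exactly the existence statement to be proved. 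As it stands the hard half of the theorem is asserted rather than proved; one would either have to execute the Le Bruyn--Procesi induction in detail or supply the codimension estimate for the locus of representations admitting a proper subrepresentation of each sub-dimension vector $\vec{n}<\vec{m}$.
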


\begin{rmk}\label{rmk:simple}
Suppose that one of the conditions (\ref{cond:simple}) fails, e.g. 
\begin{align*}
\langle \vec{m}, \vec{i} \rangle=
m_i-\sum_{j \in V(Q), e \in E_{j, i}} m_j>0
\end{align*}
for some $i \in V(Q)$. 
Then for a $Q$-representation $\mathbb{V}$ as in (\ref{rep:Q}), 
the natural map
\begin{align*}
\sum_{j \in V(Q), e \in E_{j, i}}u_e \colon 
\bigoplus_{e \in E_{j, i}} V_j \to V_i
\end{align*}
has a non-trivial cokernel. 
Therefore we have a surjection 
$\mathbb{V} \twoheadrightarrow S_i$
and $M_Q^s(\vec{m})=\emptyset$ holds. 
Similarly if $\langle \vec{i}, \vec{m} \rangle >0$, 
then there is an injection $S_i \hookrightarrow \mathbb{V}$. 
These facts will be used later. 
\end{rmk}

Next, we consider canonical line 
bundles on moduli spaces of quiver representations. 
Suppose that $\vec{m}$ is primitive in 
$\Gamma_{Q}$, and let 
$M_Q^{\xi, s}(\vec{m}) \subset M_Q^{\xi}(\vec{m})$ be the 
$\xi$-stable part. 
Then by~\cite[Proposition~5.3]{Kin}, 
there exist universal $Q$-representations
\begin{align*}
\vV_i \to M_Q^{\xi, s}(\vec{m}), \ 
\mathbf{u}_e \colon \vV_{s(e)} \to \vV_{t(e)}, \ 
i \in V(Q), \ e \in E(Q).
\end{align*}
Here $\vV_i$ is a vector bundle on $M_Q^{\xi, s}(\vec{m})$ whose fiber 
is $V_i$, 
$\mathbf{u}_e$ is a map of vector bundles, 
and 
for a point $p \in M_Q^{\xi, s}(\vec{m})$ corresponding to 
a $Q$-representation (\ref{rep:Q}) 
we have $(\mathbf{u}_e)|_{p}=u_e$. 
In this case, 
we have the following lemma on the canonical 
line bundle of the smooth variety 
$M_Q^{\xi, s}(\vec{m})$:
\begin{lem}\label{lem:formula:K}
In the above situation,
we have 
\begin{align}\label{formula:K}
\omega_{M_Q^{\xi, s}(\vec{m})}=\bigotimes_{e \in E(Q)}\det \vV_{s(e)}
\otimes \det \vV_{t(e)}^{\vee}.  
\end{align}
\end{lem}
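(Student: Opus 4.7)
The plan is to read off $\omega_M$ for $M=M_Q^{\xi,s}(\vec{m})$ from a short resolution of its tangent bundle coming from the deformation theory of quiver representations, and then take determinants.

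First, at a $\xi$-stable point $[\mathbb{V}]$, the tangent space is $\Ext^1_{\mathbb{C}[Q]}(\mathbb{V},\mathbb{V})$. Since the path algebra of a quiver without relations has global dimension at most one, the standard length-two projective resolution of a $\mathbb{C}[Q]$-module yields the canonical four-term exact sequence
\begin{equation*}
0 \to \Hom(\mathbb{V},\mathbb{V}) \to \bigoplus_{i\in V(Q)} \End_{\mathbb{C}}(V_i) \stackrel{\delta}{\to} \bigoplus_{e\in E(Q)} \Hom_{\mathbb{C}}(V_{s(e)},V_{t(e)}) \to \Ext^1(\mathbb{V},\mathbb{V}) \to 0,
\end{equation*}
where $\delta(\phi_i)=(u_e\phi_{s(e)}-\phi_{t(e)}u_e)$ is the infinitesimal action of $G=\prod_i \GL(V_i)$ on $\mathrm{Rep}_Q(\vec m)$. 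At $\xi$-stable $\mathbb{V}$ we have $\Hom(\mathbb{V},\mathbb{V})=\mathbb{C}$.

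Second, I would globalize this sequence using the universal $Q$-representation $(\vV_i,\mathbf{u}_e)$ on $M$ to obtain a four-term exact complex of locally free sheaves
\begin{equation*}
0 \to \oO_M \to \bigoplus_{i} \mathcal{E}nd(\vV_i) \stackrel{\delta}{\to} \bigoplus_{e} \hH om(\vV_{s(e)},\vV_{t(e)}) \to T_M \to 0,
\end{equation*}
whose cokernel is $T_M$ by the pointwise identification above and whose kernel is $\oO_M$ because the fiberwise stabilizer is the diagonal scalar. Additivity of determinants on exact sequences then gives
\begin{equation*}
\det T_M \;=\; \bigotimes_{e} \det \hH om(\vV_{s(e)},\vV_{t(e)}) \;\otimes\; \bigotimes_{i} \det \mathcal{E}nd(\vV_i)^{-1}.
\end{equation*}

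Third, I would invoke the fact that for any vector bundle $\vV$ of rank $r$ one has $\det(\vV^{\vee}\otimes \vV)=(\det \vV)^{-r}\otimes(\det \vV)^{r}=\oO$, so each $\det\mathcal{E}nd(\vV_i)$ is trivial. The remaining factors simplify via $\det\hH om(\vV_{s(e)},\vV_{t(e)})=\det\vV_{s(e)}^{\vee}\otimes\det\vV_{t(e)}$ edge-by-edge, and dualizing yields the stated formula for $\omega_M=\det T_M^{\vee}$. The only nontrivial point in the execution is the careful bookkeeping of the ranks $m_i$ when computing $\det(\vV_{s(e)}^{\vee}\otimes\vV_{t(e)})$; a useful consistency check is that the final expression must be invariant under the inherent ambiguity in the universal bundles (tensoring each $\vV_i$ by a line bundle on $M$ subject to the compatibility forced by the universal maps $\mathbf{u}_e$), which pins down the normalization implicit in the formula.
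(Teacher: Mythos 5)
Your strategy is sound, and it is really the fibrewise version of the paper's own argument: the paper computes the canonical bundle of the quotient stack $[\mathrm{Rep}_Q(\vec m)/G]$ as the line bundle induced by the determinant character of $G$ acting on the (co)tangent space of $\mathrm{Rep}_Q(\vec m)$ (the adjoint factor $\det\mathfrak g$ being trivial) and descends it to the stable locus; your four-term sequence is exactly the tangent complex of that quotient stack restricted to $M_Q^{\xi,s}(\vec m)$. Steps one through four, including the triviality of each $\det\hH om(\vV_i,\vV_i)$, are correct.

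The problem is the final determinant identity. For bundles of ranks $r$ and $s$ one has $\det(\vV^{\vee}\otimes\wW)=(\det\vV^{\vee})^{s}\otimes(\det\wW)^{r}$, so the contribution of an edge $e$ to $\det T_M$ is $(\det\vV_{s(e)}^{\vee})^{m_{t(e)}}\otimes(\det\vV_{t(e)})^{m_{s(e)}}$, not $\det\vV_{s(e)}^{\vee}\otimes\det\vV_{t(e)}$. What your argument actually proves is
\begin{align*}
\omega_{M_Q^{\xi,s}(\vec m)}=\bigotimes_{e\in E(Q)}(\det\vV_{s(e)})^{m_{t(e)}}\otimes(\det\vV_{t(e)}^{\vee})^{m_{s(e)}},
\end{align*}
and the consistency check you propose in your last sentence, had you carried it out, would have shown that the exponents cannot be dropped: the displayed expression is invariant under $\vV_i\mapsto\vV_i\otimes L$, whereas the right-hand side of (\ref{formula:K}) changes by $L^{\sum_e(m_{s(e)}-m_{t(e)})}$, which need not vanish. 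Concretely, for the $3$-Kronecker quiver with $\vec m=(1,2)$ one has $M\cong\mathbb{P}^2$ with $\omega_M=\oO(-3)$; the normalization $\vV_1=\oO$, $\vV_2=\qQ$ gives $\oO(-3)$ from either formula, but the equally valid universal family $\vV_1=\oO(1)$, $\vV_2=\qQ(1)$ gives $\oO(-6)$ from the exponent-free formula and $\oO(-3)$ from the one above. So (\ref{formula:K}) as written is only valid for a suitable normalization of the universal family; in the places the paper uses it (symmetric $Q$ in Proposition~\ref{prop:sflop}, and the extended quiver in Lemma~\ref{lem:ample} with $\vV_0$ trivialized and all vertices of $Q$ paired by the symmetry) the discrepancy cancels, but as a derivation of the stated lemma your step is not correct: you must either keep the rank exponents or fix the normalization explicitly.
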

\begin{proof}
When $\vec{m}$ is primitive, 
the $\xi$-stable part of 
the stack $\mM_Q(\vec{m})$
is a trivial $\mathbb{C}^{\ast}$-gerbe over $M_Q^{\xi, s}(\vec{m})$. 
By the description of the stack $\mM_Q(\vec{m})$ in (\ref{stack:MQ}), 
the 
canonical line bundle of
the stack $\mM_Q(\vec{m})$ 
is induced by the one dimensional $G$-representation $G \to \mathbb{C}^{\ast}$ 
given by 
\begin{align*}
g=(g_i)_{i \in V(Q)} \mapsto 
\prod_{e \in E(Q)} \det g_{s(e)} \cdot (\det g_{t(e)})^{-1}.
\end{align*}
Therefore the identity (\ref{formula:K}) holds. 
\end{proof}

\subsection{Flops via representations of symmetric quivers}
Here we investigate 
the morphism $q_{Q}^{\xi}$ in (\ref{nat:q_Q})
for a symmetric quiver $Q$, defined 
below: 
\begin{defi}\label{def:symmetric}
A quiver $Q$ is called \textit{symmetric} if 
$\sharp E_{i, j}=\sharp E_{j, i}$ for any 
$i, j \in V(Q)$. 
Here $E_{i, j}$ is defined as in (\ref{Eab}). 
\end{defi}
\begin{rmk}\label{rmk:symmetric}
A symmetric condition of a quiver $Q$ 
is equivalent to that 
the pairing (\ref{pairing}) is symmetric. 
\end{rmk}

Below for a symmetric quiver $Q$, we 
fix identifications $E_{i, j}=E_{j, i}$, so 
that $Q$ and $Q^{\vee}$ are identified. 
In particular for a $Q$-representation 
$\mathbb{V}$, its dual representation 
$\mathbb{V}^{\vee}$ given in (\ref{V:dual}) is also a
$Q$-representation. 
We have the following lemma on the 
non-emptiness of the moduli spaces of stable
$Q$-representations: 
\begin{lem}\label{lem:slop}
For a symmetric quiver $Q$
and $\vec{m} \in \Gamma_Q$, we have 
$M_Q^{\xi, s}(\vec{m}) \neq \emptyset$
for some $\xi \in \hH^{\sharp V(Q)}$ 
if and only if $M_Q^{\xi, s}(\vec{m}) \neq \emptyset$
for any $\xi \in \hH^{\sharp V(Q)}$.  
\end{lem}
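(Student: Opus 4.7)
The ``only if'' direction is immediate. For the ``if'' direction, the key observation is the general identity
$M_Q^s(\vec{m}) = \bigcap_{\xi \in \hH^{\sharp V(Q)}} M_Q^{\xi, s}(\vec{m})$: simple representations have no proper subrepresentations and so are vacuously $\xi$-stable for every $\xi$ (giving the $\subseteq$ inclusion); conversely, if $V$ has a proper subrepresentation $W$ with dim vec $\vec{m}_W$, then either $\vec m_W$ is proportional to $\vec m$ (forcing $\mu_\xi(W)=\mu_\xi(V)$ always, so $V$ is never $\xi$-stable) or one can choose $\xi$ with $\mu_\xi(W) > \mu_\xi(V)$, whence $V \notin M_Q^{\xi,s}(\vec m)$. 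Combined with the trivial inclusion $M_Q^s(\vec{m}) \subseteq M_Q^{\xi, s}(\vec{m})$, it suffices to show that $M_Q^{\xi_0, s}(\vec{m}) \neq \emptyset$ for some $\xi_0$ implies $M_Q^s(\vec{m}) \neq \emptyset$.

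I then connect $\xi_0$ to any target $\xi$ by a piecewise-linear path in the connected space $\hH^{\sharp V(Q)}$ meeting each wall $\wW_{(\vec{m}_1, \vec{m}_2)}$ only transversely at its generic codimension-one interior; this is possible because deeper wall intersections form a locus of real codimension $\geq 2$. At such a generic wall-point $\sigma_0$, the polystable $Q$-representations are generically of the form $V = V_1 \oplus V_2$ with distinct $\sigma_0$-stable factors $V_i$ of non-proportional dim vecs $\vec{m}_i$, and the local structure of the wall-crossing diagram
\[
M_Q^{\xi^+}(\vec{m}) \longrightarrow M_Q^{\sigma_0}(\vec{m}) \longleftarrow M_Q^{\xi^-}(\vec{m})
\]
near the image of $[V_1 \oplus V_2]$ is the standard toric GIT model of Example~\ref{subsec:dflop}, governed by the parameters $a = \dim \Ext^1(V_1, V_2)$ and $b = \dim \Ext^1(V_2, V_1)$. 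By the symmetry of $Q$ (Remark~\ref{rmk:symmetric}) the Euler pairing is symmetric, and Schur's lemma for the distinct $\sigma_0$-stables yields $\Hom(V_i, V_j) = 0$ for $i \neq j$. The Euler relation in the hereditary category $\modu Q$ then forces
\[
a \;=\; -\langle \vec{m}_1, \vec{m}_2 \rangle \;=\; -\langle \vec{m}_2, \vec{m}_1 \rangle \;=\; b,
\]
so the local model is either trivial (when $a = b \leq 1$) or a generalized flop (when $a = b \geq 2$)---never a flip or Mori fibre space.

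Consequently the birational map $M_Q^{\xi^+}(\vec{m}) \dashrightarrow M_Q^{\xi^-}(\vec{m})$ at each generic wall-crossing is an isomorphism in codimension one. When non-empty, $M_Q^{\xi, s}(\vec{m})$ is a smooth dense open subset of $M_Q^{\xi}(\vec{m})$ of expected dimension $1 - \langle \vec{m}, \vec{m}\rangle$ (by Lemma~\ref{lem:formula:K} and hereditariness), a quantity independent of $\xi$. Since there are only finitely many wall-types relevant to $\vec{m}$ and the associated chamber moduli are all birational to one another via codimension-one isomorphisms, the stable loci across these chambers share a common non-empty open dense subset of the same expected dimension; by the identity of the first paragraph, this common subset is exactly $M_Q^s(\vec{m})$, so $M_Q^s(\vec m) \neq \emptyset$. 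The main technical obstacle is justifying that the finite intersection of stable loci across chambers remains non-empty; this follows from the flop property preserving codimension-one open subsets, combined with the finiteness of wall-types and the $\xi$-independence of the expected dimension.
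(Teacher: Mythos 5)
Your opening reduction --- that it suffices to show $M_Q^{\xi_0,s}(\vec{m}) \neq \emptyset$ for one $\xi_0$ implies $M_Q^{s}(\vec{m}) \neq \emptyset$, since simple representations are $\xi$-stable for every $\xi$ --- is exactly the paper's first step and is fine. But from there the paper applies the Le Bruyn--Procesi criterion (Theorem~\ref{thm:criterion}): it checks that $\mathrm{supp}(\vec{m})$ is strongly connected and that $\langle \vec{m},\vec{i}\rangle \le 0$ for all $i$, using the symmetry of the pairing together with Remark~\ref{rmk:simple} (if $\langle \vec{m},\vec{i}\rangle = \langle \vec{i},\vec{m}\rangle > 0$ then every representation of dimension vector $\vec{m}$ has both a sub- and a quotient copy of $S_i$, so none is $\xi$-stable for \emph{any} $\xi$), plus a separate hands-on treatment of the $A_1$, $\widetilde{A}_1$, $\widetilde{A}_2$ cases. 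Your route avoids the criterion entirely and instead tries to propagate non-emptiness across walls; unfortunately it has a genuine gap.

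The central problem is circularity. The assertion that each generic wall-crossing $M_Q^{\xi^+}(\vec{m}) \dashrightarrow M_Q^{\xi^-}(\vec{m})$ is an isomorphism in codimension one is precisely the smallness statement of Proposition~\ref{prop:sflop}, and both there and in the sources it cites (\cite[Lemma~4.4]{TodGV}, \cite[Theorem~1.4]{MeRe}) this is derived \emph{under the hypothesis} $M_Q^{s}(\vec{m}) \neq \emptyset$ --- the very statement you are trying to prove. Your attempted substitute, the local toric model at a polystable point $V_1 \oplus V_2$ with two distinct stable summands, does not close this gap: even at a generic point of a single wall $\wW_{(\vec{m}_1,\vec{m}_2)}$ the deeper strata of $M_Q^{\sigma_0}(\vec{m})$ contain polystables $\bigoplus_i V_i \otimes W_i$ with several factors and multiplicities (whenever $\vec{m}$ decomposes as $k_1\vec{m}_1 + k_2\vec{m}_2$ with $k_i \ge 2$), and the fibres of $q_Q^{\xi^\pm}$ over those strata are not controlled by the two-factor computation; choosing the path generically in $\xi$-space only avoids intersections of walls, not deep strata in the moduli space. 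Two further issues: (i) the final identification of ``the common open subset shared by finitely many chambers'' with $M_Q^{s}(\vec{m})$ needs an argument that a representation stable for all chamber values of $\xi$ is simple, and in any case inherits the unproven codimension-one claim; (ii) the whole path/wall apparatus, and your use of Lemma~\ref{lem:formula:K} and of $M_Q^{\xi,s}(\vec{m}) = M_Q^{\xi}(\vec{m})$, implicitly assumes $\vec{m}$ primitive, whereas the lemma is stated for arbitrary $\vec{m} \in \Gamma_Q$ and the paper's criterion-based proof covers that generality. As written, the proposal does not establish the lemma.
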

\begin{proof}
It is enough to show that 
if $M_Q^{\xi, s}(\vec{m}) \neq \emptyset$ 
for some $\xi$, 
then $M_Q^{s}(\vec{m}) \neq \emptyset$. 
Suppose that $M_Q^{\xi, s}(\vec{m}) \neq \emptyset$. 
We apply the criterion in Theorem~\ref{thm:criterion}
to show that $M_Q^s(\vec{m}) \neq \emptyset$. 
Since $\mathrm{supp}(\vec{m})$ is symmetric, 
it is of type $A_1$ or $\widetilde{A}_n$
only if $n=1$ or $n=2$. 
In $A_1$ and $\widetilde{A}_1$ case, 
we have $M_Q^{\xi, s}(\vec{m})=M_Q^{s}(\vec{m})$
for any dimension vector $\vec{m}$, 
and the statement is obvious. 

Suppose that $\mathrm{supp}(\vec{m})$ is 
$\widetilde{A}_2$, and write its 
vertices as $\{1, 2\}$. 
We may assume
that $\arg Z_{\xi}(S_1)>\arg Z_{\xi}(S_2)$, 
where $Z_{\xi}$ is given by (\ref{K:dim}). 
Let us write a $Q$-representation 
corresponding to a point
in $M_Q^{\xi, s}(\vec{m})$
as
\begin{align}\notag
\mathbb{V}=
\left(\xymatrix{
V_1 \ar@<0.5ex>[r]^-{e_{12}}& V_2 \ar@<0.5ex>[l]^-{e_{21}}
}\right)
\end{align}
where $V_i$ are vector spaces with dimension $m_i$
and $e_{12}$, $e_{21}$ are linear maps. 
The $\xi$-stability implies that
$\Hom(S_1, \mathbb{V})=0$ and $\Hom(\mathbb{V}, S_2)=0$.
These conditions imply that $e_{12}$ is an isomorphism, 
so we can assume $V_1=V_2=V$ and $e_{12}=\id$. 
Let $v \in V$ be an eigen vector of $e_{21}$
with eigen value $\lambda$. 
Then we have an injection of $Q$-representations
\begin{align*}
\left(\xymatrix{
\mathbb{C}v \ar@<0.5ex>[r]^-{\id}& \mathbb{C}v
 \ar@<0.5ex>[l]^-{\lambda \cdot}
}\right)
\hookrightarrow \mathbb{V}.
\end{align*}
As $\mathbb{V}$ is $\xi$-stable, 
the above injection must be an isomorphism. 
Therefore we have $m_1=m_2=1$
and $M_Q^{\xi, s}(\vec{m}) \neq \emptyset$
follows from Theorem~\ref{thm:criterion}. 

Suppose that $\mathrm{supp}(\vec{m})$ is not of the above types. 
If $\mathrm{supp}(\vec{m})$ is not
strongly connected, then as 
it is symmetric it must be disconnected. 
Then $M_Q^{\xi, s}(\vec{m}) =\emptyset$ for any $\xi$, 
which is a contradiction. 
Therefore $\mathrm{supp}(\vec{m})$ is strongly connected. 
For $i \in V(Q)$, 
note that we have $\langle \vec{m}, \vec{i} \rangle= \langle \vec{i}, \vec{m} \rangle$
as $Q$ is symmetric (see Remark~\ref{rmk:symmetric}). 
If $\langle \vec{m}, \vec{i} \rangle=\langle \vec{i}, \vec{m} \rangle>0$, then by 
Remark~\ref{rmk:simple}
for any $Q$-representation 
$\mathbb{V}$ with dimension vector $\vec{m}$,
there exist an injection $S_i \hookrightarrow \mathbb{V}$
and a surjection $\mathbb{V} \twoheadrightarrow S_i$. 
Such a representation $\mathbb{V}$ can never be 
$\xi$-stable for any choice of $\xi$, 
which is a contradiction. 
Therefore the criterion of Theorem~\ref{thm:criterion} is applied
and $M_Q^s(\vec{m}) \neq \emptyset$ holds. 
\end{proof}

For a symmetric quiver $Q$ and $\vec{m} \in \Gamma_Q$, let us take
$\xi^{\pm}=(\xi_i^{\pm})_{i \in V(Q)} \in \hH^{\sharp V(Q)}$
satisfying the following: 
\begin{align}\label{satisfy:xi}
\Re (\xi^+_i)=-\Re (\xi^-_i)  \in \mathbb{Z}, \ 
\sum_{i \in V(Q)} m_i \cdot \Re (\xi^{\pm}_i)=0.
\end{align}
We have the following diagram
\begin{align}\label{dia:sflop}
\xymatrix{
M_{Q}^{\xi^{+}}(\vec{m}) \ar[rd]_-{q_{Q}^{\xi^{+}}}
 & & M_Q^{\xi^{-}}(\vec{m}) \ar[ld]^-{q_{Q}^{\xi^{-}}} \\
& M_Q(\vec{m}). &
} 
\end{align}
By lemma~\ref{lem:slop}, 
we have $M_Q^{\xi^{+}, s}(\vec{m}) \neq \emptyset$ if
and only if 
$M_Q^{\xi^{-}, s}(\vec{m}) \neq \emptyset$. 

\begin{prop}\label{prop:sflop}
Suppose that $\vec{m}$ is primitive and
 $M_{Q}^{\xi^{\pm}, s}(\vec{m})=M_Q^{\xi^{\pm}}(\vec{m}) \neq \emptyset$
hold. Then the diagram (\ref{dia:sflop}) is a generalized flop 
of smooth varieties $M_{Q}^{\xi^{\pm}}(\vec{m})$. 
\end{prop}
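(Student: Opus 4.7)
The plan is to verify each of the three conditions of Definition~\ref{defi:generalized}(ii): crepant birationality of $q_Q^{\xi^\pm}$, isomorphism in codimension one, and the existence of a $q_Q^{\xi^+}$-ample line bundle whose strict transform to $M_Q^{\xi^-}(\vec{m})$ is $q_Q^{\xi^-}$-anti-ample. Smoothness of $M_Q^{\xi^\pm}(\vec{m})$ is immediate from the hypothesis $M_Q^{\xi^\pm, s}(\vec{m}) = M_Q^{\xi^\pm}(\vec{m})$ together with primitivity of $\vec{m}$, since then every $\xi^\pm$-semistable representation has only the diagonal $\mathbb{C}^\ast$ as stabilizer. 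Birationality of $q_Q^{\xi^\pm}$ follows from Lemma~\ref{lem:slop}, which converts our non-emptiness hypothesis into $M_Q^s(\vec{m}) \neq \emptyset$; both $q_Q^{\xi^\pm}$ restrict to isomorphisms over the open subset $M_Q^s(\vec{m}) \subset M_Q(\vec{m})$.

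For crepancy, I would apply Lemma~\ref{lem:formula:K} together with the symmetric hypothesis $\sharp E_{i,j} = \sharp E_{j,i}$. Choosing a bijection $E_{i,j} \cong E_{j,i}$ for each ordered pair $i \neq j$ and pairing edges accordingly, the contributions $\det \vV_i \otimes \det \vV_j^\vee$ and $\det \vV_j \otimes \det \vV_i^\vee$ in the formula for $\omega_{M_Q^{\xi^\pm}(\vec{m})}$ cancel, while each self-loop contributes a trivial factor $\det \vV_i \otimes \det \vV_i^\vee$. Hence $\omega_{M_Q^{\xi^\pm}(\vec{m})} \cong \oO$, which forces $K_{M_Q^{\xi^\pm}(\vec{m})} \cdot C = 0$ on every contracted curve, verifying crepancy in the sense of Definition~\ref{defi:contraction}(iii).

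For the polarization I would use King's GIT construction~\cite{Kin}. The character $\chi^\pm \colon G \to \mathbb{C}^\ast$ given by $g \mapsto \prod_i \det(g_i)^{\Re \xi_i^\pm}$ is a well-defined algebraic character (by $\Re \xi_i^\pm \in \mathbb{Z}$) that descends to a line bundle $L^\pm$ on $M_Q^{\xi^\pm}(\vec{m})$ (by $\sum m_i \Re \xi_i^\pm = 0$, which kills the diagonal $\mathbb{C}^\ast$), and is $q_Q^{\xi^\pm}$-ample by King's theorem. Since $\chi^- = (\chi^+)^{-1}$ by (\ref{satisfy:xi}), we have $L^+ = (L^-)^{-1}$ on the common open subset $(q_Q^{\xi^\pm})^{-1}(M_Q^s(\vec{m}))$, so the strict transform of $L^+$ to $M_Q^{\xi^-}(\vec{m})$ is $(L^-)^{-1}$, which is $q_Q^{\xi^-}$-anti-ample.

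The main obstacle I anticipate is the isomorphism-in-codimension-one condition for both $q_Q^{\xi^\pm}$. This is a standard consequence of Thaddeus-type variation-of-GIT theory for a wall-crossing through the trivial character: under our hypothesis that semistability coincides with stability on both sides (so the $\xi^\pm$ are genuinely in the interior of their chambers), the induced birational transformation between the two GIT quotients is small. In concrete terms, the exceptional locus of $q_Q^{\xi^\pm}$ is the preimage of the strictly semistable locus $M_Q(\vec{m}) \setminus M_Q^s(\vec{m})$, and a stratum-by-stratum analysis using the primitivity of $\vec{m}$ and the symmetric form of the pairing (\ref{pairing}) shows each stratum has preimage of codimension at least two in $M_Q^{\xi^\pm}(\vec{m})$. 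Carrying out these dimension estimates via the explicit description of the fibers of $q_Q^{\xi^\pm}$ over polystable points (in terms of moduli of Jordan--H\"older filtrations) constitutes the technical core of the argument.
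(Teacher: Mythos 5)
Your overall architecture (crepancy via Lemma~\ref{lem:formula:K} and the pairing of $E_{i,j}$ with $E_{j,i}$; the $q_Q^{\xi^\pm}$-ample line bundles from the King characters $g\mapsto\prod_i(\det g_i)^{\Re\xi_i^{\pm}}$, which descend because of (\ref{satisfy:xi}) and satisfy $L^+=(L^-)^{-1}$) coincides with the paper's proof. The gap is in the step you yourself flag as the technical core: the claim that $q_Q^{\xi^{\pm}}$ are isomorphisms in codimension one. Your justification --- that this is ``a standard consequence of Thaddeus-type variation-of-GIT theory'' once semistability equals stability in both adjacent chambers --- is false as stated. Take the non-symmetric quiver with two vertices, $a\ge 2$ arrows from $1$ to $2$ and one arrow from $2$ to $1$, with $\vec m=(1,1)$: both chambers have stable $=$ semistable, yet $q_Q^{\xi^+}$ is the blow-up of $\mathbb{C}^{a}$ at the origin, a divisorial contraction. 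So coincidence of stability and semistability on both sides of the wall does not make the transformation small; the symmetry of $Q$ must enter in an essential way, and it does not enter through VGIT formalities.

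What the paper actually uses is a semismallness theorem: by \cite[Lemma~4.4]{TodGV} the maps $q_Q^{\xi^\pm}$ are semismall with respect to a stratification $\{S_\lambda\}$ of $M_Q(\vec m)$ by polystable type, i.e.\ $\dim(q_Q^{\xi^\pm})^{-1}(x)\le\tfrac12\codim S_\lambda$, and by the input from \cite[Theorem~1.4]{MeRe} (Meinhardt--Reineke), under the hypothesis $M_Q^{s}(\vec m)\neq\emptyset$ the equality is attained only on the dense stratum $M_Q^{s}(\vec m)$; hence the maps are small, which gives the codimension-one statement. This is precisely the ``stratum-by-stratum dimension estimate'' you defer, and it is a genuine theorem about symmetric quivers rather than something recoverable from primitivity and general GIT wall-crossing. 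Until you either invoke such a semismallness result or carry out the fiber-dimension estimates over each polystable stratum (with the symmetry of the Euler pairing doing real work), the proof is incomplete at its central point; the remaining two conditions of Definition~\ref{defi:generalized}(ii) are handled correctly and exactly as in the paper.
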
 
\begin{proof}
Under the assumption, the 
morphisms $q_Q^{\xi^{\pm}}$ are projective birational 
morphisms from smooth varieties $M_Q^{\xi^{\pm}}(\vec{m})$. 
Moreover the canonical divisors of $M_Q^{\xi^{\pm}}(\vec{m})$
are trivial by 
Lemma~\ref{lem:formula:K}
and the symmetric condition of $Q$. 
We show that 
$q_{Q}^{\xi^{\pm}}$ are isomorphisms in codimension one. 
By~\cite[Lemma~4.4]{TodGV}, the maps $q_Q^{\xi^{\pm}}$ are semismall, 
i.e. 
there is a stratification 
$\{S_{\lambda}\}_{\lambda}$ of $M_Q(\vec{m})$ such that 
for any $x \in S_{\lambda}$ 
we have 
the inequality
\begin{align}\label{ineq:strata}
\dim (q^{\xi^{\pm}}_Q)^{-1}(x) \le \frac{1}{2}\codim S_{\lambda}.
\end{align}
 Moreover
from the proof of \textit{loc.cit.} and~\cite[Theorem~1.4]{MeRe}
(which is used in \textit{loc.cit.}), 
 under the 
assumption $M_Q^{s}(\vec{m}) \neq \emptyset$, 
the equality holds in (\ref{ineq:strata})
only for the dense strata 
$S_{\lambda}=M_Q^s(\vec{m})$, i.e. 
$q^{\xi^{\pm}}_Q$ are small maps. 
In particular, $q_Q^{\xi^{\pm}}$ are isomorphisms in codimension one. 

It remains to show that there exists a $q_Q^{\xi^{+}}$-ample divisor 
on $M_Q^{\xi^{+}}(\vec{m})$ whose strict transform to 
$M_Q^{\xi-}(\vec{m})$ is $q_Q^{\xi^{-}}$-anti ample. 
Let us consider the following 
characters of $G$: 
\begin{align}\label{character}
g=(g_i)_{i \in V(Q)} \mapsto 
(\det g_i)^{\Re (\xi^{\pm}_i)}. 
\end{align}
They define 
$G$-equivariant line bundles on 
$\mathrm{Rep}_Q(\vec{m})$, 
hence on the stack $\mM_Q(\vec{m})$, 
which we write as $\lL_{\pm}$. 
Note that by the condition (\ref{satisfy:xi}), 
the characters (\ref{character}) are trivial 
on the diagonal torus $\mathbb{C}^{\ast} \subset G$. 
Therefore the restrictions of $\lL_{\pm}$ to
$\mM_Q^{\xi^{+}, s}(\vec{m})$, 
$\mM_Q^{\xi^{-}, s}(\vec{m})$
descend to line bundles 
$(L_{\pm})^{+}$, $(L_{\pm})^{-}$ on 
$M_Q^{\xi^{+}, s}(\vec{m})$, 
$M_Q^{\xi^{-}, s}(\vec{m})$ respectively. 
By the GIT construction of $M_{Q}^{\xi^{\pm}}(\vec{m})$
(see~\cite{Kin}), 
the line bundle
$(L_{+})^{+}$ is $q_{Q}^{\xi^{+}}$-ample, 
and $(L_-)^-$ is $q_Q^{\xi^{-}}$-ample.  
By the construction
of the above line bundles, 
the strict transform of $(L_+)^{+}$ on $M_Q^{\xi^{+}}(\vec{m})$ to 
$M_Q^{\xi^{-}}(\vec{m})$
is $(L_+)^{-}=((L_-)^{-})^{\vee}$, which is 
$q_{Q}^{\xi^{-}}$-anti-ample. 
Therefore
the diagram (\ref{dia:sflop}) is a generalized flop 
\end{proof}

For a convergent super-potential $W$ of $Q$, 
let us take an analytic open neighborhood
$0 \in V \subset M_Q(\vec{m})$ as in (\ref{V:open}). 
For two data $\xi^{\pm}$ as above, 
as in (\ref{dia:qflip}) we have the 
diagram 
\begin{align}\label{dia:sym:flop}
\xymatrix{
M_{(Q, \partial W)}^{\xi^{+}}(\vec{m})|_{V} 
\ar[rd]_-{q_{(Q, \partial W)}^{\xi^{+}}} & & 
\ar[dl]^-{q_{(Q, \partial W)}^{\xi^{-}}}
M_{(Q, \partial W)}^{\xi^{-}}(\vec{m})|_{V} \\
&  M_{(Q, \partial W)}(\vec{m})|_{V}. &
}
\end{align}
By Lemma~\ref{MW:dcrit}
and Proposition~\ref{prop:sflop}, we obtain the following corollary: 
\begin{cor}\label{cor:dflop}
Let $(Q, W)$ be a symmetric quiver $Q$ with a convergent 
super-potential $W$. 
Suppose that $\vec{m} \in \Gamma_Q$ is primitive and 
take $\xi^{\pm}$ satisfying (\ref{satisfy:xi}). 
If
$M_{Q}^{\xi^{\pm}, s}(\vec{m})=M_Q^{\xi^{\pm}}(\vec{m}) \neq \emptyset$
holds, 
then the diagram (\ref{dia:sym:flop}) is an analytic d-critical 
generalized flop. 
\end{cor}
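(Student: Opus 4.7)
The plan is to reduce this to Proposition~\ref{prop:sflop} via Corollary~\ref{cor:dia:qflip}. Since $(Q,\vec{m},\xi^{\pm})$ satisfy the exact hypotheses of Proposition~\ref{prop:sflop} ($Q$ is symmetric, $\vec{m}$ is primitive, $\xi^{\pm}$ are balanced as in (\ref{satisfy:xi}), and the $\xi^{\pm}$-stable loci coincide with the $\xi^{\pm}$-semistable loci and are non-empty), that proposition produces the ambient diagram (\ref{dia:sflop}) as a generalized flop of smooth varieties. Corollary~\ref{cor:dia:qflip} then converts this conclusion across the critical-locus construction into the statement that (\ref{dia:sym:flop}) is an analytic d-critical generalized flop.

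First I would verify the hypotheses of Proposition~\ref{prop:sflop} hold verbatim and invoke it to conclude that
\[ M_Q^{\xi^{+}}(\vec{m})\stackrel{q_Q^{\xi^{+}}}{\longrightarrow} M_Q(\vec{m})\stackrel{q_Q^{\xi^{-}}}{\longleftarrow} M_Q^{\xi^{-}}(\vec{m}) \]
is a generalized flop. Second, for each closed point $p\in M_{(Q,\partial W)}(\vec{m})|_V\subset V$, I would assemble the $q_{(Q,\partial W)}^{\xi^{\pm}}$-relative d-critical charts required by Definition~\ref{defi:dflip} out of the data furnished by Lemma~\ref{MW:dcrit}: take $Y^{\pm}=(q_Q^{\xi^{\pm}})^{-1}(V)$, $Z=V$, $f^{\pm}=q_Q^{\xi^{\pm}}$, $g=\overline{\tr}(W)$, and $w^{\pm}=\tr^{\xi^{\pm}}W$, so that $M_{(Q,\partial W)}^{\xi^{\pm}}(\vec{m})|_V=\{dw^{\pm}=0\}$ inside the smooth $Y^{\pm}$, the d-critical structures match, and the ambient diagram $Y^{+}\to Z\leftarrow Y^{-}$ is precisely the generalized flop of the first step pulled back to $V$. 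This is exactly the content packaged by Corollary~\ref{cor:dia:qflip}.

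The main obstacle lies entirely upstream in Proposition~\ref{prop:sflop} rather than in the corollary itself; assuming that result, the present statement is almost formal. The substantive ingredients already used there are triviality of $\omega_{M_Q^{\xi^{\pm}}(\vec{m})}$ coming from symmetry via Lemma~\ref{lem:formula:K}, smallness of $q_Q^{\xi^{\pm}}$ obtained by combining semismallness~\cite[Lemma~4.4]{TodGV} with the sharpness criterion of~\cite[Theorem~1.4]{MeRe} to rule out equality on non-stable strata, and the construction of compatible relative ample/anti-ample divisors from the GIT characters (\ref{character}), whose descent from the stack to the good moduli space is precisely what the balance condition (\ref{satisfy:xi}) guarantees. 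Once these are in place, no further work is needed for Corollary~\ref{cor:dflop}.
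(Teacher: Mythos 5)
Your proposal is correct and follows essentially the same route as the paper, which deduces the corollary directly from Lemma~\ref{MW:dcrit} (supplying the $q_{(Q,\partial W)}^{\xi^{\pm}}$-relative d-critical charts with $Y^{\pm}=(q_Q^{\xi^{\pm}})^{-1}(V)$, $Z=V$, $g=\overline{\tr}(W)$) together with Proposition~\ref{prop:sflop} (the ambient diagram being a generalized flop). Your explicit routing through Corollary~\ref{cor:dia:qflip} is just the packaged form of the same argument, so nothing further is needed.
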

\subsection{Flips via representations of extended quivers}\label{subsec:extend}
Let $Q$ be a symmetric quiver, and 
choose non-negative integers $a_i$, $b_i$
for each $i \in V(Q)$, and another non-negative integer $c$. 
We construct an extended quiver $Q^{\star}$ 
in the following way: 
the set of vertices is 
\begin{align*}
V(Q^{\star})=\{0\} \sqcup V(Q). 
\end{align*}
For $i, j \in V(Q) \subset V(Q^{\star})$, the 
set of edges from $i$ to $j$ in $Q^{\star}$ is the same 
as that in $Q$. The numbers of other edges are given by
\begin{align*}
\sharp E_{0, i}=a_i, \ \sharp E_{i, 0}=b_i, \ i \in V(Q), 
\ \sharp E_{0, 0}=c.  
\end{align*}
For example, see Figure~1. 
Note that $Q^{\star}$ contains $Q$ as a subquiver. 
In particular, any $Q$-representation is 
regarded as a $Q^{\star}$-representation in a natural way. 

\begin{figure}\label{figure:Q}
\begin{align*}
\xymatrix{
1    \ar@/^5pt/[rr] \ar@/^5pt/[ddrr] 
&& \ar@/^5pt/[ll]
2 \ar@/^5pt/[rr] \ar@/^15pt/[rr] 
&& \ar@/^5pt/[ll] \ar@/^15pt/[ll]
\ar[lldd]
3 \\ \\
&& 0 \ar@/^15pt/[uull] \ar@/^5pt/[uu] \ar@/_5pt/[uu]
\ar@/_20pt/[uurr]
\ar@/_10pt/[uurr]
\ar@/^25pt/[uull]
\ar@/^5pt/[uull]  
  && 
 }
\end{align*}
\caption{Picture of $Q^{\star}$
for $V(Q)=\{1, 2, 3\}$}
\end{figure}
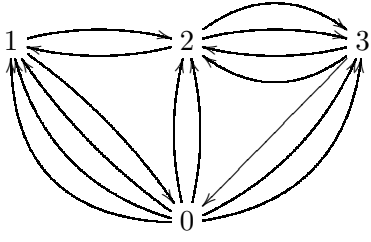

For a dimension vector $\vec{m} \in \Gamma_Q$
of $Q$, we define the extended dimension vector 
$\vec{m}^{\star} \in \Gamma_{Q^{\star}}$ by
\begin{align*}
\vec{m}^{\star} \cneq \vec{0}+\vec{m}
\end{align*}
i.e.
$(\vec{m}^{\star})_0=1$ and 
$(\vec{m}^{\star})_i=m_i$ for $i \in V(Q)$. 
The following lemma is obvious from the 
construction of $Q^{\star}$: 
\begin{lem}\label{lem:obvious}
Giving a $Q^{\star}$-representation $\mathbb{V}^{\star}$
with dimension vector $\vec{m}^{\star}$ is equivalent to 
giving a $Q$-representation 
\begin{align}\label{Qrep:V}
\mathbb{V}=\{(V_i, u_e)\}_{i \in V(Q), e \in E(Q)}
\end{align}
of dimension vector $\vec{m}$, 
together with linear maps
\begin{align}\label{lmaps}
\mathbb{E}_{0, i} \to V_i, \ 
\mathbb{E}_{i, 0} \otimes V_i \to \mathbb{C}
\end{align}
for each $i \in V(Q)$. 
Here 
$\mathbb{E}_{i, j}$ is the $\mathbb{C}$-vector 
space defined as in (\ref{Eab}). 
\end{lem}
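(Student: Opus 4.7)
The proof is essentially a direct unpacking of definitions, so my plan is to make each identification explicit and check that it is functorial (i.e.\ compatible with morphisms of representations, so it really is an equivalence of data).

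First, I would fix a $Q^{\star}$-representation $\mathbb{V}^{\star}=\{(V^{\star}_j, u_e^{\star})\}$ with dimension vector $\vec{m}^{\star}$. Since $(\vec{m}^{\star})_0=1$, I would canonically identify $V^{\star}_0=\mathbb{C}$ and $V^{\star}_i=V_i$ for $i\in V(Q)$. Since $Q$ is a full subquiver of $Q^{\star}$ with exactly the edges between vertices of $V(Q)$ preserved, the restriction of $\mathbb{V}^{\star}$ to $V(Q)$ (i.e.\ keeping only the data $(V_i,u_e^{\star})$ for $i\in V(Q)$ and $e\in E(Q)$) is by definition a $Q$-representation $\mathbb{V}$ of dimension vector $\vec{m}$.

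Next I would handle the remaining edges of $Q^{\star}$ incident to the vertex $0$. For each $i\in V(Q)$ and each $e\in E_{0,i}$, the linear map $u_e^{\star}\colon V^{\star}_0=\mathbb{C}\to V_i$ is equivalent to the single vector $u_e^{\star}(1)\in V_i$. Packaging these together over $e\in E_{0,i}$ gives, by the universal property of the basis $E_{0,i}$ of $\mathbb{E}_{0,i}$, precisely a linear map $\mathbb{E}_{0,i}\to V_i$ defined by $e\mapsto u_e^{\star}(1)$; this assignment is clearly a bijection. Similarly, for each $e\in E_{i,0}$ the map $u_e^{\star}\colon V_i\to\mathbb{C}$ is an element of $V_i^{\vee}$, and packaging these over $E_{i,0}$ yields a linear map $\mathbb{E}_{i,0}\to V_i^{\vee}$, which via the standard adjunction is the same as a bilinear pairing $\mathbb{E}_{i,0}\otimes V_i\to\mathbb{C}$. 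Each of these correspondences is manifestly reversible, producing an honest $Q^{\star}$-representation from the triple consisting of $\mathbb{V}$ and the two families of linear maps.

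I would then combine the above identifications to exhibit a mutually inverse pair of assignments between $Q^{\star}$-representations with dimension vector $\vec{m}^{\star}$ and triples $(\mathbb{V},\{\mathbb{E}_{0,i}\to V_i\}_i,\{\mathbb{E}_{i,0}\otimes V_i\to\mathbb{C}\}_i)$, completing the proof. There is no real obstacle since each step is a tautology once the basis conventions for $\mathbb{E}_{i,j}$ are fixed; the only mild bookkeeping issue is noting that any loops at vertex $0$ (the $c$ edges counted by $\sharp E_{0,0}$) would contribute $c$ extra scalars, but in the setting of the lemma these are either absent or treated as a harmless additional factor that one carries along silently.
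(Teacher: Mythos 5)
Your proof is correct and is exactly the definition-unpacking the paper intends here: the paper states this lemma without proof, declaring it obvious from the construction of $Q^{\star}$, and your identifications (restricting to the full subquiver $Q$, reading each $u_e^{\star}\colon \mathbb{C}\to V_i$ for $e\in E_{0,i}$ as a vector of $V_i$ and assembling these into a linear map $\mathbb{E}_{0,i}\to V_i$, and dually for $E_{i,0}$) are the intended ones. Your side remark about the $c$ loops at the vertex $0$ is also well taken: as literally stated the lemma omits the corresponding $c$ scalars $\mathbb{C}\to\mathbb{C}$ (equivalently a linear map $\mathbb{E}_{0,0}\to\mathbb{C}$), and the paper's own Example~\ref{exam:Grass} confirms this data is really there, appearing as the extra factor $\mathbb{E}_{0,0}^{\vee}$ in the description of $\mM_{Q^{\star}}(\vec{m}^{\star})$.
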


Let us take data
$\xi^{\pm}=(\xi^{\pm}_i) \in \hH^{\sharp V(Q^{\star})}$ 
for $Q^{\star}$ 
satisfying
\begin{align}\label{star:data}
\xi^{\pm}_{i}=\sqrt{-1}, \ i \in V(Q), \ 
\Re (\xi^{+}_0) <0, \
 \Re (\xi^{-}_0)>0.
\end{align}
The following lemma 
characterizes $\xi^{\pm}$-semistable 
$Q^{\star}$-representations:  
\begin{lem}\label{lem:xistab}
Let $\mathbb{V}^{\star}$ be a $Q^{\star}$-representation 
of dimension vector $\vec{m}^{\star}$, 
given by a $Q$-representation 
$\mathbb{V}$ as in (\ref{Qrep:V})
together with linear maps (\ref{lmaps}). 
\begin{enumerate}
\item The object $\mathbb{V}^{\star}$ is $\xi^{+}$-semistable
if and only if it is $\xi^{+}$-stable if and only if 
the images of linear maps 
$\mathbb{E}_{0, i} \to V_i$ in (\ref{lmaps}) generate
$\bigoplus_{i\in V(Q)}V_i$ as a $\mathbb{C}[Q]$-module. 

\item The object $\mathbb{V}^{\star}$ is $\xi^{-}$-semistable
if and only if it is $\xi^{-}$-stable if and only if 
the images of linear maps 
$\mathbb{E}_{i, 0} \to V_i^{\vee}$
induced by right maps in (\ref{lmaps}) generate
$\bigoplus_{i \in V(Q)}V_i^{\vee}$ 
as a $\mathbb{C}[Q]$-module.
Here the $\mathbb{C}[Q]$-module structure 
on $\bigoplus_{i\in V(Q)}V_i^{\vee}$
is given by the dual $Q^{\vee}(=Q)$
representation 
$\mathbb{V}^{\vee}$
of $\mathbb{V}$. 
\end{enumerate}
\end{lem}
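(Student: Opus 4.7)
The plan is to analyze proper subrepresentations $\mathbb{W}^\star \subsetneq \mathbb{V}^\star$ according to the value of $w_0 \in \{0, 1\}$, which exhausts all cases since $(\vec{m}^\star)_0 = 1$, and then read off the $\xi^{\pm}$-(semi)stability condition from a direct computation of the arguments of the central charges.

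First I would translate the two types of subrepresentation using Lemma~\ref{lem:obvious}. A subrepresentation with $w_0 = 1$ is precisely a $Q$-subrepresentation $\mathbb{W} \subset \mathbb{V}$ that contains all images of the maps $\mathbb{E}_{0, i} \to V_i$ appearing in (\ref{lmaps}). A subrepresentation with $w_0 = 0$ is a $Q$-subrepresentation $\mathbb{W} \subset \mathbb{V}$ on which every map $V_i \to \mathbb{E}_{i,0}^\vee$ dual to $\mathbb{E}_{i,0} \to V_i^\vee$ vanishes; taking orthogonal complements, this says equivalently that $\{W_i^\perp \subset V_i^\vee\}$ is a $Q^\vee$-subrepresentation of $\mathbb{V}^\vee$ that contains $\bigcup_i \mathrm{Im}(\mathbb{E}_{i,0} \to V_i^\vee)$.

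Next I would compute the arguments of the central charges. Writing $\xi^\pm_0 = a^\pm + b^\pm \sqrt{-1}$ with $b^\pm > 0$, $a^+ < 0$, $a^- > 0$, one has $Z_{\xi^\pm}(\vec{n}^\star) = n_0 \xi^\pm_0 + \sqrt{-1}\sum_i n_i$, and a direct check shows that, for $n_0 = 1$ fixed, the argument is strictly monotone in $s := \sum_i n_i$: strictly decreasing for $\xi^+$ (sitting in the second quadrant and rotating down toward $\pi/2$) and strictly increasing for $\xi^-$ (sitting in the first quadrant and rotating up toward $\pi/2$). Consequently, for $\xi^+$ a $w_0 = 0$ subrepresentation has argument $\pi/2$, strictly below $\arg Z_{\xi^+}(\mathbb{V}^\star) \in (\pi/2, \pi)$ and so never destabilizes, while any proper $w_0 = 1$ subrepresentation has strictly larger argument and always destabilizes; for $\xi^-$ the roles reverse.

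Since all the destabilizing inequalities are strict, semistability and stability coincide in both cases, and what remains is simply to restate the non-existence of destabilizing subrepresentations. For $\xi^+$ this reads: the smallest $Q$-subrepresentation of $\mathbb{V}$ containing $\bigcup_i \mathrm{Im}(\mathbb{E}_{0, i} \to V_i)$ equals $\bigoplus_i V_i$, i.e.\ condition (i). For $\xi^-$ it reads, by the first-step translation, that the smallest $Q^\vee$-subrepresentation of $\mathbb{V}^\vee$ containing $\bigcup_i \mathrm{Im}(\mathbb{E}_{i, 0} \to V_i^\vee)$ equals $\bigoplus_i V_i^\vee$, which under the symmetric identification $Q = Q^\vee$ is condition (ii). The one bookkeeping point that requires care is pinning down the sign of the derivative of $\arg Z_{\xi^\pm}$ in each quadrant; once that is done the lemma is immediate.
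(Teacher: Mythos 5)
Your proof is correct and is essentially the paper's argument: the paper characterizes $\xi^{+}$-(semi)stability as the absence of a surjection onto a nonzero $Q$-representation (equivalently, your "no proper subrepresentation with $w_0=1$") and $\xi^{-}$-(semi)stability dually via injections, then translates these into the generation conditions exactly as you do. You merely make explicit the monotonicity computation of $\arg Z_{\xi^{\pm}}$ that the paper leaves implicit in the phrase "by a choice of $\xi^{\pm}$".
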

\begin{proof}
(i) By a choice of $\xi^{+}$, a $Q^{\star}$-representation 
$\mathbb{V}^{\star}$
of dimension vector $\vec{m}^{\star}$ is $\xi^{+}$-(semi)stable 
if and only if there is no surjection 
$\mathbb{V}^{\star} \twoheadrightarrow \mathbb{V}'$ 
as $Q^{\star}$-representations
where $\mathbb{V}'$ 
is a non-zero $Q$-representation. 
The last condition is equivalent to that 
the images of linear maps $\mathbb{E}_{0, i} \to V_i$ 
generate $\bigoplus_{i\in V(Q)}V_i$
as a $\mathbb{C}[Q]$-module. 

(ii) By a choice of $\xi^{-}$, 
a $Q^{\star}$-representation 
$\mathbb{V}^{\star}$
of dimension vector $\vec{m}^{\star}$ is $\xi^{-}$-(semi)stable 
if and only if there is no injection
$\mathbb{V}' \hookrightarrow \mathbb{V}^{\star}$ 
as $Q^{\star}$-representations
where $\mathbb{V}'$
is a non-zero $Q$-representation.  
This is equivalent to that the dual representation 
$(\mathbb{V}^{\star})^{\vee}$
does not admit a surjection to $\mathbb{V}''$ 
in the category of $(Q^{\star})^{\vee}$-representations,
where 
$\mathbb{V}''$
is a non-zero $Q^{\vee}(=Q)$-representation. 
Therefore similarly to (i), we conclude (ii). 
\end{proof}
By Lemma~\ref{lem:xistab}, we have 
$M_{Q^{\star}}^{\xi^{\pm}, s}(\vec{m}^{\star})
=M_{Q^{\star}}^{\xi^{\pm}}(\vec{m}^{\star})$, 
so they are smooth quasi-projective varieties. 
Let us consider the following diagram
\begin{align}\label{dia:sflip}
\xymatrix{
M_{Q^{\star}}^{\xi^{+}}(\vec{m}^{\star})
 \ar[rd]_-{q_{Q^{\star}}^{\xi^{+}}}
 & & M_{Q^{\star}}^{\xi^{-}}(\vec{m}^{\star}) 
\ar[ld]^-{q_{Q^{\star}}^{\xi^{-}}} \\
& M_{Q^{\star}}(\vec{m}^{\star}). &
} 
\end{align}

\begin{lem}\label{lem:ample}
Suppose that $a_i>b_i$ for all $i \in V(Q)$. Then 
in the diagram (\ref{dia:sflip}), 
we have the following: 
\begin{enumerate}
\item The anti-canonical divisor 
of 
$M_{Q^{\star}}^{\xi^{+}}(\vec{m}^{\star})$ 
is ample. 
\item The 
canonical divisor of 
$M_{Q^{\star}}^{\xi^{-}}(\vec{m}^{\star})$
is ample. 
\end{enumerate}
\end{lem}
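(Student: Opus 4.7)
The plan is to compute $\omega_{M_{Q^{\star}}^{\xi^{\pm}}(\vec{m}^{\star})}$ as the descent of a diagonal-trivial $G^{\star}$-character and then conclude its ampleness in one of the two GIT chambers via King's theorem.

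First, I would compute $\omega$ directly from $(\det T\mathrm{Rep}_{Q^{\star}}(\vec{m}^{\star}))^{-1}$, using that $\det \mathfrak{g}^{\star}=\oO$. For each edge $e$ of $Q^{\star}$ the character of $G^{\star}$ on $\det\Hom(V_{s(e)},V_{t(e)})$ is $(\det g_{s(e)})^{m_{t(e)}}(\det g_{t(e)})^{-m_{s(e)}}$; the dimension weights are crucial to enforce diagonal triviality of the total character. The edges of the symmetric subquiver $Q\subset Q^{\star}$ cancel by the pairing argument used in the proof of Proposition \ref{prop:sflop}, the $c$ loops at the vertex $0$ contribute $\epsilon_0\epsilon_0^{-1}=1$, and only the $a_i$ arrows $0\to i$ and the $b_i$ arrows $i\to 0$ contribute nontrivially. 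Adding these contributions yields that $\omega_{M_{Q^{\star}}^{\xi^{\pm}}(\vec{m}^{\star})}$ is the descent of the $G^{\star}$-character
\begin{align*}
\theta(g)=g_0^{-\sum_i(a_i-b_i)m_i}\prod_{i\in V(Q)}(\det g_i)^{a_i-b_i}.
\end{align*}
Writing $\theta=(\theta_0,\theta_1,\ldots,\theta_k)$ with $\theta_0=-\sum_i(a_i-b_i)m_i$ and $\theta_i=a_i-b_i$, I would check the identity $\theta_0+\sum_i m_i\theta_i=0$, confirming that $\theta$ is trivial on the diagonal scalar torus $\mathbb{C}^{\ast}\subset G^{\star}$ and hence descends to a genuine line bundle $L_{\theta}$ on each of $M_{Q^{\star}}^{\xi^{\pm}}(\vec{m}^{\star})$.

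Next I invoke King's GIT construction: for $\theta$ trivial on the diagonal, $L_{\theta}$ is ample on the GIT quotient of the $\theta$-semistable locus. Under the hypothesis $a_i>b_i$ one has $\theta_0<0$ and $\theta_i>0$ for every $i\in V(Q)$; by the slope computation following (\ref{star:data}), equivalently by Lemma \ref{lem:xistab}, this sign pattern is precisely the defining condition of the $\xi^{-}$-chamber. Hence $L_{\theta}=\omega_{M_{Q^{\star}}^{\xi^{-}}(\vec{m}^{\star})}$ is ample, proving (ii). For (i), the opposite character $-\theta$ has $-\theta_0>0$ and $-\theta_i<0$, hence lies in the $\xi^{+}$-chamber, so $L_{-\theta}=\omega_{M_{Q^{\star}}^{\xi^{+}}(\vec{m}^{\star})}^{\vee}$ is ample on $M_{Q^{\star}}^{\xi^{+}}(\vec{m}^{\star})$. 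Because both structure morphisms $q_{Q^{\star}}^{\xi^{\pm}}$ are projective over the affine GIT base $M_{Q^{\star}}(\vec{m}^{\star})$, absolute and relative ampleness coincide here, giving the statement of the lemma.

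The main subtlety lies in the first step. Unlike the symmetric case of Proposition \ref{prop:sflop}, where the exponent vector of $\omega$ vanishes vertex-by-vertex so that Lemma \ref{lem:formula:K} can be applied formally, the extra vertex $0$ and its asymmetric arrow counts $a_i\neq b_i$ break the symmetry; only the dimension-weighted form of each $\det\Hom(V_s,V_t)$-character, combined with $\det\mathfrak{g}^{\star}=\oO$, produces a character that is diagonal-trivial and hence descends. Once this normalisation is in place, the remainder of the argument is the direct application of King's theorem together with the chamber description already established in Lemma \ref{lem:xistab}.
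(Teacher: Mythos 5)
Your argument reaches the correct conclusion but by a genuinely different route from the paper. The paper normalises the universal representation so that $\vV_0^{\pm}=\oO$, obtains $\omega_{M_{Q^{\star}}^{\xi^{\pm}}(\vec{m}^{\star})}=\bigotimes_i\det(\vV_i^{\pm})^{b_i-a_i}$ from Lemma~\ref{lem:formula:K}, and then proves \emph{positivity} directly: Lemma~\ref{lem:xistab}(i) produces a surjection $\mathbf{E}\otimes\oO\twoheadrightarrow\bigoplus_i\vV_i^{+}$, so each $\vV_i^{+}$ is globally generated, and the anticanonical bundle is upgraded from globally generated to ample by the curve argument together with Sublemma~\ref{sublem1}; part (ii) is the dual statement. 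You instead identify $\omega$ with the descent of the diagonal-trivial character $\theta=g_0^{-\sum(a_i-b_i)m_i}\prod(\det g_i)^{a_i-b_i}$ and invoke King's theorem at the linearization $\theta$. This is shorter and avoids the sublemma, and your character computation is correct (the dimension-weighted exponents are exactly what make $\theta$ diagonal-trivial without any normalisation of $\vV_0$). Two points deserve more care than you give them. First, to apply King at $\theta$ and conclude anything about $M_{Q^{\star}}^{\xi^{-}}(\vec{m}^{\star})$ you must check that $\theta$ lies in the \emph{interior} of the GIT chamber of $\xi^{-}$; this does hold, because every proper subdimension vector $\vec{n}$ of $\vec{m}^{\star}$ satisfies $\theta(\vec{n})>0$ if $n_0=0$ and $\theta(\vec{n})<0$ if $n_0=1$, so no wall meets the region $\{\theta_i>0,\ \theta(\vec{m}^{\star})=0\}$ — say this explicitly. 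Second, and more seriously, the whole content of the lemma is concentrated in your one-line assertion that the sign pattern $\theta_0<0$, $\theta_i>0$ ``is precisely the $\xi^{-}$-chamber'': a single sign slip (in King's inequality on subrepresentations, or in which direction the character acts on the linearized fibre relative to the convention (\ref{G:act})) swaps conclusions (i) and (ii). The identification is in fact correct — e.g.\ for $a_1=2$, $b_1=c=0$, $m=1$ one has $M_{Q^{\star}}^{\xi^{+}}=\mathbb{P}^1$, $\theta=g_0^{-2}(\det g_1)^{2}$ descends to $\oO(-2)=\omega$, and the $\theta$-semi-invariants vanish identically so the $\theta$-semistable locus is the (empty) $\xi^{-}$-locus — but you should verify it rather than assert it, since Lemma~\ref{lem:xistab} only characterises the semistable objects and does not by itself fix the orientation of King's criterion. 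The paper's global-generation argument sidesteps this sign-sensitivity entirely, which is what it buys at the cost of the extra curve lemma.
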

\begin{proof}
For $i \in V(Q^{\star})$ and $e \in E(Q^{\star})$, let
\begin{align*}
\vV_i^{\pm} \to M_{Q^{\star}}^{\xi^{\pm}}(\vec{m}^{\star}), \ 
\mathbf{u}_{e} \colon \vV_{s(e)}^{\pm} \to \vV_{t(e)}^{\pm}
\end{align*}
be a universal $Q^{\star}$-representations. 
Note that such a universal representations exists
as $\vec{m}^{\star}$ is primitive and 
$M_{Q^{\star}}^{\xi^{\pm}, s}(\vec{m}^{\star})
=M_{Q^{\star}}^{\xi^{\pm}}(\vec{m}^{\star})$ hold. 
For $i=0$, the vector bundles 
$\vV_0^{\pm}$ are line bundles by 
our choice of $\vec{m}^{\star}$, so by 
replacing $\vV_i^{\pm}$
by $\vV_i^{\pm} \otimes (\vV_0^{\pm})^{-1}$ we may 
assume that $\vV_0^{\pm}$ are trivial line bundles.   
Then by (\ref{formula:K}), we have 
\begin{align}\label{formula:K2}
\omega_{M_{Q^{\star}}^{\xi^{\pm}}(\vec{m}^{\star})}
=\bigotimes_{i \in V(Q)} \det (\vV_i^{\pm})^{b_i-a_i}.
\end{align}

Let $\mathbf{E} \subset \mathbb{C}[Q]$ be the 
vector subspace generated by paths of the form
$e_1 e_2 \cdots e_n$ for $n\ge 1$ such
that $s(e_1) =0$, $t(e_1) \in V(Q)$
and $e_2, \ldots, e_n \in E(Q)$. 
Then the compositions
$\mathbf{u}_{e_n} \circ \cdots \circ \mathbf{u}_{e_1}$
determine the morphism of sheaves
\begin{align*}
\mathbf{E} \otimes \oO_{M_{Q^{\star}}^{\xi^{+}}(\vec{m}^{\star})}
\to \bigoplus_{i\in V(Q)} \vV_i^{+}. 
\end{align*}
Then Lemma~\ref{lem:xistab} (i) implies that 
the above morphism is surjective. 
Therefore each $\vV_i^{+}$ is generated by 
its global sections. 
By (\ref{formula:K2})
and the assumption $a_i>b_i$ for all $i\in V(Q)$, 
the line bundle 
$(\omega_{M_{Q^{\star}}^{\xi^{+}}(\vec{m}^{\star})})^{-1}$
is generated by its global sections. 
In order to show that it is ample, it is enough to show that 
it has positive degree on any projective curve 
on $M_{Q^{\star}}^{\xi^{+}}(\vec{m}^{\star})$. 
Let $C$ be a smooth projective curve and 
take a non-constant map
\begin{align*}h \colon C \to M_{Q^{\star}}^{\xi^{+}}(\vec{m}^{\star}).
\end{align*}
Note that each degree of $h^{\ast}\vV_i^{+}$
is non-negative, as it is globally generated. 
Therefore
if the degree of 
$h^{\ast}(\omega_{M_{Q^{\star}}^{\xi^{+}}(\vec{m}^{\star})})^{-1}$
is non-positive, then 
each degree of $h^{\ast}\vV_i^{+}$
must be zero. 
By Sublemma~\ref{sublem1} below, 
in this case
the vector bundle $h^{\ast}\vV_i^{+}$ 
must be a direct
sum of $\oO_C$. 
Then the pull-back of 
the universal map $\mathbf{u}_e$ to $C$
by the map $h$ has to be constant. 
But this implies that the map 
$h$ is constant, which is a contradiction. 
Therefore (i) of the lemma holds. 
The result of (ii) follows from 
Lemma~\ref{lem:xistab} (ii) and the dual argument of (i). 
\end{proof}
We used the following sublemma: 
\begin{sublem}\label{sublem1}
Let $C$ be a smooth projective curve and 
$\vV$ a vector bundle on it. 
Suppose that $\vV$ is generated by its global sections, 
and $\deg \vV=0$. 
Then $\vV$ is isomorphic to a direct sum of 
$\oO_C$.
\end{sublem}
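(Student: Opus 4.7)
The plan is to exhibit $r \cneq \rank \vV$ global sections of $\vV$ that are linearly independent at the generic point of $C$, and then use the degree hypothesis to upgrade the resulting map $\oO_C^{\oplus r}\to\vV$ from a generic isomorphism to a genuine isomorphism of sheaves.

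First I would pass to the generic point $\eta$ of $C$. The global generation hypothesis means that the evaluation map $\ev: H^0(\vV)\otimes\oO_C \to \vV$ is surjective, and in particular surjective on the stalk at $\eta$, so the image of $H^0(\vV)$ in the $K(C)$-vector space $\vV_\eta$ spans it over $K(C)$. Hence I can choose sections $s_1,\dots,s_r\in H^0(\vV)$ whose images in $\vV_\eta$ form a $K(C)$-basis; this is a purely linear-algebraic step (pick $\mathbb{C}$-linearly independent sections one by one, discarding any that become $K(C)$-dependent on the previous ones, which is possible as long as the span over $K(C)$ has dimension less than $r$).

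Next I would assemble these into a morphism of sheaves
\begin{align*}
\phi\dit \oO_C^{\oplus r}\lr \vV, \quad (a_1,\dots,a_r)\mapsto \sum_i a_i s_i,
\end{align*}
which by construction is an isomorphism at $\eta$. Since $\phi$ is a map between locally free sheaves of the same rank and is an isomorphism generically, $\phi$ is injective with cokernel $T$ supported on a proper closed subset of $C$, i.e. $T$ is a torsion sheaf. From the short exact sequence $0\to\oO_C^{\oplus r}\to\vV\to T\to 0$ and additivity of degree I obtain
\begin{align*}
\deg T=\deg\vV-\deg\oO_C^{\oplus r}=0.
\end{align*}
Because $T$ is a torsion coherent sheaf on a smooth curve, its degree equals its length, so $\deg T=0$ forces $T=0$. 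Consequently $\phi$ is an isomorphism and $\vV\cong\oO_C^{\oplus r}$.

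I do not anticipate a serious obstacle here; the only point that requires a moment of care is the existence of $r$ sections that are $K(C)$-linearly independent at $\eta$, which is immediate from global generation at the generic point. The rest is a standard degree-equals-length argument for torsion sheaves on a smooth curve, which needs no further input beyond what is available in the paper.
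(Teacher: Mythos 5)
Your proof is correct, and it takes a genuinely different route from the paper's. The paper sets $W=H^0(C,\vV)$ and observes that the surjection $W\otimes\oO_C\twoheadrightarrow\vV$ defines a morphism $\gamma\colon C\to \mathrm{Gr}(W,r)$; composing with the Pl\"ucker embedding gives $\deg\vV=\deg\gamma^{\ast}\oO(1)$, so $\deg\vV=0$ forces $\gamma$ to be constant, and $\vV$, being the pullback of the universal quotient bundle from a point, is trivial. You instead extract $r$ sections forming a $K(C)$-basis of $\vV_\eta$ (legitimate, since surjectivity of the evaluation map localizes at the generic point), obtain $\phi\colon\oO_C^{\oplus r}\to\vV$ that is generically an isomorphism, note that $\phi$ is injective because its kernel would be a torsion subsheaf of a locally free sheaf on an integral curve, and then kill the torsion cokernel by the additivity of degree together with the fact that the degree of a torsion sheaf on a smooth curve is its length. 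Your argument is more elementary — it avoids the Grassmannian and the Pl\"ucker embedding entirely and produces an explicit trivializing frame — while the paper's argument is a compact geometric one-liner that in addition identifies the full evaluation map as factoring through a single point of the Grassmannian. Both are complete; no gap.
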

\begin{proof}
We set $W=H^0(C, \vV)$, and 
$r=\rank \vV$. 
The natural surjection 
$W \otimes \oO_C \twoheadrightarrow \vV$
induces a morphism
\begin{align*}
\gamma \colon C \to \mathrm{Gr}(W, r) \hookrightarrow 
\mathbb{P}^N. 
\end{align*}
Here $N=\dim \bigwedge^r W$, 
and the right arrow is the Pl{\"u}cker embedding. 
Since $\deg \vV=\deg \gamma^{\ast}\oO(1)$, 
the assumption $\deg \vV=0$ implies that 
$\gamma$ is a constant map. 
Also the vector bundle $\vV$ is a pull-back of the universal 
quotient bundle
 on $\mathrm{Gr}(W, r)$.
Therefore
$\vV$ is isomorphic to a direct sum of $\oO_C$. 
\end{proof}

Using the above lemma, we show the following proposition: 
\begin{prop}\label{prop:Mflip}
Suppose that $a_i>b_i$ for all $i \in V(Q)$.
 Then 
in the diagram (\ref{dia:sflip}), 
either one of the followings hold: 
\begin{enumerate}
\item We have $M_{Q^{\star}}^{\xi^{-}}(\vec{m}^{\star})=\emptyset$
and the diagram (\ref{dia:sflip}) is a generalized MFS. 
\item We have 
$M_{Q^{\star}}^{\xi^{\pm}}(\vec{m}^{\star})\neq \emptyset$
and the diagram (\ref{dia:sflip}) is a generalized flip. 
\end{enumerate}
\end{prop}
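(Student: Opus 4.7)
The plan is to split into two cases according to whether $M_{Q^{\star}}^{\xi^{-}}(\vec{m}^{\star})$ is empty, and verify the defining properties (Definition~\ref{defi:generalized}) in each case, using Lemma~\ref{lem:ample} as the main input on canonical bundles.

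In Case (i), when $M_{Q^{\star}}^{\xi^{-}}(\vec{m}^{\star}) = \emptyset$, Lemma~\ref{lem:ample}~(i) gives that $-K_{M_{Q^{\star}}^{\xi^{+}}(\vec{m}^{\star})}$ is globally ample, and hence $q_{Q^{\star}}^{\xi^{+}}$-ample. Combined with the standard GIT fact that $q_{Q^{\star}}^{\xi^{+}}$ is projective with $(q_{Q^{\star}}^{\xi^{+}})_{\ast}\oO = \oO_{Z}$ onto its image $Z \subset M_{Q^{\star}}(\vec{m}^{\star})$, this matches the definition of a generalized MFS directly.

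For Case (ii), where both $M_{Q^{\star}}^{\xi^{\pm}}(\vec{m}^{\star})$ are non-empty, the key observation is that by Lemma~\ref{lem:xistab}, a $Q^{\star}$-representation of dimension vector $\vec{m}^{\star}$ is \emph{simple} if and only if it is simultaneously $\xi^{+}$-stable and $\xi^{-}$-stable (no pure $V_{i}$ quotient nor subobject). This common open locus of simple $Q^{\star}$-representations is dense in both $M_{Q^{\star}}^{\xi^{+}}(\vec{m}^{\star})$ and $M_{Q^{\star}}^{\xi^{-}}(\vec{m}^{\star})$, and on it both $q_{Q^{\star}}^{\xi^{\pm}}$ are injective to a common image $Z \subset M_{Q^{\star}}(\vec{m}^{\star})$. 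This yields birational morphisms $q_{Q^{\star}}^{\xi^{\pm}}$ onto $Z$, and together with Lemma~\ref{lem:ample} (giving $-K$ is $q_{Q^{\star}}^{\xi^{+}}$-ample and $K$ is $q_{Q^{\star}}^{\xi^{-}}$-ample), this verifies the generalized flip definition.

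The main obstacle is to rule out the scenario $M_{Q^{\star}}^{\xi^{-}}(\vec{m}^{\star}) \neq \emptyset$ yet $M_{Q^{\star}}^{\xi^{+}}(\vec{m}^{\star}) = \emptyset$, which falls under neither case of the statement. Under the strict inequality $a_{i} > b_{i}$, existence of a $\xi^{-}$-stable $Q^{\star}$-rep implies some $Q$-rep $\mathbb{V}$ admits $b_{i}$ maps $\mathbb{E}_{i, 0} \to V_{i}^{\vee}$ generating $\bigoplus V_{i}^{\vee}$ as a $\mathbb{C}[Q]$-module under the $\mathbb{V}^{\vee}$-action. The symmetry of $Q$ and the strictly larger edge count $a_{i}$ should allow one to construct $a_{i}$ maps $\mathbb{E}_{0, i} \to V_{i}$ generating $\bigoplus V_{i}$ on a suitable $Q$-rep, giving a $\xi^{+}$-stable $Q^{\star}$-rep by Lemma~\ref{lem:xistab}~(i). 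A parallel generic-position argument handles the density of the simple locus needed for Case (ii).
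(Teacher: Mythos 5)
Your argument is correct, but it follows a genuinely different route from the paper's. The paper splits on whether the simple locus $M_{Q^{\star}}^{s}(\vec{m}^{\star})$ is empty: if not, both contractions are birational and Lemma~\ref{lem:ample} finishes; if so, it invokes the criterion of Theorem~\ref{thm:criterion} together with the Euler-pairing inequalities $\langle \vec{m}^{\star}, \vec{i}\rangle < \langle \vec{i}, \vec{m}^{\star}\rangle$, $\langle \vec{0}, \vec{m}^{\star}\rangle < \langle \vec{m}^{\star}, \vec{0}\rangle$ and a strong-connectivity analysis to conclude $M_{Q^{\star}}^{\xi^{-}}(\vec{m}^{\star}) = \emptyset$. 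You instead split on whether $M_{Q^{\star}}^{\xi^{-}}(\vec{m}^{\star})$ is empty and avoid Theorem~\ref{thm:criterion} altogether: your observation that, since $m_0^{\star}=1$, simplicity is equivalent to simultaneous $\xi^{+}$- and $\xi^{-}$-stability is correct, and no ``generic-position argument'' is needed for density --- the two stable loci are nonempty Zariski-open subsets of the irreducible affine space $\mathrm{Rep}_{Q^{\star}}(\vec{m}^{\star})$, so their intersection is automatically nonempty and dense in both, giving birationality of both $q_{Q^{\star}}^{\xi^{\pm}}$. The only soft spot is the step you flag with ``should allow one to construct'': it does work, and can be made precise as follows. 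Given a $\xi^{-}$-stable representation, Lemma~\ref{lem:xistab}~(ii) says the $b_i$ images of $\mathbb{E}_{i,0} \to V_i^{\vee}$ generate $\bigoplus_i V_i^{\vee}$ over $\mathbb{C}[Q]$ for the dual representation $\mathbb{V}^{\vee}$; since $a_i > b_i$, define a new $Q^{\star}$-representation with underlying $Q$-representation $\mathbb{V}^{\vee}$ by sending $b_i$ of the $a_i$ basis edges of $\mathbb{E}_{0,i}$ to those same generators (and the remaining edges, and all coframing maps $\mathbb{E}_{i,0}\otimes V_i^{\vee} \to \mathbb{C}$, to zero); Lemma~\ref{lem:xistab}~(i) then gives $\xi^{+}$-stability, so $M_{Q^{\star}}^{\xi^{-}}(\vec{m}^{\star}) \neq \emptyset$ forces $M_{Q^{\star}}^{\xi^{+}}(\vec{m}^{\star}) \neq \emptyset$ and the excluded third scenario cannot occur. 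With that step filled in, your proof is more self-contained and elementary than the paper's, at the cost of not recording the numerical characterization (via the pairing and connectivity of $\mathrm{supp}(\vec{m}^{\star})$) of when each alternative occurs.
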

\begin{proof}
If $M_{Q^{\star}}^{s}(\vec{m}^{\star}) \neq \emptyset$, 
then 
the maps in (\ref{dia:sflip}) are birational 
so 
(ii) holds by Lemma~\ref{lem:ample}. 
Below we show that
$M_{Q^{\star}}^{s}(\vec{m}^{\star}) = \emptyset$
implies that 
$M_{Q^{\star}}^{\xi^{-}}(\vec{m}^{\star}) = \emptyset$. 
Note that for $i\in V(Q)$ we have
\begin{align*}
\langle \vec{m}^{\star}, \vec{i} \rangle 
=\langle \vec{m}, \vec{i} \rangle -a_i, \ 
\langle \vec{i}, \vec{m}^{\star}\rangle 
=\langle \vec{i}, \vec{m} \rangle -b_i.
\end{align*}
We also have the identities
\begin{align*}
\langle \vec{m}^{\star}, \vec{0} \rangle 
=1-\sum_{i\in V(Q)} b_i m_i, \ 
\langle \vec{0}, \vec{m}^{\star} \rangle 
=1-\sum_{i\in V(Q)} a_i m_i.
\end{align*}
By our assumption $a_i>b_i$, we have the inequalities
\begin{align}\label{ineq:langle}
\langle \vec{m}^{\star}, \vec{i} \rangle 
<\langle \vec{i}, \vec{m}^{\star} \rangle, \ 
\langle \vec{0}, \vec{m}^{\star}\rangle
<
\langle \vec{m}^{\star}, \vec{0} \rangle.
\end{align}
If $\langle \vec{i}, \vec{m}^{\star} \rangle>0$
for $i\in V(Q)$,  
then by Remark~\ref{rmk:simple}
any $Q^{\star}$-representation $\mathbb{V}^{\star}$ of dimension vector 
$\vec{m}^{\star}$ admits an injection $S_i \hookrightarrow 
\mathbb{V}^{\star}$. 
Therefore we have $M_{Q^{\star}}^{\xi^{-}}(\vec{m}^{\star}) = \emptyset$. 
Similarly if 
$\langle \vec{m}^{\star}, \vec{0} \rangle>0$, then 
any such $\mathbb{V}^{\star}$ admits a surjection 
$\mathbb{V}^{\star}\twoheadrightarrow S_0$, which implies 
$M_{Q^{\star}}^{\xi^{-}}(\vec{m}^{\star}) = \emptyset$. 
Therefore by the inequalities (\ref{ineq:langle}), 
we may assume that 
$\langle \vec{m}^{\star}, \vec{j}\rangle \le 0$
and $\langle \vec{j}, \vec{m}^{\star} \rangle \le 0$
for any $j \in V(Q^{\star})$. 

By Theorem~\ref{thm:criterion}, 
the condition 
$M_{Q^{\star}}^{s}(\vec{m}^{\star}) = \emptyset$
implies that $\mathrm{supp}(\vec{m}^{\star})$
 is not strongly connected. 
Let $Q_1, \cdots, Q_l$ be the connected components of 
$\mathrm{supp}(\vec{m})$, which are subquivers of 
$Q$. 
As $\mathrm{supp}(\vec{m}^{\star})$ is not 
strongly connected and $Q$ is symmetric, there is 
$1\le k \le l$ such that 
we have $b_i=0$ for any $i \in V(Q_k)$. 
This implies that any 
$Q^{\star}$-representation $\mathbb{V}^{\star}$ of dimension vector 
$\vec{m}^{\star}$ admits an injection
$\mathbb{V}' \hookrightarrow \mathbb{V}^{\star}$ for a
non-zero 
$Q_k$-representation $\mathbb{V}'$. 
Therefore we have $M_{Q^{\star}}^{\xi^{-}}(\vec{m}^{\star}) = \emptyset$
and (i) holds. 
\end{proof}

\begin{exam}\label{exam:Grass}
Let $Q$ be the symmetric quiver 
with one vertex and no loops, and 
write $V(Q)=\{1\}$. 
Then $Q^{\star}$ has two vertices 
$V(Q^{\star})=\{0, 1\}$, 
with $a_1$-arrows from 
$0$ to $1$, 
$b_1$-arrows from 
$1$ to $0$, and $c$-loops at 
$0$. The dimension vector 
$\vec{m}^{\star}$ is written as
$\vec{0}+m \cdot \vec{1}$
for $m \in \mathbb{Z}_{>0}$. 
Let $V$ be a vector space with dimension $m$. 
Then by Lemma~\ref{lem:obvious}, 
the stack $\mM_{Q^{\star}}(\vec{m}^{\star})$ is written as
\begin{align*}
\mM_{Q^{\star}}(\vec{m}^{\star})
=\left[(\Hom(\mathbb{E}_{0, 1}, V) \times 
\Hom(\mathbb{E}_{1, 0}, V^{\vee}))/(\mathbb{C}^{\ast} \times \GL(V))
\right] \times \mathbb{E}_{0, 0}^{\vee}. 
\end{align*}
By Lemma~\ref{lem:xistab}, we see that
\begin{align*}
&M_{Q^{\star}}^{\xi^+}(\vec{m}^{\star})
=\mathrm{Tot}_{\mathrm{Gr}(\mathbb{E}_{0, 1}, m)}
(\qQ_{0, 1}^{\vee} \otimes \mathbb{E}_{1, 0}^{\vee}) \times 
\mathbb{E}_{0, 0}^{\vee}, \\
&M_{Q^{\star}}^{\xi^-}(\vec{m}^{\star})
=\mathrm{Tot}_{\mathrm{Gr}(\mathbb{E}_{1, 0}, m)}
(\qQ_{1, 0}^{\vee} \otimes \mathbb{E}_{0, 1}^{\vee}) \times 
\mathbb{E}_{0, 0}^{\vee}.
\end{align*}
Here $\qQ_{i, j}$ is the universal 
quotient bundle on 
$\mathrm{Gr}(\mathbb{E}_{i, j}, m)$.
In this case, the birational map
$M_{Q^{\star}}^{\xi^+}(\vec{m}^{\star})
\dashrightarrow M_{Q^{\star}}^{\xi^-}(\vec{m}^{\star})$
is a 
Grassmannian flip. 
\end{exam}

Let $W^{\star}$ be a convergent super-potential of $Q^{\star}$, 
and take an analytic 
neighborhood $0 \in V \subset M_{Q^{\star}}(\vec{m}^{\star})$
as in (\ref{V:open}).
We have the diagram
 \begin{align}\label{dia:sym:flip}
\xymatrix{
M_{(Q^{\star}, \partial W^{\star})}^{\xi^{+}}(\vec{m}^{\star})|_{V} 
\ar[rd]_-{q_{(Q^{\star}, \partial W^{\star})}^{\xi^{+}}} & & 
\ar[dl]^-{q_{(Q^{\star}, \partial W^{\star})}^{\xi^{-}}}
M_{(Q^{\star}, \partial W^{\star})}^{\xi^{-}}(\vec{m}^{\star})|_{V} \\
&  M_{(Q^{\star}, \partial W^{\star})}(\vec{m}^{\star})|_{V}. &
}
\end{align}
By Lemma~\ref{MW:dcrit} and Proposition~\ref{prop:Mflip}, 
we have the following corollary: 
\begin{cor}\label{cor:dflip}
For the diagram (\ref{dia:sym:flip}),
either one of the followings holds:
\begin{enumerate}
\item We have $M_{Q^{\star}}^{\xi^{-}}(\vec{m}^{\star})=M_{(Q^{\star}, \partial W^{\star})}^{\xi^{-}}(\vec{m}^{\star})|_{V}=\emptyset$
and the diagram (\ref{dia:sym:flip}) is a d-critical generalized MFS. 
\item The diagram (\ref{dia:sym:flip}) is a d-critical generalized flip. 
\end{enumerate} 
\end{cor}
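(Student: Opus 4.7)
The plan is to reduce Corollary~\ref{cor:dflip} to Proposition~\ref{prop:Mflip} at the level of the ambient smooth GIT quotients, using Lemma~\ref{MW:dcrit} to transport the conclusion to the critical loci. First I would fix a point $p$ in $M_{(Q^{\star}, \partial W^{\star})}(\vec{m}^{\star})|_V$ and denote its image in $V \subset M_{Q^{\star}}(\vec{m}^{\star})$ again by $p$. For any sufficiently small analytic open neighborhood $p \in U \subset V$, the two diagrams (\ref{dia:arrow}) for $\xi^+$ and $\xi^-$ can be placed over the same base $U$, with common function $g = \overline{\tr}(W^{\star})|_U$ and common closed immersion $j \colon M_{(Q^{\star}, \partial W^{\star})}(\vec{m}^{\star})|_U \hookrightarrow U$. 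Setting $Y^{\pm} = (q_{Q^{\star}}^{\xi^{\pm}})^{-1}(U)$, these are smooth because $\vec{m}^{\star}$ is primitive and $M_{Q^{\star}}^{\xi^{\pm}, s}(\vec{m}^{\star}) = M_{Q^{\star}}^{\xi^{\pm}}(\vec{m}^{\star})$ by Lemma~\ref{lem:xistab}. Lemma~\ref{MW:dcrit} then guarantees that these data simultaneously realize $q_{(Q^{\star}, \partial W^{\star})}^{\xi^{\pm}}$-relative d-critical charts in exactly the form required by Definition~\ref{defi:dflip}.

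Having produced the common relative d-critical charts, the remaining task is to verify that the underlying diagram $Y^+ \to U \leftarrow Y^-$ satisfies the birational-geometric properties of Definition~\ref{defi:generalized}. In case~(i), the vanishing $M_{Q^{\star}}^{\xi^-}(\vec{m}^{\star}) = \emptyset$ forces $Y^- = \emptyset$ and hence $M_{(Q^{\star}, \partial W^{\star})}^{\xi^-}(\vec{m}^{\star})|_V = \emptyset$, while the ampleness of $-K_{M_{Q^{\star}}^{\xi^+}(\vec{m}^{\star})}$ proved in Lemma~\ref{lem:ample}(i) yields $f^+$-ampleness of $-K_{Y^+}$ by restriction to $U$; this is precisely the data of a generalized MFS. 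In case~(ii), Proposition~\ref{prop:Mflip}(ii) tells us that $f^{\pm}$ are birational with $-K_{Y^+}$ ample and $K_{Y^-}$ ample on the total spaces, and all three properties are preserved by passing to the preimage of $U$, giving a generalized flip.

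Since the point $p$ was arbitrary, the diagram (\ref{dia:sym:flip}) satisfies the pointwise conditions of Definition~\ref{defi:dflip} at every point of the base, establishing case~(i) and case~(ii) respectively. The main content has already been isolated elsewhere: Proposition~\ref{prop:Mflip} supplies the birational geometry of the quiver GIT quotients, and Lemma~\ref{MW:dcrit} provides the d-critical lifting. The only point requiring mild care is the simultaneous compatibility of the relative d-critical charts for $\xi^+$ and $\xi^-$ over a common base $U$ with common $g$ and $j$, and this is built directly into the construction of (\ref{dia:arrow}), so I expect no serious obstacle beyond routine bookkeeping.
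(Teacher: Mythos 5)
Your proposal is correct and follows exactly the route the paper intends: Corollary~\ref{cor:dflip} is stated there as an immediate consequence of Lemma~\ref{MW:dcrit} (which supplies the common relative d-critical charts via the ambient smooth GIT quotients, using that $M_{Q^{\star}}^{\xi^{\pm},s}(\vec{m}^{\star})=M_{Q^{\star}}^{\xi^{\pm}}(\vec{m}^{\star})$ by Lemma~\ref{lem:xistab}) combined with Proposition~\ref{prop:Mflip} (which gives the generalized MFS/flip dichotomy for the quiver moduli). Your elaboration of the bookkeeping — common base, common $g=\overline{\tr}(W^{\star})$ and $j$, and the restriction of the (anti)canonical ampleness to preimages of open subsets — is accurate and fills in precisely the routine details the paper leaves implicit.
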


We also have the strictness (see Definition~\ref{defi:strict})
of the diagram (\ref{dia:sym:flip})
under some conditions: 
\begin{lem}\label{lem:strict}
Suppose that $W^{\star}$ is minimal (see (\ref{form:W})) and 
$a_i>m_i$ for any $i \in V(Q)$. Then 
the diagram (\ref{dia:sym:flip}) is strict
at $0 \in M_{(Q^{\star}, \partial W^{\star})}(\vec{m}^{\star})|_{V}$. 
\end{lem}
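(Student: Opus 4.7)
The plan is to exhibit a positive-dimensional family of points in the fiber $(q_{(Q^\star,\partial W^\star)}^{\xi^+})^{-1}(0)$; by Definition~\ref{defi:strict} this is exactly strictness of the diagram at $0$. Via Lemma~\ref{lem:obvious}, a $Q^\star$-representation with dimension vector $\vec{m}^\star$ whose semi-simplification is $S_0 \oplus \bigoplus_i S_i^{\oplus m_i}$ (i.e.\ whose image in $M_{Q^\star}(\vec{m}^\star)$ is $0$) is the same as a nilpotent $\xi^+$-stable such representation, and the point $0$ lies in the critical locus automatically because $W^\star$ is minimal. So the task is to find a positive-dimensional moduli of isomorphism classes of nilpotent $\xi^+$-stable representations satisfying $\partial W^\star$.

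First I would construct a sparse family. For each $i \in V(Q)$, since $a_i > m_i$, one can pick a surjective linear map $\alpha_i \colon \mathbb{E}_{0,i} \to V_i$; let $\mathbb{V}^\star_{\vec{\alpha}}$ be the $Q^\star$-representation whose edge data (in the sense of Lemma~\ref{lem:obvious}) consists of the maps $\alpha_i$ on $\mathbb{E}_{0,i}$ and of the zero map on every other class of edges (namely $E(Q)$, $\mathbb{E}_{i,0}$ for all $i$, and $\mathbb{E}_{0,0}$). Any path in $Q^\star$ of length $\ge 2$ evaluates to zero on $\mathbb{V}^\star_{\vec{\alpha}}$: after the first edge lands in some $V_i$ with $i \in V(Q)$, the next edge lies in $E(Q) \sqcup \mathbb{E}_{i,0}$, both of which act as zero; analogously for paths involving loops at $0$. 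In particular $\mathbb{V}^\star_{\vec{\alpha}}$ is nilpotent.

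Next I would verify the two constraints. Because $W^\star$ is minimal in the sense of (\ref{form:W}), every cyclic monomial $e_1 e_2 \cdots e_n$ appearing in $W^\star$ has length $n \ge 3$, so each relation $\partial_e W^\star$ is a sum of paths of length $n-1 \ge 2$. By the previous paragraph, all such paths vanish on $\mathbb{V}^\star_{\vec{\alpha}}$; hence $\mathbb{V}^\star_{\vec{\alpha}}$ belongs to $M_{(Q^\star,\partial W^\star)}(\vec{m}^\star)$. For $\xi^+$-stability, Lemma~\ref{lem:xistab}(i) demands that the images of the maps $\mathbb{E}_{0,i} \to V_i$ generate $\bigoplus_i V_i$ as a $\mathbb{C}[Q]$-module, which holds because each $\alpha_i$ is surjective; one does not even need non-trivial paths of $Q$.

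Finally I would count moduli. Two tuples $\vec{\alpha} = (\alpha_i)$, $\vec{\alpha}' = (\alpha_i')$ define isomorphic representations precisely when $\ker \alpha_i = \ker \alpha_i'$ for each $i$, after dividing by the $\GL(V_0) \times \prod_i \GL(V_i)$-action (whose diagonal $\mathbb{C}^\ast$ acts trivially). The resulting moduli is therefore
\[
\prod_{i \in V(Q)} \mathrm{Gr}(\mathbb{E}_{0,i}, m_i), \qquad \dim = \sum_{i \in V(Q)} m_i(a_i - m_i).
\]
Since $\vec{m} \in \Gamma_{Q,>0}$ is nonzero, some $m_i \ge 1$, and the hypothesis $a_i > m_i$ gives $m_i(a_i - m_i) \ge 1$ for that $i$; the product has positive dimension, yielding a positive-dimensional subvariety of $(q_{(Q^\star,\partial W^\star)}^{\xi^+})^{-1}(0)$, which is exactly the strictness claim. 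There is no serious obstacle in this argument; its only subtle point is the use of minimality of $W^\star$ to make every relation $\partial_e W^\star$ automatically vanish on the sparse family $\mathbb{V}^\star_{\vec{\alpha}}$.
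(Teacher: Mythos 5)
Your proof is correct and follows essentially the same route as the paper: the author also exhibits the family of nilpotent representations with surjective maps $\mathbb{E}_{0,i}\to V_i$ and all other edge data zero, identifies its moduli with $\prod_{i\in V(Q)}\mathrm{Gr}(\mathbb{E}_{0,i},m_i)$ inside the fiber over $0$, and uses minimality of $W^{\star}$ together with Lemma~\ref{lem:xistab} exactly as you do. Your write-up merely makes explicit a few details (why length~$\ge 2$ paths vanish, hence why the relations $\partial W^{\star}$ hold, and why the Grassmannian product has positive dimension) that the paper leaves implicit.
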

\begin{proof}
We need to show that the map 
$q_{(Q^{\star}, \partial W^{\star})}^{\xi^{+}}$ in the diagram (\ref{dia:sym:flip}) is not a finite morphism
at $0 \in M_{(Q^{\star}, \partial W^{\star})}(\vec{m}^{\star})|_{V}$. 
We consider nilpotent $Q^{\star}$-representations $\mathbb{V}^{\star}$
given by $Q$-representations (\ref{Qrep:V}) where $u_e=0$ for all $e \in E(Q)$, 
together with surjective linear maps 
$\mathbb{E}_{0, i} \to V_i$, and zero maps 
$\mathbb{E}_{i, 0} \otimes V_i \to 0$
in (\ref{lmaps}). 
The isomorphism 
classes of such $Q^{\star}$-representations 
form the product of 
Grassmannians $\mathrm{Gr}(\mathbb{E}_{0, i}, m_i)$
for all $i \in V(Q)$. 
By Lemma~\ref{lem:xistab}, such $Q^{\star}$-representations are 
$\xi^+$-stable. 
They also satisfy the relation $\partial W^{\star}$ (see Remark~\ref{rmk:pW}) 
by the minimality of $W^{\star}$, 
so we have 
\begin{align*}
\prod_{i \in V(Q)} \mathrm{Gr}(\mathbb{E}_{0, i}, m_i) \subset 
\left(q_{(Q^{\star}, \partial W^{\star})}^{\xi^{+}} \right)^{-1}(0). 
\end{align*}
Since the LHS is not zero dimensional 
by the assumption $a_i >m_i$, 
the lemma holds. 
\end{proof}

\section{D-critical flops of moduli spaces of one dimensional sheaves}
\label{sec:onedim}
In this section, we show that 
wall-crossing phenomena of 
one dimensional stable sheaves on CY 3-folds are described in terms of 
d-critical (generalized) flops. 
The proof of this result is related to the proof of the 
wall-crossing formula of Gopakumar-Vafa invariants given in~\cite{TodGV}.

\subsection{Twisted semistable sheaves}\label{subsec:twist2}
For a smooth projective CY 3-fold $X$, let 
$\Coh_{\le 1}(X) \subset \Coh(X)$
be
the abelian subcategory of 
coherent sheaves $E$ on $X$ whose supports have dimensions less than or equal to one, 
and 
\begin{align*}
D^b_{\le 1}(X) \cneq D^b(\Coh_{\le 1}(X))\subset D^b(X)
\end{align*}
its bounded derived category. 
Let $\Gamma_{\le 1}$ be defined by
\begin{align}\label{def:Gamma}
\Gamma_{\le 1} \cneq H_2(X, \mathbb{Z}) \oplus \mathbb{Z}.
\end{align}
The Chen character 
of an object in $D^b_{\le 1}(X)$ takes
its value in $\Gamma_{\le 1}$, and given by
\begin{align}\label{ch:Gamma}
\ch(E)=(\ch_2(E), \ch_3(E)) =([E], \chi(E)). 
\end{align}
Here $[E]$ is the fundamental one cycle associated with $E$. 

We denote by $\Stab_{\le 1}(X)$ the 
space of Bridgeland stability conditions on 
$D^b_{\le 1}(X)$ with respect to the 
Chern character map (\ref{ch:Gamma}). 
Namely a point $\sigma \in \Stab_{\le 1}(X)$
is a pair 
\begin{align*}
\sigma=(Z, \aA), \ 
\aA \subset D^b_{\le 1}(X), \ 
Z \colon \Gamma_{\le 1} \to \mathbb{C}
\end{align*}
where $\aA$ is the heart of a bounded t-structure on 
$D^b_{\le 1}(X)$ and $Z$
is a group homomorphism, satisfying 
some conditions (see Appendix~\ref{sec:Bridgeland}). 
By Theorem~\ref{thm:Stab}, 
the forgetting map $(Z, \aA) \mapsto Z$ gives a local 
homeomorphism 
 \begin{align*}
\Stab_{\le 1}(X) \to (\Gamma_{\le 1})_{\mathbb{C}}^{\vee}. 
\end{align*}
Let $A(X)_{\mathbb{C}}$ be the complexified ample cone of $X$
defined by
\begin{align*}
A(X)_{\mathbb{C}} \cneq \{B+i\omega \in H^2(X, \mathbb{C}) : \omega \mbox{ is ample }\}. 
\end{align*}
For a given element $B+i\omega \in A(X)_{\mathbb{C}}$, let 
$Z_{B, \omega}$ be the group homomorphism 
defined by 
\begin{align}\label{ZBw}
Z_{B, \omega} \colon \Gamma_{\le 1} \to \mathbb{C}, \ 
(\beta, n) \mapsto -n+(B+i\omega)\beta.
\end{align}
Then the pair
\begin{align}\label{sigma:Bw}
\sigma_{B, \omega} \cneq (Z_{B, \omega}, \Coh_{\le 1}(X))
\end{align}
determines a point in $\Stab_{\le 1}(X)$. 
The map 
\begin{align*}
A(X)_{\mathbb{C}} \to \Stab_{\le 1}(X),  \ 
(B, \omega) \mapsto \sigma_{B, \omega}
\end{align*}
is a continuous injective 
map, whose image is denoted by 
\begin{align*}
U(X) \subset \Stab_{\le 1}(X).
\end{align*}  

\begin{rmk}
For an object $E \in \Coh_{\le 1}(X)$, it is 
$\sigma_{B, \omega}$-(semi)stable if and only if for any subsheaf 
$0 \neq F \subsetneq E$, we have the inequality
\begin{align*}
\mu_{B, \omega}(F)<(\le) \mu_{B, \omega}(E).
\end{align*}
Here $\mu_{B, \omega}(E) \in \mathbb{R} \cup \{\infty\}$ is defined by 
\begin{align}\label{mu:slope}
\mu_{B, \omega}(E) \cneq 
\frac{\chi(E)-B \cdot [E]}{\omega \cdot [E]}
=-\frac{\Re Z_{B, \omega}(E)}{\Im Z_{B, \omega}(E)} 
\end{align}
when $\omega \cdot [E] \neq 0$, 
and $\mu_{B, \omega}(E)=\infty$
when $\omega \cdot [E]=0$. 
\end{rmk}

\subsection{Moduli spaces of one dimensional stable sheaves}
Let 
us take
\begin{align*}
\beta \in H_2(X, \mathbb{Z}), \ 
\sigma=(Z, \Coh_{\le 1}(X)) \in U(X)
\end{align*}
where $\beta$ is an effective curve class. 
We denote by $\mM_{\sigma}(\beta)$
the moduli stack of $\sigma$-semistable 
$E \in \Coh_{\le 1}(X)$ 
satisfying $\ch(E)=(\beta, 1)$. 
The stack $\mM_{\sigma}(\beta)$ is an Artin 
stack locally of finite type, with a 
good moduli space
\begin{align*}
p_M \colon 
\mM_{\sigma}(\beta) \to M_{\sigma}(\beta)
\end{align*}
for a projective scheme $M_{\sigma}(\beta)$ (see~\cite[Lemma~7.4]{Todstack}). 
A closed point of $M_{\sigma}(\beta)$ corresponds to a 
$\sigma$-polystable sheaf, i.e. a direct sum
\begin{align}\label{polystable}
E=\bigoplus_{i=1}^k V_i \otimes E_i
\end{align}
where each $V_i$ is a finite dimensional 
vector space, $E_i \in \Coh_{\le 1}(X)$ is a $\sigma$-stable sheaf
with $\arg Z(E_i)=\arg Z(E)$ and 
$E_i \not\cong E_j$ for $i\neq j$.
\begin{rmk}
The stack $\mM_{\sigma}(\beta)$ is a GIT quotient stack
(see~\cite[Lemma~7.4]{Todstack}), 
so the good moduli space $M_{\sigma}(\beta)$
exists without relying on~\cite{AHLH}
(see Remark~\ref{rmk:AHHL}). 
\end{rmk}

As we mentioned in Subsection~\ref{subsec:wallCY3}, there 
is a 
wall-chamber structure on 
$\Stab_{\le 1}(X)$. 
On the subset $U(X) \subset \Stab_{\le 1}(X)$, 
each wall is given by
\begin{align}\notag
\{(Z, \Coh_{\le 1}(X)) \in U(X) : 
Z(v_1) \in \mathbb{R}_{>0} Z(v_2) \}
\end{align}
for each decomposition
\begin{align*}
(\beta, 1)=v_1+v_2, \ 
v_i=(\beta_i, n_i) \in \Gamma_{\le 1}.
\end{align*}
Here $\beta_i$ is an effective curve class. 
Suppose that 
$\sigma \in U(X)$ 
 lies in one of the above walls, 
and write $\sigma=\sigma_{B, \omega}$ as in (\ref{sigma:Bw})
 for $B+i\omega \in A(X)_{\mathbb{C}}$.
Let us take another stability conditions
$\sigma^{\pm} \in U(X)$ 
written as 
\begin{align}\label{spm}
\sigma^{\pm}=\sigma_{B \pm \varepsilon_B, \omega \pm \varepsilon_{\omega}} \in U(X)
\end{align}
for $\varepsilon_B +i \varepsilon_{\omega} \in H^2(X, \mathbb{C})$.
We assume that $\varepsilon_B +i \varepsilon_{\omega}$
is sufficiently small and 
general so that 
both of $\sigma^{\pm}$ lie on chambers. 
Similarly to the diagram (\ref{wall:diagram2}), we 
have the diagram
\begin{align}\label{dia:Mone}
\xymatrix{
M_{\sigma^{+}}(\beta) \ar[rd]_-{q_M^{+}} & & M_{\sigma^{-}}(\beta) 
\ar[ld]^-{q_M^{-}} \\
& M_{\sigma}(\beta). &
}
\end{align}
Note that as $(\beta, 1)$ is primitive in 
$\Gamma_{\le 1}$
and $\sigma^{\pm}$ lie on chambers, 
both of $M_{\sigma^{\pm}}(\beta)$
consist of $\sigma^{\pm}$-stable 
sheaves. Therefore by Theorem~\ref{thm:CYdcrit}, 
they 
admit d-critical structures.  
Applying Corollary~\ref{cor:analytic}, 
we have the following: 
\begin{thm}\label{thm:dflop:one}\label{thm:Dflop:one}
The diagram (\ref{dia:Mone})
is an analytic d-critical generalized flop. 
\end{thm}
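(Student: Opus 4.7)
The plan is to apply Corollary~\ref{cor:analytic} at each closed point $p \in M_\sigma(\beta)$, which reduces the statement to checking that the associated quiver wall-crossing diagram (\ref{dia:qflip3}) is a generalized flop. Concretely, for a point $p$ corresponding to a $\sigma$-polystable object $E = \bigoplus_{i=1}^k V_i \otimes E_i$ with $\ch(E)=(\beta,1)$, I will identify the Ext-quiver $Q = Q_{E_\bullet}$ and verify the hypotheses of Corollary~\ref{cor:dflop} for the dimension vector $\vec m = (\dim V_i)_{i=1}^k$.

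The key observation is that $Q$ is \emph{symmetric} in the sense of Definition~\ref{def:symmetric}. Since each $E_i$ is one-dimensional, a standard Riemann--Roch computation on the CY 3-fold gives $\chi(E_i,E_j)=0$ for all $i,j$; moreover the trivialization (\ref{trivialization}) and Serre duality give $\Ext^3(E_i,E_j) \cong \Hom(E_j,E_i)^\vee = \mathbb{C}\cdot \delta_{ij}$, while simplicity of the collection gives $\Hom(E_i,E_j) = \mathbb{C}\cdot \delta_{ij}$. Combining these,
\begin{align*}
\dim \Ext^1(E_i,E_j) = \dim \Ext^2(E_i,E_j) = \dim \Ext^1(E_j,E_i),
\end{align*}
where the last equality is Serre duality. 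Hence $\sharp E_{i,j} = \sharp E_{j,i}$, so $Q$ is symmetric. Next, the dimension vector $\vec m$ is primitive, because the identity $\sum_i m_i \ch(E_i) = (\beta,1)$ together with primitivity of $1$ in the $\ch_3$-slot forces $\gcd(m_i)=1$.

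For the stability data, let $\xi^\pm \in \hH^k$ be defined as in (\ref{xi+}) by $\xi^\pm_i = Z^\pm(E_i)$. Since $\vec m$ is primitive and $\sigma^\pm$ lie on chambers, $M_Q^{\xi^\pm,s}(\vec m)=M_Q^{\xi^\pm}(\vec m)$. Using the $\mathbb{C}$-action on $\Stab_{\le 1}(X)$ to normalize $Z(E)$ and exploiting that $M_Q^{\xi^\pm}(\vec m)$ depends only on the chamber containing $\xi^\pm$, I can replace $\xi^\pm$ within their chambers so that the symmetry condition (\ref{satisfy:xi}) is satisfied; this uses that at the wall $\arg Z(E_i)$ is constant in $i$, so $\xi^\pm$ may be chosen symmetric across the wall. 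With these adjustments Corollary~\ref{cor:dflop} applies: either $M_Q^{\xi^\pm,s}(\vec m)$ are both empty (in which case Lemma~\ref{lem:slop} forces the symmetric vanishing on both sides, and Remark~\ref{rmk:dcrit} covers this degenerate case), or Proposition~\ref{prop:sflop} yields a generalized flop of smooth varieties, hence by Lemma~\ref{MW:dcrit} an analytic d-critical generalized flop locally on $M_{Q}(\vec m)$.

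Transporting this back via the isomorphism in Theorem~\ref{thm:Equiver}, which identifies the wall-crossing diagram (\ref{dia:Mone}) with the quiver diagram (\ref{dia:qflip}) analytic-locally near $p$ and preserves d-critical structures, gives the conclusion. The main technical obstacle I anticipate is step of normalizing $\xi^\pm$ to satisfy (\ref{satisfy:xi}) in a way compatible with the isomorphism of Theorem~\ref{thm:Equiver}; one must verify that the ample line bundle realizing the flop downstairs (the line bundles $(L_\pm)^\pm$ in the proof of Proposition~\ref{prop:sflop}) pulls back to an ample divisor on $M_{\sigma^+}(\beta)$ whose strict transform to $M_{\sigma^-}(\beta)$ is anti-ample over $M_\sigma(\beta)$, so that the generalized flop condition is global and not just analytic-local. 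However, since the conclusion we want is analytic-local (d-critical generalized flop at each $p$), the local statement coming from Corollary~\ref{cor:dflop} is precisely what is needed.
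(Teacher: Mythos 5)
Your proposal is correct and follows essentially the same route as the paper's proof: reduce to the Ext-quiver wall-crossing via Theorem~\ref{thm:Equiver}/Corollary~\ref{cor:analytic}, observe that the Ext-quiver of a collection of one-dimensional stable sheaves is symmetric, normalize $\xi^{\pm}$ to satisfy (\ref{satisfy:xi}) using the rotation/scaling freedom, and conclude by Corollary~\ref{cor:dflop}. The extra details you supply (the Riemann--Roch and Serre-duality computation for symmetry, primitivity of $\vec m$ from the $\ch_3$-component, and the empty/degenerate case via Lemma~\ref{lem:slop} and Remark~\ref{rmk:dcrit}) are all consistent with what the paper cites or leaves implicit, and your closing observation that the d-critical generalized flop condition is purely local correctly disposes of the only worry you raise.
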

\begin{proof}
For a point $p \in M_{\sigma}(\beta)$, 
suppose that it corresponds to a 
polystable sheaf $E$ of the form (\ref{polystable}). 
Since each $E_i$ has at most 
one dimensional support,  
the Ext-quiver $Q=Q_{E_{\bullet}}$ 
associated with the collection 
\begin{align*}
E_{\bullet}=(E_1, E_2, \ldots, E_k)
\end{align*}
is 
symmetric (see~\cite[Lemma~5.1]{TodGV}). 
We take
data $\xi^{\pm} \in \hH^{k}$ as in (\ref{xi})
for the quiver $Q$, 
given by 
\begin{align*}
\xi^{\pm}_i=
Z^{\pm}_{B \pm \varepsilon_B, \omega \pm \varepsilon_{\omega}}(E_i)=
Z_{B, \omega}(E_i) \pm (\varepsilon_B+i\varepsilon_{\omega}) \cdot [E_i].
\end{align*}
As $\arg Z_{B, \omega}(E_i)=\arg Z_{B, \omega}(E)$, by 
taking rotations and scaling
of $\xi^{\pm}$, 
and also perturbing $\varepsilon_B+i\varepsilon_{\omega}$ if necessary, 
we may assume that $\xi^{\pm}$ satisfy the condition (\ref{satisfy:xi}). 
Then the result follows from 
Corollary~\ref{cor:analytic}
and Corollary~\ref{cor:dflop}. 
\end{proof}

\subsection{Example: elliptic CY 3-fold}\label{exam:elliptic}
Here we discuss
an example of 
 wall-crossing of one dimensional 
stable sheaves
on an elliptic CY 3-fold. 
Let $S=\mathbb{P}^2$ and take 
general elements 
\begin{align*}
u \in H^0(S, \oO_S(-4K_S)), \ 
v \in H^0(S, \oO_S(-6K_S)).
\end{align*}
Then as in~\cite[Section~6.4]{Tsurvey}, 
we have a simply connected CY 3-fold 
$X$ with a flat elliptic fibration
\begin{align}\label{pi:XS}
\pi_X \colon X \to S
\end{align}
defined by the equation 
$zy^2=uxz^2+vz^3$
in the projective bundle 
\begin{align*}
\mathbb{P}_S(\oO_S(-2K_S) \oplus \oO_S(-3K_S) \oplus \oO_S) \to S. 
\end{align*}
Here $[x:y:z]$ is the homogeneous coordinate
of the above projective bundle. 
Note that $\pi_X$ admits a section
$\iota \colon S \to X$
whose image $D \cneq \iota(S)$
correspond to the fiber point 
$[0:1:0]$. 
Let $H \subset X$ be 
the pull-back of a hyperplane in $\mathbb{P}^2$
to $X$ by $\pi_X$. We have
\begin{align*}
H^2(X, \mathbb{R})=\mathbb{R}[D]+\mathbb{R}[H].
\end{align*}
Let $F$ be a fiber of $\pi_X$ and 
$l \subset D$ a line. 
Then $[F]$ and $[l]$ span the Mori cone
$\overline{NE}(X)$ of $X$. 
The intersection matrix is given by
\begin{align*}
\left(\begin{array}{cc}
H \cdot l & D \cdot l \\
H \cdot F & D \cdot F
\end{array}  \right)
=\left(\begin{array}{cc}
1 & -3 \\
0 & 1
\end{array}  \right).
\end{align*}

We fix an ample divisor $\omega_0$ on $X$
and write
$d_1=\omega_0 \cdot F>0$, 
$d_2=\omega_0 \cdot l>0$. 
Let us take an effective 
curve class $\beta$ and write it 
as 
\begin{align*}
\beta=r[F]+k[l], \ 
r, k \in \mathbb{Z}_{\ge 0}.
\end{align*}
We consider wall-chamber structure 
on the subset of $U(X)$, 
given by the image of the map
\begin{align}\label{identify:U}
H^2(X, \mathbb{R}) \to
U(X), \ 
B \mapsto \sigma_{B, \omega_0}.
\end{align}
We identify the image of (\ref{identify:U})
with $H^2(X, \mathbb{R})$
by the above map. 
For each decomposition in $\Gamma_{\le 1}(X)$
\begin{align}\label{decom:beta1}
(\beta, 1)=(\beta_1, n_1)+(\beta_2, n_2), \ 
\beta_i=r_i[F]+k_i[l]
\end{align}
the wall is given by the equation 
$\mu_{B, \omega_0}(\beta_1, n_1)=\mu_{B, \omega_0}(\beta, 1)$.
By writing $B=x[D]+y[H]$, 
a direct computation shows that 
the above condition is equivalent to
\begin{align}\label{wall:equation}
(3d_1+d_2)x-d_1y=
\frac{r_1d_1+k_1 d_2 -n_1(rd_1+kd_2)}{rk_1-kr_1}.
\end{align}
It follows that every wall is proportional 
to the line $y=(3+d_2/d_1)x$, so any two 
wall are
disjoint if they do not coincide. 

In the case of $k=1$, i.e. 
$\beta=r[F]+[l]$, 
we have the following 
decomposition 
\begin{align}\label{beta:decom}
(r[F]+[l], 1)=(r[F], 1)+([l], 0). 
\end{align} 
We set 
\begin{align}\notag
B_0=\frac{d_2}{r(3d_1+d_2)}[D], \ 
B_{\pm}=B_0 \pm \varepsilon[D], \ 
0<\varepsilon \ll 1.
\end{align}
The above $B_0$ 
satisfies the equation 
(\ref{wall:equation})
determined by the decomposition (\ref{beta:decom}).
Therefore $\sigma_0 \cneq \sigma_{B_0, \omega_0}$ lies 
on a wall  
and $\sigma_{\pm} \cneq \sigma_{B_{\pm}, \omega_0}$ lie on 
its adjacent chambers. 

In the above $k=1$ case, we can describe 
the wall-crossing diagram (\ref{dia:Mone}) 
for $\sigma=\sigma_0$
in terms of d-critical simple flop. 
It is easy to see that 
$\sigma_0$ does not lie 
on a wall determined by a 
decomposition of the form (\ref{decom:beta1}), 
other than (\ref{beta:decom}). 
Therefore any point $p\in M_{\sigma_0}(\beta)$,
which do not correspond to $\sigma_0$-stable sheaf,
corresponds to a 
a $\sigma_0$-polystable 
sheaf $E$ of the form
\begin{align*}
E=E_1 \oplus E_2, \ 
\ch(E_1)=(r[F], 1), \ 
\ch(E_2)=([l], 0).
\end{align*}
Here $E_1$, $E_2$ are $\sigma$-stable 
sheaves. 
Then $E_1$ is a 
stable vector bundle on 
a fiber $F$ with rank $r$ and degree $1$, 
and $E_2=\oO_l(-1)$
for some line $l \subset D$. 
If $F \cap l=\emptyset$, then 
\begin{align*}
\Ext^1(E_1, E_2)=\Ext^1(E_2, E_1)=0
\end{align*}
and 
we have $(q_M^{\pm})^{-1}(U)=\emptyset$
for a small open subset $p \in U \subset M_{\sigma}(\beta)$. 
This is an obvious case of d-critical generalized flop 
(see Remark~\ref{rmk:dcrit}). 
Otherwise
$F \cap l$ is a one point, and  
it is easy to check that
\begin{align*}
&\Ext^1(E_1, E_1)=\mathbb{C}^3, \ 
\Ext^1(E_2, E_2)=\mathbb{C}^2, \\ 
&\Ext^1(E_1, E_2)=\Ext^1(E_2, E_1)=\mathbb{C}^r. 
\end{align*}
The associated Ext-quiver $Q$ 
has two vertices $\{1, 2\}$, 
$r$-arrows from $1$ to $2$ and $2$ to $1$, 
and the number of 
loops at $1$, $2$ are $3$, $2$ respectively. 
In this case, 
the diagram (\ref{dia:Mone}) 
is a d-critical simple flop at $p \in M_{\sigma}(\beta)$ 
as we mentioned 
in Example~\ref{subsec:dflop}.

\subsection{D-critical flops under flops}\label{subsec:FF}
For a CY 3-fold $X$ and $\beta \in H_2(X, \mathbb{Z})$, we define
\begin{align}\label{moduli:GV}
M_X(\beta) \cneq M_{\sigma=\sigma_{(0, \omega)}}(\beta).
\end{align}
Here $\omega$ is an ample divisor on $X$. 
Note that $M_X(\beta)$ is independent of a choice of 
$\omega$ (see~\cite[Remark~3.2]{MT}).
Moreover $M_X(\beta)$ consists of stable sheaves, 
so it admits a d-critical structure
by Theorem~\ref{thm:CYdcrit}.
The moduli space (\ref{moduli:GV}) was used in~\cite{MT}
in the definition of Gopakumar-Vafa invariants on 
CY 3-folds.  

Suppose that we have a flop diagram of 
CY 3-folds
\begin{align}\label{flop:3-fold}
\xymatrix{
X \ar[rd]\ar@{.>}[rr]^-{\phi} & & X^{\dag} \ar[ld] \\
& Y. &
}
\end{align}
Let $\phi_{\ast}\beta \in H_2(X^{\dag}, \mathbb{Z})$
be defined by 
$\phi_{\ast} \beta \cdot D=\beta \cdot \phi_{\ast}^{-1}D$
for any divisor $D$ on $X^{\dag}$. Here 
$\phi_{\ast}^{-1}D$ is a strict transform of $D$ to $X$. 
In the above situation, we have the following:
\begin{thm}\label{thm:FF}
The d-critical loci 
$M_{X}(\beta)$ and $M_{X^{\dag}}(\phi_{\ast}\beta)$
are connected by a sequence of analytic 
d-critical 
generalized flops. 
\end{thm}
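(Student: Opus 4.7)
The plan is to produce the desired sequence of d-critical generalized flops by wall-crossing along a path of Bridgeland stability conditions in $\Stab_{\le 1}(X)$, with endpoints chosen so that one moduli space is $M_X(\beta)$ and the other is identified with $M_{X^{\dag}}(\phi_\ast\beta)$ via the Bondal-Orlov derived equivalence of the flop.

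First I would invoke the derived equivalence $\Phi \colon D^b(X) \simto D^b(X^{\dag})$ associated with (\ref{flop:3-fold}), realized as the Fourier-Mukai transform with kernel $\oO_{X \times_Y X^{\dag}}$. Since $\phi$ is an isomorphism in codimension one and the flopping locus is one-dimensional, $\Phi$ restricts to an equivalence $\Phi \colon D^b_{\le 1}(X) \simto D^b_{\le 1}(X^{\dag})$, and induces a compatible identification of the numerical lattices $\Gamma_{\le 1}$ on the two sides extending $(\phi_\ast, \id_{\mathbb{Z}})$. Consequently $\Phi$ induces a homeomorphism $\Phi_\ast \colon \Stab_{\le 1}(X^{\dag}) \simto \Stab_{\le 1}(X)$ under which moduli stacks and their good moduli spaces of semistable objects are identified: $M_{\Phi_\ast \sigma^{\dag}}(\beta) \cong M_{\sigma^{\dag}}(\phi_\ast \beta)$ for any $\sigma^{\dag} \in \Stab_{\le 1}(X^{\dag})$.

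Next, I would fix ample classes $\omega$ on $X$ and $\omega^{\dag}$ on $X^{\dag}$, set $\sigma_0 \cneq \sigma_{0, \omega} \in U(X)$ and $\sigma_1 \cneq \Phi_\ast \sigma_{0, \omega^{\dag}} \in \Stab_{\le 1}(X)$, so that $M_{\sigma_0}(\beta) = M_X(\beta)$ and $M_{\sigma_1}(\beta) \cong M_{X^{\dag}}(\phi_\ast \beta)$. Since $\Stab_{\le 1}(X)$ is a connected complex manifold by Theorem~\ref{thm:Stab}, one can join $\sigma_0$ to $\sigma_1$ by a smooth path $\gamma \colon [0, 1] \to \Stab_{\le 1}(X)$. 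The walls for the primitive class $(\beta, 1) \in \Gamma_{\le 1}$ form a locally finite collection of real codimension one submanifolds, so after a generic small perturbation $\gamma$ meets them transversally at isolated interior points $0 < t_1 < \cdots < t_N < 1$, avoiding all higher codimension intersections. At each crossing $t_i$, setting $\sigma = \gamma(t_i)$ and $\sigma^{\pm} = \gamma(t_i \pm \varepsilon)$, the diagram
\begin{align*}
M_{\sigma^{+}}(\beta) \to M_{\sigma}(\beta) \leftarrow M_{\sigma^{-}}(\beta)
\end{align*}
is an analytic d-critical generalized flop: this is Theorem~\ref{thm:dflop:one} as stated for $\sigma \in U(X)$, and its extension to arbitrary $\sigma \in \Stab_{\le 1}(X)$ is routine since the proof proceeds via Corollary~\ref{cor:analytic} and Corollary~\ref{cor:dflop}, which only require primitivity of $(\beta, 1)$ together with symmetry of the Ext-quiver at each $\sigma$-polystable object. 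The latter symmetry follows from Serre duality together with the vanishing of $\chi(E_i, E_j)$ for any two objects in $D^b_{\le 1}(X)$ on a CY 3-fold, which is independent of the heart. Concatenating these $N$ d-critical generalized flops connects $M_X(\beta)$ to $M_{X^{\dag}}(\phi_\ast\beta)$, as claimed.

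The main obstacle lies in step one: carefully establishing that $\Phi$ restricts to $D^b_{\le 1}$, preserves the numerical lattice up to $\phi_\ast$, and induces a homeomorphism of stability manifolds identifying moduli stacks with their good moduli spaces. The second (analytic) ingredient is confirming that Assumption~\ref{assum:stack} holds for the class $(\beta, 1)$ along the entire path $\gamma$, so that Theorem~\ref{thm:goodmoduli} applies at every point of $\gamma$; this is handled by the same arguments as in the case of twisted one-dimensional semistable sheaves in Section~\ref{sec:onedim}, combined with a generic choice of $\gamma$.
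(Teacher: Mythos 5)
Your overall strategy is the right one and matches the paper's: transport one moduli space through the derived equivalence of the flop, join the two stability conditions by a path, and apply the one-dimensional wall-crossing theorem at each wall. But there is a genuine gap in how you choose the path. You join $\sigma_{0,\omega}$ to $\Phi_\ast\sigma_{0,\omega^{\dag}}$ by an arbitrary path in $\Stab_{\le 1}(X)$, justified by the claim that $\Stab_{\le 1}(X)$ is a \emph{connected} complex manifold "by Theorem~\ref{thm:Stab}". Theorem~\ref{thm:Stab} only provides the complex manifold structure and the local homeomorphism to $\Hom(\Gamma,\mathbb{C})$; it says nothing about connectedness, and connectedness of Bridgeland stability manifolds is in general an open and delicate question. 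Without it, there is no a priori reason that the two endpoints lie in the same component, and your path does not exist. A secondary (acknowledged but real) issue is that Theorem~\ref{thm:dflop:one} is only proved for wall-crossings of the form (\ref{spm}) inside $U(X)$, where the heart is $\Coh_{\le 1}(X)$, the moduli stacks are GIT quotient stacks, and the good moduli spaces exist unconditionally; for a generic path through $\Stab_{\le 1}(X)$ you would need to extend all of this (finite-type openness, existence of good moduli spaces, the normalization (\ref{satisfy:xi}) of the $\xi^{\pm}$) to arbitrary hearts, which is more than "routine" even granting that the Ext-quiver symmetry argument survives.

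The paper sidesteps both problems at once: it constructs the path inside $\Phi_\ast\overline{U}(X)\cup\overline{U}(X^{\dag})$, using the nontrivial geometric input (cited from \cite{TodGV}) that the closures of $\Phi_\ast U(X)$ and $U(X^{\dag})$ actually intersect, and that this intersection is not a wall. Every wall-crossing along such a path is then literally of the form (\ref{spm}) in $U(X)$ or $U(X^{\dag})$, so Theorem~\ref{thm:dflop:one} applies as stated and no connectedness of the full stability manifold is needed. To repair your argument you should replace the appeal to connectedness by this explicit construction of the path (or otherwise prove that the two points lie in a common component in which the wall-and-chamber analysis of Section~\ref{sec:onedim} extends).
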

\begin{proof}
Under a 3-fold flop (\ref{flop:3-fold}),
we have the commutative diagram
(see~\cite{Br1, ToBPS}):
\begin{align*}
\xymatrix{
D^b_{\le 1}(X) \ar[r]^-{\Phi} \ar[d]_-{\ch} & D^b_{\le 1}(X^{\dag}) \ar[d]^-{\ch} \\
\Gamma_{\le 1} \ar[r]_{\Phi_{\Gamma}} & \Gamma_{\le 1}^{\dag}.
}
\end{align*}
Here 
$\Phi$ is an equivalence of derived categories, 
$\Gamma_{\le 1}^{\dag}=H_2(X^{\dag}, \mathbb{Z})
\oplus \mathbb{Z}$
and 
$\Phi_{\Gamma}$ is an isomorphism 
which takes 
$(\beta, n)$ to $(\phi_{\ast}\beta, n)$. 
The above equivalence induces 
the isomorphism
\begin{align*}
\Phi_{\ast} \colon 
\Stab_{\le 1}(X) \stackrel{\cong}{\to} 
\Stab_{\le 1}(X^{\dag}).
\end{align*}
Under the above isomorphism, 
the closures of $\Phi_{\ast}U(X)$
and $U(X^{\dag})$ intersect
(see~\cite[Lemma~6.4]{TodGV}).
Let $\omega^{\dag}$ be an ample 
divisor on $X^{\dag}$. 
We take a
path 
\begin{align*}
\gamma \colon 
[0, 1]
\to 
\Phi_{\ast}\overline{U}(X) \cup 
\overline{U}(X^{\dag})
\end{align*}
such that 
$\gamma(0)=
\Phi_{\ast}\sigma_{0, \omega}$
and $\gamma(1)=\sigma_{(0, \omega^{\dag})}$. 
By perturbing $\gamma$ if necessary, 
we can assume that 
each wall-crossing of the path is given 
as in (\ref{spm}). 
Moreover the intersection of 
$\Phi_{\ast}\overline{U}(X)$
and $\overline{U}(X^{\dag})$ is 
not a wall by~\cite[Lemma~6.7]{TodGV}. 
Therefore applying Theorem~\ref{thm:Dflop:one}
at each wall, we obtain the 
result. 
\end{proof}

\section{D-critical flips of moduli spaces of stable pairs}\label{sec:wc:mmp}
In this section, we show that wall-crossing phenomena of 
Pandharipande-Thomas stable pair moduli spaces~\cite{PT}, 
studied in~\cite{BrH, Tolim, Tolim2, Tsurvey, MR2888981}, 
are described in terms of d-critical birational geometry. 
The wall-crossing phenomena here were used in \textit{loc.cit.}
to show the rationality of the generating series of 
stable pair invariants, which we will review in Appendix~\ref{sec:append}. 
\subsection{Stable pairs}
Let $X$ be a smooth projective CY 3-fold over $\mathbb{C}$. 
The notion of stable pairs by Pandharipande-Thomas is given below: 
\begin{defi}(\cite{PT})
A \textit{stable pair} 
consists of data
\begin{align*}
(F, s), \ F \in \Coh_{\le 1}(X), \ s \colon \oO_X \to F
\end{align*}
where $F$ is a pure one dimensional sheaf and $s$ is a morphism of 
coherent sheaves which is surjective in dimension one. 
\end{defi}
As in (\ref{def:Gamma}), 
we set $\Gamma_{\le 1}=H_2(X, \mathbb{Z}) \oplus \mathbb{Z}$. 
For $(\beta, n) \in \Gamma_{\le 1}$, 
let 
\begin{align}\label{moduli:PT}
P_n(X, \beta)
\end{align}
 be the moduli space of 
stable pairs $(F, s)$ 
such that $\ch(F)=(\beta, n)$. 
It is proved in~\cite{PT, HT2} 
that the moduli space (\ref{moduli:PT}) is a projective scheme 
with a symmetric perfect obstruction theory. 
Indeed the moduli space (\ref{moduli:PT}) is 
identified with the moduli space of two term complexes 
in the derived category (here $\oO_X$ is located in degree zero)
\begin{align}\label{twoterm}
I^{\bullet}=
(\cdots \to 0 \to\oO_X \stackrel{s}{\to} F \to 0 \to \cdots) \in D^b(X) 
\end{align}
satisfying a certain stability condition on it (see~\cite{PT, Tcurve1}). 
It follows that by Theorem~\ref{thm:CYdcrit}
there is a canonical d-critical structure 
on the moduli space of stable pairs (\ref{moduli:PT}).

\subsection{Weak semistable objects}
Below we study wall-crossing in
some abelian subcategory in $D^b(X)$
defined below: 
\begin{defi}
We define the subcategory $\aA_X$ in $D^b(X)$ by 
\begin{align}\label{cat:A}
\aA_X \cneq \langle \oO_X, \Coh_{\le 1}(X)[-1] \rangle_{\rm{ex}} \subset 
D^b(X).
\end{align}
Here $\langle \ast \rangle_{\rm{ex}}$ is the smallest extension 
closed subcategory containing $\ast$. 
\end{defi}
The category (\ref{cat:A}) is an abelian subcategory
of $D^b(X)$
(see~\cite[Lemma~6.2]{Tcurve1}). 
We have the Chern character map
\begin{align}\label{cl:AX}
\cl \colon K(\aA_X) \to \Gamma_{\le 1}^{\star}
\cneq \mathbb{Z} \oplus \Gamma_{\le 1}
\end{align}
sending $\oO_X$ to $(1, 0)$ and 
$F \in \Coh_{\le 1}(X)$ to 
$(0, \ch(F))$. 
We will be interested in certain rank one objects in 
$\aA_X$. 
We have the following lemma describing 
rank one objects in $\aA_X$, whose 
proof is obvious: 
\begin{lem}
An object $E \in D^b(X)$ with
$\rank(E)=1$
is an object in $\aA_X$ if and only if there 
exist distinguished triangles in $D^b(X)$
\begin{align}\label{dist:E}
\xymatrix{
0=E_0 \ar[rr] & & E_1 \ar[ld]    \ar[rr]    &  & E_2 \ar[dl] \ar[rr] &  & E_3=E \ar[dl] \\
    &  F_1 \ar@{.>}[lu]^-{[1]} &  &  F_2 \ar@{.>}[lu]^-{[1]}  & 
&  F_3 \ar@{.>}[lu]^-{[1]} &
}
\end{align}
such that 
\begin{align*}
F_1 \in \Coh_{\le 1}(X)[-1], \ 
F_2=\oO_X, \ F_3 \in \Coh_{\le 1}(X)[-1].
\end{align*}
In this case, 
the top sequence of (\ref{dist:E}) is a filtration in $\aA_X$. 
\end{lem}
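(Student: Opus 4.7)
The proof proposal splits into the two implications, with the content concentrated in the ``only if'' direction.

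For the ``if'' direction, suppose distinguished triangles as in (\ref{dist:E}) exist. Each $F_i$ lies in $\aA_X$ by the definition of $\aA_X$ as an extension-closed subcategory containing $\oO_X$ and $\Coh_{\le 1}(X)[-1]$. Since each triangle $E_{i-1}\to E_i\to F_i$ exhibits $E_i$ as an extension of $F_i$ by $E_{i-1}$ inside $\aA_X$, extension-closure yields $E=E_3\in \aA_X$ by induction on $i$.

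For the ``only if'' direction, unwinding the definition of $\langle-\rangle_{\rm ex}$, any $E\in \aA_X$ admits some finite filtration $0=E_0\subset E_1\subset \cdots \subset E_N=E$ in $\aA_X$ whose successive quotients $G_j=E_j/E_{j-1}$ each lie in $\{\oO_X\}\cup \Coh_{\le 1}(X)[-1]$. The assumption $\rank(E)=1$, together with additivity of rank on short exact sequences in $\aA_X$ and the observation $\rank(F[-1])=0$ for $F\in \Coh_{\le 1}(X)$, forces exactly one index $j_0$ to have $G_{j_0}=\oO_X$, with every other $G_j$ in $\Coh_{\le 1}(X)[-1]$. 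The remaining step is to collapse $G_1,\ldots,G_{j_0-1}$ into a single $F_1\in \Coh_{\le 1}(X)[-1]$, to take $F_2=\oO_X$, and to collapse $G_{j_0+1},\ldots,G_N$ into a single $F_3\in \Coh_{\le 1}(X)[-1]$.

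The only real content is the closure lemma that $\Coh_{\le 1}(X)[-1]\subset \aA_X$ is closed under extensions: given a short exact sequence $0\to A[-1]\to G\to B[-1]\to 0$ in $\aA_X$ with $A,B\in \Coh_{\le 1}(X)$, the associated triangle $A[-1]\to G\to B[-1]\to A$ in $D^b(X)$ is classified by a class in $\Ext^1(B,A)$, so shifting shows $G[1]$ is an ordinary extension of $B$ by $A$ in the abelian category $\Coh_{\le 1}(X)$; hence $G\in \Coh_{\le 1}(X)[-1]$. Iterating this to the sub-filtration up through $E_{j_0-1}$ and to the quotient filtration from $E_{j_0}$ onwards produces the claimed three-step filtration. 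I do not anticipate a serious obstacle; the argument is purely homological bookkeeping once the closure lemma is verified, which explains the author's remark that the proof is obvious.
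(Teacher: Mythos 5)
Your argument is correct and is exactly the bookkeeping the paper omits (it declares the proof ``obvious''): the rank count isolates a unique $\oO_X$-factor, and the extension-closedness of $\Coh_{\le 1}(X)[-1]$ in $D^b(X)$ (via the long exact cohomology sequence, using that $\Coh_{\le 1}(X)$ is closed under extensions in $\Coh(X)$) lets you collapse the factors on either side into $F_1$ and $F_3$. No gaps.
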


\begin{rmk}\label{rmk:IA}
For a two term complex $I^{\bullet}$ in (\ref{twoterm})
associated with a stable pair, 
it
is an object in $\aA_X$ as it fits into the exact sequence in $\aA_X$
\begin{align*}
0 \to F[-1] \to I^{\bullet} \to \oO_X \to 0.
\end{align*}
Similarly the derived dual 
$\mathbb{D}(I^{\bullet})$
for $\mathbb{D}(\ast)=\dR \hH om(\ast, \oO_X)$
is an object in $\aA_X$ as it fits into the exact sequence in $\aA_X$
\begin{align*}
0 \to \oO_X \to \mathbb{D}(I^{\bullet}) \to F^{\vee}[-1] \to 0
\end{align*}
where $F^{\vee} \cneq \eE xt^2_{\oO_X}(F, \oO_X)$. 
\end{rmk}

In what follows, we fix an ample divisor 
$\omega$ on $X$.
For $t \in \mathbb{R}$, let $\mu_t^{\star}$ be the 
slope function on the abelian category $\aA_X$
defined by, for $E \in \aA_X$
\begin{align*}
\mu_t^{\star}(E) \cneq \left\{ \begin{array}{cc}
t & \mbox{ if } E \notin \Coh_{\le 1}(X)[-1] \\
\mu_{\omega}(E)=\chi(E)/\omega \cdot [E]
 & \mbox{ if } E \in \Coh_{\le 1}(X)[-1]. 
\end{array}  \right. 
\end{align*}
Here $\mu_{\omega} \cneq 
\mu_{0, \omega}$ is 
the slope function on 
one dimensional sheaves
defined as in (\ref{mu:slope})
for $B=0$. 
The above slope function $\mu_t^{\star}$
on $\aA_X$ satisfies the weak see-saw
property, and defines the 
weak stability condition on $\aA_X$
(see~\cite{Tolim2, Tsurvey}):
\begin{defi}
An object $E \in \aA_X$ is \textit{$\mu_t^{\star}$-(semi)stable}
if for any exact sequence $0 \to F \to E \to G \to 0$
in $\aA_X$, we have the inequality
\begin{align*}
\mu_t^{\star}(F)<(\le) \mu_t^{\star}(G).
\end{align*}
\end{defi}

\begin{rmk}
The above $\mu_t^{\star}$-stability condition on $\aA_X$
can 
also be 
formulated in terms of  a
Bridgeland-type weak stability condition
introduced in~\cite{Tcurve1}, 
by taking the filtration 
$\{0\} \oplus \Gamma_{\le 1} \subset \Gamma_{\le 1}^{\star}$.
See~\cite{Tsurvey} for details.
\end{rmk}

\subsection{Moduli spaces of weak semistable objects}\label{subsec:weak}
Let $\mM$ be the 
moduli stack of objects
in $D^b(X)$
considered in Subsection~\ref{subsec:moduli:general}. 
For $(\beta, n) \in \Gamma_{\le 1}$ and $t \in \mathbb{R}$, we have 
the substack
\begin{align}\label{sub:star}
\mM_t^{\star}(\beta, n) \subset \mM
\end{align}
consisting of $\mu_t^{\star}$-semistable 
objects $E \in \aA_X$ satisfying 
\begin{align*}
\cl(E)=(1, -\beta, -n) 
\in \Gamma_{\le 1}^{\star}. 
\end{align*}
The substack (\ref{sub:star}) is 
an open substack of $\mM$, which is an Artin stack 
of finite type over $\mathbb{C}$
(see~\cite[Proposition~3.17]{Tolim2}, \cite[Proposition~5.4]{Tsurvey}). 
Similarly to Theorem~\ref{thm:goodmoduli}, 
the result of~\cite{AHLH} is applied 
to the stack $\mM_t^{\star}(\beta, n)$.
So it admits a good moduli space
\begin{align}\label{good:star}
p_t \colon
\mM_t^{\star}(\beta, n) \to M_t^{\star}(\beta, n)
\end{align} 
where $M_t^{\star}(\beta, n)$
is a separated algebraic space of finite type. 
A closed point of $M_t^{\star}(\beta, n)$ corresponds to 
a $\mu_t^{\star}$-polystable object
$E \in \aA_X$ written as 
\begin{align}\label{mu:pstable}
E=\bigoplus_{i=0}^k V_i \otimes E_i, \ E_i \in \aA_X.
\end{align}
Here 
each $V_i$ is a finite dimensional vector space with 
$V_0=\mathbb{C}$, the object
$E_0 \in \aA_X$ is a rank one 
$\mu_t^{\star}$-stable object, 
and each $E_i$ for $1\le i\le k$ is 
isomorphic to $F_i[-1]$ for a $\mu_{\omega}$-semistable
sheaf $F_i \in \Coh_{\le 1}(X)$ with $\mu_{\omega}(F_i)=t$,
such that $F_i \not\cong F_j$ for $i\neq j$.
The moduli space $M_t^{\star}(\beta, n)$ is related to stable pair 
moduli spaces as follows: 
\begin{prop}\emph{(\cite[Theorem~3.21]{Tolim2}, \cite[Proposition~5.4]{Tsurvey})}\label{prop:isom}
For $\lvert t \rvert \gg 0$, 
the moduli spaces $M_t^{\star}(\beta, n)$
consists of $\mu_t^{\star}$-stable objects. 
Moreover we have the isomorphisms 
\begin{align}\label{isom:PT}
&P_n(X, \beta) \stackrel{\cong}{\to}
M_{t}^{\star}(\beta, n), \ t \gg 0, \\
\label{isom:PT2}
&P_{-n}(X, \beta) \stackrel{\cong}{\to}
M_{t}^{\star}(\beta, n), \ t \ll 0.
\end{align}
The isomorphism (\ref{isom:PT}) sends a stable pair $(F, s)$
to the two term complex (\ref{twoterm}), and the isomorphism (\ref{isom:PT2})
sends $(F, s)$ to the derived dual of (\ref{twoterm})
(see Remark~\ref{rmk:IA}). 
\end{prop}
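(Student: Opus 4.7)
The plan is to follow the strategy developed in \cite[Theorem~3.21]{Tolim2} and \cite[Proposition~5.4]{Tsurvey}: (i) reduce $\mu_t^{\star}$-semistability to stability for $|t|\gg 0$; (ii) match $\mu_t^{\star}$-stable objects in $\aA_X$ of class $(1,-\beta,-n)$ with stable pairs via the correspondence $(F,s)\leftrightarrow I^{\bullet}$; (iii) upgrade the resulting set-theoretic bijection to an isomorphism of moduli spaces; and (iv) deduce the $t\ll 0$ case by derived duality.

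For step (i), any $\mu_t^{\star}$-semistable $E$ of class $(1,-\beta,-n)$ admits a Jordan--H\"older filtration with a unique rank-$1$ factor (of slope $t$) and rank-$0$ factors in $\Coh_{\le 1}(X)[-1]$ whose $\mu_{\omega}$-slopes range over a bounded set of rationals, by boundedness of $\mM_t^{\star}(\beta,n)$. Only finitely many values of $t$ can coincide with such slopes, so $\mu_t^{\star}$-semistability forces $\mu_t^{\star}$-stability once $|t|$ exceeds all of them. For step (ii) in the direction ``stable $\Rightarrow$ stable pair'' at $t\gg 0$, the filtration \eqref{dist:E} presents a rank-$1$ subobject $E_2\subseteq E$ with $\mu_t^{\star}(E_2)=t=\mu_t^{\star}(E)$, so strict stability forces $F_3=0$ and hence an exact sequence $0\to F[-1]\to E\to \oO_X\to 0$ with $F:=F_1$. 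This extension is classified by $\Ext^1_{\aA_X}(\oO_X,F[-1])\cong \Hom(\oO_X,F)=H^0(F)$, yielding a section $s\colon \oO_X\to F$ together with an identification $E\cong I^{\bullet}=[\oO_X\stackrel{s}{\to}F]$. Purity of $F$ then follows: any $0$-dimensional subsheaf $T\subset F$ would produce a subobject $T[-1]\hookrightarrow E$ with $\mu_t^{\star}(T[-1])=+\infty$, violating stability. Surjectivity of $s$ in dimension one follows from analyzing rank-$0$ quotients: using the long exact sequence for $\Hom_{\aA_X}(-,G[-1])$ and the vanishing $\Hom(\oO_X,G[-1])=0$, a rank-$0$ quotient $I^{\bullet}\twoheadrightarrow G'[-1]$ in $\aA_X$ corresponds to a surjective sheaf map $\phi\colon F\to G'$ with $\phi\circ s=0$; if $\mathrm{coker}(s)$ had $1$-dimensional support one could choose $G'$ to be $1$-dimensional with bounded $\mu_{\omega}(G')$, destabilizing $E$ once $t>\mu_{\omega}(G')$. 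Conversely, for a stable pair $(F,s)$, the same translations confirm the required stability inequalities for $t$ large: the condition $\phi\circ s=0$ forces $\mathrm{im}(s)\subseteq \ker(\phi)$, and since $F/\mathrm{im}(s)$ is $0$-dimensional one has $G'$ $0$-dimensional, so of infinite slope; rank-$0$ subobjects $G[-1]\hookrightarrow I^{\bullet}$ correspond to subsheaves $G\hookrightarrow F$ (the map from $\oO_X$ being forced to vanish by $\Ext^1(G,\oO_X)=0$), whose $\mu_{\omega}$-slopes are bounded by purity.

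For step (iii), since $P_n(X,\beta)$ is a fine moduli space \cite{PT} carrying a universal complex $\mathbb{I}^{\bullet}$ whose fibers are pointwise $\mu_t^{\star}$-stable for $t\gg 0$ by step (ii), the universal property of the good moduli space $\mM_t^{\star}(\beta,n)\to M_t^{\star}(\beta,n)$ from Theorem~\ref{thm:goodmoduli} yields a morphism $P_n(X,\beta)\to M_t^{\star}(\beta,n)$; it is bijective on closed points by step (ii), and induces isomorphisms on tangent and obstruction spaces via the common identifications with $\Ext^i(I^{\bullet},I^{\bullet})_0$ shared by both moduli problems. Together with finite type and separatedness on both sides, this upgrades it to the desired isomorphism \eqref{isom:PT}. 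For step (iv), the derived dual $\mathbb{D}(-)=\dR\hH om(-,\oO_X)$ sends $\oO_X$ to $\oO_X$ and a pure $1$-dimensional $F[-1]$ to $F^{\vee}[-1]\in \Coh_{\le 1}(X)[-1]$, so preserves $\aA_X$ on the objects arising from stable pairs (cf.\ Remark~\ref{rmk:IA}); it interchanges subobjects with quotients, negates $\chi$ on $\Coh_{\le 1}(X)$ by Serre duality on the CY 3-fold, and therefore carries $\mu_t^{\star}$-stability at $t\ll 0$ on class $(1,-\beta,-n)$ to $\mu_{-t}^{\star}$-stability at $-t\gg 0$ on class $(1,-\beta,n)$. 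Applying step (iii) to $\mathbb{D}(E)$ yields the isomorphism \eqref{isom:PT2}.

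The main obstacle will be the careful bookkeeping in step (ii): identifying the rank-$0$ quotients of $I^{\bullet}$ in $\aA_X$ with pairs $(\phi,s)$ satisfying $\phi\circ s=0$, determining when a sheaf map $\phi\colon F\to G$ descends to a genuine surjection in $\aA_X$ by computing the cohomologies of the mapping cone (in particular using that $\ker(s)\neq 0$ is automatic for a pair), and translating the $\mu_t^{\star}$-stability inequalities cleanly into the classical purity and dimension-one surjectivity of $(F,s)$. Once these local translations are in place, the globalization and dualization are formal.
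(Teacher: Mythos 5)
Your proposal sets out to reprove the cited results \cite[Theorem~3.21]{Tolim2}, \cite[Proposition~5.4]{Tsurvey} from scratch, whereas the paper's own proof is essentially a citation: it quotes those references for the stack-level isomorphism $[P_{\pm n}(X,\beta)/\mathbb{C}^{\ast}]\cong \mM_t^{\star}(\beta,n)$ and then simply passes to good moduli spaces. That is a legitimate ambition, but your reconstruction of the key step contains a genuine error. In step (ii) you claim that for a $\mu_t^{\star}$-stable $E$ the filtration \eqref{dist:E} must have $F_3=0$ because $\mu_t^{\star}(E_2)=t=\mu_t^{\star}(E)$. With the paper's definition of stability, which compares the slope of a subobject to the slope of the corresponding \emph{quotient}, the inequality to be checked is $\mu_t^{\star}(E_2)<\mu_t^{\star}(F_3)$, and this is perfectly compatible with $F_3\neq 0$: if $F_3=T[-1]$ with $T$ a nonzero zero-dimensional sheaf then $\mu_t^{\star}(F_3)=+\infty>t$. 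Indeed your own converse direction exhibits exactly this situation: for a stable pair with $\mathrm{coker}(s)\neq 0$ the complex $I^{\bullet}$ has the quotient $\mathrm{coker}(s)[-1]$ of infinite slope, so if your forward argument were correct such pairs would not be stable and \eqref{isom:PT} would fail. What stability for $t$ above all walls actually yields is only that every rank-zero quotient of $E$ is supported in dimension zero; one must then show $H^0(E)$ is an ideal sheaf $I_Z$ of a one-dimensional subscheme, that $H^1(E)$ is zero-dimensional, and reconstruct the section $s$ from the extension $0\to I_Z\to E\to H^1(E)[-1]\to 0$ together with $0\to \oO_Z[-1]\to I_Z\to \oO_X\to 0$. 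This reconstruction is the real content of \cite[Theorem~3.21]{Tolim2} and is missing from your argument.

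Two smaller points. First, in step (iii), ``bijective on closed points and isomorphisms on tangent and obstruction spaces'' does not by itself upgrade a morphism of possibly non-reduced algebraic spaces to an isomorphism; the clean route, and the one the paper takes, is to identify the two moduli \emph{functors} (equivalently, to produce the isomorphism of stacks $[P_n(X,\beta)/\mathbb{C}^{\ast}]\cong\mM_t^{\star}(\beta,n)$, the $\mathbb{C}^{\ast}$ accounting for the gerbe structure) and then apply the universal property of good moduli spaces. Second, your step (iv) via derived duality is the standard argument and is fine, modulo the caveat you already note that $\mathbb{D}$ preserves $\aA_X$ only on the relevant objects; but it inherits the gap from step (ii), since it reduces \eqref{isom:PT2} to \eqref{isom:PT}.
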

\begin{proof}
For $\lvert t \rvert \gg 0$, it is proved in~\cite[Theorem~3.21]{Tolim2}, 
\cite[Proposition~5.4]{Tsurvey}
that the stack $\mM_t^{\star}(\beta, n)$
consists of $\mu_t^{\star}$-stable 
objects, and 
we have the isomorphism
\begin{align}\notag
&[P_{\pm n}(X, \beta)/\mathbb{C}^{\ast}] \stackrel{\cong}{\to}
\mM_{t}^{\star}(\beta, n), \ \pm t \gg 0.
\end{align}
Here $\mathbb{C}^{\ast}$ acts on $P_{\pm n}(X, \beta)$ trivially, 
and the above isomorphisms
are defined as in the statement of the proposition. 
By taking the good moduli spaces of both sides, 
we obtain the proposition. 
\end{proof}
\begin{rmk}\label{rmk:dualpair}
By the isomorphism (\ref{isom:PT2}),
an object $E \in \aA_X$ is 
isomorphic to $\mathbb{D}(I^{\bullet})$
for a stable pair (\ref{twoterm}) 
as in Remark~\ref{rmk:IA} if and only if 
$E$ fits into an exact sequence in $\aA_X$
\begin{align*}
0 \to \oO_X \to E \to F'[-1] \to 0
\end{align*} 
for some $F' \in \Coh_{\le 1}(X)$
such that $\Hom(F''[-1], E)=0$ for any 
$F'' \in \Coh_{\le 1}(X)$. 
This fact will be not used later in this paper. 
\end{rmk}

\subsection{Wall-crossing of weak semistable objects}
For $t \in \mathbb{R}$, we set 
\begin{align*}
t^{\pm} \cneq t\pm \varepsilon, \ 0<\varepsilon \ll 1.
\end{align*}
Then we have open immersions of stacks
\begin{align}\label{stack:open}
\mM_{t^{+}}^{\star}(\beta, n) \subset \mM_t^{\star}(\beta, n)
\supset \mM_{t^{-}}^{\star}(\beta, n)
\end{align}
We define $\wW \subset \mathbb{R}$
to be the subset of $t \in \mathbb{R}$ where at least 
one of the 
open immersions in (\ref{stack:open}) is not an isomorphism, i.e. 
$\wW$ is the set of walls for the 
$\mu_t^{\star}$-stability for $t \in \mathbb{R}$. 
\begin{lem}\label{lem:Wfinite}
The subset $\wW \subset \mathbb{R}$ is a finite set.
\end{lem}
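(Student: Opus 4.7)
The plan is to reduce finiteness of $\wW$ to a bounded counting problem, using both Proposition~\ref{prop:isom} and the finite-type property of $\mM_t^\star(\beta,n)$.

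A wall at $t$ arises from some $E\in\aA_X$ with $\cl(E)=(1,-\beta,-n)$ that is $\mu_t^\star$-semistable but not $\mu_t^\star$-stable. Since rank is additive on short exact sequences in $\aA_X$ and $E$ has rank one, any destabilizing short exact sequence $0\to E'\to E\to E''\to 0$ has exactly one of $E',E''$ of rank zero, say a factor of the form $F[-1]$ with $F\in\Coh_{\le 1}(X)$; the equality $\mu_t^\star(E')=\mu_t^\star(E'')=t$ then reduces to $\mu_\omega(F)=t$. Thus
\[
\wW\subset\{\,\chi(F)/(\omega\cdot[F])\colon F \text{ appears this way for some such } E\,\}.
\]

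Next I would prove that the set of numerical classes $([F],\chi(F))$ of such destabilizers is finite. The class $[F]$ is effective with $[F]\le \beta$: indeed $\beta-[F]$ is the $\ch_2$-part of the rank-one object complementary to $F[-1]$, and by the standard filtration~(\ref{dist:E}) for rank-one objects of $\aA_X$ it is a sum of classes of one-dimensional sheaves, so there are only finitely many allowed $[F]$. For $\chi(F)$, the bound comes from Proposition~\ref{prop:isom}: there exists $T=T(\beta,n)>0$ such that $\mM_t^\star(\beta,n)$ is identified with the (fixed) projective scheme $P_{\pm n}(X,\beta)$ for all $\pm t>T$, so the open immersions in~(\ref{stack:open}) are isomorphisms on each of $(T,\infty)$ and $(-\infty,-T)$, giving $\wW\subset[-T,T]$. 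For each of the finitely many admissible $[F]=\beta'$, the equation $t=\chi(F)/(\omega\cdot\beta')$ with $t\in[-T,T]$ and $\chi(F)\in\mathbb{Z}$ admits only finitely many integer solutions, so $\wW$ is finite.

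The main obstacle is the uniform stabilization, namely promoting the pointwise ``for each $|t|\gg 0$'' content of Proposition~\ref{prop:isom} to a single threshold $T$ beyond which the moduli stack is constant on each unbounded half-line. This requires that the numerical classes of potential rank-zero destabilizers be bounded independently of $t$, which one obtains either by analyzing the Harder--Narasimhan filtration of the one-dimensional sheaf part of a stable pair, or equivalently by showing that the union $\bigcup_{t\in\mathbb{R}}\mM_t^\star(\beta,n)$ is itself of finite type. Once this threshold is pinned down, the counting argument above concludes the proof.
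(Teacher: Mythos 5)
Your proof is correct and follows essentially the same route as the paper: a wall has the form $t=n'/(\omega\cdot\beta')$ for an effective class $0<\beta'\le\beta$ and $n'\in\mathbb{Z}$, which makes $\wW$ locally finite, and boundedness comes from Proposition~\ref{prop:isom}. The ``uniform stabilization'' you flag as the main obstacle is already contained in the statement of that proposition (``for $\lvert t\rvert \gg 0$'' asserts a single threshold beyond which $M_t^{\star}(\beta,n)$ is identified with the fixed stable pair moduli space), so no additional argument is needed there.
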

\begin{proof}
An element $t \in \wW$ is of the form
\begin{align*}
t=\frac{n'}{\omega \cdot \beta'}
\end{align*}
for $0<\beta'<\beta$ and $n' \in \mathbb{Z}$. 
In particular, $\wW$ is locally finite. 
It is also bounded by 
Proposition~\ref{prop:isom}, 
hence 
$\wW$ is a finite set. 
\end{proof}
We write $\wW$ as 
\begin{align}\label{W:wall}
\wW=\{\infty=t_0> t_1>t_2> \cdots >t_l >t_{l+1}=-\infty:
 t_1, \cdots, t_l \in \mathbb{R} \}. 
\end{align}
Note that each $\mM_t^{\star}(\beta, n)$ is constant 
if $t$ lies in a 
connected component of $\mathbb{R} \setminus \wW$, 
but may change if $t$ crosses one of $t_i$. 
For $t \in \wW$, 
the open immersions (\ref{stack:open})
induce the 
diagram of good moduli spaces
\begin{align}\label{dia:Mstar}
\xymatrix{
M_{t^{+}}^{\star}(\beta, n) \ar[rd]_-{q_{M^{\star}}^{+}} & & M_{t^{-}}^{\star}(\beta, n) 
\ar[ld]^-{q_{M^{\star}}^{-}} \\
& M_{t}^{\star}(\beta, n). &
}
\end{align}
Note that $M_{t^{\pm}}(\beta, n)$ consists of 
$\mu_{t^{\pm}}$-stable objects, hence they 
admit d-critical structures by Theorem~\ref{thm:CYdcrit}. 
Below, we investigate d-critical 
birational geometry of the diagram (\ref{dia:Mstar}). 

Let us take a point 
$p \in M_{t}^{\star}(\beta, n)$ in the diagram (\ref{dia:Mstar}), 
and suppose that it corresponds 
to a $\mu_t^{\star}$-polystable object (\ref{mu:pstable}).
Let 
$Q^{\star}=Q_{E_{\bullet}^{\star}}$
be the Ext-quiver 
associated with the collection
\begin{align}\label{collect:star}
E_{\bullet}^{\star}=
(E_0, E_1, \ldots, E_k).
\end{align}
On the other hand, 
let $Q=Q_{E_{\bullet}}$ be the Ext-quiver 
associated with the collection of shift of one dimensional sheaves
\begin{align}\label{collect:shift}
E_{\bullet}=(E_1, E_2, \ldots, E_k).
\end{align}
As we observed in the proof of Theorem~\ref{thm:Dflop:one}, 
the quiver $Q$ is 
symmetric. 
Then we have
\begin{align*}
V(Q^{\star})=\{0\} \sqcup V(Q)
\end{align*}
and the numbers of arrows from $0$ to 
$i \in V(Q)$, 
from $i$ to $0$, and the loops at $0$ are
\begin{align}\label{numbers:ai}
a_i \cneq \ext^1(E_0, E_i), \ 
b_i \cneq \ext^1(E_i, E_0), \ 
c \cneq \ext^1(E_0, E_0) 
\end{align}
respectively. 
Therefore $Q^{\star}$ is obtained from $Q$ as in
the construction of Subsection~\ref{subsec:extend}. 
We also have the convergent super-potential 
\begin{align*}
W^{\star} \cneq W_{E_{\bullet}^{\star}}
\in \mathbb{C}\{ Q^{\star} \}/[\mathbb{C}\{ Q^{\star} \}, \mathbb{C}\{ Q^{\star} \}]
\end{align*}
of $Q^{\star}$ associated with the collection 
(\ref{collect:star}) as in the construction of (\ref{def:WE}). 
The following lemma (which will be used in Lemma~\ref{prop:isomM})
is obvious from the constructions, 
and we leave readers for details: 
\begin{lem}\label{rmk:W:restrict}
Let $\mathbb{C}\{Q^{\star}\} \twoheadrightarrow \mathbb{C}\{Q\}$
be the linear map sending a path
$e_1 e_2 \ldots e_n$ to itself if each $e_i \in E(Q)$
and to zero otherwise. 
The above map induces the linear map
\begin{align*}
\mathbb{C}\{ Q^{\star} \}/[\mathbb{C}\{ Q^{\star} \}, \mathbb{C}\{ Q^{\star} \}]
 \twoheadrightarrow 
 \mathbb{C}\{ Q \}/[\mathbb{C}\{ Q  \}, \mathbb{C}\{ Q\}], \ 
 f \mapsto f|_Q.
 \end{align*}
Under the above map, 
we have $W^{\star}|_{Q}=W$, 
where $W=W_{E_{\bullet}}$ is the convergent 
super-potential of $Q$
associated with the collection 
(\ref{collect:shift}). 
\end{lem}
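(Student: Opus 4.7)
The plan is to check two things: (a) that the declared linear map $f \mapsto f|_Q$ is well-defined on the commutator quotients, and (b) that the restriction of the super-potential $W^{\star}$ yields exactly $W$. Both reduce to bookkeeping for the map, and to the compatibility of the minimal $A_\infty$-products on the dg subcategory generated by $(E_1,\ldots,E_k)$ inside the one generated by $(E_0,E_1,\ldots,E_k)$.

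First, for the descent to commutator quotients, I would describe the restriction map as multiplication on both sides by the idempotent $e_Q=\sum_{i\in V(Q)}e_i$ (the sum of length-zero paths at the vertices of $V(Q)$) followed by identifying $e_Q \mathbb{C}\{Q^{\star}\}e_Q$ with $\mathbb{C}\{Q\}$ plus contributions of paths that go through vertex $0$. Only paths all of whose edges lie in $E(Q)$ survive, since the presence of any edge incident to $0$ forces the path to traverse $0$. For two paths $a,b$, both $ab$ and $ba$ correspond to the same cyclic word, so either both cycles lie entirely in $E(Q)$ (in which case $[a,b]|_Q$ is a commutator in $\mathbb{C}\{Q\}$) or neither does (in which case $[a,b]|_Q=0$). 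Extending by continuity with respect to path length—which is legitimate because the constants $C$ in Definition~\ref{def:CQ} are preserved—shows that the map descends to the quotients.

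Next, for the identity $W^{\star}|_Q=W$, recall from (\ref{def:WE}) that the coefficient of a cyclic monomial $e_1e_2\cdots e_n$ in $W^{\star}$ is
\begin{align*}
a_{\psi,e_\bullet}=\tfrac{1}{n}\bigl(m^{\star}_{n-1}(e_1^{\vee},\ldots,e_{n-1}^{\vee}),e_n^{\vee}\bigr),
\end{align*}
where $m^{\star}_{\bullet}$ is the cyclic minimal $A_\infty$-structure on $\mathrm{Ext}^{\ast}(E^{\star},E^{\star})$ for $E^{\star}=\bigoplus_{i=0}^{k}E_i$. After restriction to $Q$ one only keeps those cyclic words for which every edge lies in $E(Q)$, equivalently those maps $\psi\colon\{1,\ldots,n+1\}\to V(Q^{\star})$ whose image avoids the vertex $0$. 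For such $\psi$, each argument $e_i^{\vee}$ lies in $\mathrm{Ext}^{1}(E_{\psi(i)},E_{\psi(i+1)})$ with $\psi(i)\in V(Q)$, and the target of the $A_\infty$-product also lies in $\mathrm{Ext}^{\ast}(E_{\psi(1)},E_{\psi(n)})$ within the subcollection. Thus the computation coincides with the analogous one for the minimal $A_\infty$-structure $m_{\bullet}$ on $\mathrm{Ext}^{\ast}(\bigoplus_{i=1}^{k}E_i,\bigoplus_{i=1}^{k}E_i)$. The Serre pairing used in the coefficient depends only on the pair $(E_{\psi(1)},E_{\psi(n+1)})$ and so is identical on either side, producing exactly the coefficient of the same monomial in $W$.

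The only nontrivial point is the compatibility $m^{\star}_{n-1}|_{\mathrm{Ext}^{\ast}(E_i,E_j),\ i,j\in V(Q)}=m_{n-1}$. I would establish this by choosing Dolbeault dg-enhancements $\mathfrak{g}^{\ast}_{\eE^{\star}_{\bullet}}$ and $\mathfrak{g}^{\ast}_{\eE_{\bullet}}$ so that the latter is the direct summand of the former cut out by the idempotent corresponding to $\bigoplus_{i=1}^{k}E_i$; then homological perturbation applied to the full enhancement restricts to homological perturbation on the sub-dg-category, because the defining equations and homotopies of the minimal model commute with this idempotent. This is the standard fact that passage to a full dg subcategory is compatible with taking minimal models, and it gives the equality of $A_\infty$-products term by term, hence $W^{\star}|_Q=W$. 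No sophisticated estimate is required beyond what was already needed to make both super-potentials convergent.
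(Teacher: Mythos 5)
Your proposal is correct, and it takes the route the paper intends: the paper offers no proof, declaring the lemma ``obvious from the constructions,'' and your argument is exactly the fleshed-out version of that — descent to the commutator quotients via the observation that $ab$ and $ba$ involve the same set of edges, plus the identification of the coefficients of $W^{\star}$ on cycles avoiding the vertex $0$ with those of $W$ via compatibility of minimal cyclic $A_{\infty}$-models on a full subcategory. You correctly isolate the one genuinely nontrivial point, namely that the Dolbeault homotopy data defining the minimal model for $(E_0,\ldots,E_k)$ must be chosen so that it restricts to that for $(E_1,\ldots,E_k)$, which is the implicit convention in the paper's construction of $W_{E_{\bullet}}$.
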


Let $\vec{m}^{\star}$ be the dimension vector of $Q^{\star}$ given by
\begin{align}\label{dimvec:m2}
(\vec{m}^{\star})_i=\dim V_i, \ 
0 \le i\le k
\end{align}
where $V_i$ is given in (\ref{mu:pstable}). 
Note that we have $\vec{m}^{\star}=\vec{0}+\vec{m}$, where 
$\vec{m}$ is the dimension vector of $Q$
given by $m_i=\dim V_i$, $1\le i\le k$. 
Let us also 
take 
\begin{align*}
\xi^{\pm} =(\xi_i^{\pm})_{0 \le i\le k}
\in \hH^{\sharp V(Q^{\star})}
\end{align*}
 as in (\ref{star:data}). 
Similarly to Theorem~\ref{thm:Equiver}, we have the local 
description of the morphisms in (\ref{dia:Mstar})
in terms of $Q^{\star}$:

\begin{thm}\label{thm:Equiver2}
For a closed point 
$p\in M_{t}^{\star}(\beta, n)$ corresponding to a 
$\mu_t^{\star}$-polystable 
object (\ref{mu:pstable}),
let $Q^{\star}=Q_{E_{\bullet}^{\star}}$ be the 
Ext-quiver
associated with $E_{\bullet}^{\star}$ 
and $W^{\star}=W_{E_{\bullet}^{\star}}$ the 
convergent super-potential of $Q^{\star}$ constructed in (\ref{def:WE}). 
Then there exist
 analytic 
open neighborhoods 
\begin{align*}
p \in T \subset M_{t}^{\star}(\beta, n), \ 
0 \in V \subset M_{Q^{\star}}(\vec{m}^{\star})
\end{align*}
where $\vec{m}^{\star}$ is the dimension vector (\ref{dimvec:m2}), 
such that we have the commutative diagram of isomorphisms 
\begin{align}\label{isom:weak}
\xymatrix{
M^{\xi^{\pm}}_{(Q^{\star}, \partial W^{\star})}(\vec{m}^{\star})|_{V} \ar[r]^-{\cong} 
\ar[d]_-{q_{(Q^{\star}, \partial W^{\star})}^{\xi^{\pm}}} &
(q_{M^{\star}}^{\pm})^{-1}(T) \ar[d]^-{q_{M^{\star}}^{\pm}} \\
M_{(Q^{\star}, \partial W^{\star})}(\vec{m}^{\star})|_{V} \ar[r]^-{\cong} & T. 
}
\end{align}
Here the left vertical arrow is given in (\ref{dia:sym:flip}), 
the right vertical arrow is given 
in (\ref{dia:Mstar}) pulled back to
$T$. 
Moreover the top isomorphism 
preserves $d$-critical structures, where 
the $d$-critical structure on the LHS is given in Lemma~\ref{MW:dcrit}
and that on the RHS is given in Theorem~\ref{thm:CYdcrit}. 
\end{thm}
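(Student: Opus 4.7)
The plan is to adapt the proof of Theorem~\ref{thm:Equiver} sketched in Subsection~\ref{subsec:outline}, exploiting the observation that the $\mu_t^{\star}$-polystable object $E = E_0 \oplus \bigoplus_{i=1}^k V_i \otimes E_i$ sits precisely in the extended-quiver framework of Subsection~\ref{subsec:extend}: the rank-one stable summand $E_0$ appears with multiplicity one, producing the distinguished vertex $0 \in V(Q^{\star})$ with $(\vec{m}^{\star})_0=1$, while the shifts $E_i$ of $\mu_{\omega}$-semistable one-dimensional sheaves for $i \ge 1$ generate the symmetric subquiver $Q$ (symmetric by the argument of Theorem~\ref{thm:Dflop:one}), equipped with the numbers $a_i,b_i,c$ of (\ref{numbers:ai}). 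Since $E_{\bullet}^{\star}$ is a simple collection, we have the NC equivalence $\Phi_{E_{\bullet}^{\star}}$ of (\ref{equiv:NC}) identifying nilpotent $\mathbb{C}\lkakko Q^{\star} \rkakko /(\partial W^{\star})$-modules with objects in the extension closure $\langle E_0, E_1,\ldots, E_k \rangle_{\rm ex} \subset \aA_X$.

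First I would construct the analytic map $I_{\ast} \colon \mM_{(Q^{\star}, \partial W^{\star})}(\vec{m}^{\star})|_V \to \mM$ exactly as in (\ref{Iast}), using a cyclic minimal $A_{\infty}$-structure on $\Ext^{\ast}(E,E)$ together with its $A_{\infty}$-quasi-isomorphism $I_n$ to the Dolbeault dg Lie algebra $\mathfrak{g}_{\eE_{\bullet}}^{\ast}$; the formal sum (\ref{I:sum}) has positive radius of convergence by~\cite[Lemma~4.1]{Todstack}, and its Maurer--Cartan locus coincides with $\{d\,\tr W^{\star}=0\}$. Given the existence of the good moduli space $p_t \colon \mM_t^{\star}(\beta,n) \to M_t^{\star}(\beta,n)$ from Theorem~\ref{thm:goodmoduli}, together with the \'etale slice theorem of Alper--Hall--Rydh for good moduli spaces, the argument of~\cite[Proposition~5.4]{Todstack} applies verbatim (it only uses the existence of good moduli spaces and \'etale slices, never a GIT presentation), so that $I_{\ast}$ induces, after shrinking $V$ and choosing $p \in T \subset M_t^{\star}(\beta,n)$, the unpolarized analogue of (\ref{isom:weak}) on the full stacks and their good moduli spaces. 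The comparison of d-critical structures then follows from~\cite[Appendix~A]{TodGV}.

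The main obstacle --- and the only genuinely new step relative to Theorem~\ref{thm:Equiver} --- is to restrict the fiber isomorphism $I_{\ast} \colon p_{Q^{\star}}^{-1}(0) \stackrel{\sim}{\to} p_t^{-1}(p)$ to the $\xi^{\pm}$-semistable and $\mu_{t^{\pm}}^{\star}$-semistable loci respectively, after which the argument of~\cite[Theorem~6.8]{Todstack} shrinks $V, T$ to yield (\ref{isom:weak}). The crux is to verify that, under $\Phi_{E_{\bullet}^{\star}}$, a nilpotent $Q^{\star}$-representation $\mathbb{V}^{\star}$ is $\xi^{\pm}$-stable if and only if $\Phi_{E_{\bullet}^{\star}}(\mathbb{V}^{\star})$ is $\mu_{t^{\pm}}^{\star}$-stable in $\aA_X$. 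The hard direction uses the explicit characterization in Lemma~\ref{lem:xistab}: for the choice (\ref{star:data}) with $\Re\xi_0^+<0$ and $\xi_i^+=\sqrt{-1}$ for $i\ge 1$, $\xi^+$-stability forbids $Q^{\star}$-quotients that are pure $Q$-representations, which under $\Phi_{E_{\bullet}^{\star}}$ translates into forbidding quotients $\Phi_{E_{\bullet}^{\star}}(\mathbb{V}^{\star}) \twoheadrightarrow G$ in $\aA_X$ with $G \in \langle E_1,\ldots, E_k\rangle_{\rm ex} \subset \Coh_{\le 1}(X)[-1]$; since any such $G$ has $\mu_{t^+}^{\star}(G)=t<t^+=\mu_{t^+}^{\star}(\Phi_{E_{\bullet}^{\star}}(\mathbb{V}^{\star}))$, this is precisely what $\mu_{t^+}^{\star}$-stability demands on the extension closure. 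The $\xi^-/\mu_{t^-}^{\star}$ case is dual, using Lemma~\ref{lem:xistab}(ii) to rule out subobjects in $\Coh_{\le 1}(X)[-1]$. Conversely, any $\mu_{t^{\pm}}^{\star}$-destabilizing sequence at $p$ stays in $\langle E_0,\ldots,E_k\rangle_{\rm ex}$ by semicontinuity of Harder--Narasimhan filtrations after shrinking $T$, and therefore pulls back under $\Phi_{E_{\bullet}^{\star}}^{-1}$ to a $\xi^{\pm}$-destabilizing sub- or quotient representation of $\mathbb{V}^{\star}$, completing the equivalence of stabilities.
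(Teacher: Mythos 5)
Your proposal is correct and follows essentially the same route as the paper: the proof is declared to be a repetition of Theorem~\ref{thm:Equiver} with the sole new ingredient being the comparison of $\xi^{\pm}$-stability with $\mu_{t^{\pm}}^{\star}$-stability under $\Phi_{E_{\bullet}^{\star}}$, which the paper isolates as Lemma~\ref{lem:category2} and proves exactly via the quotient characterizations you describe from Lemma~\ref{lem:xistab}. The only place where the paper is more precise than you is the step showing that a $\mu_{t^{\pm}}^{\star}$-destabilizing sequence of an object in $\langle E_0,\ldots,E_k\rangle_{\rm ex}$ actually lives inside that extension closure: rather than your appeal to ``semicontinuity of Harder--Narasimhan filtrations,'' the paper takes the limit $t^{+}\to t$ to force equality of $\mu_t^{\star}$-slopes and then invokes uniqueness of Jordan--H\"older factors at the wall, which is the cleaner way to make your sentence rigorous.
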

\begin{proof}
The proof is almost identical to Theorem~\ref{thm:Equiver}.
The only required modification is 
the proof of the preservation of 
stability in Lemma~\ref{lem:category}, 
as we need to compare Bridgeland stability 
on $Q^{\star}$-representations given by $\xi^{\pm}$
with weak 
stability $\mu_{t^{\pm}}^{\star}$ on $\aA_X$. 
In Lemma~\ref{lem:category2} below, we
 give an ad-hoc proof for this using 
characterizations of semistable objects in both sides. 
\end{proof}
\begin{lem}\label{lem:category2}
 For the collection (\ref{collect:star}), 
let
\begin{align*}
\Phi_{E^{\star}_{\bullet}} \colon 
\modu_{\rm{nil}}\mathbb{C}[[Q^{\star}]]/(\partial W^{\star}) \stackrel{\sim}{\to}
\langle E_0, E_1, \ldots, E_k \rangle_{\rm{ex}}
\end{align*}
be the equivalence of categories
given in (\ref{equiv:NC}).
Under the above equivalence, 
a nilpotent 
$Q^{\star}$-representation
$\mathbb{V}$ is $\xi^{\pm}$-semistable 
if and only if $\Phi_{E^{\star}_{\bullet}}(\mathbb{V})$ is 
$\mu_{t^{\pm}}^{\star}$-semistable in $\aA_X$. 
\end{lem}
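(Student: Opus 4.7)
The plan is to use Lemma~\ref{lem:xistab} to characterize $\xi^{\pm}$-semistability explicitly and then match it with $\mu_{t^{\pm}}^{\star}$-semistability via a rank-and-slope analysis of destabilizing subobjects in $\aA_X$. I will treat $\xi^{+}$ in detail; the case of $\xi^{-}$ follows by a dual argument (replacing quotients by subobjects and using Lemma~\ref{lem:xistab}(ii)).

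First, I would translate $\xi^{+}$-semistability through the equivalence $\Phi=\Phi_{E_{\bullet}^{\star}}$ of (\ref{equiv:NC}). By Lemma~\ref{lem:xistab}(i), a nilpotent $Q^{\star}$-representation $\mathbb{V}^{\star}$ of dimension vector $\vec{m}^{\star}=\vec{0}+\vec{m}$ is $\xi^{+}$-semistable iff it admits no surjection onto a nonzero $Q$-representation (i.e.\ one supported away from the vertex $0$). Under $\Phi$, this translates to: $\Phi(\mathbb{V}^{\star})$ admits no quotient in $\aA_X$ lying in $\langle E_1,\ldots,E_k\rangle_{\mathrm{ex}}$.

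Second, I would observe that $\Phi(\mathbb{V}^{\star})$ is automatically $\mu_t^{\star}$-semistable of slope $t$, since it is an iterated extension in $\aA_X$ of the $\mu_t^{\star}$-stable objects $E_0,E_1,\ldots,E_k$, all of slope $t$ (recall $\mu_{\omega}(F_i)=t$ for $i\ge 1$). Then I would carry out a rank analysis to characterize $\mu_{t^{+}}^{\star}$-semistability. Any $F\subset \Phi(\mathbb{V}^{\star})$ in $\aA_X$ has rank $0$ or $1$. If $F$ has rank $0$, then $G=\Phi(\mathbb{V}^{\star})/F$ has rank $1$, so $\mu_{t^{+}}^{\star}(G)=t+\varepsilon$; the destabilizing inequality $\mu_{t^{+}}^{\star}(F)>\mu_{t^{+}}^{\star}(G)$ reads $\mu_{\omega}(F)>t+\varepsilon$, which for small $\varepsilon$ is ruled out by $\mu_t^{\star}$-semistability (which gives $\mu_{\omega}(F)\le t$). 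If $F$ has rank $1$, then $G$ has rank $0$ and the destabilizing inequality reads $\mu_{\omega}(G)<t+\varepsilon$; combined with $\mu_{\omega}(G)\ge t$ (again from $\mu_t^{\star}$-semistability), for small $\varepsilon$ this holds precisely when $\mu_{\omega}(G)=t$. Hence $\Phi(\mathbb{V}^{\star})$ is $\mu_{t^{+}}^{\star}$-semistable iff it admits no rank-$0$ quotient $G$ with $\mu_{\omega}(G)=t$.

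Third, I would match this criterion with the one coming from step one. The ``only if'' direction is immediate: any $G\in\langle E_1,\ldots,E_k\rangle_{\mathrm{ex}}$ is rank-$0$ of slope $t$, and destabilizes $\mu_{t^{+}}^{\star}$-semistability. For the converse, suppose $G$ is a rank-$0$ quotient of $\Phi(\mathbb{V}^{\star})$ with $\mu_{\omega}(G)=t$. Then $G$ is itself $\mu_t^{\star}$-semistable of slope $t$, and its Jordan--H\"older factors in $\aA_X$ (at slope $t$) are rank-$0$ $\mu_t^{\star}$-stable objects. Since $G$ is a quotient of $\Phi(\mathbb{V}^{\star})\in\langle E_0,\ldots,E_k\rangle_{\mathrm{ex}}$, these factors lie in $\{E_0,E_1,\ldots,E_k\}$; being rank-$0$, they lie in $\{E_1,\ldots,E_k\}$, and hence $G\in\langle E_1,\ldots,E_k\rangle_{\mathrm{ex}}$. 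The main obstacle is this last step---confirming that the rank-$0$ Jordan--H\"older factors of a quotient remain confined to the prescribed list $\{E_1,\ldots,E_k\}$---but it is forced by closure of slope-$t$ $\mu_t^{\star}$-stable factors under subquotients within $\langle E_0,\ldots,E_k\rangle_{\mathrm{ex}}$, together with the uniqueness (up to isomorphism) of the polystable summands at $p$.
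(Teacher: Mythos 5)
Your proof is correct and follows essentially the same route as the paper's: both sides reduce $\xi^{\pm}$-semistability to the absence of quotients in $\langle S_1,\ldots,S_k\rangle_{\rm ex}$ via Lemma~\ref{lem:xistab}, and then use the discreteness of slopes of subquotients near $t$ (your ``small $\varepsilon$'' rank analysis is the paper's limit $t_{+}\to t$ argument) together with uniqueness of Jordan--H\"older factors to confine any destabilizing quotient to $\langle E_1,\ldots,E_k\rangle_{\rm ex}$. The step you flag as the main obstacle is resolved exactly as in the paper, by JH uniqueness applied to $\Phi(\mathbb{V}^{\star})$.
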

\begin{proof}
Let $S_i$
for $0\le i\le k$ be the 
simple $Q^{\star}$-representation
corresponding to the vertex $i \in V(Q^{\star})$. 
As in the proof of Lemma~\ref{lem:xistab}, 
a $Q^{\star}$-representation
$\mathbb{V}$ is $\xi^{+}$-semistable 
if and only if there is 
no non-trivial surjection 
$\mathbb{V} \twoheadrightarrow \mathbb{V}'$
of $Q^{\star}$-representations 
such that $\mathbb{V}'$ is 
an object in $\langle S_1, \ldots, S_k \rangle_{\rm{ex}}$. 

For an object $F \in \langle E_0, E_1, \ldots, E_k \rangle_{\rm{ex}}$, 
we claim that it is $\mu_{t_{+}}^{\star}$-semistable 
if and only if there is no non-trivial surjection 
$F \twoheadrightarrow F'$ in 
$\langle E_0, E_1, \ldots, E_k \rangle_{\rm{ex}}$ 
such that $F' \in \langle E_1, \ldots, E_k \rangle_{\rm{ex}}$. 
The only if direction is obvious from the definition of 
$\mu_{t_{+}}^{\star}$-stability. 
In order to show the if direction, 
suppose that $F$ is not $\mu_{t_{+}}^{\star}$-semistable in $\aA_X$. 
Then there exists an exact sequence 
\begin{align}\label{exact:AFB}
0 \to A \to F \to B \to 0
\end{align}
in $\aA_X$
such that $\mu_{t_{+}}^{\star}(A)>\mu_{t_{+}}^{\star}(B)$. 
By taking the limit $t_+ \to t$, we 
have $\mu_t^{\star}(A) \ge \mu_t^{\star}(B)$. 
On the other hand, since 
$F$ is $\mu_{t}^{\star}$-semistable, we have 
$\mu_{t}^{\star}(A) \le \mu_t^{\star}(B)$. 
It follows that $\mu_t^{\star}(A)=\mu_t^{\star}(B)$, 
therefore both $A$, $B$ are $\mu_t^{\star}$-semistable. 
By the uniqueness of Jordan-H$\ddot{\rm{o}}$lder factors, 
the exact sequence (\ref{exact:AFB}) is an exact sequence 
in $\langle E_0, E_1, \ldots, E_k \rangle_{\rm{ex}}$. 
Then by the inequality $\mu_{t_{+}}^{\star}(A)>\mu_{t_{+}}^{\star}(B)$, 
we have $B \in \langle E_1, \ldots, E_k \rangle_{\rm{ex}}$. 
Therefore the if direction is also proved. 

Now the lemma for the plus sign holds by the above descriptions of 
$\xi^{+}$-semistable
$Q^{\star}$-representations and $\mu_{t_{+}}^{\star}$-semistable 
objects
in $\aA_X$,  
together with the 
fact that the equivalence (\ref{Istar:eq}) sends $S_i$ to $E_i$. 
The result for the minus sign holds in a similar way. 
\end{proof}

Using Theorem~\ref{thm:Equiver2}, 
we can describe the diagram (\ref{dia:Mstar})
in terms of d-critical birational transformations:
\begin{thm}\label{thm:starflip}
For $t \in \wW$ with $t>0$, 
the diagram (\ref{dia:Mstar}) is an 
analytic d-critical generalized flip at 
any $p \in \Imm q_{M^{\star}}^-$, 
and an analytic d-critical generalized MFS 
at any $p \in M_t^{\star}(\beta, n) \setminus \Imm q_{M^{\star}}^-$. 

Moreover for each effective curve class $\beta$, there is 
$t(\beta)>0$ (which is independent of $n$)
such that if $t>t(\beta)$ 
then the diagram (\ref{dia:Mstar}) is strict. 
There is also $n(\beta)>0$ such that 
if $n>n(\beta)$, then any $t \in \wW$ satisfies $t>t(\beta)$. 
Therefore in this case, the diagram (\ref{dia:Mstar}) is always strict. 
\end{thm}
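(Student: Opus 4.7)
The plan is to reduce to the extended-quiver situation of Section~\ref{sec:repsym} via the analytic neighborhood theorem (Theorem~\ref{thm:Equiver2}), and then apply Corollary~\ref{cor:dflip} together with Lemma~\ref{lem:strict}. Fix a closed point $p \in M_t^{\star}(\beta, n)$ corresponding to a $\mu_t^{\star}$-polystable object $E = E_0 \oplus \bigoplus_{i=1}^{k} V_i \otimes F_i[-1]$, and let $(Q^{\star}, W^{\star}, \vec{m}^{\star} = \vec{0} + \vec{m})$ be the associated data. The subquiver $Q \subset Q^{\star}$ on $\{1, \ldots, k\}$ is symmetric since the $E_i = F_i[-1]$ are shifts of one-dimensional sheaves (cf.~the proof of Theorem~\ref{thm:Dflop:one}), so $Q^{\star}$ fits the framework of Subsection~\ref{subsec:extend} with arrow counts
\[
 a_i = \ext^1(E_0, F_i[-1]) = \Hom(E_0, F_i), \quad
 b_i = \ext^1(F_i[-1], E_0) = \Ext^1(E_0, F_i),
\]
the latter by Serre duality on the CY $3$-fold. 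The super-potential $W^{\star}$ is automatically minimal since the sum in (\ref{def:WE}) ranges over $n \geq 3$.

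The core computation is the strict inequality $a_i > b_i$ whenever $t > 0$, which is the hypothesis of Proposition~\ref{prop:Mflip}. I first claim $\Ext^{\ge 2}(E_0, F_i) = 0$. By Serre duality $\Ext^3(E_0, F_i) \cong \Hom(F_i, E_0)^{\vee}$, and since $F_i[-1], E_0 \in \aA_X$ while $F_i = F_i[-1][1]$, this $\Hom$ vanishes by the standard fact that the heart of a bounded t-structure has no negative extensions. Similarly $\Ext^2(E_0, F_i) \cong \Hom_{\aA_X}(F_i[-1], E_0)^{\vee}$; any nonzero morphism $\phi \colon F_i[-1] \to E_0$ in $\aA_X$ would have image $F_i'[-1]$ for a quotient $F_i \twoheadrightarrow F_i'$, and $\mu_{\omega}$-stability of $F_i$ forces $\mu_{\omega}(F_i') \geq t$ with equality iff $F_i' = F_i$, so the resulting subobject of $E_0$ with slope $\geq t$ contradicts $\mu_t^{\star}$-stability of $E_0$. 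Hence $\Ext^{\ge 2}(E_0, F_i) = 0$, and since $E_0 - \oO_X$ has Chern character supported in codimension $\geq 2$ its Riemann--Roch pairing with $F_i$ reduces to
\[
a_i - b_i = \chi(E_0, F_i) = \chi(\oO_X, F_i) = \chi(F_i) = t \cdot \omega \cdot [F_i] > 0.
\]

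With $a_i > b_i$ in hand, Corollary~\ref{cor:dflip} applied to the diagram (\ref{dia:sym:flip}) gives either a d-critical generalized MFS or a d-critical generalized flip. Via Theorem~\ref{thm:Equiver2} this transports to the geometric side: properness of $q_{M^{\star}}^{-}$ makes $\Imm q_{M^{\star}}^{-}$ closed, so at any $p \notin \Imm q_{M^{\star}}^{-}$ I shrink the analytic neighborhood $T$ so that $(q_{M^{\star}}^{-})^{-1}(T) = \emptyset$ and realize the relative d-critical chart with $Y^{-} = \emptyset$ in the spirit of Remark~\ref{rmk:dcrit}, obtaining a d-critical generalized MFS at $p$. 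At $p \in \Imm q_{M^{\star}}^{-}$ the quiver-side moduli $M_{Q^{\star}}^{\xi^{-}}(\vec{m}^{\star})$ is nonempty, and Corollary~\ref{cor:dflip}(ii) together with Lemma~\ref{MW:dcrit} delivers a d-critical generalized flip at $p$.

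For the strictness clause, $\beta = \beta_0 + \sum m_i [F_i]$ gives $m_i \leq (\omega \cdot \beta)/(\omega \cdot [F_i]) \leq \omega \cdot \beta$, while $a_i \geq a_i - b_i = t \cdot \omega \cdot [F_i] \geq t$. Setting $t(\beta) := \omega \cdot \beta$ yields $a_i > m_i$ at every polystable point for $t > t(\beta)$, and Lemma~\ref{lem:strict} produces strictness. For the final clause, every wall $t \in \wW$ arises from a decomposition with $n = n_0 + t \sum m_i \omega \cdot [F_i]$ and $\sum m_i \omega \cdot [F_i] \leq \omega \cdot \beta$, so $t \geq (n - n_0)/(\omega \cdot \beta)$; since $|n_0|$ is uniformly bounded by some $N(\beta)$ as $\beta_0$ ranges over the finitely many effective classes $\leq \beta$ and $E_0$ over the finite-type family of rank-one $\mu_t^{\star}$-stable objects with $\ch_2(E_0) = -\beta_0$, the choice $n(\beta) := N(\beta) + (\omega \cdot \beta)^2$ forces $t > t(\beta)$ at every wall whenever $n > n(\beta)$. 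The main technical subtlety I expect is establishing the uniform bound $|n_0| \leq N(\beta)$, which amounts to boundedness of rank-one $\mu_t^{\star}$-stable objects and should follow from the finite-type property of the stacks $\mM_t^{\star}(\beta_0, n_0)$ in Subsection~\ref{subsec:weak} together with finiteness of effective curve classes below $\beta$.
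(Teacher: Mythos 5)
Your proposal is correct and follows essentially the same route as the paper: reduce to the extended quiver $(Q^{\star}, W^{\star})$ via Theorem~\ref{thm:Equiver2}, establish $a_i>b_i$ by Riemann--Roch and Serre duality (your detour through $\Ext^{\ge 2}(E_0,F_i)=0$ is equivalent to the paper's direct computation $\chi(E_0,E_i)=-a_i+b_i$ using the simple-collection vanishing), apply Corollary~\ref{cor:dflip}, and obtain strictness from $a_i>m_i$ together with Lemma~\ref{lem:strict} and the boundedness of $n_0$ from Lemma~\ref{lem:boundn}. The only deviations are cosmetic and harmless: you make explicit the minimality of $W^{\star}$ (left implicit in the paper) and use the cruder but valid bound $t(\beta)=\omega\cdot\beta$ in place of the paper's finitely-many-ratios argument.
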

\begin{proof}
Suppose that $p \in M_{t}^{\star}(\beta, n)$ 
corresponds to  
a $\mu_t^{\star}$-polystable object (\ref{mu:pstable}), 
and take $1\le i\le k$. Let 
$a_i, b_i$ be as in (\ref{numbers:ai}).  
Since $\mu_{\omega}(E_i)=t>0$ 
and $E_i \in \Coh_{\le 1}(X)[-1]$, 
we have $\chi(E_i)<0$. 
By the Riemann-Roch theorem and 
$\hom(E_0, E_i)=\hom(E_i, E_0)=0$, we have 
\begin{align*}
0>\chi(E_i)=\chi(E_0, E_i)=-a_i+b_i.
\end{align*}
Therefore we have $a_i>b_i$, and 
the first statement follows from 
Corollary~\ref{cor:dflip} and Theorem~\ref{thm:Equiver2}. 

We show the second strictness statement. 
Again suppose that $p$ 
corresponds to a $\mu_t^{\star}$-polystable object (\ref{mu:pstable}), 
but now assume that (\ref{mu:pstable}) is not $\mu_t^{\star}$-stable, i.e. 
$k\ge 1$. 
We write 
\begin{align*}
\cl(E_0)=(1, -\beta_0, -n_0), \ \ch(E_i)=-(\beta_i, n_i)
\end{align*}
for $1\le i\le k$. 
Then we have $\beta_0 \ge 0$, $\beta_i>0$ for $1\le i\le k$, and 
\begin{align}\label{eq:beta}
&\beta_0+m_1 \beta_1+\cdots+m_k \beta_k=\beta, \\ 
\label{eq:n}
&n_0+m_1 n_1+\cdots + m_k n_k=n
\end{align}
where $m_i=\dim V_i$ for $1\le i\le k$. 
By the identity (\ref{eq:beta}), for a fixed $\beta \ge 0$ there is only 
a finite number of possibilities for $k$, $\beta_i$ and $m_i$. 
If we have $n_i \le m_i$ for some $1\le i\le k$, then we have
\begin{align*}
t=\frac{n_i}{\omega \cdot \beta_i} \le \frac{m_i}{\omega \cdot \beta_i}
\end{align*}
and the RHS is bounded above. 
Therefore there is $t(\beta)>0$ such that if $t>t(\beta)$
then $n_i>m_i$ for any $1\le i\le k$. 
But then 
\begin{align*}
-n_i=\chi(E_i)=-a_i+b_i
\end{align*}
which implies $a_i \ge n_i>m_i$. 
Therefore the 
diagram (\ref{dia:Mstar}) is strict at $p$ by 
Lemma~\ref{lem:strict}. 

Suppose that $t \in \wW$ satisfies $t \le t(\beta)$. 
Applying Lemma~\ref{lem:boundn} below
for $\beta_0$ and all $0<t \le t(\beta)$, 
we can find $n'(\beta_0) \in \mathbb{Z}$ such that 
$n_0\le n'(\beta_0)$ holds. 
By (\ref{eq:n}) and using $t=n_i/\omega \cdot \beta_i$, we 
obtain
\begin{align*}
t \ge \frac{n-n'(\beta_0)}{\sum_{i=1}^k m_i (\omega \cdot \beta_i)}.
\end{align*}
There is $n(\beta)>0$ such that for $n>n(\beta)$, 
the RHS is bigger than $t(\beta)$. 
Therefore for such $n(\beta)$, the desired statement holds. 
\end{proof}

We have used the following lemma: 
\begin{lem}\label{lem:boundn}
For each effective curve class $\beta$
and $t \in \mathbb{R}$, there is 
$n_t(\beta) >0$ such that 
$\mM_t^{\star}(\beta, n) = \emptyset$ 
for $\lvert n \rvert >  n_t(\beta)$. 
\end{lem}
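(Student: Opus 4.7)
The plan is to control a $\mu_t^{\star}$-semistable rank-one object by its Jordan--H\"{o}lder filtration in $\aA_X$, reducing the problem to a bound on the single rank-one stable factor, which will turn out to be an ideal sheaf of a bounded family.

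First I will apply Jordan--H\"{o}lder to $E \in \mM_t^{\star}(\beta,n)$ in $\aA_X$ with respect to the weak stability $\mu_t^{\star}$. The JH factors are $\mu_t^{\star}$-stable of common slope $t$. Because rank in $\aA_X$ is additive and $E$ has rank one, exactly one factor $E_0$ has rank $1$ with $\cl(E_0)=(1,-\beta_0,-n_0)$, while the other factors are rank-zero, necessarily of the form $F_i[-1]$ for pure one-dimensional $\mu_\omega$-stable sheaves $F_i$ with $\mu_\omega(F_i)=t$ and multiplicities $m_i$. Comparing classes yields $n-n_0=\sum_i m_i\chi(F_i)=t\cdot\omega\cdot(\beta-\beta_0)$, which is bounded in absolute value by a function of $\beta$ and $t$ alone, since $\beta_0$ ranges over the finite set of effective classes $\le\beta$. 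Thus it suffices to bound $|n_0|$.

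Next I will analyze $E_0$. The canonical truncation triangle gives a short exact sequence
\begin{equation*}
0\longrightarrow H^0(E_0)\longrightarrow E_0\longrightarrow H^1(E_0)[-1]\longrightarrow 0
\end{equation*}
in $\aA_X$. Since $\aA_X$ is the tilt with torsion-free part in $\Coh_{\ge 2}(X)$, and since $\cl(E_0)=(1,-\beta_0,-n_0)$ has $c_1=0$ and rank $1$, the sheaf $H^0(E_0)$ is rank-one torsion-free with $c_1=0$, hence of the form $I_{Z_0}$ for a subscheme $Z_0\subset X$ of codimension at least two. Now $I_{Z_0}$ is a nonzero rank-one subobject of $E_0$ with $\mu_t^{\star}(I_{Z_0})=t=\mu_t^{\star}(E_0)$, so the $\mu_t^{\star}$-stability of $E_0$ forces $I_{Z_0}=E_0$ and $H^1(E_0)=0$. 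Consequently $E_0=I_{Z_0}$, and in $\aA_X$ the standard exact sequence $0\to\oO_{Z_0}[-1]\to I_{Z_0}\to \oO_X\to 0$ exhibits $\oO_{Z_0}[-1]$ as a rank-zero subobject of $E_0$. Semistability gives $\mu_\omega(\oO_{Z_0})\le t$, and any zero-dimensional subsheaf of $\oO_{Z_0}$ would produce a rank-zero subobject of slope $+\infty>t$; therefore $Z_0$ is pure one-dimensional Cohen--Macaulay with $[Z_0]=\beta_0$ and $\chi(\oO_{Z_0})=n_0$.

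Finally, the bound $\chi(\oO_{Z_0})\le t\cdot\omega\cdot\beta_0$ is immediate from semistability, and the complementary lower bound $\chi(\oO_{Z_0})\ge \chi_{\min}(\beta_0)$ is the standard Castelnuovo-type estimate for pure one-dimensional sheaves of fixed homology class. Hence for each of the finitely many admissible $\beta_0$ the possible values of $n_0$ form a finite set depending only on $\beta$ and $t$, and combining with the bound on $n-n_0$ from Step~1 produces the desired $n_t(\beta)$. The only delicate points are the existence of the Jordan--H\"{o}lder filtration in the weak stability setting (which follows from the weak see-saw property of $\mu_t^{\star}$ recorded in Section~\ref{sec:wc:mmp} and the general theory of weak stability conditions in~\cite{Tcurve1}) and the slope-equality step $I_{Z_0}=E_0$, which crucially uses \emph{stability} rather than semistability of the JH factor $E_0$ and hence breaks down if one tries to apply the argument directly to $E$ itself.
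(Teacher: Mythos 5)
Your reduction in Step~1 is fine: the Jordan--H\"older factors of a $\mu_t^{\star}$-semistable $E$ consist of one rank-one $\mu_t^{\star}$-stable factor $E_0$ with $\cl(E_0)=(1,-\beta_0,-n_0)$ and shifted sheaves $F_i[-1]$ with $\mu_\omega(F_i)=t$, and the identity $n-n_0=t\,\omega\cdot(\beta-\beta_0)$ correctly reduces the lemma to bounding $n_0$. (For what it is worth, the paper does not give an argument at all here --- it only cites \cite[Lemma~4.4]{Tolim2} --- so a self-contained proof would be welcome if it closed.)

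The gap is the step ``$I_{Z_0}=E_0$, hence $H^1(E_0)=0$.'' You infer this from the existence of a subobject of slope equal to $\mu_t^{\star}(E_0)=t$, but $\mu_t^{\star}$ is only a \emph{weak} stability condition: by Definition of $\mu_t^{\star}$-stability one must compare $\mu_t^{\star}(F)$ with $\mu_t^{\star}(G)$ for the \emph{quotient} $G$, not with $\mu_t^{\star}(E_0)$. For the sequence $0\to I_{Z_0}\to E_0\to H^1(E_0)[-1]\to 0$ the quotient has slope $\mu_\omega(H^1(E_0))$, which is allowed to be strictly larger than $t$; stability only forces $t<\mu_\omega(H^1(E_0))$, not $H^1(E_0)=0$. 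And indeed the conclusion is false: by Proposition~\ref{prop:isom} the $\mu_t^{\star}$-stable rank-one objects for $t\ll 0$ are the derived duals $\mathbb{D}(I^{\bullet})$ of stable pairs, which sit in $0\to\oO_X\to\mathbb{D}(I^{\bullet})\to F^{\vee}[-1]\to 0$ and have $H^1\neq 0$ whenever $\beta\neq 0$; the objects counted by $L_{n,\beta}$ at small $t>0$ are likewise not ideal sheaves. (So the delicate point is not ``stability versus semistability'' as you suggest, but the failure of the genuine see-saw property.) Once $H^1(E_0)$ may be nonzero, your argument still yields a two-sided bound on $\chi(\oO_{Z_0})$ (the subobject $\oO_{Z_0}[-1]\hookrightarrow I_{Z_0}\hookrightarrow E_0$ gives $\chi(\oO_{Z_0})\le t\,\omega\cdot[Z_0]$, and purity of $Z_0$ plus Castelnuovo gives the lower bound) and a \emph{lower} bound $\chi(H^1(E_0))\ge t\,\omega\cdot[H^1(E_0)]$ from the quotient, but it provides no \emph{upper} bound on $\chi(H^1(E_0))$, i.e.\ no upper bound on $n_0$ or on $n$. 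That upper bound is exactly the hard half of the lemma and requires an additional input beyond the slope inequalities you have used.
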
 
\begin{proof}
The lemma is proved in the proof of~\cite[Lemma~4.4]{Tolim2}. 
\end{proof}

In the following example, we see
that Grassmannian flips
appear as relative d-critical charts: 
\begin{exam}\label{exam:Gflip}
In Theorem~\ref{thm:starflip}, suppose that 
$p \in M_t^{\star}(\beta, n)$ corresponds to a 
$\mu_t^{\star}$-polystable object $E \in \aA_X$ of the form
\begin{align*}
E=E_0 \oplus (V \otimes F[-1])
\end{align*}
where 
$E_0 \in \aA_X$ is a rank one $\mu_t^{\star}$-stable object, 
$V$ is a finite dimensional 
vector space, $F \in \Coh_{\le 1}(X)$
satisfying $\Ext^1(F, F)=0$, e.g. 
$F=\oO_C(k)$ for a rational 
curve $\mathbb{P}^1=C \subset X$
with $N_{C/X}=\oO_{C}(-1)^{\oplus 2}$. 
In this case the Ext-quiver 
$Q^{\star}$ 
for $\{E_0, F[-1]\}$
is the same one considered in 
Example~\ref{exam:Grass}, 
and the birational map 
\begin{align*}
M_{Q^{\star}}^{\xi^{+}}(\vec{m}^{\star}) \dashrightarrow 
M_{Q^{\star}}^{\xi^-}(\vec{m}^{\star})
\end{align*}
 is a 
Grassmannian flip
as in Example~\ref{exam:Grass}. 
For example,
the wall-crossing diagrams 
in local $\mathbb{P}^1$
studied 
in~\cite{NN} are described as d-critical 
Grassmannian flips
as above. 
\end{exam}

\begin{rmk}\label{rmk:dMFS}
In Theorem~\ref{thm:starflip},
it is possible that 
the diagram (\ref{dia:Mstar})
is a strict analytic $d$-critical generalized MFS
at $p \in M_t^{\star}(\beta, n) \setminus \Imm q_{M^{\star}}^-$,
but in the notation of Theorem~\ref{thm:Equiver2}
we have $M_{Q^{\star}}^{\xi^{-}}(\vec{m}^{\star})=\emptyset$
and the
morphism
\begin{align}\label{rmk:birational}
q_{Q^{\star}}^{\xi^{+}} \colon 
M_{Q^{\star}}^{\xi^{+}}(\vec{m}^{\star})
\to M_{Q^{\star}}(\vec{m}^{\star})
\end{align}
is birational. 

Indeed suppose that there exist disjoint smooth curves
$C_1, C_2 \subset X$
such that $C_1 \cong \mathbb{P}^1$, 
and $\omega \cdot C_2=(\omega \cdot C_1) \cdot m$
for some integer $m\ge 2$.
Then 
for $t=1/(\omega \cdot C_1)$, 
$\beta=[C_1]+[C_2]$ and $n=m+1$, 
we have the point 
$p \in M_t^{\star}(\beta, n)$
corresponding to the $\mu_t^{\star}$-polystable 
object
\begin{align*}
\oO_X \oplus \oO_{C_1}[-1] \oplus \oO_{C_2}(D)[-1]
\end{align*}
where $D$ is a divisor on $C_2$
with 
$\deg D=m+g(C_2)-1$.
Suppose that $h^1(\oO_{C_2}(D)) \neq 0$.
Then at the point $p$,  
it is easy to see that 
we have the situation mentioned above. 
\end{rmk}

\subsection{PT/L correspondence via d-critical MMP}\label{subsec:PT/L}
Let $l \in \mathbb{Z}$ be the number of elements in $\wW$
given in (\ref{W:wall}). 
For each $1\le i \le l+1$, we set
\begin{align*}
M_i \cneq M_{t_i^{+}}^{\star}(\beta, n)=M_{t_{i-1}^{-}}^{\star}(\beta, n), \ 
A_i \cneq M_{t_i}^{\star}(\beta, n)
\end{align*}
with d-critical structure $s_i$ on 
$M_i$ given in Theorem~\ref{thm:CYdcrit}. 
We also define $L_n^{\pm}(X, \beta)$ to be
\begin{align*}
L_{n}^{\pm}(X, \beta) \cneq M_{\pm \varepsilon}^{\star}(\beta, n), \ 
0<\varepsilon \ll 1. 
\end{align*}
Let us take unique 
$1\le l' \le l+1$
satisfying  
$t_{l'-1}>0 \ge t_{l'}$. 
We have the following zigzag diagram 
which connects $M_1 \cong P_n(X, \beta)$ and 
$M_{l'}=L_n^{+}(X, \beta)$:

\begin{align}\label{zigzag}
\xymatrix{
M_1 \ar[rd]_-{\pi_1^+}  &  &  M_2 \ar[ld]^-{\pi_1^-} \ar[rd]_-{\pi_2^+} &  & \cdots \ar[ld] \ar[rd] &
  &  M_{l'} \ar[ld]^-{\pi_{l'-1}^-}
\\
& A_1  & & A_2 & & A_{l'-1}  &
}
\end{align} 
The wall-crossing diagrams at $t\le 0$ are given by the following 
zigzag diagram, 
which connects $M_{l+1} \cong P_{-n}(X, \beta)$ and $M_{l'}=L_n^+(X, \beta)$: 
\begin{align}\label{zigzag2}
\xymatrix{
M_{l+1} \ar[rd]_-{\pi_{l}^-}  &  &  M_l \ar[ld]^-{\pi_l^+} 
\ar[rd]_-{\pi_{l-1}^-} &  & \cdots \ar[ld] \ar[rd] &
  &  M_{l'} \ar[ld]^-{\pi_{l'}^+}
\\
& A_{l}  & & A_{l-1} & & A_{l'}  &
}
\end{align}

\begin{cor}\label{cor:zigzag}
The diagrams (\ref{zigzag}), (\ref{zigzag2}) are 
d-critical MMP. 
In particular, we have the inequalities of
virtual canonical line bundles: 
\begin{align*}
&(M_1, s_1) \ge_K (M_2, s_2) \ge_K \cdots \ge_K (M_{l'}, s_{l'}), \\
&(M_{l+1}, s_{l+1}) \ge_K (M_l, s_l) \ge_K \cdots \ge_K (M_{l'}, s_{l'}). 
\end{align*}
Moreover for each effective curve class $\beta$, there 
is $n(\beta)>0$ such that 
the diagrams (\ref{zigzag}), (\ref{zigzag2}) are strict
and $M_{l'}=\emptyset$
if  
$\lvert n \rvert >n(\beta)$ holds.
In this case, the morphisms
\begin{align}\label{mor:dMFS}
\pi_{l'-1}^+ \colon M_{l'-1} \to A_{l'-1}, \ 
\pi_{l'}^- \colon M_{l'+1} \to A_{l'}
\end{align}
are $d$-critical generalized MFS which are strict 
at any point in $A_{l'-1}$, $A_{l'}$ respectively. 
\end{cor}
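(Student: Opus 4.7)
The plan is to reduce each arrow in the two zigzag diagrams (\ref{zigzag}) and (\ref{zigzag2}) to a direct application of Theorem~\ref{thm:starflip}, verify against Definition~\ref{def:d-mmp}, and then read the canonical line bundle inequalities and the terminal MFS statement off from the general formalism of Section~\ref{sec:dbir}.

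First I would treat the positive-$t$ zigzag (\ref{zigzag}). By the choice of $l'$, every wall $t_i$ with $1\le i\le l'-1$ satisfies $t_i>0$, so Theorem~\ref{thm:starflip} applies verbatim: the diagram $M_i \stackrel{\pi_i^+}{\to} A_i \stackrel{\pi_i^-}{\leftarrow} M_{i+1}$ is an analytic d-critical generalized flip at every point of $\Imm \pi_i^-$ and an analytic d-critical generalized MFS at every point of $A_i\setminus \Imm \pi_i^-$. This is precisely Definition~\ref{def:d-mmp}, so (\ref{zigzag}) is a d-critical MMP. The inequalities $(M_1,s_1)\ge_K\cdots\ge_K(M_{l'},s_{l'})$ then follow from the concluding paragraph of Section~\ref{sec:dbir}, which records that both d-critical generalized flips and d-critical generalized MFS satisfy $(M^+,s^+)\ge_K(M^-,s^-)$.

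Next I would treat the non-positive-$t$ zigzag (\ref{zigzag2}). Here the walls $t_i$ for $l'\le i\le l$ satisfy $t_i\le 0$, which reverses the sign step in the proof of Theorem~\ref{thm:starflip}: one has $\mu_{\omega}(E_i)=t_i\le 0$, whence $\chi(E_i)\ge 0$ and $b_i\ge a_i$ in (\ref{numbers:ai}). The analogue of Corollary~\ref{cor:dflip} with the roles of $\xi^+$ and $\xi^-$ exchanged (or equivalently the version extracted from the derived-dualizing involution on $\aA_X$ hinted at in Remark~\ref{rmk:dualpair}) then says that $M_{i+1}\stackrel{\pi_i^-}{\to} A_i \stackrel{\pi_i^+}{\leftarrow} M_i$ is a d-critical generalized flip at every point of $\Imm \pi_i^+$ and a d-critical generalized MFS elsewhere; this is exactly the orientation used in (\ref{zigzag2}), so that diagram is also a d-critical MMP, with the corresponding chain of $\ge_K$-inequalities.

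For the last assertion I would combine the strictness clause of Theorem~\ref{thm:starflip} with a boundedness input. That clause produces $n(\beta)>0$ such that for $n>n(\beta)$ every $t\in\wW$ exceeds $t(\beta)>0$, which simultaneously yields strictness of every step of both zigzags and forces $l'=l+1$, so $M_{l'}=M_{l+1}\cong P_{-n}(X,\beta)$. By the standard lower bound on $\chi(F)$ for purely one-dimensional $F$ with $[F]=\beta$ admitting a nonzero section from $\oO_X$, $P_{-n}(X,\beta)$ is empty once $n$ is sufficiently large; enlarging $n(\beta)$ gives $M_{l'}=\emptyset$. The case $n<-n(\beta)$ is reduced to the previous one via the same derived-dualizing involution, which interchanges $P_n(X,\beta)$ with $P_{-n}(X,\beta)$ and carries (\ref{zigzag}) to (\ref{zigzag2}). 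Once $M_{l'}=\emptyset$, both diagrams in (\ref{mor:dMFS}) have empty right-hand side, so $\Imm \pi_{l'-1}^-=\Imm \pi_{l'}^+=\emptyset$ and the MFS and strictness clauses of Theorem~\ref{thm:starflip} (together with its $t\le 0$ analogue) apply at every point of $A_{l'-1}$ and $A_{l'}$. The main technical obstacle I expect is the bookkeeping around the orientation flip in (\ref{zigzag2}) and the precise form of the duality needed to transfer the $n>0$ version of Theorem~\ref{thm:starflip} to the $n<0$ case; everything else is a direct verification against Definition~\ref{def:d-mmp}.
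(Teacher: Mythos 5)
Your proposal is correct and follows essentially the same route as the paper's (very terse) proof: each step of (\ref{zigzag}) is Theorem~\ref{thm:starflip} verbatim since all its walls are positive, each step of (\ref{zigzag2}) is the sign-reversed analogue (for $t\le 0$ one gets $\chi(E_i)\ge 0$, hence $b_i\ge a_i$, and the roles of $\xi^{\pm}$ are exchanged, matching the orientation of the zigzag), and the terminal claims follow from the strictness clause of Theorem~\ref{thm:starflip} plus a boundedness input. The one genuine divergence is how you obtain $M_{l'}=\emptyset$: the paper simply invokes Lemma~\ref{lem:boundn} at $t=\pm\varepsilon$, which covers both signs of $n$ at once, whereas you argue that for $n>n(\beta)$ all walls are positive so $M_{l'}\cong P_{-n}(X,\beta)$, which is empty by boundedness of $\chi$ for pure one-dimensional sheaves admitting a section, and then handle $n<-n(\beta)$ by derived duality; both work, the paper's being shorter and duality-free. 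One loose end you share with the paper: if $t_{l'}=0$, the last step of (\ref{zigzag2}) is the flop of (\ref{wall:0}) (Corollary~\ref{cor:star:dflop}), because there $a_i=b_i$ while Proposition~\ref{prop:Mflip} (hence Corollary~\ref{cor:dflip} and its reversed analogue) requires the strict inequality; a nontrivial generalized flop is not a generalized flip, so Definition~\ref{def:d-mmp} is only literally met by (\ref{zigzag2}) when its walls are strictly negative (as happens for $\lvert n\rvert>n(\beta)$). Your phrase ``$b_i\ge a_i$ and the analogue of Corollary~\ref{cor:dflip}'' passes over this silently, but it is an imprecision already present in the paper's own statement rather than a gap you introduced.
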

\begin{proof}
The inequalities of
virtual canonical line bundles
 for the diagram (\ref{zigzag}) is immediate from 
Theorem~\ref{thm:starflip}. 
The same argument also applies to the diagram (\ref{zigzag2}). 

If we take $n(\beta)>0$ enough large, then 
the diagrams (\ref{zigzag}), (\ref{zigzag2}) 
are strict as in the argument of Theorem~\ref{thm:starflip}. 
Moreover we have $M_{l'}=\emptyset$ by Lemma~\ref{lem:boundn}. 
Then any point in $A_{l'-1}$, $A_{l'}$ does not correspond to 
$\mu_{t_{l'-1}}^{\star}$-stable object, 
$\mu_{t_{l'}}^{\star}$-stable object, 
respectively (as otherwise $M_{l'} \neq \emptyset$).
Then as in the proof of Theorem~\ref{thm:starflip}, 
the morphisms in (\ref{mor:dMFS}) are 
strict at any point in $A_{l'-1}$, $A_{l'}$ respectively. 
\end{proof}

If $0 \in \wW$,
i.e. $t_{l'}=0$, the wall-crossing 
at $t=0$ is given by the diagram
\begin{align}\label{wall:0}
\xymatrix{
M_{l'}=L_n^{+}(X, \beta) \ar[rd] &  & L_n^{-}(X, \beta) =
M_{l'+1} \ar[ld] \\
& M_{t=0}^{\star}(\beta, n). &
}
\end{align}
\begin{cor}\label{cor:star:dflop}
The diagram (\ref{wall:0}) is an analytic d-critical 
generalized flop.
In particular, we have 
\begin{align*}
L_n^{+}(X, \beta)=_K L_n^{-}(X, \beta).
\end{align*} 
\end{cor}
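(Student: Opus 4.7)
The plan is to proceed exactly as in the proof of Theorem~\ref{thm:starflip}, applying the analytic neighborhood theorem (Theorem~\ref{thm:Equiver2}) at every closed point $p \in M_{t=0}^\star(\beta,n)$, but exploiting the fact that at the wall $t=0$ the Ext-quiver $Q^\star$ becomes symmetric, so that Corollary~\ref{cor:dflop} (the flop corollary for symmetric quivers) takes the place of Corollary~\ref{cor:dflip}. Points $p$ corresponding to $\mu_0^\star$-stable objects are handled trivially: both $q_M^{\pm}$ are local isomorphisms there, which is the trivial case of a d-critical generalized flop in the sense of Remark~\ref{rmk:dcrit}. The interesting case is when $p$ corresponds to a strictly polystable object $E=E_0\oplus \bigoplus_{i=1}^k V_i\otimes F_i[-1]$, where $E_0\in\aA_X$ is rank-one $\mu_0^\star$-stable and each $F_i\in\Coh_{\le 1}(X)$ is $\mu_\omega$-stable with $\mu_\omega(F_i)=0$, hence $\chi(F_i)=0$.

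The key computation is that the Ext-quiver $Q^\star=Q_{E_\bullet^\star}$ is symmetric at $t=0$. Its subquiver on the vertices $\{1,\dots,k\}$ (i.e.\ the quiver $Q$) is already symmetric by the argument in the proof of Theorem~\ref{thm:Dflop:one}. It remains to check $a_i=b_i$ for $a_i=\ext^1(E_0,E_i)$ and $b_i=\ext^1(E_i,E_0)$ with $E_i=F_i[-1]$. Using $\Hom(E_0,E_i)=\Hom(E_i,E_0)=0$ (distinct polystable components) and CY-3 Serre duality $\Ext^2(E_0,E_i)\cong\Ext^1(E_i,E_0)^\vee$, one obtains $\chi(E_0,E_i)=-a_i+b_i$. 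On the other hand, since $\ch(E_i)=-\ch(F_i)=(0,0,-[F_i],-\chi(F_i))=(0,0,-[F_i],0)$ and $E_0$ has Chern character of the form $(1,0,\ast,\ast)$, Riemann--Roch (together with $c_1(X)=0$) gives $\chi(E_0,E_i)=0$. Hence $a_i=b_i$, so $Q^\star$ is symmetric.

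Next I would invoke Corollary~\ref{cor:dflop} applied to $Q^\star$ with the dimension vector $\vec m^\star$, which is primitive because $(\vec m^\star)_0=1$. The stability data $\xi^\pm$ extracted from $\sigma_{\pm\varepsilon}^\star$ via Theorem~\ref{thm:Equiver2} lies in two chambers distinguished by the generation/cogeneration criterion of Lemma~\ref{lem:xistab}; by the symmetry of $Q^\star$ these chambers are interchanged by the natural duality $Q^\star\leftrightarrow (Q^\star)^\vee$. Since the moduli $M_{Q^\star}^{\xi^\pm}(\vec m^\star)$ and their d-critical structures depend only on the chamber containing $\xi^\pm$, we may replace $\xi^\pm$ by new choices within the same chambers that satisfy the normalization~(\ref{satisfy:xi}); for instance, after scaling one may take $\Re(\xi_0^+)=-\Re(\xi_0^-)$ a negative integer and then distribute small opposite real parts among the $\xi_i^\pm$, $i\in V(Q)$, to enforce the weighted-sum condition. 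Non-emptiness of $M_{Q^\star}^{\xi^\pm}(\vec m^\star)|_V$ is automatic from Theorem~\ref{thm:Equiver2} (otherwise the corresponding side of (\ref{wall:0}) is locally empty and we are in the trivial case of Remark~\ref{rmk:dcrit}); moreover, by Lemma~\ref{lem:slop} for the symmetric quiver $Q^\star$, non-emptiness on one side forces non-emptiness on the other. Corollary~\ref{cor:dflop} then shows that the local model is an analytic d-critical generalized flop, and via Theorem~\ref{thm:Equiver2} so is the original diagram (\ref{wall:0}) at $p$.

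The equality $L_n^+(X,\beta)=_K L_n^-(X,\beta)$ then follows from Definition~\ref{defi:dcrit:K} together with the fact, recorded at the end of Section~\ref{sec:birational}, that generalized flops satisfy $Y^+=_K Y^-$. The main technical obstacle in this outline is checking that the specific $\xi^\pm$ arising from the weak stability conditions $\mu_{\pm\varepsilon}^\star$ can indeed be normalized within their chambers to satisfy~(\ref{satisfy:xi}); this is where the symmetry of $Q^\star$ established in the second paragraph is essential, as it guarantees that the two relevant chambers are related by the involution on $\xi$ that flips real parts. Once this is verified, the rest of the argument is a direct transcription of the proof of Theorem~\ref{thm:starflip} with Corollary~\ref{cor:dflop} in place of Corollary~\ref{cor:dflip}.
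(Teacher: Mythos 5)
Your proposal is correct and follows essentially the same route as the paper: the paper's own proof is a three-line sketch that computes $a_i=b_i$ at $t=0$ via the Riemann--Roch argument of Theorem~\ref{thm:starflip}, concludes that $Q^{\star}$ is symmetric, and then refers back to Theorem~\ref{thm:dflop:one} for the normalization of $\xi^{\pm}$ and the application of Corollary~\ref{cor:dflop}. Your write-up is a faithful (and somewhat more careful) expansion of exactly those steps, including the chamber-wise adjustment of $\xi^{\pm}$ to meet condition~(\ref{satisfy:xi}).
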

\begin{proof}
For a point $p \in M_{t=0}^{\star}(\beta, n)$, 
let $a_i$, $b_i$ be as in (\ref{numbers:ai}). 
Then similarly to the proof of Theorem~\ref{thm:starflip}, 
we have $a_i=b_i$. 
This implies that the Ext-quiver $Q^{\ast}$ associated with
the collection (\ref{collect:star}) is symmetric. 
Therefore similarly to Theorem~\ref{thm:dflop:one}, the 
diagram (\ref{wall:0}) is an analytic d-critical 
generalized flop.
\end{proof} 

In the next subsection, we discuss the case 
that $\beta$ is irreducible in detail. 
Here we give some 
explicit examples for non-irreducible curve classes, 
discussed in~\cite[Section~5]{Tolim}. 

\begin{exam}\label{exam:non-irre1}
Let $X \to Y$ be a birational contraction 
with exceptional locus $C=C_1 \cup C_2$, 
where each $C_i$ is isomorphic to $\mathbb{P}^1$, 
$C_1 \cap C_2=\{p\}$
and $N_{C_i/X}=\oO_{C_i}(-1)^{\oplus 2}$. 
We set $d_i \cneq C_i \cdot \omega$ and 
assume $d_1>d_2>0$. 
Let us consider the diagram (\ref{zigzag})
in the case $(\beta, n)=([C], 2)$. In this case, 
we have two walls 
\begin{align*}
\wW=\left\{
\infty>t_1=\frac{1}{d_1} >t_2=\frac{2}{d_1+d_2} >0
\right\}. 
\end{align*}
The reduced part of the diagram (\ref{zigzag})
becomes (see~\cite[Section~5.2]{Tolim})
\begin{align}\label{zigzag:ex1}
\xymatrix{
C \ar[rd]_-{\pi_1^+}  &  &  C_1 \ar[ld]^-{\pi_1^-} \ar[rd]_-{\pi_2^+} &  & 
\emptyset \ar[ld]^-{\pi_2^-}  &  
\\
& C_1  & & \mathrm{pt} & & 
}
\end{align} 
The map $\pi_1^+$ contracts 
to $C_2 \subset C$ to the point 
$p \in C_1$
and $\pi_1^-=\id$. 
The point $p$ corresponds to 
the $\mu_{t_1}^{\star}$-polystable 
object $I_{C_1} \oplus \oO_{C_2}[-1]$, 
where $I_{C_1}$ is the ideal sheaf of $C_1$. 
The Ext-quiver $Q^{\star}$ 
of $\{I_{C_1}, \oO_{C_2}[-1]\}$
is of the following form
\begin{align*}
Q^{\star}=\left(\xymatrix{
0 \ar@<0.5ex>[r] \ar@<1.5ex>[r]& 1 \ar@<0.5ex>[l]
}\right). 
\end{align*}
Therefore locally at $p$, the 
$\pi_1^{\pm}$-relative d-critical charts 
are given by a diagram of the form
\begin{align*}
\xymatrix{
\widehat{\mathbb{C}}^2 \ar[r]^-{f^{+}} \ar[rd]_-{w^+} 
& \mathbb{C}^2 \ar[d]^-{g} &
\ar[l]_-{\id} \mathbb{C}^2 \ar[ld]^-{w^-}\\
& \mathbb{C} &
}
\end{align*}
where $f^{\dag}$ is the blow-up 
at the origin. 
It seems likely that 
$g(u, v)=u^2$, and the scheme structure 
of $P_2(X, \beta)$ at $p \in C=P_2^{\rm{red}}(X, \beta)$
is given by the critical locus of 
$\mathbb{C}^2 \to \mathbb{C}$, 
$(x, y) \mapsto x^2 y^2$. 
In particular, the left diagram of (\ref{zigzag:ex1})
is an analytic d-critical divisorial contraction. 
Similarly the right diagram of (\ref{zigzag:ex1})
is a d-critical MFS. 
\end{exam}

\begin{exam}\label{exam:non-irre2}
Let $X \to Y$ be a birational contraction 
with $C \cong \mathbb{P}^1$, 
$N_{C/X}=\oO_{C}(-1)^{\oplus 2}$. 
We set $d \cneq C \cdot \omega$.
Let us consider the diagram (\ref{zigzag})
in the case $(\beta, n)=(2[C], 4)$. In this case, 
we have two walls 
\begin{align*}
\wW=\left\{
\infty>t_1=\frac{3}{d} >t_2=\frac{2}{d} >0
\right\}. 
\end{align*}
The reduced part of the diagram (\ref{zigzag})
becomes (see~\cite[Section~5.3]{Tolim}
and~\cite[Section~4.1]{PT}
for $P_4^{\rm{red}}(X, \beta)=\mathbb{P}^3$)
\begin{align}\label{zigzag:ex2}
\xymatrix{
\mathbb{P}^3
 \ar[rd]_-{\pi_1^+}  &  &  \mathrm{pt}
 \ar[ld]^-{\pi_1^-} \ar[rd]_-{\pi_2^+} &  & 
\emptyset \ar[ld]^-{\pi_2^-}  &  
\\
& \mathrm{pt}  & & \mathrm{pt} & & 
}
\end{align} 
The map $\pi_1^{+}$ contracts 
$\mathbb{P}^3$ to a point, corresponding 
to the $\mu_{t_1}^{\star}$-polystable 
object $I_C \oplus \oO_C(2)[-1]$. 
The Ext-quiver $Q^{\star}$ 
of $\{I_C, \oO_C(2)[-1]\}$
is of the following form
\begin{align*}
Q^{\star}=\left(\xymatrix{
0 \ar@<-0.5ex>[r] \ar@<0.5ex>[r] \ar@<1.5ex>[r] \ar@<2.5ex>[r] 
& 1 \ar@<1.5ex>[l]
}\right). 
\end{align*}
Therefore the 
$\pi_1^{\pm}$-relative d-critical charts 
are given by a diagram of the form
\begin{align*}
\xymatrix{
\widehat{\mathbb{C}}^4 \ar[r]^-{f^{+}} \ar[rd]_-{w^+} 
& \mathbb{C}^4 \ar[d]^-{g} &
\ar[l]_-{\id} \mathbb{C}^4 \ar[ld]^-{w^-}\\
& \mathbb{C} &
}
\end{align*}
where $f^{\dag}$ is the blow-up 
at the origin. 
It seems likely that 
$g(u, v, t, s)=uv+ts$
and the scheme structure 
on $P_4(X, \beta)$ 
is non-reduced 
along the quadric in 
$\mathbb{P}^3=P_4^{\rm{red}}(X, \beta)$.
In particular, the left diagram of (\ref{zigzag:ex2})
is an analytic d-critical divisorial contraction. 
Similarly the right diagram of (\ref{zigzag:ex2})
is a d-critical MFS. 
\end{exam}

\subsection{The case of irreducible curve class}
Suppose that $\beta$ is an irreducible curve class, i.e. 
it is not written as $\beta_1+\beta_2$ for 
effective curve classes $\beta_i>0$. 
Let $\mM_n(X, \beta)$ be the moduli stack of 
$\mu_{\omega}$-semistable one dimensional 
sheaves $F$ on $X$ with 
$\ch(F)=(\beta, n)$, and 
\begin{align}\label{good:M}
\mM_n(X, \beta) \to M_n(X, \beta)
\end{align}
its good moduli space
for a projective scheme $M_n(X, \beta)$. 
\begin{rmk}\label{rmk:irreducible}
When $\beta$ is irreducible, 
a one dimensional sheaf $F$
with $[F]=\beta$ is 
$\mu_{\omega}$-semistable 
if and only if it is a pure one dimensional 
sheaf. 
In particular 
any point in $\mM_n(X, \beta)$ corresponds to a 
$\mu_{\omega}$-stable sheaf, 
and the morphism (\ref{good:M}) 
is a $\mathbb{C}^{\ast}$-gerbe.  
\end{rmk}
As $\beta$ is irreducible, 
by Remark~\ref{rmk:irreducible}
we have the following diagram:
\begin{align}\label{dia:Pflip2}
\xymatrix{
P_n(X, \beta)  \ar[rd]_-{q_{P}^{+}}
 & & 
P_{-n}(X, \beta) 
\ar[ld]^-{q_{P}^{-}} \\
& M_n(X, \beta). &
}
\end{align}
Here the morphisms 
$q_P^{\pm}$ are given by
\begin{align*}
q_P^{+}(F, s)=F, \ 
q_P^{-}(F', s')=\eE xt^2_{\oO_X}(F', \oO_X). 
\end{align*}
The diagram (\ref{dia:Pflip2}) appeared in~\cite{PT3} 
to show the BPS rationality of the generating series of 
stable pair invariants with irreducible curve classes. 
The above diagram is indeed a special case of the wall-crossing 
in the previous subsection, and we have the following: 
\begin{thm}\label{thm:irreducible}
(i) Suppose that $n>0$. 
Then at a point $p=[F] \in M_n(X, \beta)$, 
the diagram (\ref{dia:Pflip2})
is 
\begin{align}\label{PT:irreducible}
\left\{ \begin{array}{ll}
\mbox{analytic d-critical flip} & \mbox{if } h^1(F)>1 \\
\mbox{analytic d-critical divisorial contraction} & \mbox{if } h^1(F)=1 \\
\mbox{analytic d-critical MFS} & \mbox{if } h^1(F)=0
\end{array}  \right. 
\end{align}

(ii) Suppose that $n=0$. Then at a point $p=[F] \in M_n(X, \beta)$, 
the diagram (\ref{dia:Pflip2}) is 
\begin{align*}
\left\{ \begin{array}{ll}
\mbox{analytic d-critical flop} & \mbox{if } h^1(F)>1 \\
\mbox{isomorphisms}  & \mbox{if } h^1(F)=1 \\
\mbox{empty sets}  & \mbox{if } h^1(F)=0
\end{array}  \right. 
\end{align*}
\end{thm}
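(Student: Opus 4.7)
My plan is to realize the diagram (\ref{dia:Pflip2}) as the wall-crossing diagram of Theorem~\ref{thm:starflip} at a single wall $t_0 = n/(\omega\cdot\beta)$ (or $t_0 = 0$ when $n=0$), and then to extract the analytic local model using Theorem~\ref{thm:Equiver2} applied to the Ext-quiver of $\{\oO_X, F[-1]\}$, which matches the Grassmannian flip of Example~\ref{exam:Grass}. The reduction to a single wall exploits the irreducibility of $\beta$ decisively: any $\mu_t^\star$-polystable object of class $(1,-\beta,-n)$ in $\aA_X$ which is not $\mu_t^\star$-stable must be of the form $E_0 \oplus F[-1]$ with $E_0 \in \aA_X$ a rank-one $\mu_t^\star$-stable object of class $(1,0,-n_0)$ and $F$ a $\mu_\omega$-stable sheaf of class $(\beta, n-n_0)$. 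Using the filtration (\ref{dist:E}) of such $E_0$, with $F_1, F_3 \in \Coh_{\le 1}(X)[-1]$ both zero-dimensional (forced by $\beta_0 = 0$), a nonzero $F_1$ would provide a subobject $F_1[-1] \subset E_0$ of infinite $\mu_\omega$-slope, destabilizing $E_0$, while a nonzero $F_3$ would produce an extension $0 \to \oO_X \to E_0 \to F_3[-1] \to 0$; Serre duality on the CY 3-fold gives $\Ext^1_{\aA_X}(F_3[-1], \oO_X) \cong \Ext^2(F_3, \oO_X) \cong H^1(F_3)^\vee = 0$, so the extension splits and the summand $F_3[-1]$ again destabilizes. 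Hence $E_0 = \oO_X$ and $n_0 = 0$, so $t_0$ is the unique wall for class $(1,-\beta,-n)$. Combined with Proposition~\ref{prop:isom}, this gives $M_{t_0^+}^\star \cong P_n(X,\beta)$, $M_{t_0^-}^\star \cong P_{-n}(X,\beta)$, and $M_{t_0}^\star \cong M_n(X,\beta)$ via $[\oO_X \oplus F[-1]] \leftrightarrow [F]$, identifying (\ref{dia:Mstar}) at $t_0$ with (\ref{dia:Pflip2}).

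Fix a point $p = [F] \in M_n(X,\beta)$. Applying Theorem~\ref{thm:Equiver2} at $[\oO_X \oplus F[-1]] \in M_{t_0}^\star$ with $E_0 = \oO_X$ and $E_1 = F[-1]$, the Ext-quiver $Q^\star$ has two vertices and Ext-numbers
\begin{align*}
a := \dim \Ext^1(E_0, E_1) = h^0(F), \qquad b := \dim \Ext^1(E_1, E_0) = \dim \Ext^2(F, \oO_X) = h^1(F),
\end{align*}
the last equality via Serre duality on the CY 3-fold. The loops at vertex $0$ number $h^1(\oO_X) = 0$ by the Calabi--Yau condition $H^1(\oO_X) = 0$, and those at vertex $1$ number $\dim \Ext^1(F,F)$ and span the analytic tangent directions to $M_n(X,\beta)$ at $[F]$. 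Riemann--Roch gives $a - b = \chi(F) = n$.

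Specializing Example~\ref{exam:Grass} to the dimension vector $\vec{m}^\star = \vec{0} + \vec{1}$, and treating the loops at vertex $1$ as the parameter factor analogous to the $U$-factor of Example~\ref{exam:toric}, the analytic local model of (\ref{dia:Pflip2}) at $p$ takes the form
\begin{align*}
\mathrm{Tot}_{\mathbb{P}^{a-1}}(\oO(-1)^{\oplus b}) \times U \;\longrightarrow\; Z \;\longleftarrow\; \mathrm{Tot}_{\mathbb{P}^{b-1}}(\oO(-1)^{\oplus a}) \times U,
\end{align*}
where $U$ is an analytic neighborhood of $[F]$ in $M_n(X,\beta)$ (embedded in a smooth model by $\Ext^1(F,F)$) and $Z$ is the corresponding base. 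When $n > 0$ so $a > b$, this is the standard toric flip of Example~\ref{exam:toric} if $b \ge 2$, a divisorial contraction if $b = 1$ (the $\xi^+$ side is the blow-up of $Z \cong \mathbb{C}^a \times U$ along $\{0\} \times U$, and the $\xi^-$ side equals $Z$), and a MFS if $b = 0$ (the $\xi^-$ side is empty and the $\xi^+$ side is the projection $\mathbb{P}^{a-1} \times U \to U$). When $n = 0$ so $a = b$, the same formulas yield respectively a standard toric flop, trivial isomorphisms, or empty sets. Invoking Corollary~\ref{cor:dflip} and Theorem~\ref{thm:Equiver2} transports these to the analytic d-critical structures asserted in Theorem~\ref{thm:irreducible}.

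The main obstacle is the Step~1 reduction, namely the identification of $P_n$ and $P_{-n}$ directly with the $\xi^\pm$ sides of the single wall at $t_0$ within the framework of Theorem~\ref{thm:starflip}. Pointwise compatibility at $[\oO_X \oplus F[-1]]$ between $\xi^\pm$-stability of $Q^\star$-representations and $\mu_{t_0^\pm}^\star$-stability of objects in $\aA_X$ is already supplied by Lemma~\ref{lem:category2}, and the Ext-vanishing computation above rules out additional walls. What requires the most care is the classification of $\mu_t^\star$-stable rank-one objects in $\aA_X$ with trivial curve class, which the zero-dimensional sheaf argument sketched above reduces to the unique candidate $E_0 = \oO_X$.
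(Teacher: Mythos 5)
Your proposal follows essentially the same route as the paper: reduce to the single wall at $t_1=n/(\omega\cdot\beta)$, apply Theorem~\ref{thm:Equiver2} to the Ext-quiver of $\{\oO_X, F[-1]\}$ with $a=h^0(F)$, $b=h^1(F)$, $c=\ext^1(F,F)$, and read off the toric flip / blow-up / MFS (resp.\ flop) local models of Example~\ref{exam:toric}. Your self-contained argument that $E_0=\oO_X$ is the only possible rank-one stable summand (via the filtration (\ref{dist:E}) and the vanishing $\Ext^2(F_3,\oO_X)\cong H^1(F_3)^\vee=0$ for $F_3$ zero-dimensional) is a nice addition; the paper simply cites the single-wall statement from earlier work.

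The one place where you overstate is the claim that $M_{t_0}^{\star}(\beta,n)\cong M_n(X,\beta)$ via $[\oO_X\oplus F[-1]]\leftrightarrow [F]$. The paper does not assert an isomorphism: it constructs the morphism $\gamma\colon M_n(X,\beta)\to M_{t_1}^{\star}(\beta,n)$, observes it is bijective on closed points, and then proves separately (Lemma~\ref{prop:isomM}, using the explicit form (\ref{trW}) of $\tr W^{\star}$ and the inclusion of ideals $(d\tr W^{\star})^G\subset (d\tr W(\vec z), x_iy_j)$) that $\gamma$ is a closed immersion, so that $M_{t_1}^{\star}(\beta,n)$ is only a nilpotent thickening of $M_n(X,\beta)$. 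This suffices to transfer the relative d-critical charts from the base $M_{t_1}^{\star}(\beta,n)$ to the base $M_n(X,\beta)$ of the diagram (\ref{dia:Pflip2}), but it is a genuine step that your write-up elides; without some comparison of the two scheme/analytic structures on the base, the conclusion "at a point $p=[F]\in M_n(X,\beta)$" does not formally follow from the statement over $M_{t_1}^{\star}(\beta,n)$.
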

\begin{proof}
If $\beta$ is irreducible, 
there is only one wall (see~\cite[Subsection~5.1]{Tolim}):
\begin{align*}
\wW=\left\{ t_1=\frac{n}{\omega \cdot \beta} \right\}.
\end{align*}
As in the previous subsection, 
for $(\beta, n) \in \Gamma_{\le 1}$ we have the wall-crossing 
diagram
\begin{align}\label{dia:Pflip}
\xymatrix{
M_{t_{1}^+}^{\star}(\beta, n) \ar[rd]_-{q_{M^{\star}}^{+}}
 & & M_{t_{1}^-}^{\star}(\beta, n) 
\ar[ld]^-{q_{M^{\star}}^{-}} \\
& M_{t_1}^{\star}(\beta, n). &
}
\end{align}
The algebraic space
$M_{t_1}^{\star}(\beta, n)$
 parametrizes $\mu_{t_1}^{\star}$-polystable 
objects $E$ of the form
\begin{align}\label{polyE01}
E=E_0  \oplus E_1, \ 
E_0=\oO_X, \ E_1=F[-1]
\end{align}
where $F$ is a pure one dimensional 
sheaf satisfying $\ch(F)=(\beta, n)$
(see Remark~\ref{rmk:irreducible}). 
Let us take 
$p \in M_{t_1}^{\star}(\beta, n)$ 
which 
corresponds to a $\mu_t^{\star}$-polystable object (\ref{polyE01}). 
The Ext-quiver $Q^{\star}$ associated with the collection 
$\{E_0, E_1\}$
has two vertices $\{0, 1\}$, 
the number of arrows from $0$ to $1$, $1$ to $0$ and 
the loops at $1$, $0$ are
\begin{align*}
a \cneq h^0(F), \ b \cneq h^1(F), \ c \cneq 
\ext^1(F, F), \ 0=\ext^1(\oO_X, \oO_X)
\end{align*}
respectively. 
Note that $a-b=n$
by the Riemann-Roch theorem. 
Let us set 
\begin{align*}
V^+=H^0(F), \ 
V^-=H^1(F)^{\vee}, \ U=\Ext^1(F, F).
\end{align*}
Let $\vec{m}^{\star}=(1, 1)$ be the dimension vector of 
$Q^{\star}$.  
Similarly to the argument in Example~\ref{subsec:dflop}, 
we have
\begin{align*}
&M_{Q^{\star}}^{\xi^+}(\vec{m}^{\star})=
\mathrm{Tot}_{\mathbb{P}(V^+)}
(\oO_{\mathbb{P}(V^+)}(-1) \otimes
V^-) \times U, \\
&M_{Q^{\star}}^{\xi^-}(\vec{m}^{\star})=
\mathrm{Tot}_{\mathbb{P}(V^-)}
(\oO_{\mathbb{P}(V^-)}(-1) \otimes
V^+) \times U.
\end{align*}
Therefore for $n>0$, the diagram
\begin{align}\notag
\xymatrix{
M_{Q^{\star}}^{\xi^{+}}(\vec{m}^{\star})
 \ar[rd]_-{q_{Q^{\star}}^{\xi^{+}}}
 & & M_{Q^{\star}}^{\xi^{-}}(\vec{m}^{\star}) 
\ar[ld]^-{q_{Q^{\star}}^{\xi^{-}}} \\
& M_{Q^{\star}}(\vec{m}^{\star}) &
} 
\end{align}
is a standard toric flip if $b>1$, 
a divisorial contraction (indeed a blow-up of 
a smooth variety at a smooth center) if $b=1$, 
and 
MFS if $b=0$
(see~Example~\ref{exam:toric}). 
By Theorem~\ref{thm:Equiver2}, 
it follows that 
the diagram (\ref{dia:Pflip})
satisfies the condition (\ref{PT:irreducible}). 

On the other hand, we have 
the isomorphisms 
by Proposition~\ref{prop:isom}
\begin{align*}
P_{\pm n}(X, \beta) \stackrel{\cong}{\to}
M_{t_1^{\pm}}^{\star}(\beta, n). 
\end{align*}
We also have
the morphism of stacks
\begin{align*}
\mM_n(X, \beta) \to \mM_{t_1}^{\star}(\beta, n)
\end{align*}
sending a flat family
of one dimensional sheaves $\fF$ on $X \times S$
over a $\mathbb{C}$-scheme $S$ to 
the family of $\mu_t^{\star}$-semistable objects
$\oO_{X \times S} \oplus \fF[-1]$ over $S$. 
By the universality of good moduli spaces, we have the induced 
morphism
\begin{align}\label{induced:M}
\gamma \colon 
M_n(X, \beta) \to M_{t_1}^{\star}(\beta, n). 
\end{align}
The above morphism $\gamma$ is bijective on closed points, 
by the description of $\mu_t^{\star}$-polystable objects
in (\ref{polyE01}). 
We can also show that $\gamma$ is 
a closed immersion (see Lemma~\ref{prop:isomM} below), 
so $M_{t_1}^{\star}(\beta, n)$ is a nilpotent thickening 
of $M_n(X, \beta)$. 
Therefore 
by comparing the diagram (\ref{dia:Pflip2})
with (\ref{dia:Pflip}), 
the result for $n>0$ follows. 
The result for $n=0$ also holds by
a similar argument.  
\end{proof}

We have used the following lemma: 
\begin{lem}\label{prop:isomM}
The morphism (\ref{induced:M}) is a closed immersion. 
\end{lem}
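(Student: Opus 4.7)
My plan is as follows. First, $\gamma$ is proper because $M_n(X, \beta)$ is projective, and it has already been observed to be bijective on closed points, hence quasi-finite; consequently $\gamma$ is a finite morphism. To deduce that $\gamma$ is a closed immersion, it will suffice to show that the canonical map $\gamma^{\sharp} \colon \oO_{M_{t_1}^{\star}(\beta, n)} \to \gamma_{\ast} \oO_{M_n(X, \beta)}$ is surjective, and this can be checked on analytic stalks at each closed point.

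Fix a closed point $p = [F] \in M_n(X, \beta)$ with image $q = [\oO_X \oplus F[-1]] \in M_{t_1}^{\star}(\beta, n)$. Theorem~\ref{thm:Equiver2} gives an analytic local model for $M_{t_1}^{\star}(\beta, n)$ near $q$ in terms of the Ext-quiver $Q^{\star}$ associated to $\{\oO_X, F[-1]\}$ (with vertices $\{0, 1\}$, $a = h^0(F)$ arrows from $0$ to $1$, $b = h^1(F)$ arrows from $1$ to $0$, $c = \ext^1(F, F)$ loops at $1$, and \emph{no loops at} $0$ since $H^1(\oO_X) = 0$), dimension vector $\vec{m}^{\star} = (1, 1)$, and convergent super-potential $W^{\star}$. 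Since $\beta$ is irreducible, $F$ is automatically $\mu_{\omega}$-stable, so Theorem~\ref{thm:Equiver} applied at $p$ (with no wall to cross) gives an analytic local model of $M_n(X, \beta)$ near $p$ via the single-vertex subquiver $Q \subset Q^{\star}$ with $c$ loops, dimension vector $\vec{m} = (1)$, and convergent super-potential $W$. Setting $V^+ = \Ext^1(\oO_X, F[-1])$, $V^- = \Ext^1(F[-1], \oO_X)$ and $U = \Ext^1(F, F)$, with $\mathbb{C}^{\ast}$ acting on $V^+ \times V^- \times U$ with weights $(+1, -1, 0)$, one verifies by tracing through the construction of $I_{\ast}$ in Subsection~\ref{subsec:outline}, applied simultaneously to the collections $\{F\}$ and $\{\oO_X, F[-1]\}$, that $\gamma$ corresponds locally to the morphism induced by the $\mathbb{C}^{\ast}$-equivariant closed immersion $U \hookrightarrow V^+ \times V^- \times U$, $u \mapsto (0, 0, u)$.

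The final step is to show that the induced ring map
\begin{align*}
\bigl(\mathbb{C}\{V^+ \oplus V^- \oplus U\}/(\partial \tr W^{\star})\bigr)^{\mathbb{C}^{\ast}} \lt \mathbb{C}\{U\}/(\partial \tr W)
\end{align*}
is surjective. By Lemma~\ref{rmk:W:restrict} we have $\tr W^{\star}|_{x = y = 0} = \tr W$. Moreover, since there are no loops at vertex $0$, every cycle in $Q^{\star}$ passing through $0$ must use at least one arrow of type $0 \to 1$ and one of type $1 \to 0$; hence every monomial appearing in $\tr W^{\star} - \tr W$ is divisible by some product $x_i y_j$. A direct computation then yields
\begin{align*}
\partial_{x_i} \tr W^{\star}|_{x = y = 0} = 0, \ \partial_{y_j} \tr W^{\star}|_{x = y = 0} = 0, \ \partial_{u_k} \tr W^{\star}|_{x = y = 0} = \partial_{u_k} \tr W,
\end{align*}
so that the Jacobian ideal $(\partial \tr W^{\star})$ restricts to $(\partial \tr W)$ under the surjective restriction map $(x, y, u) \mapsto (0, 0, u)$, and reductivity of $\mathbb{C}^{\ast}$ preserves surjectivity upon taking invariants. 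The main technical obstacle lies in establishing the precise compatibility of the two analytic neighborhood theorems above, i.e.\ verifying that the functorial map $\fF \mapsto \oO_{X \times S} \oplus \fF[-1]$ on families corresponds to $u \mapsto (0, 0, u)$ under the respective identifications; once this compatibility is in place, the rest is a short algebraic verification.
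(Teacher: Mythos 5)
Your proof is correct and follows essentially the same route as the paper: both reduce to the local quiver model of Theorem~\ref{thm:Equiver2} at a polystable point $\oO_X\oplus F[-1]$ and exploit that, since $\Ext^1(\oO_X,\oO_X)=H^1(\oO_X)=0$ forces every cycle through the vertex $0$ to use an arrow in each direction, every term of $\tr W^{\star}-\tr W$ is divisible by some $x_iy_j$ — which is exactly the paper's expansion (\ref{trW}) and the resulting containment $(d\tr W^{\star})^G\subset(d\tr W(\vec z), x_iy_j)$. The only difference is packaging (you deduce the closed immersion from finiteness plus surjectivity of $\gamma^{\sharp}$ on invariant analytic local rings, while the paper reads off the closed-subspace inclusion directly from the ideal containment), and the compatibility of $I_{\ast}$ for the sub-collection $\{F[-1]\}$ with that for $\{\oO_X,F[-1]\}$ that you flag is the same implicit step the paper takes when identifying the locus $\{x=y=0,\ d\tr W=0\}$ with $M_n(X,\beta)$.
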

\begin{proof}
It is enough to show the claim 
analytic locally at any point $p \in M_{t_1}^{\star}(\beta, n)$. 
Let $p$ corresponds to a $\mu_{t_1}^{\star}$-polystable 
object (\ref{polyE01}), $Q^{\star}$ be the
Ext-quiver associated with $\{E_0, E_1\}=\{\oO_X, F[-1]\}$, 
where $F$ is a pure one dimensional sheaf with 
$[F]=\beta$.
Let $Q$ be the sub Ext-quiver 
of $Q^{\star}$
associated with $\{E_1\}=\{F[-1]\}$. 
Let 
\begin{align*}
W^{\star} \in \mathbb{C}\{Q^{\star}\}, \ 
W \in \mathbb{C}\{Q\}
\end{align*}
be the convergent super-potentials defined 
as in (\ref{def:WE})
for the collection $\{E_0, E_1\}$, $\{E_1\}$ respectively. 
Let us take an analytic open neighborhood
$0 \in V \subset M_{Q^{\star}}(\vec{m}^{\star})$
for $\vec{m}^{\star}=(1, 1)$. 
As in (\ref{tr:W}), we have the 
$G=(\mathbb{C}^{\ast})^2$-invariant
analytic function 
\begin{align}\label{trW:star}
\tr W^{\star} \colon 
\pi_{Q^{\star}}^{-1}(V) \to \mathbb{C}
\end{align}
where $\pi_{Q^{\star}} \colon \mathrm{Rep}_{Q^{\star}}(\vec{m}^{\star})
\to M_{Q^{\star}}(\vec{m}^{\star})$ is the quotient morphism. 
By Theorem~\ref{thm:Equiver2}, 
the local analytic structure of $M_{t_1}^{\star}(\beta, n)$ at $p$ is 
isomorphic to the analytic closed subspace in $V$
defined by the ideal
\begin{align}\label{loc:ana}
(d \tr W^{\star})^G \subset 
(\oO_{\pi_{Q^{\star}}^{-1}(V)})^G =\oO_V. 
\end{align}
Let 
\begin{align*}
\vec{x}=(x_1, \ldots, x_a), \ \vec{y}=(y_1, \ldots, y_{b}), \ 
\vec{z}=(z_1, \ldots, z_c)
\end{align*} be the coordinates of 
$\mathbb{E}_{0, 1}$, $\mathbb{E}_{1, 0}$, $\mathbb{E}_{1, 1}$
corresponding to the basis $E_{i, j} \subset \mathbb{E}_{i, j}$
(see the notation~\ref{Eab})
respectively. 
Since $W^{\star}|_{Q}=W$ by Lemma~\ref{rmk:W:restrict}, 
the function (\ref{trW:star})
is of the following form
\begin{align}\label{trW}
&\tr W^{\star}(\vec{x}, \vec{y}, \vec{z})= \\
\notag
&\tr W(\vec{z})+\sum_{i,j}f_{ij}(\vec{z})x_i y_j+
\sum_{i, i', j, j'} f_{i i' j j'}(\vec{z})x_i x_{i'} y_{j} y_{j'}+\cdots.
\end{align}
Here $f_{\bullet}(\vec{z})$ is an analytic function on 
an open neighborhood of $0 \in \mathbb{E}_{1, 1}$. 
Since each $x_i y_j$ is $G$-invariant, the above description of 
$\tr W^{\star}$ implies that 
\begin{align*}
(d \tr W^{\star})^G \subset (d \tr W(\vec{z}), x_i y_j : 
1\le i \le a, 1\le j \le b ).
\end{align*}
Since we have 
\begin{align*}
M_{Q^{\star}}(\vec{m}^{\star})=\Spec \mathbb{C}[x_i y_j : 1 \le i \le a, 1\le j \le b] \times \mathbb{E}_{1, 1}
\end{align*}
it follows that the analytic closed 
subspace in $V$ 
defined by the ideal (\ref{loc:ana})
contains 
the analytic closed subspace defined by 
$d \tr W(\vec{z})=0$ on 
$V \cap (\{0\} \times \mathbb{E}_{1, 1})$. 
Since the latter is analytic locally isomorphic 
to $M_n(X, \beta)$ at $[F] \in M_n(X, \beta)$
(see Theorem~\ref{thm:Equiver}), the lemma holds. 
\end{proof}

In the following example, we see that the classical 
diagrams on symmetric products of curves appear as special 
cases of the diagram (\ref{dia:Pflip2}): 
\begin{exam}\label{exam:curve}
Suppose that $X$ contains 
a smooth projective curve $C$ of genus $g$,
which is super-rigid in $X$, i.e. 
$H^0(N_{C/X})=0$, and set $\beta=[C]$. 
Suppose that $C \subset X$ is the unique curve 
on $X$
with $[C]=\beta$. 
For example, we can take a local model 
(see~Remark~\ref{rmk:sym})
\begin{align}\label{loc:X}
X=\mathrm{Tot}_C(L_1 \oplus L_2)
\end{align}
for general line bundles $L_1$, $L_2$ on $C$
satisfying $\deg L_i=g-1$ and 
$L_1 \otimes L_2 \cong \omega_C$.
Here
$X$ contains $C$ as a zero-section. 
By setting $\beta=[C]$, we have 
the isomorphism
\begin{align*}
S^{n+g-1}(C) \stackrel{\cong}{\to} P_n(X, \beta), \ 
Z \mapsto (\oO_C(Z), s)
\end{align*}
where $s$ is the section of $\oO_C(Z)$ which vanishes at $Z$. 
Therefore in this case, the diagram (\ref{dia:Pflip2}) 
coincides with (\ref{dia:Pflip3}) and the statement
in Example~\ref{exam:symC} follows from Theorem~\ref{thm:irreducible}. 
\end{exam}

\begin{rmk}\label{rmk:sym}
A subtlety of using the local model (\ref{loc:X})
is that it is non-compact. 
Let us compactify $X$ to 
the $\mathbb{P}^2$-bundle $\overline{X} \to C$. 
Still $\overline{X}$ is not CY3, and 
also $H^1(\oO_{\overline{X}}) \neq 0$, 
so we need to modify the argument. 
Let $E_0=\oO_{\overline{X}}$, 
$E_1=F[-1]$ where $F$ is a 
line bundle on $C$ pushed forward to $\overline{X}$
by the zero section of $X \to C$. 
Then for $E=E_0 \oplus E_1$, 
we replace
the $A_{\infty}$-structure on 
$\Ext^{\ast}(E, E)$
with the $L_{\infty}$-structure 
on its traceless part 
$\Ext^{\ast}(E, E)_0$. 
Then the latter is cyclic (though the former is not), 
and the argument similar to Theorem~\ref{thm:Equiver2}
shows that the diagram (\ref{dia:Pflip3})
satisfies the desired property in Example~\ref{exam:symC}. 
\end{rmk}

\appendix
\addcontentsline{toc}{section}{Appendices}

\section{Review of Bridgeland stability conditions}\label{sec:Bridgeland}
Here we recall basic definitions on Bridgeland stability 
conditions on triangulated categories~\cite{Brs1}. 

\subsection{Stability conditions on abelian categories}
Let $\aA$ be an abelian category, and $K(\aA)$ its 
Grothendieck group. 
\begin{defi}\label{defi:stabA}
A \textit{stability condition} on an abelian 
category $\aA$
is a group homomorphism
\begin{align*}
Z \colon K(\aA) \to \mathbb{C}
\end{align*}
satisfying the followings: 

(i) (Positivity property): For any non-zero $0 \neq E \in \aA$, 
we have 
\begin{align*}
Z(E) \in \{ z \in \mathbb{C} : \Imm z>0\} \cup \mathbb{R}_{<0}.
\end{align*}
A non-zero object $E \in \aA$ 
is called \textit{$Z$-(semi)stable} if for any 
non-zero subobject $0 \neq F \subsetneq E$ in $\aA$, 
we have the inequality in $(0, \pi]$ 
\begin{align*}
\arg Z(F)<(\le) \arg Z(E).
\end{align*}

(ii) (Harder-Narasimhan property): 
For any $E \in \aA$, there exists a filtration 
(called \textit{Harder-Narasimhan filtration})
\begin{align*}
0=E_0 \subset E_1 \subset \cdots \subset E_n=E
\end{align*}
such that 
each $F_i \cneq E_i/E_{i-1}$ is $Z$-semistable
with $\arg Z(F_i)> \arg Z(F_{i+1})$ for all 
$1\le i \le n-1$. 
\end{defi}

\subsection{Stability conditions on triangulated categories}
Let $\dD$ be a $\mathbb{C}$-linear triangulated category, 
and $K(\dD)$ its Grothendieck group. 
We fix a finitely generated abelian group $\Gamma$ together with 
a group homomorphism
\begin{align}\label{def:cl}
\cl \colon K(\dD) \to \Gamma.
\end{align}
\begin{rmk}\label{rmk:cl}
A choice of $(\Gamma, \cl)$ 
corresponds to a 
choice of a Chern character map. 
For example if $\dD=D^b(X)$ for a smooth 
projective variety $X$, 
we can take $\Gamma=\Gamma_X$ 
where $\Gamma_X$ is the image of 
the Chern character map 
$\ch \colon K(X) \to H^{2\ast}(X, \mathbb{Q})$
and $\cl=\ch$. 
\end{rmk}

\begin{defi}(\cite{Brs1})\label{def:Bstab}
A \textit{Bridgeland stability condition} on $\dD$ consists 
of data
\begin{align*}
\sigma=(Z, \aA), \ \aA \subset \dD, \ Z \colon \Gamma \to \mathbb{C}.
\end{align*}
Here $\aA$ is the heart of a 
bounded t-structure on $\dD$, 
$Z$ is a group homomorphism (called
\textit{central charge})
such that $Z \circ \cl$ is a stability condition on 
$\aA$ as in Definition~\ref{defi:stabA}. 
An object $E \in \dD$ is called
\textit{$\sigma$-(semi)stable} 
if $E[k] \in \aA$ for some $k \in \mathbb{Z}$ and it 
is $Z$-(semi)stable in $\aA$. 
\end{defi}

\subsection{The space of Bridgeland stability conditions}\label{subsec:space}
Let $\Stab_{\Gamma}(\dD)$ be 
the set of Bridgeland stability conditions 
on $\dD$ with respect to the group homomorphism 
(\ref{def:cl})
satisfying the 
following condition (called \textit{support property}): 

\begin{align*}
\mathrm{sup}\left\{ \frac{\lVert \cl(E) \rVert}{\lvert Z(E) \rvert}
: E \mbox{ is } \sigma\mbox{-semistable} \right\}<\infty. 
\end{align*}
Here $\lVert \ast \rVert$ is a fixed norm on 
$\Gamma_{\mathbb{R}}$.
The following is the main result in~\cite{Brs1}:
\begin{thm}\emph{(\cite[Theorem~1.2]{Brs1})}\label{thm:Stab}
The set $\Stab_{\Gamma}(\dD)$ has a structure of a complex manifold such that 
the map
\begin{align*}
\Stab_{\Gamma}(\dD) \to \Hom(\Gamma, \mathbb{C})
\end{align*}
sending $(Z, \aA)$ to $Z$ is a local isomorphism. 
\end{thm}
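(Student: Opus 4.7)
The plan is to reformulate stability conditions in terms of slicings, equip $\Stab_{\Gamma}(\dD)$ with a topology via a natural generalized metric, and then establish the local homeomorphism by a deformation argument that uses the support property crucially.

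First I would recall that a Bridgeland stability condition $\sigma = (Z, \aA)$ on $\dD$ is equivalent to a pair $(Z, \pP)$, where $\pP = \{\pP(\phi)\}_{\phi \in \mathbb{R}}$ is a \emph{slicing} of $\dD$: a collection of full additive subcategories satisfying $\pP(\phi+1) = \pP(\phi)[1]$, $\Hom(A,B) = 0$ for $A \in \pP(\phi_1)$, $B \in \pP(\phi_2)$ with $\phi_1 > \phi_2$, and every nonzero object admits a Harder-Narasimhan filtration with factors $F_i \in \pP(\phi_i)$, $\phi_1 > \cdots > \phi_n$, compatible with $Z$ in the sense that $Z(E) = |Z(E)|\exp(i\pi\phi)$ for $E \in \pP(\phi)$. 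Under this equivalence, $\aA = \pP((0,1])$ is the extension closure. For any $E \in \dD$, define $\phi_\sigma^+(E) = \phi_1$, $\phi_\sigma^-(E) = \phi_n$ as the maximal and minimal HN phases. Using this I would define a generalized metric on $\Stab_{\Gamma}(\dD)$ by
\begin{align*}
d(\sigma_1, \sigma_2) = \sup_{0 \neq E \in \dD}\bigl\{ |\phi_{\sigma_1}^+(E) - \phi_{\sigma_2}^+(E)|,\ |\phi_{\sigma_1}^-(E) - \phi_{\sigma_2}^-(E)|,\ \|Z_1 - Z_2\|_\sigma\bigr\}
\end{align*}
where $\|\cdot\|_\sigma$ is the operator norm on $\Hom(\Gamma, \mathbb{C})$ dual to the norm on $\Gamma_{\mathbb{R}}$ appearing in the support property.

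The substantive step is the \emph{deformation theorem}: given $\sigma = (Z, \pP) \in \Stab_\Gamma(\dD)$ with support-property constant $C = \sup\{\|\cl(E)\|/|Z(E)|\}$ over $\sigma$-semistable $E$, I would show that there exists $\varepsilon > 0$ (depending on $C$) such that every $W \in \Hom(\Gamma, \mathbb{C})$ with $\|W - Z\|_\sigma < \varepsilon$ lifts uniquely to a stability condition $\sigma' = (W, \pP')$ with $d(\sigma, \sigma') < \varepsilon$. The construction proceeds as follows. For a small window $(a, a+\eta] \subset \mathbb{R}$ with $\eta$ slightly less than $1$, the extension closure $\pP((a, a+\eta])$ (which is a quasi-abelian category) becomes the heart of a bounded t-structure after suitable truncation. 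Using the bound $C$, the perturbed central charge $W \circ \cl$ remains in the strict upper half plane on all $\sigma$-semistable objects of phase in a slightly larger window, hence one can run a Harder-Narasimhan-type construction with respect to $W$ inside $\pP((a-\delta, a+\eta+\delta])$ to produce the refined slicing $\pP'$. The support property bound on $\sigma$-semistable objects then propagates to $\sigma'$-semistable objects with a slightly worse constant, so $\sigma' \in \Stab_\Gamma(\dD)$. This step is where the work really lies.

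Having established deformation, I would verify that the map $\Stab_\Gamma(\dD) \to \Hom(\Gamma, \mathbb{C})$, $(Z, \pP) \mapsto Z$, is continuous, open, and locally injective — continuity and openness from the metric, local injectivity from the uniqueness clause in the deformation theorem (the HN filtration with respect to $W$ is uniquely determined). Transporting the complex structure from the open subset of $\Hom(\Gamma, \mathbb{C})$ back through these local homeomorphisms gives $\Stab_\Gamma(\dD)$ the structure of a complex manifold, and since the transition functions are the identity on central charges, they are holomorphic.

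The main obstacle is the deformation theorem, specifically controlling how HN filtrations behave under a perturbation of $Z$ without an \emph{a priori} bound on the length of HN filtrations. This is exactly where the support property is essential: without it, $\sigma$-semistable objects could cluster arbitrarily close to the real axis in the $Z$-plane, and an arbitrarily small perturbation of $Z$ could send their images into the lower half plane, destroying the positivity condition. The quantitative bound $\|\cl(E)\|/|Z(E)| \leq C$ converts into an explicit quantitative size $\varepsilon \sim 1/C$ for the allowed perturbation, and makes the whole deformation argument work uniformly.
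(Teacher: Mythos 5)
This theorem is quoted from \cite[Theorem~1.2]{Brs1} and the paper supplies no proof of its own, so the only meaningful comparison is with Bridgeland's original argument (as streamlined via the support property by Kontsevich--Soibelman and Bayer); your proposal reproduces exactly that standard route: reformulation via slicings, the generalized metric, and the deformation theorem in which the support constant $C$ gives the quantitative radius of the allowed perturbation and the uniqueness of the lift gives local injectivity. The outline is correct and is essentially the same approach as the cited source.
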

Let $X$ be a smooth projective variety 
and take 
$\dD=D^b(X)$. 
By setting $\Gamma$ to be the 
image of the Chern character map 
as in Remark~\ref{rmk:cl}, 
we have the complex manifold
\begin{align*}
\Stab(X) \cneq \Stab_{\Gamma_X}(D^b(X)).
\end{align*}

\begin{rmk}\label{rmk:Caction}
Note that we have the natural $\mathbb{C}^{\ast}$-action 
on $\Hom(\Gamma, \mathbb{C})$
by the multiplication. 
This action lifts to a $\mathbb{C}$-action on 
$\Stab_{\Gamma}(\dD)$ via the universal 
cover $\mathbb{C} \to \mathbb{C}^{\ast}$, which 
does not change semistable objects. 
See~\cite[Section~3.3]{Brs6}. 
\end{rmk}

\section{Other examples}\label{sec:other}
In this section, we discuss some other examples of 
wall-crossing in CY 3-folds in terms of d-critical 
birational geometry. We just describe the results without 
details, since the arguments are similar to Theorem~\ref{thm:intro:main}.
\subsection{DT/PT correspondence}\label{subsec:DT/PT0}
Let $X$ be a smooth projective CY 3-fold. 
For $(\beta, n) \in \Gamma_{\le 1}$, let 
\begin{align}\label{moduli:I}
I_n(X, \beta)
\end{align}
be the moduli space of 
subschemes $C \subset X$
such that $[C]=\beta$ and $\chi(\oO_C)=n$. 
The moduli space (\ref{moduli:I}) is identified with 
the moduli space of rank one torsion free sheaves 
$I$ with Chern character $(1, 0, -\beta, -n)$. 
In particular, it has a canonical d-critical structure. 

The moduli spaces (\ref{moduli:I}) and (\ref{moduli:PT})
are related by wall-crossing phenomena with respect to 
certain Bridgeland-type 
weak stability conditions 
in the derived category (see~\cite{Tcurve1}).
The above wall-crossing is relevant 
in showing the DT/PT correspondence 
conjecture~\cite{PT} (see Subsection~\ref{subsec:DT/PT}).  
As in Section~\ref{sec:wc:mmp}, we have the diagram
\begin{align}\label{DT/PT}
\xymatrix{
I_n(X, \beta) \ar[rd]_-{q_I} &  &  P_n(X, \beta) \ar[ld]^-{q_P} \\
& T_n(X, \beta) &}
\end{align}
where $T_n(X, \beta)$ is an algebraic space which 
parametrizes 
objects of the form
\begin{align*}
I_C \oplus \left(\bigoplus_{i=1}^k V_i \otimes \oO_{x_i}[-1] \right)
\end{align*}
where $I_C$ is an ideal sheaf of a pure one 
dimensional subscheme $C \subset X$, 
and $x_i \neq x_j$ for $i \neq j$, 
satisfying
\begin{align*}
\chi(\oO_C)+\sum_{i=1}^k \dim V_i =n.
\end{align*}
We have the following: 
\begin{thm}\label{thm:DT/PT}
The diagram (\ref{DT/PT}) is an analytic 
d-critical generalized flip 
at any point in $\Imm q_P$, 
an analytic $d$-critical generalized MFS
at any point in $T_n(X, \beta) \setminus \Imm q_P$. 
In particular, we have 
\begin{align*}
I_n(X, \beta) \ge_K P_n(X, \beta).
\end{align*}
\end{thm}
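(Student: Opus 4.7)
The plan is to follow the same strategy as in the proofs of Theorem~\ref{thm:dflop:one} and Theorem~\ref{thm:starflip}: establish an analytic neighborhood theorem for the DT/PT wall-crossing, analyze the resulting Ext-quiver, and apply the structural results of Section~\ref{sec:repsym}.

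First I would realize the diagram (\ref{DT/PT}) as a wall-crossing diagram of semistable objects in a suitable abelian subcategory of $D^b(X)$, in the spirit of Toda's limit stability construction~\cite{Tcurve1}. Both $I_n(X,\beta)$ and $P_n(X,\beta)$ parametrize rank one objects in an abelian heart containing $\oO_X$ and $\Coh_{\le 0}(X)[-1]$, and they lie on adjacent chambers of a weak stability condition whose unique wall yields $T_n(X, \beta)$. A closed point $p \in T_n(X, \beta)$ corresponds to a polystable object of the form $I_C \oplus \bigoplus_{i=1}^k V_i \otimes \oO_{x_i}[-1]$ with the $x_i$ distinct and $\chi(\oO_C) + \sum_i \dim V_i = n$. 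The arguments of Section~\ref{sec:anatheorem}, specifically the analogue of Theorem~\ref{thm:Equiver2}, then give analytic local charts identifying neighborhoods of $p$ in the diagram (\ref{DT/PT}) with moduli of $\xi^{\pm}$-semistable representations of the Ext-quiver $Q^{\star}$ associated to the collection $(I_C, \oO_{x_1}[-1], \ldots, \oO_{x_k}[-1])$, equipped with its canonical convergent super-potential $W^{\star}$ and dimension vector $\vec{m}^{\star} = \vec{0} + \sum_i (\dim V_i) \cdot \vec{i}$.

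Next I would verify that $Q^{\star}$ has the extended-quiver structure of Section~\ref{subsec:extend}. Vertex $0$ corresponds to $I_C$ and vertices $1, \ldots, k$ to the point sheaves. Since $\Ext^{\ast}(\oO_{x_i}, \oO_{x_j}) = 0$ for $i \neq j$, there are no arrows between distinct vertices in $\{1, \ldots, k\}$, so the subquiver $Q \subset Q^{\star}$ is the disjoint union of $k$ one-vertex symmetric quivers, each with three loops. The key numerical input is that, setting $a_i = \dim \Ext^1(I_C, \oO_{x_i}[-1])$ and $b_i = \dim \Ext^1(\oO_{x_i}[-1], I_C)$, one has $a_i - b_i = \chi(I_C, \oO_{x_i}) = 1 > 0$. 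This follows from Serre duality on the CY $3$-fold together with the vanishings $\Ext^2(I_C, \oO_{x_i}) \cong \Ext^1(\oO_{x_i}, I_C)^{\vee} = 0$ and $\Ext^3(I_C, \oO_{x_i}) \cong \Hom(\oO_{x_i}, I_C)^{\vee} = 0$, the latter because $I_C$ is torsion-free. Given this strict inequality for every $i$, Corollary~\ref{cor:dflip} applies locally at $p$: if the $\xi^-$ side is non-empty, i.e.\ $p \in \Imm q_P$, the local model is an analytic d-critical generalized flip, and otherwise it is an analytic d-critical generalized MFS. The identification of local charts preserves d-critical structures by the last assertion of the analytic neighborhood theorem, so the stated global conclusion on the diagram (\ref{DT/PT}) follows, and the inequality $I_n(X, \beta) \ge_K P_n(X, \beta)$ is then immediate from Definition~\ref{defi:dcrit:K}.

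The main obstacle will be the first step: realizing the DT/PT wall-crossing inside the framework of Section~\ref{subsec:msstable}. The DT/PT wall is not among the $\mu_t^{\star}$-walls of Section~\ref{sec:wc:mmp}, but instead sits at the boundary $t \to +\infty$ (compare Proposition~\ref{prop:isom}). One needs to work with a variant of the tilted heart, such as $\langle \oO_X, \Coh_{\le 0}(X)[-1]\rangle_{\rm ex}$, together with an appropriate weak stability condition in the sense of~\cite{Tcurve1}, and to verify that the existence of good moduli spaces (Theorem~\ref{thm:goodmoduli}) together with the \'etale slice theorem hold in this setting. Once this is done, the proof of the analytic neighborhood theorem from Subsection~\ref{subsec:outline} carries over with only notational changes, since it ultimately depends only on general features of $(-1)$-shifted symplectic derived moduli stacks and their good moduli spaces.
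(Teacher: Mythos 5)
Your proposal is correct and follows essentially the same route as the paper, whose entire proof of this theorem is the single sentence that the argument is identical to that of Theorem~\ref{thm:starflip}; you have simply spelled out that reduction explicitly (the extended Ext-quiver of $(I_C,\oO_{x_1}[-1],\ldots,\oO_{x_k}[-1])$, the inequality $a_i>b_i$ from $\chi(I_C,\oO_{x_i})=1$ together with the purity of $C$ and torsion-freeness of $I_C$, and the appeal to Corollary~\ref{cor:dflip} via the analytic neighborhood theorem). Your closing caveat about realizing the DT/PT wall in the heart $\langle \oO_X, \Coh_{\le 0}(X)[-1]\rangle_{\rm ex}$ is exactly the setup the paper implicitly assumes from~\cite{Tcurve1}, so there is no substantive divergence.
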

\begin{proof}
The proof is the same as in Theorem~\ref{thm:starflip}. 
\end{proof}

Here is an example of DT/PT correspondence for 
a fixed smooth curve:
\begin{exam}\label{exam:DTC}
In the situation of Example~\ref{exam:curve}, 
we have
\begin{align*}
\mathrm{Quot}(I_C, n+g-1) \stackrel{\cong}{\to}
I_n(X, \beta).
\end{align*}
Here the LHS is the Quot scheme parameterizing 
quotients $I_C \twoheadrightarrow Q$ 
such that $Q$ is a zero dimensional sheaf with 
length $n+g-1$, and the above isomorphism is given 
by taking the kernel of $I_C \twoheadrightarrow Q$. 
By setting $m=n+g-1$, the diagram (\ref{DT/PT}) is 
\begin{align}\notag
\xymatrix{
\mathrm{Quot}(I_C, m) \ar[rd]_-{q_I} &  & S^m(C) \ar[ld]^-{q_P} \\
& S^m(X). &}
\end{align}
Here $q_I$ sends $I_C \twoheadrightarrow Q$ to the support of $Q$, 
and $q_P$ is induced by $C \subset X$. 
Note that $S^m(C)$ is always smooth while 
$\mathrm{Quot}(I_C, m)$ can be singular. 
By Theorem~\ref{thm:DT/PT}, 
the above diagram is an analytic d-critical generalized 
flip at any point in $\Imm q_P$, 
an analytic d-critical generalized MFS
at any point in $S^m(X) \setminus \Imm q_P$. 
\end{exam}

\subsection{Wall-crossing in local $K3$ surfaces}
Let $S$ be a smooth projective $K3$ surface over $\mathbb{C}$, and 
$\Gamma_S$ its Mukai lattice: 
\begin{align*}
\Gamma_S \cneq \mathbb{Z} \oplus \mathrm{NS}(S) \oplus \mathbb{Z}.
\end{align*}
Let $v(-)$ be the Mukai vector 
\begin{align}\label{Mvector}
v \colon K(S) \to \Gamma_S, \ 
E \mapsto \ch(E) \cdot \sqrt{\td}_X. 
\end{align}
Then we have the space of Bridgeland 
stability conditions on 
$D^b(S)$
with respect to 
group homomorphism (\ref{Mvector}), 
denoted by $\Stab(S)$. 
The structure of the space $\Stab(S)$
is studied in~\cite{Brs2}, and 
moduli spaces of Bridgealnd stable 
objects on $S$ are studied in~\cite{MR3194493}. 

Let $X$ be the non-compact CY 3-fold 
defined by 
\begin{align*}
X \cneq \mathrm{Tot}_S(K_S)=S \times \mathbb{C}. 
\end{align*}
We consider moduli spaces of semistable objects 
on the triangulated category
\begin{align*}
D^b_c(X) \cneq \{E \in D^b(X) : 
\Supp(E) \mbox{ is compact }\}. 
\end{align*}
Let $p_S \colon X \to S$ be the projection. 
We have the group homomorphism
\begin{align}\label{cl:K3}
\cl \colon K(D^b_c(X)) \to \Gamma_S, \ 
E \mapsto v(p_{S\ast}E)
\end{align}
Let $\Stab_c(X)$ be the space of 
Bridgeland stability conditions on 
$D^b_c(X)$
with respect to the group homomorphism (\ref{cl:K3}).
Then we have the isomorphism
(see~\cite[Theorem~6.5, Lemma~5.3]{Tst2})
\begin{align*}
p_{S\ast} \colon 
\Stab_{c}(X) \stackrel{\cong}{\to} \Stab(S)
\end{align*}
which is identity on central charges.
Under the above isomorphism, 
an object $E \in D_{c}^b(X)$ is 
$\sigma$-(semi)stable for $\sigma \in \Stab_{c}(X)$
if and only if $p_{S\ast}E \in D^b(S)$
 is $p_{S\ast}\sigma$-(semi)stable. 

For $\sigma=(Z, \aA) \in \Stab_c(X)$
and $v \in \Gamma_S$
with $\Im Z(v)>0$, 
let $\mM_{\sigma}(v)$ be 
the moduli stack of $\sigma$-semistable 
objects
$E \in \aA$ such that 
$\cl(E)=v$, 
and 
\begin{align*}
\mM_{\sigma}(v) \to M_{\sigma}(v)
\end{align*}
be its good moduli space. 
Note that if $v$ is primitive
and $\sigma$ is general, 
then $\mM_{\sigma}(v)$ consists of $\sigma$-stable objects
of the form $i_{c\ast}F$ for 
$c \in \mathbb{C}$, where
$F \in D^b(S)$ is $p_{S\ast}\sigma$-stable 
and $i_c \colon S \times \{c\} \hookrightarrow X$ is the 
inclusion. 
Therefore we have 
\begin{align*}
M_{\sigma}(v)=M_{p_{S\ast}\sigma}(v) \times \mathbb{C}
\end{align*}
where $M_{p_{S\ast}\sigma}(v)$ is the
moduli space of $p_{S\ast}\sigma$-stable objects
in $D^b(S)$ with Mukai vector $v$.
By~\cite{MR3194493}, the moduli space 
$M_{p_{S\ast}\sigma}(v)$
is a projective holomorphic symplectic 
manifold of dimension $2+(v, v)$, where 
$(-, -)$ is the Mukai product on $\Gamma_S$. 

Suppose that $v\in \Gamma_S$ is primitive and $\sigma \in \Stab_c(X)$ lies on a wall 
with respect to $v$. If $\sigma^{\pm} \in \Stab_c(X)$ lie on 
its adjacent chambers, we obtain the diagram
\begin{align}\label{dia:locK3}
\xymatrix{
M_{\sigma^{+}}(v) \ar[rd]_-{q_M^{+}}  & 
& M_{\sigma^{-}}(v)
 \ar[ld]^-{q_M^{-}} \\
& M_{\sigma}(v). &
}
\end{align}
By noting the Ext-quiver associated with 
any collection in $D^b(S)$ is symmetric, 
we have the following: 
\begin{thm}\label{thm:locK3}
The diagram (\ref{dia:locK3}) 
is an analytic $d$-critical generalized flop.
\end{thm}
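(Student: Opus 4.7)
The plan is to combine the analytic neighborhood theorem (Theorem~\ref{thm:Equiver}) with the flop criterion for symmetric quivers (Corollary~\ref{cor:dflop}). Fix a closed point $p \in M_{\sigma}(v)$ corresponding to a $\sigma$-polystable object
\[
E = \bigoplus_{i=1}^{k} V_i \otimes E_i, \qquad E_i \in D^b_c(X) \text{ being } \sigma\text{-stable with } \arg Z(E_i) = \arg Z(E).
\]
Since $v$ is primitive and $\sigma^{\pm}$ lie on adjacent chambers, $M_{\sigma^{\pm}}(v)$ consists of stable objects and therefore carries canonical d-critical structures by Theorem~\ref{thm:CYdcrit}. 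By Theorem~\ref{thm:Equiver}, an analytic neighborhood of $p$ in the diagram (\ref{dia:locK3}) is isomorphic, as a diagram of d-critical loci, to the wall-crossing diagram for representations of the Ext-quiver $Q = Q_{E_\bullet}$ with its convergent super-potential $W = W_{E_\bullet}$, at the dimension vector $\vec m = (\dim V_i)_i$ (which is primitive by (\ref{sum:dimVi})) and the stability parameters $\xi^\pm_i = Z^{\pm}(E_i)$.

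The key step is to verify that $Q$ is symmetric. Since $\sigma \in \Stab_c(X)$ comes from a stability condition $p_{S\ast}\sigma \in \Stab(S)$, each $\sigma$-stable $E_i$ is of the form $i_{c_i\ast}F_i$ where $c_i \in \mathbb{C}$ and $F_i \in D^b(S)$ is $p_{S\ast}\sigma$-stable. For $c_i \neq c_j$ the supports are disjoint in $X$ and all Ext groups vanish, so such pairs of vertices have no edges between them and contribute trivially to symmetry. For $c_i = c_j$, using that the zero section $S \times \{c\} \hookrightarrow X$ is a Cartier divisor with trivial normal bundle, one computes
\[
\Ext^1_X(i_{c\ast}F_i, i_{c\ast}F_j) \cong \Hom_S(F_i, F_j) \oplus \Ext^1_S(F_i, F_j).
\]
Since $F_i, F_j$ are non-isomorphic $p_{S\ast}\sigma$-stable objects of the same phase, $\Hom_S(F_i, F_j) = 0$, and Serre duality on the K3 surface $S$ (with $K_S = 0$) gives $\Ext^1_S(F_i, F_j) \cong \Ext^1_S(F_j, F_i)^\vee$, forcing $\sharp E_{i,j} = \sharp E_{j,i}$ in $Q$. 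Hence $Q$ is symmetric.

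Having established symmetry, I would then show that $\xi^\pm$ can be chosen to satisfy condition (\ref{satisfy:xi}). As in the proof of Theorem~\ref{thm:Dflop:one}, after a $\mathbb{C}$-rotation and rescaling of central charges (which does not affect the set of semistable objects, cf.\ Remark~\ref{rmk:Caction}) and a generic perturbation of the chamber parameter, one arranges that $\Re \xi^+_i = -\Re \xi^-_i \in \mathbb{Z}$ and $\sum m_i \Re \xi^\pm_i = 0$. Finally one checks the non-emptiness hypothesis $M_Q^{\xi^{\pm},s}(\vec m) = M_Q^{\xi^\pm}(\vec m) \neq \emptyset$ of Corollary~\ref{cor:dflop}: the equality is automatic since $\vec m$ is primitive and $\xi^\pm$ lie on chambers, while non-emptiness follows from the assumption that both $M_{\sigma^{\pm}}(v)$ are non-empty in a neighborhood of $p$ (the opposite case being covered by Remark~\ref{rmk:dcrit}). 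Applying Corollary~\ref{cor:dflop} then identifies (\ref{dia:locK3}) analytic-locally at $p$ with an analytic d-critical generalized flop, and since $p$ was arbitrary the theorem follows.

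The main obstacle is purely bookkeeping: the claim about symmetry of the Ext-quiver is essentially immediate from the product structure $X = S \times \mathbb{C}$ together with K3 Serre duality, so the proof reduces to assembling ingredients already developed in Section~\ref{sec:anatheorem} and Section~\ref{sec:repsym}. The only mildly delicate point is matching the Bridgeland-theoretic $\xi^\pm$ with the normalization (\ref{satisfy:xi}), which is handled by the rotation/scaling trick already used in the one-dimensional sheaf case.
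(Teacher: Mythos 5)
Your proposal is correct and follows exactly the route the paper intends: the paper gives no detailed proof here, only the remark that the Ext-quiver of any collection coming from $D^b(S)$ is symmetric together with the statement that the argument is the same as for Theorem~\ref{thm:dflop:one} (reduce to the Ext-quiver via Theorem~\ref{thm:Equiver}, normalize $\xi^{\pm}$ to satisfy (\ref{satisfy:xi}), apply Corollary~\ref{cor:dflop}, with the empty/isomorphism cases absorbed by Remark~\ref{rmk:dcrit}). Your explicit verification of symmetry via $\Ext^1_X(i_{c\ast}F_i,i_{c\ast}F_j)\cong \Hom_S(F_i,F_j)\oplus\Ext^1_S(F_i,F_j)$, stability, and K3 Serre duality is a correct filling-in of the detail the paper leaves implicit.
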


\section{Wall-crossing formula of DT type invariants}\label{sec:append}
Here we review the 
previous 
works~\cite{BrH, Tcurve1, Tolim, Tolim2, Tsurvey, MR2888981} 
where wall-crossing phenomena in this paper 
were applied to show several properties on DT invariants. 

\subsection{Product expansion formula of PT invariants}
For a CY 3-fold $X$ and an element 
$(\beta, n) \in \Gamma_{\le 1}$, 
the moduli space of stable pairs 
$P_n(X, \beta)$
admits a zero dimensional virtual class~\cite{PT}. 
The PT invariant $P_{n, \beta}\in \mathbb{Z}$
is defined by the integration of the virtual class. 
It also coincides with weighted 
Euler characteristics 
\begin{align*}
P_{n, \beta} = \int_{[P_n(X, \beta)]} \chi_B \ de
\cneq \sum_{m\in \mathbb{Z}} m \cdot e(\chi_B^{-1}(m)). 
\end{align*}
Here $\chi_B$ is the Behrend constructible function~\cite{Beh}
 on $P_n(X, \beta)$.  
The generating series of PT invariants 
satisfies the following product expansion formula
\begin{align}\label{PT:formula}
&1+\sum_{\beta>0, n \in \mathbb{Z}}P_{n, \beta}q^n t^{\beta} \\
\notag
&=\exp \left( \sum_{\beta>0, n>0} (-1)^{n-1} n N_{n, \beta} q^n t^{\beta}
  \right) \cdot \left( \sum_{\beta>0, n \in \mathbb{Z}} 
L_{n, \beta}q^n t^{\beta} \right).
\end{align}
Here 
$N_{n, \beta}$ and $L_{n, \beta}$ are as follows: 
\begin{itemize}
\item The invariant
$N_{n, \beta} \in \mathbb{Q}$ is the 
generalized DT invariant~\cite{JS}
counting one dimensional semistable sheaves on $X$
with 
Chern character $(\beta, n) \in \Gamma_{\le 1}$.
It satisfies the symmetric property
$N_{n, \beta}=N_{-n, \beta}$ and the 
periodicity property $N_{n, \beta}=N_{n, \beta+\omega \cdot \beta}$
for any ample divisor $\omega$ on $X$. 

\item
The invariant  $L_{n, \beta} \in \mathbb{Z}$ 
is a DT type invariant counting certain 
stable objects $E \in D^b(X)$
satisfying $\ch(E)=(1, 0, -\beta, -n)$ 
(see (\ref{def:Linv}) below). 
It satisfies the symmetric property 
$L_{n, \beta}=L_{-n, \beta}$ and the vanishing 
$L_{n, \beta}=0$ for $\lvert n \rvert \gg 0$. 
\end{itemize}

The formula (\ref{PT:formula})
is proved 
using 
wall-crossing phenomena in Section~\ref{sec:wc:mmp}, 
which 
led to the proof of the rationality conjecture of 
the generating series of PT invariants~\cite{MNOP, PT}. 

\subsection{Wall-crossing formula of PT invariants}\label{subsection:PT:WCF}
Below we recall how to derive the formula (\ref{PT:formula}) from 
wall-crossing in Section~\ref{sec:wc:mmp}. 
For $t \in \mathbb{R}$, let us consider the 
moduli stack 
$\mM_t^{\star}(\beta, n)$ and its 
good moduli space $M_t^{\star}(\beta, n)$
as in Subsection~\ref{subsec:weak}. 
Let $\wW \subset \mathbb{R}$ be the 
set of walls (\ref{W:wall}), and take $t \notin \wW$. 
By integrating the Behrend function on $M_t^{\star}(\beta, n)$, we have 
the invariant 
\begin{align}\label{inv:Lt}
L_{n, \beta}^{t} \cneq \int_{M_t^{\star}(\beta, n)} \chi_B \ de \in \mathbb{Z}. 
\end{align}
By Proposition~\ref{prop:isom}, 
we have the identities
\begin{align*}
L_{n, \beta}^t=\left\{ \begin{array}{ll}
P_{n, \beta}, & t\gg 0, \\
P_{-n, \beta}, & t \ll 0.
\end{array}
\right. 
\end{align*}
For $t_i \in \wW$, 
the wall-crossing formula 
of the invariants (\ref{inv:Lt}) is given by
(see~\cite[Theorem~5.7]{Tsurvey})
\begin{align*}
&\lim_{\varepsilon \to +0}
\sum_{\beta>0, n\in \mathbb{Z}}
 L_{n, \beta}^{t_i+\varepsilon}
q^n t^{\beta} \\
&=
\exp\left( \sum_{n/\omega \cdot \beta=t_i} 
(-1)^{n-1} n N_{n, \beta} q^n t^{\beta} \right) \cdot 
\left(\lim_{\varepsilon \to +0} 
\sum_{\beta>0, n\in \mathbb{Z}}
L_{n, \beta}^{t_i-\varepsilon}
q^n t^{\beta}\right). 
\end{align*}
Applying the above formula 
from $t\to \infty$ to $t \to +0$, 
using the identity (\ref{isom:PT})
and setting 
\begin{align}\label{def:Linv}
L_{n, \beta} \cneq L_{n, \beta}^t, \ 
0<t \ll 1
\end{align}
we obtain the formula (\ref{PT:formula}). 

\subsection{DT/PT correspondence}\label{subsec:DT/PT}
Let $I_n(X, \beta)$ the moduli space defined in (\ref{moduli:I}). 
The rank one DT invariants
\begin{align*}
I_{n, \beta} \cneq \int_{I_n(X, \beta)} \chi_B \ de
\end{align*}
are related to PT invariants by the identity
\begin{align*}
\sum_{n \in \mathbb{Z}}I_{n, \beta}q^n=
\prod_{n\ge 1}(1-q^n)^{-n \cdot e(X)}
\sum_{n \in \mathbb{Z}}P_{n, \beta}q^n.
\end{align*}
The above formula, called DT/PT correspondence, 
was conjectured in~\cite{PT} and proved in~\cite{BrH, Tcurve1}, 
via wall-crossing phenomena in the derived category
discussed in Subsection~\ref{subsec:DT/PT0}. 

\bibliographystyle{amsalpha}
\bibliography{math}

Kavli Institute for the Physics and 
Mathematics of the Universe, University of Tokyo (WPI),
5-1-5 Kashiwanoha, Kashiwa, 277-8583, Japan.

\textit{E-mail address}: yukinobu.toda@ipmu.jp

\end{document}